\newtheorem{theorem}{Theorem}[section]
\newtheorem{corollary}[theorem]{Corollary}
\newtheorem{conjecture}[theorem]{Conjecture}
\newtheorem{lemma}[theorem]{Lemma}
\newtheorem{rmk}[theorem]{Remark}    
\theoremstyle{definition}
\newtheorem{definition}[theorem]{Definition}
\newtheorem{example}[theorem]{Example}
\theoremstyle{remark}
\theoremstyle{plain}
\newcommand{\thistheoremname}{}
\newtheorem{genericthm}[theorem]{\thistheoremname}
\newtheorem*{genericthm*}{\thistheoremname}
\newenvironment{namedthm*}[1]
  {\renewcommand{\thistheoremname}{#1}%
   \begin{genericthm*}}
  {\end{genericthm*}}
\newcommand{\ep}{\varepsilon}
\newcommand{\eps}{\epsilon}
\newcommand{\beps}{{\bm{\epsilon}}}
\def\om{\omega}
\def\P{\Phi}
\newcommand\cC{\mathcal{C}}
\newcommand\cE{\mathcal{E}}
\newcommand\cF{\mathcal{F}}
\newcommand\cG{\mathcal{G}}
\newcommand\cH{\mathcal{H}}
\newcommand\cK{\mathcal{K}}
\newcommand\cL{\mathcal{L}}
\newcommand\cM{\mathcal{M}}
\newcommand\cR{\mathcal{R}}
\newcommand\cU{\mathcal{U}}
\newcommand\cV{\mathcal{V}}
\newcommand{\bE}{\mathbb{E}}
\newcommand{\bN}{\mathbb{N}}
\newcommand{\bR}{\mathbb{R}}
\newcommand{\fC}{\mathfrak{C}}
\newcommand{\fc}{\mathfrak{c}}
\newcommand{\fK}{\mathfrak{K}}
\newcommand{\fk}{\mathfrak{k}}
\newcommand{\fl}{\mathfrak{l}}
\newcommand{\fv}{\mathfrak{v}}
\newcommand{\fg}{\mathfrak{g}}
\def\rD{{\rm D}}
\def\rT{{\rm T}}
\def\rd{{\rm d}}
\newcommand{\on}{\operatorname}
\renewcommand\Im{\on{Im}}
\newcommand\coker{\on{coker}}
\newcommand{\im}{\on{im}}
\newcommand{\ind}{{\on{ind}}\,}
\def\pd{\partial}
\def\pd{\partial}
\newcommand{\ol}{\overline}
\newcommand{\ul}{\underline}
\newcommand{\ti}{\widetilde}
\newcommand{\wh}{\widehat}
\newenvironment{itemlist}
   { \begin{list} {$\bullet$}
         { \setlength{\topsep}{.5ex}  \setlength{\itemsep}{.5ex} \setlength{\leftmargin}{2.5ex} } }
   { \end{list} }
   \newenvironment{iitemlist}
   { \begin{list} {$\bullet$}
         { \setlength{\topsep}{.5ex}  \setlength{\itemsep}{.5ex} \setlength{\leftmargin}{7ex} } }
   { \end{list} }
   \newenvironment{iiitemlist}
   { \begin{list} {$\bullet$}
         { \setlength{\topsep}{.5ex}  \setlength{\itemsep}{.5ex} \setlength{\leftmargin}{7.5ex} } }
   { \end{list} }
\newcounter{qcounter}
\subjclass[2020]{47H09,\, 53D40,\, 57R58,\, 58B15}
\begin{document}
\sloppy

\title{Adiabatic Fredholm Theory}
\author{Nathaniel Bottman}
\author{Katrin Wehrheim}

\maketitle

\begin{abstract}
We develop a robust functional analytic framework for adiabatic limits. This framework consist of a notion of adiabatic Fredholm family, several possible regularity properties, and an explicit construction that provides finite dimensional reductions that fit into all common regularization theories. We show that thhese finite dimensional reductions inherit global continuity and differentiability properties from the adiabatic Fredholm family. 
Moreover, we indicate how to construct adiabatic Fredholm families that describe the adiabatic limits for the nondegenerate Atiyah-Floer conjecture 
and strip-shrinking in quilted Floer theory. 
\end{abstract}

{\small
\setcounter{tocdepth}{2}
\tableofcontents
}

\section{Introduction}
This paper develops an analytic framework for adiabatic limits in geometric analysis -- towards systematizing the use of PDE degenerations to relate invariants arising from moduli spaces of solutions of nonlinear elliptic PDEs of quite different types.
Consider the following examples:

\begin{example} \label{ex:adiabatic}
\begin{itemlist}
\item[{\bf [AF]}]
The (nondegenerate) Atiyah-Floer conjecture \cite{dostoglou-salamon, salamon_ICM} relates an instanton Floer homology (closely related to Donaldson invariants) to a Lagrangian Floer homology by degenerating the anti-self-duality equation on a bundle over $\bR^2\times\Sigma$ to a Cauchy-Riemann equation for maps $\bR^2\to\cR(\Sigma)$ to a representation space of a Riemann surface $\Sigma$. 
\item[{\bf [GW]}]
The symplectic vortex invariants for a Hamiltonian group action $G\times M\to M$ on a symplectic manifold $M$ are identified
\cite{gaio-salamon, cieliebak-gaio-siera-salamon} with Gromov-Witten invariants of the symplectic quotient $M /\!\!/ G$ -- under a number of assumptions, in particular monotonicity -- 
by degenerating the symplectic vortex equations for maps $\Sigma\to M$ coupled with a $G$-connection on a bundle over $\Sigma$ to a Cauchy-Riemann equation for maps $\Sigma\to M /\!\!/ G$. 
\item[{\bf [SW]}]
Relationships between Seiberg-Witten invariants and Heegaard-Floer theory \cite{kutluhan-lee-taubes} can be interpreted \cite[\S 10]{cieliebak-gaio-siera-salamon} as a degeneration of the Seiberg-Witten equations on a bundle over $\bR^2\times\Sigma$ to a Cauchy-Riemann equation for maps $\bR^2\to{\rm Sym}^g(\Sigma)$ to the symmetric product of a Riemann surface $\Sigma$.
\item[{\bf [QF]}]
Quilted Floer theory \cite{wehrheim_woodward_geometric_composition} was shown -- under transversality and monotonicity assumptions -- to be invariant under geometric composition of Lagrangian correspondences, by a degeneration of the Cauchy-Riemann equation for tuples of strips of varying width (whose boundaries are coupled via Lagrangian conditions) -- allowing widths to go to zero, and replacing the shrunk strip by composition of the associated Lagrangian correspondences. 
\end{itemlist}
\end{example}

Examples [AF] and [QF] will be described in more detail in \S~\ref{examples}.
Each of these degenerations can be described locally by a smooth family of nonlinear Fredholm maps (or elliptic differential operators) $\cF_\eps:\ol\Gamma\to\ol\Omega$ with fixed domain and target for $\eps>0$ that are related to an energy functional $\cE_\eps:\ol\Gamma\to[0,\infty)$ with the property that solutions have a fixed energy $\cF_\eps(\gamma)=0 \;\Rightarrow\; \cE_\eps(\gamma)=E_0$ that is $\eps$-independent. In the limit $\eps\to 0$, however, the energy functional diverges on most of $\ol\Gamma$ -- with the exception of a subspace $\Gamma_0:=\{\gamma\in\ol\Gamma \,|\, \sup_{\eps> 0}E_\eps(\gamma) <\infty \} \subset\ol\Gamma$. Corresponding, the differential operators $\cF_\eps$ diverge on most of $\ol\Gamma$, but their restrictions to $\Gamma_0$ have a well-defined limit $\cF_0:=\lim_{\eps\to 0}\cF_\eps|_{\Gamma_0}$. 
The limit map (or operator) $\cF_0:\Gamma_0\to\ol\Omega$ has the same energy of solutions $\cF_0(\gamma)=0 \;\Rightarrow\; \lim_{\eps\to 0}\cE_\eps(\gamma)=E_0$, although it is no longer Fredholm (or elliptic). However, there are projections $\Gamma_0 \twoheadrightarrow \Gamma^{\rm red}_0$ and 
$\ol\Omega \twoheadrightarrow \Omega^{\rm red}_0$ such that $\cF_0$ descends to a an operator $\cF^{\rm red}_0:\overline\Gamma^{\rm red}_0\to\ol\Omega^{\rm red}_0$ that -- after completion -- is Fredholm (or elliptic) with the same index as $\cF_\eps$.

\begin{rmk}[The notion ``adiabatic'']
The name ``adiabatic limit'' or ``adiabatic degeneration'' that has been used for these examples was probably motivated by some or all of the notions
\begin{itemlist}
\item[$\cdot$]
adiabatic change in mechanics -- a slow deformation of the Hamiltonian,
\item[$\cdot$]
adiabatic invariants in dynamical systems -- stay approximately constant under slow change,
\item[$\cdot$]
adiabatic processes in thermodynamics -- that don't increase the entropy, 
\item[$\cdot$]
adiabatic change in quantum mechanics -- where energy states cannot transition. 
\end{itemlist}
In the context of geometric analysis, we suggest using these terms when both an energy and a Fredholm index are preserved -- akin to an adiabatic invariant -- which prevents major changes of geometric characteristics in the limit. We would add the common characteristic of this preservation taking place despite of a serious analytic degeneration -- e.g.\ quantities diverging, Fredholmness being lost, and being regained only after restriction to a smaller ``state space''. 

We will give a technical definition of ``adiabatic Fredholm family'' in Definition~\ref{def:fredholm}, which is highly specific to our purposes and results from a significant reformulation of the classical examples. It does not include an energy function, as this is more relevant to compactness arguments than the construction of local charts. Instead, the key property of an ``adiabatic Fredholm limit" is that Fredholmness holds relative to a family of $\eps$-dependent norms, which are bounded below but not equivalent to the $\eps=0$ norm. 
Note that we do not intend to restrict the use of the term ``adiabatic'' to this specific meaning -- rather use it as an adjective to indicate the generalization of a classical notion that serves to better analyze some adiabatic limits.
\end{rmk}

The (Floer-type / Gromov-Witten / ...) invariants arising from the PDEs in Example~\ref{ex:adiabatic} are constructed by giving the moduli spaces of PDE solutions (modulo symmetries and after compactification) the structure of a space that can be counted (e.g.\ a compact oriented $0$-manifold), integrated over, or associated with some type of fundamental class. For a moduli space $\ol\cM_\eps$ arising from a PDE with fixed $\eps\geq 0$, various types of such ``regularization structures" have been constructed -- all arising from local Fredholm descriptions $\cF_\eps:\ol\Gamma\to\ol\Omega$ as described above.
Now any ``regularization theory" (a method for assigning counts or (virtual/relative/...) fundamental classes to moduli spaces) crucially relies on the fact that Fredholm descriptions have transverse stabilizations. These are used differently by the various theories, but a key fact in all theories is that the stabilizations exist and induce ``finite dimensional reductions" in the following sense.

\begin{rmk} \label{rmk:finite dimensional} \rm
Let $\ol\cM$ be a topological space. 
Then a {\bf local Fredholm description} of $\ol\cM$ consists of a $\cC^1$ Fredholm\footnote{
A nonlinear map is called Fredholm if its linearizations $\rD\cF(\gamma_0)$ are Fredholm at each zero $\gamma_0\in\cF^{-1}(0)$.} 
map $\cF:\ol\Gamma \to \ol\Omega$ between Banach spaces and a continuous injection $\cF^{-1}(0)\hookrightarrow\ol\cM$ that is a homeomorphism onto its image. (More precisely, $\cF$ is defined on an open subset $\cV_\Gamma\subset\Gamma$.)  

Since the cokernel of a Fredholm map is finite dimensional, any such local Fredholm description can be extended to a {\bf stabilized Fredholm map} $\ti\cF: \fC\times\ol\Gamma \to \ol\Omega$ that is transverse to $0$ (after restriction to a smaller domain). Here $\fC$ is a finite dimensional vector space that is either isomorphic to $\coker \rD\cF(\gamma_0)$ or can be viewed to contain it as a subspace, and transversality guarantees that $\ti\cF^{-1}(0)$ is a finite dimensional manifold equipped with a $\cC^1$ map $\pi:\ti\cF^{-1}(0)\to\fC$ whose zero set is identified with $\cF^{-1}(0)$. 
This fact can be viewed as ``local obstruction bundle'', source of ``transverse perturbations" (arising from the regular values of $\pi$), or a ``finite dimensional reduction''.

More generally, a {\bf finite dimensional reduction} of a local Fredholm description is a $\cC^1$ map $f:\fK \to \fC$ between finite dimensional vector spaces and a continuous injection $\phi: f^{-1}(0) \hookrightarrow \cF^{-1}(0)$ that is a homeomorphism onto its image.
 A review of the classical construction of finite dimensional reductions -- in a less well-known formulation that we developed for the adiabatic generalization -- can be found in Lemmas~\ref{lem:inverses classical}, \ref{lem:contraction classical}, \ref{lem:solution classical}, \ref{lem:finite dimensional reduction classical}.

Thus any local Fredholm description, via stabilization and finite dimensional reduction, induces a {\bf local finite dimensional description}  of $\ol\cM$ that consists of a $\cC^1$ map $f:\bR^k\to\bR^n$ and a continuous injection $f^{-1}(0) \hookrightarrow \ol\cM$ that is a homeomorphism onto its image. 
\end{rmk}

Once finite dimensional descriptions of a moduli space are constructed, they induce ``local regularization structures" (e.g.\ local Euler classes or local perturbations), and the content of a regularization theory is to patch these into a global regularization structure for the moduli space. Then invariants are defined by counting or integrating ``regularized moduli spaces" (and packaging the results in an algebraic structure than can be shown to be independent of choices.)

Now the idea for identifying invariants for $\eps=1$ and $\eps=0$ is that the union of moduli spaces $\ol\cM_{[0,1]}:=\bigcup_{0\leq\eps\leq 1}\ol\cM_\eps$ could be equipped with a regularization structure that plays the role of a cobordism relating the invariants arising from $\ol\cM_0$ and $\ol\cM_1$.
The crucial analytic step in such a program is to stabilize the local Fredholm descriptions 
$\cF_0^{{\rm red}\;-1}(0) \cup \bigcup_{0<\eps\leq 1}\cF_\eps^{-1}(0) \hookrightarrow \ol\cM_{[0,1]}$ 
and perform finite dimensional reduction to obtain a local finite dimensional description of $\ol\cM_{[0,1]}$. 
To date, such description for an adiabatic limit has been achieved only in the special case of $\cF_0^{{\rm red}\;-1}(0)$ starting out as a $0$-manifold arising from $\cF_0$ being transverse to $0$.\footnote{Transversality in this case of Fredholm index $0$ means that at each solution $\cF_0(\gamma)=0$ the linearization $\rD\cF_0(\gamma)$ is an isomorphism.}
In that case no stabilization is necessary and a bijection $\ol\cM_0\simeq \ol\cM_\eps$ for sufficiently small $\eps>0$ is established in two steps by the

\smallskip
\noindent
{\bf classical adiabatic method} (for transverse problems of Fredholm index $0$):
\begin{itemlist}
\item
Given any solution $\gamma^{\rm red}_0\in\cF_0^{{\rm red}\;-1}(0)$ one constructs a family of solutions $\gamma_\eps\in\cF_{\eps}^{-1}(0)$ for $\eps>0$ by utilizing a lift of $\gamma^{\rm red}_0$ to $\gamma_0\in\Gamma_0$ to obtain ``near-solutions'': $\cF_\eps(\gamma_0)\to 0$ as $\eps\to 0$. Then for sufficiently small $\eps$ a Newton-Picard iteration finds exact solutions $\gamma_0+\xi_\eps\in\cF_{\eps}^{-1}(0)$. 
(See e.g.\ \cite[Thm.5.1]{dostoglou-salamon}, \cite[Thm.3.1.1]{wehrheim_woodward_geometric_composition}.)
\item 
A compactness theorem (e.g.\ \cite[Thm.9.2]{dostoglou-salamon}, \cite[Thm.3.3.1]{wehrheim_woodward_geometric_composition},  \cite{bottman_wehrheim}) shows that solutions $\gamma_\eps\in\cF_{\eps}^{-1}(0)$ for $\eps\to 0$ will converge to a lift $\gamma_0\in\Gamma_0$ of some solution $\gamma^{\rm red}_0\in\cF_0^{{\rm red}\;-1}(0)$.
\end{itemlist}
This can also be viewed as constructing a $1$-cobordism $\bigcup_{\gamma^{\rm red}_0\in\cF_0^{{\rm red}\;-1}(0)}  \{\gamma^{\rm red}_0\} \cup \bigcup_{0<\eps\leq\eps_0} \{\gamma_0+\xi_\eps\}$ with boundary $\cF_0^{{\rm red}\;-1}(0) \sqcup \cF_{\eps_0}^{-1}(0)$. However, the local finite dimensional descriptions of $\ol\cM_{[0,1]}$ constructed here are the trivial functions $f:[0,\eps_0]\to\bR^0=\{0\}$ with a nontrivial injection $[0,\eps_0]\to\ol\cM_{[0,1]}$, which maps $0\mapsto \gamma^{\rm red}_0$ and $0<\eps\mapsto \gamma_0+\xi_\eps$. This injection is constructed by working only with solutions of the PDEs -- thus provides no hints on how to lift a stabilization or finite dimensional reduction of $\cF^{\rm red}_0:\overline\Gamma^{\rm red}_0\to\ol\Omega^{\rm red}_0$ to a construction for  $\cF_\eps:\overline\Gamma\to\ol\Omega$.

\begin{rmk}[The broken dream of using sc-retracts] \label{rmk:broken}
Ever since polyfold theory \cite{hofer-wysocki-zehnder_polyfold} entered the market of ideas in 2004, we have expected to eventually obtain finite dimensional descriptions for adiabatic limits $\cF_\eps \to \cF_0 \twoheadrightarrow \cF^{\rm red}_0$ by casting them as a single sc-Fredholm section over a polyfold of the form $\overline\Gamma^{\rm red}_0 \cup \bigcup_{0<\eps\leq1} \ol\Gamma$.  After all, the very first examples of ``sc-retracts'' can be used to describe this kind of base space as the image of a family of projections $(\pi_\eps: \bE \to \bE )_{\eps\in[0,1]}$  with $\im\pi_0 \simeq \ol\Gamma^{\rm red}_0$ and $\im\pi_\eps \simeq \ol\Gamma$ for $\eps>0$. 
However, the ambient ``sc-space'' for this family is, essentially, $\bE=L^2(\bR)\times\ol\Gamma^{\rm red}_0\times L^2(\bR)\times\ol\Gamma$, and -- despite significant efforts -- we have been unable to find an extension of the adiabatic family of PDEs to this or any other ambient space of an sc-retract describing the adiabatic limit. 

We have found that a crucial difference between gluing constructions -- for which polyfold theory offers a robust analytic framework -- and the Newton-Picard iteration used for adiabatic limits is the following: Gluing constructions use the Newton-Picard iteration to find a PDE solution near a pre-glued map $\oplus_R(u_a, u_b)$, which interpolates between two PDE solutions $u_a,u_b$ on a neck of length $R$. 
Now this pregluing map $(R,u_a,u_b)\mapsto \oplus_R(u_a, u_b)$ is well-defined for any pair of maps $u_a,u_b$, and is accompanied by a similar map $(R,u_a,u_b)\mapsto \ominus_R(u_a, u_b)$ which keeps track of the information lost by $\oplus_R$.\footnote{In fact $(\oplus_R, \ominus_R)$ is an isomorphism; see \cite[\S 2.3]{fabert-fish-golovko-wehrheim} for a survey of this core idea of polyfold theory.} 
Together, these serve to reinterpret the pregluing construction as a chart for the ambient space $\Gamma_\infty \cup \bigcup_{R\geq 1} \Gamma_R$ of maps from domains with neck length $1\leq R \leq \infty$. The total space for this sc-retract is $[1,\infty]\times\Gamma_\infty$ -- thus supports the same type of elliptic PDE as the gluing problem. 

For adiabatic limits the analogue of the pregluing is the lift $(\eps,\gamma^{\rm red}_0) \mapsto \gamma_0$. However, this is a well-defined map only on a dense subspace of $\ol\Gamma^{\rm red}_0$ which contains all smooth maps, in particular the PDE solutions. Even if these lifts did cover $\Gamma_0$, we would then need a direct sum $\Gamma=\Gamma_0\oplus\Gamma_{\ominus}$ to mimic the construction of the sc-retract from $(\oplus,\ominus)$. Such splittings, however, have been elusive for the adiabatic limits {\rm [AF]}, {\rm [QF]} that we studied in detail. 
\end{rmk}

The gauge-theoretic Examples~\ref{ex:adiabatic}~[AF], [SW] are naturally monotone and can be regularized with geometric methods, so a satisfying level of generality is achievable with the classical adiabatic method. However, the inherently symplectic Examples~[GW], [QF] are severely limited by the monotonicity assumption, which is required both to avoid bubbling (which in both cases is understood to algebraically obstruct the desired result) and to ensure that transversality of $\cF^{\rm red}_0$ can be achieved by a geometric method -- i.e.\ one that is compatible with the classical method. 
For Example [QF] the algebraic impact of bubbling has now been understood and cast into the first author's proposal of the symplectic $(A_\infty,2)$-category \cite{abouzaid-bottman}. And with the compactness theorem long established \cite{bottman_wehrheim}, a local finite dimensional description for the moduli spaces near $\eps=0$ for Example~\ref{ex:adiabatic}~[QF] became the only foundational piece missing for a systematic description of the functorial properties of Fukaya categories. The present paper fills this gap. 

Such a general description of the moduli spaces of quilted Floer trajectories (solutions of several Cauchy-Riemann equations, coupled by Lagrangian seam conditions; see \S\ref{examples} for details) in the case of one strip-width going to zero requires extending the adiabatic ``strip-shrinking'' analysis in \cite{wehrheim_woodward_geometric_composition} to cases where the moduli spaces are not a priori cut out transversely, and of any expected dimension -- i.e.\ Fredholm index. 
Moreover, the proposed symplectic $(A_\infty,2)$-category also requires extending this adiabatic analysis to multiple strips shrinking to width zero -- at any ratio of speeds. 
And for the rapidly growing number of proposed applications of pseudoholomorphic quilts -- which utilize more general domains, symplectic fibrations, shrinking of annuli, and possibly even more sophisticated degenerations -- each use case requires its own version of this adiabatic analysis. 
Finally, any non-monotone use case requires coupling the requisite adiabatic analysis with the gluing analysis for the four types of bubbles exhibited in \cite{bottman_wehrheim} -- in particular the figure-eight bubbles, of which any number can appear simultaneously in a top boundary stratum. 
To serve all those use cases in a non-partisan manner -- i.e.\ without restricting the user to a particular regularization theory --  the goal of this paper is to develop a robust functional analytic framework for adiabatic limits that 
\begin{itemlist}
\item[(a)]
applies at least to Example~\ref{ex:adiabatic}~[QF], ideally to more of the known examples;
\item[(b)]
provides finite dimensional reductions that fit into all common regularization theories;  
\item[(c)]
is compatible with all common approaches to gluing. 
\end{itemlist}
We propose such a framework with the notion of an {\bf adiabatic Fredholm family} in Definition~\ref{def:fredholm}.
Note that it would be relatively easy to create a definition that satisfies just one of the goals (a) or (b). To avoid this common issue of new theoretical frameworks, we used goal (a) as a guardrail in developing this notion, while striving to meet goal (b) by proving Theorem~\ref{thm:charts} below -- i.e.\ in working towards finite dimensional reductions, we allowed ourselves to add conditions to Definition~\ref{def:fredholm} only when these were satisfied in Example~\ref{ex:adiabatic}~[QF]. As such, the actual definition has become quite technical, so here is a simplified version that combines Definitions~\ref{def:fredholm}, \ref{def:adiabatic C-l} for $\cC^{\ell=1}$-regularity.  

\smallskip
\noindent
{\bf Simplified Definition:}
A {\bf $\cC^1$-regular adiabatic Fredholm family} consists of  
\begin{itemize}
\item
$\bigl(\cF_\eps : \Gamma \to \Omega\bigr)_{\eps\in\Delta}$  a family of maps between real vector spaces indexed by a topological space $\Delta$ with a distinguished point $0=0_\Delta\in\Delta$ such that $\cF_0(0)=0$;
\item
 families of norms $\|\cdot\|^\Gamma_\eps$ and $\|\cdot\|^\Omega_\eps$ on $\Gamma$ and $\Omega$, respectively, indexed by $\eps\in\Delta$.
\end{itemize}
These are required to have the following properties.
\begin{itemlist}
\item[\bf{[Lower Bound on Norms]}]  
$\|\gamma \|^\Gamma_0 \leq \|\gamma \|^\Gamma_\eps$ and 
$\|\omega \|^\Omega_0 \leq \|\omega \|^\Omega_\eps$
for all $\gamma\in\Gamma$, $\omega\in\Omega$, and $\eps\in\Delta$.
\item[\bf{[Uniform $\mathcal{C}^1$ Regularity]}]
$\cF_\eps : (\Gamma,\|\cdot\|^\Gamma_\eps) \to (\Omega,\|\cdot\|^\Omega_\eps)$ is uniformly $\cC^1$ for each $\eps\in\Delta$ and satisfies 
$$
\bigl\| \rD \cF_\eps (\gamma^\fl)  \bigr\|^{\cL(\ol\Gamma_\eps,\ol\Omega_\eps )}  \leq C^1_\cF
\qquad\text{and}\qquad
\bigl\| \rD\cF_\eps(\gamma^\fl) -  \rD\cF_\eps (\gamma^\fk) \bigr\|^{\cL(\ol\Gamma_\eps,\ol\Omega_\eps )} 
\leq c^1_\cF(\|\gamma^\fl-\gamma^\fk\|^{\Gamma}_\eps) 
\qquad \forall\; \gamma^\fl, \gamma^\fk\in\Gamma
$$
with a constant $C^1_\cF\geq 1$ and a monotone continuous function $c^1_\cF : [0,\infty) \to [0,\infty)$ with $c^1_\cF(0)= 0$.
\item[\bf{[Fredholm Property \& Constant Index]}]
The linearizations at $0=0_\Gamma$ extend to Fredholm operators $\overline{\rD\cF_\eps(0)}: \overline\Gamma_\eps\to \overline\Omega_\eps$ between the completions $\overline\Gamma_\eps:=\overline\Gamma^{\|\cdot\|^\Gamma_\eps}$ and $\overline\Omega_\eps:=\overline\Omega^{\|\cdot\|^\Omega_\eps}$ for each $\eps\in\Delta$. 

Their Fredholm index is independent of $\eps\in\Delta$.
 \item[\bf{[$\mathbf{\epsilon=0}$ Fredholm Estimate \& Uniform Cokernel Bound]}] 
There is a projection $\pi_\fK:\Gamma\to\fK:=\ker \overline{\rD\cF_0(0)}$ and an inclusion $\fC:=\coker\overline{\rD\cF_0(0)}\subset\Omega$ with 
$\sup_{\eps\in\Delta} \sup_{\|\fc\|^\Omega_0\leq 1} \| \fc  \|^\Omega_\eps <\infty$ such that
$$
\| \gamma \|^\Gamma_0 + \| \fc  \|^\Omega_0  \leq C_0 \bigl( \|\pi_\fK(\gamma) \|^\fK + \| \rD\cF_0(0) \gamma - \fc \|^\Omega_0  \bigr)\qquad \forall\; (\gamma,\fc)\in\Gamma\times \fC .
$$ 
\item[\bf{[Uniform Fredholm-ish Estimate]}] 
$\| \gamma \|^\Gamma_\eps \leq C_1 \bigl( \| \rD\cF_\eps(0) \gamma \|^\Omega_\eps + \| \gamma \|^\Gamma_0 \bigr)$ for all $\gamma\in\Gamma$ and $\eps\in\Delta$.

\item[\bf{[Pointwise $\cC^1$-continuity w.r.t.\ $\mathbf{\Delta}$]}] 
$\bigl\|  \rD\cF_\eps (0) -   \rD\cF_0 (0)  \bigr\|^{\cL(\ol\Gamma_\ep,\ol\Omega_0)} \to 0$ as $\eps\to \eps_0$
and, given any $\eps_0\in\Delta$ and solutions $\gamma_0,\gamma_1\in \Gamma$ of $\cF_{\eps_0}(\gamma_0)=\fc_0\in\fC$, $\rD\cF_{\eps_0}(\gamma_0)\gamma_1=\fc_1\in\fC$, we have 
$$
\bigl\| \cF_\eps (\gamma_0)  -  \cF_{\eps_0} (\gamma_0) \bigr\|^\Omega_\eps 
+ \bigl\| \rD\cF_\eps (\gamma_0)\gamma_1  -  \rD\cF_{\eps_0} (\gamma_0) \gamma_1\bigr\|^\Omega_\eps 
\underset{\eps\to \eps_0}{\longrightarrow} 0 .
$$
\item[\bf{[Regularizing]}] 
The nonlinear and linearized operators are regularizing in the sense that
\begin{align*}
\gamma\in \overline\Gamma_\eps, \; \overline\cF_\eps(\gamma) \in \Omega \quad \Longrightarrow \quad \gamma\in \Gamma ,\\
\gamma_0\in \Gamma, \; \gamma\in\overline\Gamma_\eps, \; \rD\overline\cF_\eps(\gamma_0)\gamma \in \Omega \quad \Longrightarrow \quad \gamma\in\Gamma .
\end{align*}
\end{itemlist}

For users wishing to avoid noncomplete normed vector spaces, the alternative formulation in terms of an extended adiabatic Fredholm family can be found in Lemma~\ref{lem:fredholm}. However, the regularizing property will be crucial for constructing finite dimensional reductions that are continuous in $\eps\in\Delta$. 
Defined formally in Definition~\ref{def:regularizing}, the regularizing property encodes elliptic regularity of the PDEs -- similar to \cite[Def.3.1.16]{hofer-wysocki-zehnder_polyfold}.\footnote{The regularizing property is unfortunately named similarly, but should not be confused with regularization -- the process of associating well-defined counts or fundamental classes to moduli spaces, as e.g.\ surveyed in \cite[\S 3]{fabert-fish-golovko-wehrheim}.}

\begin{rmk}[Applicability to examples of adiabatic limits]
In the application to Examples~\ref{ex:adiabatic}, 
\begin{itemlist}
\item
$\Gamma,\Omega$ are spaces of smooth connections or maps -- equipped with $\eps$-dependent Sobolev norms, 
\item
$\Delta=[0,\eps_0]$, though our formulation allows for $\Delta$ to be any chart of an underlying moduli space of domains -- including one with generalized corners, 
\item
$\cF_\eps$ represent the $\eps$-dependent PDEs in a local slice.\footnote{For gauge theoretic symmetries one usually builds gauge fixing into the PDE. For quotients by reparametrization one usually builds local slice conditions into the domain.}
\end{itemlist}
The original formulations, including the above abstract summary, are not of this form since the PDEs contain negative exponents of $\eps$ which diverge for $\eps\to 0$. They can, however, be brought into this form by multiplying with a suitable positive exponent of $\eps$ such that $\lim_{\eps\to 0} \cF_\eps =:\cF_0$ exists -- and combines the $\eps=0$ PDE with the condition for solutions to be contained in $\Gamma_0$.  
Here our breakthrough discovery was the fact that $\cF_0$ is Fredholm on a completion of the same space as $\cF_\eps$ -- just after completion with a non-equivalent norm -- in fact, a Sobolev norm that is significantly weaker than the one used in the classical adiabatic method for 
$\cF^{\rm red}_0:\overline\Gamma^{\rm red}_0\to\ol\Omega^{\rm red}_0$, which works on a ``smaller'' Banach space with a stronger norm than the new Fredholm operator $\cF_0:\overline\Gamma_0\to \overline\Omega_0$. 
\end{rmk}

\begin{rmk}[Compatibility with common approaches to gluing]  \label{rmk:gluing}
Regarding goal (c), the descriptions of the Examples~\ref{ex:adiabatic} in terms of adiabatic Fredholm families utilize the classical Fredholm descriptions for fixed $\eps>0$, which are compatible with both classical and polyfold gluing methods as the pregluing construction $\oplus_R(u_a,u_b)$ (sketched in Remark~\ref{rmk:broken}) can be applied to smooth maps $u_a,u_b$ and yields smooth maps. It can similarly be applied to transfer representations of the cokernel from $R=\infty$ to finite gluing parameters $R$.
The main challenge is in ensuring that the new Fredholm description for $\eps=0$ is also compatible with the gluing analysis -- in fact, is the limit of $\eps\searrow 0$. 

As a result, we will be dealing with Fredholm problems parametrized by $\eps\in\Delta$ and gluing parameters $R$ -- with the notable exception of gluing figure eight bubbles into shrinking strips, where the strip width $\eps>0$ determines the gluing parameter $R$ that will match seams. This at least heuristically aligns with the construction of $(A_\infty,2)$-associahedra in \cite{bottman_realizations}. 
\end{rmk}

\begin{rmk}[Relationship to the classical adiabatic method] 
The analytic ingredients of the Newton-Picard iteration in the classical adiabatic method are represented in the above definition as follows: 
The ``near solutions" are given by {\rm [partial $\cC^0$-continuity w.r.t.\ $\mathbf{\Delta}$]} applied to $\gamma_0=0$. The existence of bounded right inverses is classically proven by estimates for the formal adjoint operators $\rD\cF_\eps(0)^*$ that are obtained by combining ``quadratic estimates" -- equivalent to {\rm [uniform $\mathcal{C}^1$ regularity]} with a linear function $c^1_\cF$ -- with {\rm [uniform Fredholm-ish estimates]} and the fact that the linearized operator for $\eps=0$ is an isomorphism $\ol\Gamma^{\rm red}_0 \overset{\sim}{\to} \Omega^{\rm red}_0$, which is represented by estimates for the formal adjoint operator $\ol{\rD\cF_0(0)}^*$ -- a part of the classical adiabatic analysis that wasn't well positioned for generalizations. 

Here our breakthrough insight was that the isomorphism property of $\ol{\rD\cF_0(0)}$ in the classical case 
could be understood as a special case of an isomorphism property of a Fredholm stabilization coupled with projection to the kernel, 
\begin{equation}\label{eq:stabilization}
\ol P_0 \,:\quad \ol\Gamma_0 \times\fC \;\to\; \fK\times\ol\Omega_0, \qquad (\gamma,\fc) \;\mapsto\; \bigl( \pi_\fK (\gamma) , \ol{\rD\cF_0(0)} \gamma - \fc \bigr) . 
\end{equation}
For a general adiabatic Fredholm family, this isomorphism is represented by the {\rm [$\epsilon=0$ Fredholm Estimate]}
$\| \gamma \|^\Gamma_0 + \| \fc  \|^\Omega_0  \leq C_0 \bigl( \|\pi_\fK(\gamma) \|^\fK + \| \rD\cF_0(0) \gamma - \fc \|^\Omega_0  \bigr)$, which implies injectivity of $\ol P_0$, so that surjectivity follows from $\ol P_0$ having Fredholm index $0$.  

This turned out to be an analytically more robust formulation than the classical notion of a Fredholm splitting, in which we have direct sum decompositions of the domain $\ol\Gamma_0 = \fK \oplus W$ and the target $\ol\Omega_0 = \fC \oplus \im\ol{\rD\cF_0(0)}$ so that the restricted operator $\ol{\rD\cF_0(0)}|_W$ is an isomorphism to its image. 
\end{rmk}

A general adiabatic Fredholm family just needs to be defined on a convex subset $\cV_\Gamma\subset\Gamma$ containing $0=0_\Gamma$ such that $\cV_\Gamma\subset(\Gamma,\|\cdot\|^\Gamma_0)$ is open. 
Now the main result of this paper is that the new notion of $\cC^1$-regular adiabatic Fredholm family (a) is satisfied by Examples~\ref{ex:adiabatic} as shown in \S\ref{examples}, and (b) induces local finite dimensional reductions as follows:

\begin{theorem} \label{thm:charts}
Suppose an open subset $\cU\subset\overline\cM$ of a topological space (such as a compactified moduli space) is described as the zero set of a $\cC^\ell$-regular adiabatic Fredholm family $\bigl( (\cF_\eps:\cV_\Gamma\to \Omega )_{\eps\in\Delta} , \ldots  \bigr)$ as in Definitions~\ref{def:fredholm}, \ref{def:adiabatic C-l} for $\ell\geq 0$. 
That is, there is a homeomorphism
$$
\psi \,:\; \textstyle\bigcup_{\eps\in\Delta} \{\eps\}\times \cF_\eps^{-1}(0)  \;\overset{\sim}{\to}\; \cU . 
$$
Then this induces a finite dimensional reduction that describes $\ol\cM$ locally as the zero set of a map between finite dimensional spaces, 
$$
f:\Delta_\sigma\times\cV_\fK\to\fC, \; (\eps,\fk)\mapsto f_\eps(\fk)
\qquad\text{and}\qquad 
\psi_f: f^{-1}(0) \to \ol\cM  . 
$$ 
More precisely, 
$\fK=\ker\rD\cF_0(0)\subset\Gamma$ is the kernel and $\fC \simeq\coker \rD\cF_0(0)\subset\Omega$ is the cokernel of the linearization of $\cF_{\eps=0}$ at $0\in\cV_\Gamma\subset\Gamma$. 
Then the finite dimensional reduction is defined on open subsets $\Delta_\sigma\subset\Delta$ and $\cV_\fK\subset\fK$. It describes $\overline\cM$ locally by composition $\psi_f=\psi\circ\phi$ with a homeomorphism for some $\delta_\sigma>0$
$$
\phi \,:\; f^{-1}(0) \;\overset{\sim}{\to}\;
\bigl( \textstyle\bigcup_{\eps\in\Delta_\sigma} \{\eps\}\times \overline\cF_\eps^{-1}(0)  \bigr) \cap
\bigl\{ (\eps,\gamma) \,\big|\, \|\gamma\|^\Gamma_\eps < \delta_\sigma \bigr\}
\;\subset\; \Delta \times \bigl( \Gamma , \|\cdot \|_0 \bigr) . 
$$
Moreover, this finite dimensional reduction is $\cC^\ell$ in the sense that the differentials of order $0\leq k \leq \ell$ (from $k$-fold differentiation in $\fK$; see Remark~\ref{rmk:multilinear}) form continuous maps 
$$
\Delta_\sigma\times\cV_\fK \to \cL^k(\fK^k,\fC) , \quad 
(\eps,\fk_0)\mapsto \rD^k f_\eps(\fk_0) . 
$$ 
\end{theorem}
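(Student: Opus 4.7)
The plan is to mirror the classical stabilize-then-contract scheme referenced in Lemmas~\ref{lem:inverses classical}--\ref{lem:finite dimensional reduction classical}, but with all estimates quantified uniformly in $\eps\in\Delta_\sigma$ with respect to the $\eps$-dependent norms, and with the Regularizing property used to transport solutions back into $\Gamma$ and to establish continuity in $\eps$.

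First I would set up the family of stabilized linear operators
\[
\bar L_\eps \,:\; \bar\Gamma_\eps \times \fC \;\to\; \fK\times\bar\Omega_\eps,\qquad (\gamma,\fc)\;\mapsto\; \bigl(\pi_\fK(\gamma),\;\overline{\rD\cF_\eps(0)}\gamma-\fc\bigr),
\]
and show that $\bar L_\eps$ is a Banach space isomorphism with inverse bounded by a single constant $C_P$. At $\eps=0$, the \textbf{[$\eps=0$ Fredholm Estimate]} gives injectivity of $\bar L_0$, and the index computation $\ind\bar L_0=\ind\overline{\rD\cF_0(0)}+\dim\fC-\dim\fK=0$ (using \textbf{[Fredholm Property \& Constant Index]}) upgrades this to an isomorphism. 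For $\eps>0$ small, I would combine \textbf{[Uniform Fredholm-ish Estimate]} with the $\eps=0$ estimate applied to $\rD\cF_0(0)\gamma=\rD\cF_\eps(0)\gamma-(\rD\cF_\eps(0)-\rD\cF_0(0))\gamma$ and absorb the cross term via \textbf{[Pointwise $\cC^1$-continuity w.r.t.\ $\Delta$]} on a suitably small neighborhood $\Delta_\sigma$ of $0$; together with constant index this gives the uniform estimate $\|(\gamma,\fc)\|_\eps\leq C_P\|\bar L_\eps(\gamma,\fc)\|_\eps$, hence uniform invertibility.

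Next, for each $\fk\in\cV_\fK\subset\fK$ and $\eps\in\Delta_\sigma$, I would solve the fixed-point equation
\[
\bar L_\eps(\gamma,\fc)\;=\;\bigl(\fk,\;\rD\cF_\eps(0)\gamma-\cF_\eps(\gamma)\bigr),
\]
whose solutions are exactly pairs with $\pi_\fK(\gamma)=\fk$ and $\cF_\eps(\gamma)=\fc\in\fC$. Rewriting as a self-map $(\gamma,\fc)\mapsto\bar L_\eps^{-1}\bigl(\fk,\rD\cF_\eps(0)\gamma-\cF_\eps(\gamma)\bigr)$ and invoking \textbf{[Uniform $\cC^1$ Regularity]} with $c^1_\cF(0)=0$, the nonlinear remainder is Lipschitz with arbitrarily small constant on a ball of radius $\delta_\sigma$, uniformly in $\eps\in\Delta_\sigma$. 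Banach's fixed point theorem, applied inside $\bar\Gamma_\eps\times\fC$, then yields a unique solution $(\gamma_\eps(\fk),\fc_\eps(\fk))$ with $\|\gamma_\eps(\fk)\|^\Gamma_\eps<\delta_\sigma$. I then set $f_\eps(\fk):=\fc_\eps(\fk)$, apply the nonlinear \textbf{[Regularizing]} property (since $\overline\cF_\eps(\gamma_\eps(\fk))=\fc_\eps(\fk)\in\fC\subset\Omega$) to conclude $\gamma_\eps(\fk)\in\Gamma$, and define $\phi(\eps,\fk):=(\eps,\gamma_\eps(\fk))$. Injectivity of $\phi$ is uniqueness in Banach fixed point; surjectivity onto $\bigl(\bigcup_\eps\{\eps\}\times\overline\cF_\eps^{-1}(0)\bigr)\cap\{\|\gamma\|^\Gamma_\eps<\delta_\sigma\}$ follows because any such $\gamma$ gives $(\gamma,0)$ satisfying the fixed-point equation with $\fk:=\pi_\fK(\gamma)\in\cV_\fK$, so coincides with $\gamma_\eps(\pi_\fK\gamma)$.

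Finally, for the $\cC^\ell$ statement, differentiability of $\fk\mapsto f_\eps(\fk)$ at fixed $\eps$ follows from the parametric implicit function theorem applied to $\bar L_\eps$, combined with the higher-order version of \textbf{[Uniform $\cC^\ell$ Regularity]} referenced in Definition~\ref{def:adiabatic C-l}, yielding a recursive formula for $\rD^k f_\eps(\fk_0)$ in terms of $\rD^j\cF_\eps(\gamma_\eps(\fk_0))$ composed with $\bar L_\eps^{-1}$. I expect the main obstacle to be joint continuity of $(\eps,\fk_0)\mapsto\rD^k f_\eps(\fk_0)$, since the fixed-point argument lives in the moving Banach spaces $\bar\Gamma_\eps$ and the $\eps$-dependent norms are only related through the one-sided \textbf{[Lower Bound on Norms]}. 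My approach here is to combine the pointwise convergence statements in \textbf{[Pointwise $\cC^1$-continuity w.r.t.\ $\Delta$]} (applied to the already-constructed solutions $\gamma_0=\gamma_{\eps_0}(\fk_0)$ and auxiliary $\gamma_1$'s built from $\rD^{k-1}f_{\eps_0}(\fk_0)$) with the uniform bound on $\bar L_\eps^{-1}$ to get convergence in $\|\cdot\|^\Gamma_0$; the linearized \textbf{[Regularizing]} property then promotes this convergence into $\Gamma$, and hence into $\fC$. An inductive bootstrap using the recursive formula for $\rD^k f_\eps$ handles higher derivatives. It is this delicate interplay between the $\eps$-varying norms and the two regularizing properties that I expect to be the technical heart of the proof.
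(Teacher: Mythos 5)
Your proposal follows the same architecture as the paper's proof: stabilized Fredholm isomorphism (your $\bar L_\eps$ is the paper's $\overline P_\eps(0)$ from Lemma~\ref{lem:inverses}), Banach fixed point for a contraction near the identity (Lemmas~\ref{lem:contraction}, \ref{lem:solution classical}), and the regularizing property to land back in the $\eps$-independent space $\Gamma$. The invertibility argument you give — injectivity from the $\eps=0$ estimate plus index $0$, then absorbing the $\eps$-perturbation on a shrunk neighborhood — is exactly what happens in Lemma~\ref{lem:inverses}.

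The gap is in the final $\cC^\ell$ step. You correctly identify the difficulty (the Banach spaces vary with $\eps$ and the norms are only one-sidedly comparable), but the remedy you sketch relies only on pointwise continuity in $\eps$ at a fixed $\fk_0$. That is not enough: joint continuity of $(\eps,\fk_0)\mapsto \rD^k f_\eps(\fk_0)$ on $\Delta_\sigma\times\cV_\fK$ requires a second, logically independent ingredient, namely equicontinuity in $\fk_0$ with a modulus \emph{uniform in} $\eps$. In the paper this is exactly the role of the [Uniform Continuity of $\rD^k\cF_\eps$] hypotheses in Definition~\ref{def:adiabatic C-l}, which propagate through the inversion formula $\rD^kQ_\eps(\gamma_0) = -\sum_* Q_\eps\,\rD^{k_1}P_\eps\,Q_\eps\cdots$ to give the estimate
$$
\|\rT^\ell\sigma_\eps(\ul\fl)-\rT^\ell\sigma_\eps(\ul\fk)\|^{\rT^\ell W}_\eps \;\leq\; c^\ell_\sigma(\|\ul\fl-\ul\fk\|^{\rT^\ell\fK})\,b^\ell_\sigma(\cdots)
$$
with $\eps$-independent $c^\ell_\sigma, b^\ell_\sigma$, and then the joint continuity is a triangle-inequality splitting
$$
\|\rT^\ell\sigma_\eps(\ul\fk)-\rT^\ell\sigma_{\eps_0}(\ul\fk_0)\|^{\rT^\ell W}_0
\leq \|\rT^\ell\sigma_\eps(\ul\fk)-\rT^\ell\sigma_\eps(\ul\fk_0)\|^{\rT^\ell W}_\eps
+ \|\rT^\ell\sigma_\eps(\ul\fk_0)-\rT^\ell\sigma_{\eps_0}(\ul\fk_0)\|^{\rT^\ell W}_\eps ,
$$
where the first term uses the uniform continuity and the second the pointwise continuity. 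Your sketch gives only the second term, so the conclusion does not follow as written.

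Two further cautions. First, the pointwise continuity hypothesis — Definition~\ref{def:pointwise regular} — is only assumed \emph{at solutions modulo $\fC$}, not at arbitrary base points; so when you bootstrap, you must verify at each order $k$ that the tuple at which you evaluate is a solution of the $\rT^k$-stabilized equation modulo $\rT^k\fC$ (the paper records this as \eqref{eq:T stabilized equation}). You gesture at this with "the already-constructed solutions," which is roughly right, but it must be checked inductively, and that verification uses the higher regularizing property \eqref{def:regularizing T} to even make sense of the $\|\cdot\|^\Omega_\eps$-norm of the quantities being compared. Second, the regularizing property does not "promote convergence into $\Gamma$": it shows that each value $\sigma_\eps(\fk)$ (and the higher tangent $\rT^k\sigma_\eps(\ul\fk)$) already lies in the $\eps$-independent space $\Gamma$ (resp.\ $\rT^k\cV_W$), which is what makes the differences in the display above meaningful. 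The convergence statement itself comes from the two continuity inputs, not from regularizing.
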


\begin{rmk}[Prior Results]
The prior adiabatic limit proofs in Examples~\ref{ex:adiabatic} work with $\fK=\{0\}$, $\fC=\{0\}$, and hence the trivial maps $f:\Delta_\sigma=[0,\eps_\sigma) \to \{0\}$. Their Newton-Picard iteration induces maps $\phi:f^{-1}(0)=[0,\eps_\sigma) \to \bigl\{ (\eps,\gamma) \,\big|\, \cF_\eps(\gamma)=0,  \|\gamma-\gamma_0\|^\Gamma_\eps < \delta_\sigma \bigr\}$ for each $\gamma^{\rm red}_0\in\cF^{\rm red\;-1}(0)$, for which continuity and injectivity for fixed $\eps$ holds vacuously since the domain is a single point. Continuity for varying $\eps\in[0,\eps_\sigma)$ was not even a reasonable question for lack of an ambient topological space.
\end{rmk}

\begin{rmk}[Fit with common regularization theories]
Regarding goal (b) -- finite dimensional reductions that fit into all common regularization theories --  
note that the finite dimensional descriptions $f:\Delta_\sigma\times\cV_\fK\to\fC$ of moduli spaces resulting from an adiabatic Fredholm family via Theorem~\ref{thm:charts} come in the form of Remark~\ref{rmk:finite dimensional} up to restricting to an open subset $\cV_\fK\subset\fK$ (a standard modification) and introducing an extra factor $\Delta_\sigma$ -- in applications a chart of an underlying finite dimensional moduli space of domains. 
While we obtain no differentiability in the directions of $\Delta_\sigma$, the derivatives in directions of $\fK$ are continuous with respect to $\Delta_\sigma$. 
Indeed, for a $\cC^1$-regular adiabatic Fredholm family, the assertion of Theorem~\ref{thm:charts} is the continuity of both the map  
$\Delta_\sigma\times \cV_\fK \to \fC$, $(\eps,\fk)\mapsto f_\eps(\fk)$ 
and the differential in the direction of $\fK$, in the sense that 
$\Delta_\sigma\times\cV_\fK \to \cL(\fK,\fC)$, $(\eps,\fk_0)\mapsto\rD f_\eps(\fk_0)$ is a continuous map to the space of bounded linear operators $\fK\to\fC$. 

Since the work of Siebert \cite[\S 1]{Siebert} this differentiability in all but finitely many directions has been widely understood as the optimal requirement for local regularization structures. 
In fact, the relinquishing of differentiability in finitely many directions is also a core technique in proofs of the polyfold theoretic sc-Fredholm property; see \cite[Prop.4.23]{hofer-wysocki-zehnder_GW} for the Gromov-Witten case and \cite[Def.4.3]{wehrheim_Fredholm} for a general formulation. 
A regularization theory just needs local regularization structures with sufficiently large spaces of perturbations or stabilizing factors to cover the cokernels of the differentials in the differentiable directions. (Thus we do need some differentiable directions.) 
Moreover, a regularization theory needs to compatibly patch these local regularization structures -- which, however, is formulated quite differently in different theories, so remains the job of the user. 
\end{rmk}

A more general version of Theorem~\ref{thm:charts} is stated in Corollary~\ref{cor:family finite dimensional reduction}. 
The proof takes up \S\ref{proofs} and proceeds as follows: \S\ref{inverses} lifts the stabilization isomorphism \eqref{eq:stabilization} to $\eps\ne 0$. \S\ref{contraction} utilizes these isomorphisms to rewrite the equations 
\begin{equation}\label{eq:equivalent}
\cF_\eps(\gamma)=0 
\qquad\Longleftrightarrow\qquad
\bigl( A_\eps(\fk,w) , w - B_\eps(\fk,w) \bigr) = (0,0),
\end{equation} 
where each $B_\eps(\fk,\cdot)$ is a contraction mapping -- analogous to the notion of an sc-basic germ in \cite[Def.3.1.7]{hofer-wysocki-zehnder_polyfold}.  \S\ref{solution} constructs solution maps $\sigma_\eps:\cV_\fK \to W=\fC\times \Gamma$ that solve the fixed point equations $w=B_\eps(\fk,w)$ and shows how various regularity assumptions on $\cF_\eps$ transfer to the contractions $B_\eps$ and then the solution maps $\sigma_\eps$. This is where the technical heart of our work is in the interplay of Definition~\ref{def:adiabatic C-l} with Theorem~\ref{thm:family solution} and Example~\ref{ex:adiabatic}: While it is relatively easy to formulate a definition of ``adiabatic regularity" that implies regularity of the solution maps, or to capture the regularity properties of the examples in a notion of ``adiabatic regularity", the crucial contribution of this paper is in finding a notion of adiabatic $\cC^\ell$ regularity that (a) is satisfied in Examples~\ref{ex:adiabatic} and (b) implies via Theorem~\ref{thm:family solution} the adiabatic $\cC^\ell$ regularity of the solution maps. 
Here, in addition, the notion of adiabatic $\cC^0$ regularity was designed to ensure global continuity of the solution map $(\eps,\fk) \to \fC \times \Gamma$ with respect to the weakest norm $\|\cdot \|^\Gamma_0$. 
These results are analogous to the properties of the solution germs in \cite[Thm.3.3.3]{hofer-wysocki-zehnder_polyfold}, thus make the case that adiabatic Fredholm families fit into the polyfold regularization theory. 

To fit adiabatic Fredholm families into regularization theories via finite dimensional charts or obstruction bundles we deduce in \S\ref{reduction} that \eqref{eq:equivalent} is equivalent to a  finite dimensional equation
$$
\cF_\eps(\gamma)=0 
\qquad\Longleftrightarrow\qquad
A_\eps(\fk,\sigma_\eps(\fk)) = 0 .
$$
Here the finite dimensional reduction map $f: \cV_\fK \to \fC, \fk\mapsto A_\eps(\fk,\sigma_\eps(\fk))$ is defined on an open subset $\cV_\fK\subset \fK$ of the kernel $\fK=\ker  \overline{\rD\cF_0(0)}$, maps to the cokernel $\fC=\coker  \overline{\rD\cF_0(0)}$, and inherits the regularity of the adiabatic Fredholm family. 

To make this analytic work both accessible and educational, we formulate a classical analogue of 
Theorem~\ref{thm:charts} in Theorem~\ref{thm:charts classical} -- the fact that every $\cC^1$ Fredholm map has a $\cC^1$ finite dimensional reduction -- and begin each section with a review of our version of the construction step in classical Fredholm theory before going into the new technical work of making the construction uniformly and compatibly for all $\eps\in\Delta$.

%
%

\bigskip

{\it We are most grateful to MPIM Bonn for physically hosting -- and thus making possible -- the breakthrough phase of our work. 
The second author also wishes to acknowledge the annoying accuracy of Dietmar Salamon's oracle that ``it's just estimates". }

\section{Adiabatic Fredholm Families}

 This section develops the notions of an adiabatic Fredholm family and its possible regularity properites. 
We begin with the least restrictive notion -- which will suffice for the existence of invertible Fredholm stabilizations \S\ref{inverses}, yielding contractions \S\ref{contraction}, solution maps \S\ref{solution}, and finite dimensional reductions \S\ref{reduction}, but guaranteeing only continuity and differentiability for fixed $\eps\in\Delta$ -- no regularity for varying $\eps$. The latter will require further properties of the adiabatic Fredholm family that are developed in \S\ref{regularity}. 

\vfill
\pagebreak

\begin{definition} \label{def:fredholm}
An {\bf adiabatic Fredholm family} consists of the following data. 
\begin{itemize}
\item
$\Gamma$ and $\Omega$ are real vector spaces.\footnote{These are usually infinite dimensional and we do not specify a norm or topology at this point. In our applications, these are spaces of smooth functions, which we will later  complete w.r.t.\ $\eps$-dependent Sobolev norms.}
\item
$\cV_\Gamma\subset\Gamma$ is a convex subset containing $0=0_\Gamma\in\Gamma$.\footnote{In our applications, this is a small open
 ball in a $\cC^0$-norm that is weaker than any of the $\eps$-dependent norms.} 
\item
$\Delta$ is a topological space with a distinguished point $0=0_\Delta\in\Delta$.\footnote{This is usually a local chart for a finite dimensional topological manifold. In our applications, it will be $\bR^m\times [0,1)^n$ or a generalized corner of an underlying moduli space of domains.} 
\item
$\cF_\eps : \cV_\Gamma \to \Omega$ is a family of maps indexed by $\eps\in\Delta$. 
\item
$\|\cdot\|^\Gamma_\eps$ and $\|\cdot\|^\Omega_\eps$ are families of norms on $\Gamma$ and $\Omega$, respectively, indexed by $\eps\in\Delta$.
\item $\fK\subset\Gamma$ and $\fC\subset\Omega$ are finite dimensional subspaces
equipped with the norms $\|\fk\|^\fK:=\|\fk\|^\Gamma_0$ for $\fk\in\fK$ and $\|\fc\|^\fC:=\|\fc\|^\Omega_0$ for $\fc\in\fC$.
\item
$\pi_\fK:\Gamma\to\fK$ is a linear projection.
\item
$C_0, C_1, C_\fC \in (0,\infty)$ are constants.
\item $c : [0,\infty) \to [0,\infty)$ is a monotone continuous function with $c(0)= 0$, and \\
$c_\Delta : \Delta \to [0,\infty)$ is a continuous function with $c_\Delta(0)=0$.
\end{itemize}
These are required to have the following properties.
\begin{itemlist}
\item[\bf{[Openness of Domain]}]  
$\cV_\Gamma\subset(\Gamma,\|\cdot\|^\Gamma_0)$ is open.
\item[\bf{[Lower Bound on Norms]}]  
$\|\gamma \|^\Gamma_0 \leq \|\gamma \|^\Gamma_\eps$ and 
$\|\omega \|^\Omega_0 \leq \|\omega \|^\Omega_\eps$
for all $\gamma\in\Gamma$, $\omega\in\Omega$, and $\eps\in\Delta$.
\item[\bf{[Fibrewise $\mathcal{C}^1$ Regularity]}]
$\cF_\eps : (\cV_\Gamma,\|\cdot\|^\Gamma_\eps) \to (\Omega,\|\cdot\|^\Omega_\eps)$ is uniformly $\cC^1$ for each $\eps\in\Delta$.
\item[\bf{[Fredholm]}]
The linearization $\rD\cF_\eps (0) : \Gamma \to \Omega$ at $0=0_\Gamma$ for each fixed $\eps\in\Delta$ extends to a Fredholm operator $\overline{\rD\cF_\eps(0)}: \overline\Gamma_\eps\to \overline\Omega_\eps$ between the completions $\overline\Gamma_\eps:=\overline\Gamma^{\|\cdot\|^\Gamma_\eps}$ and $\overline\Omega_\eps:=\overline\Omega^{\|\cdot\|^\Omega_\eps}$.
\item[\bf{[Index]}]
The Fredholm index of the linearizations $\overline{\rD\cF_\eps(0)}$ is independent of $\eps\in\Delta$. 
Moreover, the kernel $\ker \overline{\rD\cF_0(0)}\subset \overline\Gamma_0$ is contained in the dense subset $\Gamma$ and equals to the given subspace $\fK=\ker \overline{\rD\cF_0(0)}\subset\Gamma$.
The cokernel $\coker\overline{\rD\cF_0(0)}=\overline\Omega_0 / \Im \overline{\rD\cF_0(0)}$ is isomorphic to the given subspace $\coker\overline{\rD\cF_0(0)} \simeq \fC\subset\Omega$. 
As a result we have
$\ind \overline{\rD\cF_\eps(0)}=\dim\fC - \dim\fK$ for each $\eps\in\Delta$.
\item[\bf{[$\mathbf{\epsilon=0}$ Fredholm Estimate]}] 
The projection $\pi_\fK:\Gamma\to\fK$ and inclusion $\fC\subset\Omega$ provide a Fredholm stabilization of $\overline{\rD\cF_0(0)}$ in the sense that
$$
\| \gamma \|^\Gamma_0 + \| \fc  \|^\fC  \leq C_0 \bigl( \|\pi_\fK(\gamma) \|^\fK + \| \rD\cF_0(0) \gamma - \fc \|^\Omega_0  \bigr)\qquad\text{for all}\; (\gamma,\fc)\in\Gamma\times \fC .
$$ 
\item[\bf{[Uniform Fredholm-ish Estimate]}] 
$\| \gamma \|^\Gamma_\eps \leq C_1 \bigl( \| \rD\cF_\eps(0) \gamma \|^\Omega_\eps + \| \gamma \|^\Gamma_0 \bigr)$ for all $\gamma\in\Gamma$ and $\eps\in\Delta$. 
\item[\bf{[Uniform Cokernel Bound]}] 
$ \| \fc  \|^\Omega_\eps  \leq C_\fC  \| \fc  \|^\Omega_0 =C_\fC  \| \fc  \|^\fC$ for all $\fc\in\fC$ and $\eps\in\Delta$.
\item[\bf{[Quadratic-ish Estimate]}] 
$
\bigl\|  \rD\cF_\eps (\gamma_0)\gamma -   \rD\cF_\eps (0)\gamma  \bigr\|^\Omega_\eps \leq c(\|\gamma_0\|^\Gamma_\eps)  \| \gamma \|^\Gamma_\eps$ for all $\gamma_0\in\cV_\Gamma$, $\gamma\in\Gamma$, and $\eps\in\Delta$.
\item[\bf{[Continuity of Derivatives at $0$]}] 
$
\bigl\|  \rD\cF_\eps (0)\gamma -   \rD\cF_0 (0)\gamma  \bigr\|^\Omega_0 \leq c_\Delta(\eps)  \| \gamma \|^\Gamma_\eps$ for all $\gamma\in\Gamma$ and $\eps\in\Delta$.
\item[\bf{[Near-Solution]}] $\cF_0(0)=0$, that is $\gamma=0$ solves the equation $\cF_\eps(\gamma)=0\in\Omega$ for $\eps=0$.
Moreover, the map $\Delta\to \Omega$, $\eps\mapsto \cF_\eps(0)$ is continuous at $\eps=0$ in the sense that $\| \cF_\eps(0) \|^\Omega_\eps = \| \cF_\eps(0) - \cF_0(0) \|^\Omega_\eps \to 0$ as $\eps\to 0$. 
\end{itemlist}
\end{definition}

An application-minded reader may now want to skip to \S\ref{examples} to get an idea of what such an adiabatic Fredholm family looks like in practice.

\subsection{Extension to Completions and Tangent Map Notation}

Users content with Definition~\ref{def:fredholm} can skip most this section, as the extension to Banach space completions is mostly a technical step in the proof. 
Users wishing to avoid noncomplete normed vector spaces can work directly with the following notion of an extended adiabatic Fredholm family -- a family of Fredholm maps between Banach spaces that have common dense subspaces. 
Users wishing to make sense of adiabatic $\cC^\ell$ regularity for $\ell\geq 2$ will need the tangent map notation introduced in the later part of this section.

\begin{lemma} \label{lem:fredholm}
An adiabatic Fredholm family as in Definition~\ref{def:fredholm} induces the following data of an {\bf extended adiabatic Fredholm family}. 
\begin{itemize}
\item
$\overline\Gamma_\eps:=\overline\Gamma^{\|\cdot\|^\Gamma_\eps}$ and $\overline\Omega_\eps:=\overline\Omega^{\|\cdot\|^\Omega_\eps}$ are Banach spaces obtained by completing the fixed spaces $\Gamma$ and $\Omega$ w.r.t.\ norms that vary with $\eps\in\Delta$. 
\item
$\overline\cV_{\overline\Gamma,\eps}:=\overline{\cV_\Gamma}\subset\overline\Gamma_\eps$ 
and
$\cV_{\overline\Gamma,\eps}:={\rm int}(\overline{\cV_\Gamma})\subset\overline\Gamma_\eps$ are convex subsets containing $0=0_\Gamma\in\Gamma\subset\overline\Gamma_\eps$. These are obtained by 
taking the closure of a fixed convex subset $0\in\cV_\Gamma\subset\Gamma\subset\overline\Gamma_\eps$, and by thaking the interior of this closure.
\item
$\overline\cF_\eps : \cV_{\overline\Gamma,\eps} \to \overline\Omega_\eps$ are maps obtained by continuous extension of the maps $\cF_\eps:\Gamma\to\Omega$.
\item
$\|\cdot\|^\Gamma_\eps$ and $\|\cdot\|^\Omega_\eps$ are norms on $\overline\Gamma_\eps$ and $\overline\Omega_\eps$, obtained by continuous extension of the norms specified on $\Gamma$ and $\Omega$.
\item 
$\fK\subset\overline\Gamma_\eps$ and $\fC\subset\overline\Omega_\eps$ are inclusions of the fixed finite dimensional subspaces $\fK\subset\Gamma$ and $\fC\subset\Omega$,  
equipped with the norms $\|\fk\|^\fK=\|\fk\|^\Gamma_0$ for $\fk\in\fK$ and $\|\fc\|^\fC=\|\fc\|^\Omega_0$ for $\fc\in\fC$.
\item
$\overline\pi_\fK:\overline\Gamma_\eps\to\fK$ is a linear projection obtained by continuous extension of $\pi_\fK:(\Gamma,\|\cdot\|^\Gamma_\eps)\to(\fK,\|\cdot\|^\fK)$. 
\item
$C_0, C_1, C_\fC \in (0,\infty)$ are the same constants as in Definition~\ref{def:fredholm}.
\item 
$c : [0,\infty) \to [0,\infty)$ is a monotone continuous function with $c(0)= 0$, and \\
$c_\Delta : \Delta \to [0,\infty)$ is a continuous function with $c_\Delta(0)=0$ -- the same as in Definition~\ref{def:fredholm}.
\end{itemize}
These inherit the following properties.
\begin{itemlist}
\item[\bf{[Lower Bound on Norms]}]  
$\|\gamma \|^\Gamma_0 \leq \|\gamma \|^\Gamma_\eps$ and 
$\|\omega \|^\Omega_0 \leq \|\omega \|^\Omega_\eps$
for all $\gamma\in\overline\Gamma_\eps$, $\omega\in\overline\Omega_\eps$, and $\eps\in\Delta$.
\item[\bf{[Openness of Domain]}]  
$\cV_{\overline\Gamma,\eps}\subset\overline\Gamma_\eps$ is open for every $\eps\in\Delta$.
\item[\bf{[Fibrewise $\mathcal{C}^1$ Regularity]}]
$\overline\cF_\eps: \cV_{\overline\Gamma,\eps} \to \overline\Omega_\eps$ is uniformly $\cC^1$ for each $\eps\in\Delta$.
\item[\bf{[Fredholm]}]
$\overline\cF_\eps : \cV_{\overline\Gamma,\eps} \to \overline\Omega_\eps$ linearizes at $0=0_\Gamma\in\overline\Gamma_\eps$ to the Fredholm operator $\rD\overline\cF_\eps(0)=\overline{\rD\cF_\eps(0)} : \overline\Gamma_\eps\to \overline\Omega_\eps$ for each $\eps\in\Delta$.
\item[\bf{[Index]}]
The Fredholm index of these linearizations is $\ind \rD\overline\cF_\eps(0)=\dim\fC - \dim\fK$ for each $\eps\in\Delta$, where $\fK=\ker\rD\overline\cF_0(0)$ and $\fC\simeq\coker\rD\overline\cF_0(0)$. 
\item[\bf{[$\mathbf{\epsilon=0}$ Fredholm Estimate]}] 
The projection $\overline\pi_\fK:\overline\Gamma_\eps\to\fK$ and inclusion $\fC\subset\overline\Omega_\eps$ provide a Fredholm stabilization of $\rD\overline\cF_0(0)$ in the sense that
$$
\| \gamma \|^\Gamma_0 + \| \fc  \|^\fC  \leq C_0 \bigl( \|\pi_\fK(\gamma) \|^\fK + \| \rD\overline\cF_0(0) \gamma - \fc \|^\Omega_0  \bigr)\qquad\text{for all}\; (\gamma,\fc)\in\overline\Gamma_\eps\times \fC .
$$ 
\item[\bf{[Uniform Fredholm-ish Estimate]}] 
$\| \gamma \|^\Gamma_\eps \leq C_1 \bigl( \| \rD\overline\cF_\eps(0) \gamma \|^\Omega_\eps + \| \gamma \|^\Gamma_0 \bigr)$ for all $\gamma\in\overline\Gamma_\eps$ and $\eps\in\Delta$. 
\item[\bf{[Uniform Cokernel Bound]}] 
$ \| \fc  \|^\Omega_\eps  \leq C_\fC  \| \fc  \|^\Omega_0=C_\fC  \| \fc  \|^\fC$ for all $\fc\in\fC$ and $\eps\in\Delta$.
\item[\bf{[Quadratic-ish Estimate]}] 
$
\bigl\|  \rD\overline\cF_\eps (\gamma_0)\gamma -   \rD\overline\cF_\eps (0)\gamma  \bigr\|^\Omega_\eps \leq c(\|\gamma_0\|^\Gamma_\eps)  \| \gamma \|^\Gamma_\eps$ for all $\gamma_0\in\cV_{\overline\Gamma,\eps}$, $\gamma\in\overline\Gamma_\eps$, and $\eps\in\Delta$.
\item[\bf{[Continuity of Derivatives at $0$]}] 
$
\bigl\|  \rD\overline\cF_\eps (0)\gamma -   \rD\overline\cF_0 (0)\gamma  \bigr\|^\Omega_0 \leq c_\Delta(\eps)  \| \gamma \|^\Gamma_\eps$ for all $\gamma\in\overline\Gamma_\eps$ and $\eps\in\Delta$.
\item[\bf{[Near-Solution]}] $\overline\cF_0(0)=0$, that is $\gamma=0$ solves the equation $\overline\cF_\eps(\gamma)=0\in\Omega$ for $\eps=0$.
Moreover, the map $\Delta\to\overline\Omega_\eps$, $\eps\mapsto\overline\cF_\eps(0)$ is continuous at $\eps=0$ in the sense that $\| \overline\cF_\eps(0) \|^\Omega_\eps = \| \overline\cF_\eps(0) - \overline\cF_0(0) \|^\Omega_\eps \to 0$ as $\eps\to 0$. 
\end{itemlist}
\end{lemma}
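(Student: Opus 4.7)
The plan is to construct the extended family by standard completion of normed spaces and continuous extension of operators, then to transfer each property by density. First I would form the Banach space completions $\overline\Gamma_\eps := \overline\Gamma^{\|\cdot\|^\Gamma_\eps}$ and $\overline\Omega_\eps := \overline\Omega^{\|\cdot\|^\Omega_\eps}$ for every $\eps \in \Delta$; the [Lower Bound on Norms] makes the identity on $\Gamma$ continuous from $(\Gamma,\|\cdot\|^\Gamma_\eps)$ into $(\Gamma,\|\cdot\|^\Gamma_0)$ and so extends to a norm-nonincreasing map $\overline\Gamma_\eps \to \overline\Gamma_0$. The finite dimensional subspaces $\fK \subset \Gamma$ and $\fC \subset \Omega$ are automatically closed in every $\overline\Gamma_\eps$ and $\overline\Omega_\eps$ respectively, and since all norms on a finite dimensional vector space are equivalent, the assigned norms $\|\cdot\|^\fK$ and $\|\cdot\|^\fC$ control the restrictions of every $\|\cdot\|^\Gamma_\eps$ and $\|\cdot\|^\Omega_\eps$; on the $\fC$ side the [Uniform Cokernel Bound] exhibits this constant explicitly.

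Next I would set $\overline{\cV_\Gamma} \subset \overline\Gamma_\eps$ to be the $\|\cdot\|^\Gamma_\eps$-closure of $\cV_\Gamma$ and $\cV_{\overline\Gamma,\eps}$ its interior. Since $\|\cdot\|^\Gamma_\eps \geq \|\cdot\|^\Gamma_0$, the [Openness of Domain] hypothesis implies that $\cV_\Gamma$ is also open in $(\Gamma,\|\cdot\|^\Gamma_\eps)$, and a short density argument shows that every point of $\cV_\Gamma$ in fact lies in the interior of $\overline{\cV_\Gamma}$ inside $\overline\Gamma_\eps$, so that $0 \in \cV_{\overline\Gamma,\eps}$ as required. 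The extension $\overline\cF_\eps$ is then obtained by continuous extension of $\cF_\eps$: by [Fibrewise $\cC^1$ Regularity] together with the [Quadratic-ish Estimate], the derivative $\rD\cF_\eps$ is uniformly bounded on bounded subsets of $\cV_\Gamma$, so $\cF_\eps$ is Lipschitz on bounded subsets and extends uniquely to a $\cC^1$ map on $\cV_{\overline\Gamma,\eps}$; a standard argument for $\cC^1$ extensions identifies $\rD\overline\cF_\eps(0)$ with the Fredholm extension $\overline{\rD\cF_\eps(0)}$ postulated by [Fredholm].

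Extending the projection $\pi_\fK$ to $\overline\pi_\fK : \overline\Gamma_\eps \to \fK$ is the step I expect to demand the most care, since Definition~\ref{def:fredholm} does not explicitly require $\pi_\fK$ to be continuous. I would deduce continuity on $(\Gamma,\|\cdot\|^\Gamma_0)$ from the [$\eps = 0$ Fredholm Estimate]: setting $\fc = 0$ and using that $\rD\cF_0(0)$ is continuous with respect to the $\eps = 0$ norms and that $\fK = \ker\overline{\rD\cF_0(0)}$ is a complemented finite dimensional subspace of $\overline\Gamma_0$, one can argue that $\ker\pi_\fK$ is closed in $(\Gamma,\|\cdot\|^\Gamma_0)$; continuity with respect to the larger norm $\|\cdot\|^\Gamma_\eps$ then follows from [Lower Bound on Norms], and extension by density yields $\overline\pi_\fK$.

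Once each piece of data has been extended, the listed properties of the extended adiabatic Fredholm family follow: the [Lower Bound on Norms], [Uniform Fredholm-ish Estimate], [Uniform Cokernel Bound], [$\eps = 0$ Fredholm Estimate], [Quadratic-ish Estimate], and [Continuity of Derivatives at $0$] are affine inequalities in extended continuous norms and operators whose original versions hold on the dense subspaces $\Gamma$ or $\Omega$, and thus transfer by density to the completions. The [Index] claim follows from [Fredholm] together with the construction, and [Near-Solution] is immediate from $\overline\cF_0(0) = \cF_0(0) = 0$ together with the continuity of $\eps \mapsto \cF_\eps(0)$ inherited from the original family.
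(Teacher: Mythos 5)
Your overall route matches the paper's: form the $\eps$-completions, extend $\cF_\eps$ by density (the paper isolates this as Lemma~\ref{lem:extension}), and transfer the estimates term by term. Your treatment of $\cV_{\overline\Gamma,\eps}$, the density argument for $0\in\cV_{\overline\Gamma,\eps}$, and the inheritance of the inequality-type properties are all fine.

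The genuine gap is your argument for continuity of $\pi_\fK$. You propose to deduce it from the [$\eps=0$ Fredholm Estimate] by showing $\ker\pi_\fK$ is closed in $(\Gamma,\|\cdot\|^\Gamma_0)$; but that estimate,
$\| \gamma \|^\Gamma_0 + \| \fc  \|^\fC  \leq C_0 \bigl( \|\pi_\fK(\gamma) \|^\fK + \| \rD\cF_0(0) \gamma - \fc \|^\Omega_0  \bigr)$,
is a \emph{lower} bound on the stabilization operator $(\gamma,\fc)\mapsto(\pi_\fK(\gamma), \rD\cF_0(0)\gamma-\fc)$; it tells you the operator has bounded inverse on its image, not that $\pi_\fK$ is bounded above. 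Concretely: to show $\ker\pi_\fK$ is closed you would take $\gamma_n\in\ker\pi_\fK$, $\gamma_n\to\gamma$ in $\|\cdot\|^\Gamma_0$, and need to conclude $\pi_\fK(\gamma)=0$ — but without already knowing continuity of $\pi_\fK$ you cannot pass to the limit in $\pi_\fK(\gamma_n)=0$, and substituting $\gamma_n$ or $\gamma$ into the estimate only produces inequalities of the form $\|\gamma_n\|^\Gamma_0\leq C_0 C' \|\gamma_n\|^\Gamma_0$, which carry no information. The argument is circular. The paper instead disposes of this with the remark that $\pi_\fK:(\Gamma,\|\cdot\|^\Gamma_\eps)\to(\fK,\|\cdot\|^\fK)$ is bounded because $\fK$ is finite dimensional; the content one should extract is that \emph{once} $\pi_\fK$ is bounded for the $\eps=0$ norm (which is what the definition intends — a projection onto a finite dimensional subspace can always be chosen bounded, and the norm $\|\cdot\|^\fK:=\|\cdot\|^\Gamma_0$ on $\fK$ is the one in play), boundedness for every $\eps$ is automatic from [Lower Bound on Norms] via $\|\pi_\fK(\gamma)\|^\fK\leq C\|\gamma\|^\Gamma_0\leq C\|\gamma\|^\Gamma_\eps$. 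That one-line reduction, rather than a derivation from the Fredholm estimate, is what closes this step.

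A secondary point: your route to $\overline\cF_\eps$ via ``Lipschitz on bounded subsets'' works but is a detour; [Fibrewise $\cC^1$ Regularity] already asserts \emph{uniform} $\cC^1$ regularity, which is exactly the hypothesis of Lemma~\ref{lem:extension}, and that lemma also supplies the identification $\rD\overline\cF_\eps=\overline{\rD\cF_\eps}$ that you defer to a ``standard argument.''
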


\vfill
\pagebreak

\begin{proof}
%
The crucial fact for this Lemma is the fact that the a uniformly $\cC^\ell$ map between normed vector spaces extends continuously to a $\cC^\ell$ map between the completed vector spaces -- as shown in Lemma~\ref{lem:extension}.  
In particular, the $\ell$-th differential $\rD^\ell \overline\cF_\eps=\overline{\rD^\ell\cF_\eps}: \cV_{\ol\Gamma,\eps} \to \cL^\ell(\overline\Gamma_\eps^\ell;\overline\Omega_\eps)$ is given by the continuous extension of 
$\cV_\Gamma \to \cL^\ell(\overline\Gamma_\eps^\ell;\overline\Omega_\eps), \gamma \mapsto \overline{\rD^\ell\cF_\eps(\gamma)}$, where each $\overline{\rD^\ell\cF_\eps(\gamma)}: \overline\Gamma_\eps^\ell\to\overline\Omega_\eps$ is the continuous extension of $\rD\cF_\eps^\ell(\gamma): \Gamma_\eps^\ell\to\Omega_\eps$. 
Here $\cV_{\overline\Gamma, \eps}:={\rm int}(\overline{\cV_\Gamma})$ is the interior of the closure $\overline{\cV_\Gamma}\subset\overline\Gamma_\eps$ of $\cV_\Gamma\subset\overline\Gamma_\eps$. 

The rest of this proof is a useful exercise in getting familiar with the conditions for adiabatic Fredholm families -- and how they each interact with the norms, completions, and continuous extensions. 
Note in particular that the linear projection $\pi_\fK:(\Gamma,\|\cdot\|^\Gamma_\eps)\to(\fK,\|\cdot\|^\fK)$ is automatically bounded since $\fK$ is finite dimensional.
\end{proof}

Here and throughout we are using the notions of Fr\'echet differential and Fr\'echet differentiability classes -- reviewed in the following remark. 

\begin{rmk} \label{rmk:multilinear} \rm
Throughout, we consider normed vector spaces $(V,\|\cdot\|^V)$ and $(W,\|\cdot\|^W)$ and a continuous map $f:\cV_V\to W$ defined on an open subset $\cV_V\subset V$. 

\begin{itemlist}
\item
Such a map is called {\bf differentiable at $\mathbf{v_0\in \mathcal{V}_V}$} if there exists a bounded linear map $\rD f(v_0):V\to W$ such that $\| f(v_0+v) - f(v_0) - \rD f(v_0) v \|^W / \|v\|^V \to 0$ as $ \|v\|^V\to 0$, or equivalently 
$$
\forall\varepsilon>0 \; \exists\delta>0 : \quad \bigl( v\in V , \|v\|^V \leq \delta  \quad \Rightarrow \quad \| f(v_0+v) - f(v_0) - \rD f(v_0) v \|^W \leq \varepsilon \|v\|^V \bigr) .
$$ 
If $f:\cV_V\to W$ is differentiable at $v_0\in \cV_V$, then this condition determines the linear map $\rD f(v_0):V\to W$ uniquely, and it is called the {\bf differential of $\mathbf f$ at $\mathbf{v_0}$}. 

\item
Further, $f:\cV_V\to W$ is called {\bf of class $\mathbf{\mathcal\cC}^1$} if it is differentiable at all $v_0\in \cV_V$ and the differential $\rD f: \cV_V \to \cL(V,W), v_0 \mapsto \rD f(v_0)$ is a map of class $\mathcal\cC^0$, that is, a continuous map to the space of bounded linear operators. 

\item
Now we can then iteratively define the notion of a {\bf map of class $\mathbf{\mathcal\cC}^\ell$} for all integers $\ell\geq 2$, by calling $f:\cV_V\to W$ {\bf of class $\mathbf{\mathcal\cC}^\ell$} if $\rD f: \cV_V \to \cL(V,W)$ is of class $\mathcal\cC^{\ell-1}$. Equivalently, $f$ is of class $\mathcal\cC^{\ell-1}$ and the $(\ell-1)$-fold differential $\rD\ldots\rD f: \cV_V \to \cL(V,\ldots,V,\cL(V,W)\ldots)$ is of class $\mathcal\cC^{1}$.

\item
This definition views $\cL(V,W)$ as a vector space with norm $\| \Phi \|^{\cL(V,W)} = \sup_{\|v\|^V\leq 1 } \| \Phi(v) \|^W$, and results in higher derivatives $\rD \rD f (v_0) \in \cL(V,\cL(V,W))$, $\rD \rD \rD f (v_0) \in \cL(V,\cL(V,\cL(V,W)))$, etc.. These spaces are ``not easy to handle'', so as in \cite[\S 4.3--4]{coleman} we will add to the notion of class $\mathcal\cC^\ell$ an identification of the $\ell$-fold differential $\rD \ldots \rD f (v_0) \in \cL(V, \ldots ,\cL(V,W) \ldots)$ with the {\bf $\ell$-th differential} $\rD^\ell f(v_0)\in \cL^\ell(V^\ell;W)$, viewed as an $\ell$-linear map $V^\ell\to W$. Then we can say that $f:\cV_V\to W$ is {\bf of class $\mathbf{\mathcal\cC}^\ell$} if it is of class $\mathcal\cC^{\ell-1}$ and $\rD^{\ell-1} f: \cV_V \to \cL^{\ell-1}(V^{\ell-1};W)$ is of class $\mathcal\cC^1$.

\item
Here $\cL^\ell(V^\ell;W)$ denotes the space of maps $\Phi:V^\ell\to W$ that are $\ell$-linear, i.e.\ linear in each of the $\ell$ entries, $\Phi(r_1 v_1, \ldots, r_\ell v_\ell) = r_1  \ldots r_\ell  \Phi(v_1, \ldots, v_\ell)$ for all $v_1,\ldots,v_\ell\in V$ and $r_1,\ldots,r_\ell\in\bR$, and continuous, i.e.\ have a finite norm
\begin{equation}\label{eq:multilinear operator norm}
\| \Phi \|^{\cL^\ell(V^\ell;W)} = \sup_{\|v_1\|^V,\ldots, \|v_\ell\|^V\leq 1 } \| \Phi(v_1,\ldots,v_\ell) \|^W  .
\end{equation}

\vfill
\pagebreak

\item
Note that $\cL^1(V;W)=\cL(V,W)$, while for $\ell\geq 2$ the space $\cL^\ell(V^\ell;W)$ of maps that are linear in each factor is quite different from the space $\cL(V^\ell,W)$ of linear maps. 
Rather, one identifies $\cL(V, \ldots ,\cL(V,W) \ldots)$ with $\ell$ factors of $V$ with $\cL^{\ell}(V^\ell;W)$ by iteratively identifying $\Psi\in \cL(V,\cL^{\ell-1}(V^{\ell-1}=V\times\ldots\times V,W))$ with $\Psi'\in\cL^{\ell}(V^\ell;W)$ given by $\Psi'(v_1,\ldots,v_{\ell-1},v_\ell)=\Psi(v_\ell)(v_1,\ldots,v_{\ell-1})$. Here the last equation also serves to define an inverse map $\cL^{\ell}(V^\ell;W)\to \cL(V,\cL^{\ell-1}(V^{\ell-1};W))$, and this bijection is an isometric isomorphism since
\begin{align*}
\| \Psi' \|^{\cL^\ell(V^\ell;W)} 
&= \sup_{\|v_1\|^V,\ldots, \|v_\ell\|^V\leq 1 } \| \Psi'(v_1,\ldots,v_\ell) \|^W \\
&= \sup_{\|v_\ell\|^V\leq 1 }   \sup_{\|v_1\|^V,\ldots, \|v_{\ell-1}\|^V\leq 1 }  \| \Psi(v_\ell) (v_1,\ldots,v_{\ell-1}) \|^W \\
&= \sup_{\|v_\ell\|^V\leq 1 }   \| \Psi(v_\ell) \|^{\cL^{\ell-1}(V^{\ell-1};W)} 
\;=\; \| \Psi \|^{\cL(V,\cL^{\ell-1}(V^{\ell-1};W))}.
\end{align*}

\item
In summary, a continuous map $f:\cV_V\to W$ is $\cC^k$ if and only if for each $1\leq\ell\leq k$ the $\ell$-th differential exists 
$$
\rD^\ell f : \cV_V \;\to\; \cL^\ell(V^\ell;W), \quad v_0 \mapsto \rD^\ell f (v_0) 
$$
and is continuous w.r.t.\ the above norm on $\cL^\ell(V^\ell;W)$, that is
$$
 \sup_{\|v_1\|^V,\ldots, \|v_\ell\|^V\leq 1 } \| \rD^\ell f(v) (v_1,\ldots,v_\ell) - \rD^\ell f(v_0) (v_1,\ldots,v_\ell)  \|^W  
 \;\underset{v\to v_0}\longrightarrow\; 0 . 
$$ 

\item
Finally, note that -- assuming $\rD^\ell f$ exists and is differentiable -- we can compute $\rD^{\ell+1} f$ from directional derivatives
$$
 \rD^{\ell+1} f(v_0) (v_1,\ldots,v_\ell,v_{\ell+1}) = \tfrac\rd{\rd t}|_{t=0}  \rD^\ell f(v_0+t v_{\ell+1}) (v_1,\ldots,v_\ell) . 
$$
\end{itemlist}
\end{rmk}

Thus we are prepared for the technical proof of this section -- the surprisingly subtle fact that continuous extension to Banach completions preserves Fr\'echet differentiability classes.

\begin{lemma} \label{lem:extension}
Let $(\Gamma,\|\cdot\|^\Gamma)$ and $(\Omega,\|\cdot\|^\Omega)$ be normed vector spaces, and suppose that $\cF : \cV_\Gamma \to \Omega$ is a uniformly continuous map from an open subset $\cV_\Gamma\subset\Gamma$. 
Then there is a unique continuous extension $\overline\cF : \overline{\cV_\Gamma} \to \overline\Omega$ of $\overline\cF|_{\cV_\Gamma}=\cF$, where $\overline\Gamma:=\overline\Gamma^{\|\cdot\|^\Gamma}$ and $\overline\Omega:=\overline\Omega^{\|\cdot\|^\Omega}$ are the completions and $\overline{\cV_\Gamma}\subset\overline\Gamma$ is the closure of $\cV_\Gamma\subset\overline\Gamma$. In fact, $\overline\cF$ is uniformly continuous. 

If, moreover, $\cF : \cV_\Gamma \to \Omega$ is uniformly $\cC^\ell$ for some $\ell\geq 1$, then its continuous extension $\overline\cF|_{\cV_{\overline\Gamma}}: \cV_{\overline\Gamma} \to \overline\Omega$ is uniformly $\cC^\ell$ as well, where $\cV_{\overline\Gamma}:={\rm int}(\overline{\cV_\Gamma})$ is the interior of $\overline{\cV_\Gamma}\subset\overline\Gamma$. In particular, $\rD^\ell \overline\cF=\overline{\rD^\ell\cF}: \cV_{\ol\Gamma} \to \cL^\ell(\overline\Gamma^\ell;\overline\Omega)$ is given by the continuous extension of 
$\cV_\Gamma \to \cL^\ell(\overline\Gamma^\ell;\overline\Omega), \gamma \mapsto \overline{\rD^\ell\cF(\gamma)}$, where each $ \overline{\rD^\ell\cF(\gamma)}: \overline\Gamma^\ell\to\overline\Omega$ is the continuous extension of $ \rD\cF^\ell(\gamma): \Gamma^\ell\to\Omega$. 
\end{lemma}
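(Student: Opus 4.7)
The plan is to prove the two assertions in the order they are stated.

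For the first part, the unique uniformly continuous extension $\overline\cF:\overline{\cV_\Gamma}\to\overline\Omega$ is a standard consequence of uniform continuity plus completeness: given $\gamma\in\overline{\cV_\Gamma}$, pick a Cauchy sequence $\gamma_n\in\cV_\Gamma$ with $\gamma_n\to\gamma$ in $\overline\Gamma$; uniform continuity of $\cF$ forces $\cF(\gamma_n)$ to be Cauchy in $\Omega$, so it converges in $\overline\Omega$ to a limit that depends only on $\gamma$ (a second Cauchy sequence $\gamma_n'\to\gamma$ gives the same limit because $\|\gamma_n-\gamma_n'\|^\Gamma\to 0$). Call this limit $\overline\cF(\gamma)$. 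The $\varepsilon$-$\delta$ modulus of uniform continuity of $\cF$ transfers verbatim to $\overline\cF$. Uniqueness is automatic from density of $\cV_\Gamma$ in $\overline{\cV_\Gamma}$.

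For the $\cC^\ell$ statement I would induct on $\ell$, and the key preparatory move is that a bounded $\ell$-linear map $\rD^\ell\cF(\gamma):\Gamma^\ell\to\Omega$ extends uniquely to a bounded $\ell$-linear map $\overline{\rD^\ell\cF(\gamma)}:\overline\Gamma^\ell\to\overline\Omega$ with the same norm \eqref{eq:multilinear operator norm} (apply Part 1 componentwise, using density of $\Gamma$ in $\overline\Gamma$ one slot at a time). Under ``uniformly $\cC^\ell$'' I understand that each $\rD^k\cF:\cV_\Gamma\to\cL^k(\Gamma^k;\Omega)$ for $0\leq k\leq\ell$ is uniformly continuous as a map to the operator-norm space, and equivalently, by the previous sentence, as a map into $\cL^k(\overline\Gamma^k;\overline\Omega)$. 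Since the latter is a Banach space, Part 1 produces a continuous extension $\overline{\rD^k\cF}:\overline{\cV_\Gamma}\to\cL^k(\overline\Gamma^k;\overline\Omega)$, still uniformly continuous.

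The technical heart is the base case $\ell=1$: showing that on the interior $\cV_{\overline\Gamma}=\operatorname{int}\overline{\cV_\Gamma}$ the extension $\overline\cF$ is Fr\'echet differentiable with $\rD\overline\cF=\overline{\rD\cF}$. For this I would use the fundamental theorem of calculus along straight segments. For $\gamma_0,\gamma_0+h\in\cV_\Gamma$ with $[\gamma_0,\gamma_0+h]\subset\cV_\Gamma$, one has
$$
\cF(\gamma_0+h)-\cF(\gamma_0) \;=\; \int_0^1 \rD\cF(\gamma_0+th)h\,\rd t,
$$
and both sides are continuous in $(\gamma_0,h)$ with respect to the $\overline\Gamma$-norm (the right-hand side uses the uniformly continuous extension $\overline{\rD\cF}$), so by density this identity persists for $\gamma_0\in\cV_{\overline\Gamma}$ and every sufficiently small $h\in\overline\Gamma$ (choose a ball around $\gamma_0$ contained in $\cV_{\overline\Gamma}$ and use that nearby points are limits of segments in $\cV_\Gamma$). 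Subtracting $\overline{\rD\cF}(\gamma_0)h$ yields
$$
\bigl\|\overline\cF(\gamma_0+h)-\overline\cF(\gamma_0)-\overline{\rD\cF}(\gamma_0)h\bigr\|^\Omega
\;\leq\; \|h\|^\Gamma\int_0^1 \bigl\|\overline{\rD\cF}(\gamma_0+th)-\overline{\rD\cF}(\gamma_0)\bigr\|^{\cL(\overline\Gamma,\overline\Omega)}\rd t,
$$
and the integrand tends to $0$ as $\|h\|^\Gamma\to 0$ by uniform continuity of $\overline{\rD\cF}$, uniformly in $\gamma_0$. This proves differentiability and moreover gives uniform $\cC^1$ regularity of $\overline\cF$.

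For the induction step $\ell\mapsto\ell+1$, apply the $\ell=1$ case to identify $\rD\overline\cF=\overline{\rD\cF}$, then apply the inductive hypothesis to the uniformly $\cC^{\ell}$ map $\rD\cF:\cV_\Gamma\to\cL(\overline\Gamma,\overline\Omega)$ to conclude that its extension $\overline{\rD\cF}$ is uniformly $\cC^{\ell}$ with $k$-th differential equal to $\overline{\rD^{k+1}\cF}$ for $0\leq k\leq\ell$; composing with the identification of iterated operator spaces with multilinear maps reviewed in Remark~\ref{rmk:multilinear} gives the claimed identity $\rD^{\ell+1}\overline\cF=\overline{\rD^{\ell+1}\cF}$. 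I expect the main obstacle to be clean bookkeeping in this last step, rather than a genuine analytic difficulty: the fundamental-theorem-of-calculus identity must be propagated from $\cV_\Gamma$ to $\cV_{\overline\Gamma}$ via density, which is why the interior (rather than closure) appears in the statement, and the multilinear identifications must be tracked through the tower $\cL(\overline\Gamma,\cL(\overline\Gamma,\ldots))\cong\cL^\ell(\overline\Gamma^\ell;\overline\Omega)$ in a way compatible with the extension procedure.
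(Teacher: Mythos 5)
Your proof is correct and follows essentially the same strategy as the paper's: construct $\overline\cF$ on $\overline{\cV_\Gamma}$ via Cauchy sequences, build a candidate differential by extending $\rD\cF$ through the isometric inclusion $\cL(\Gamma,\Omega)\hookrightarrow\cL(\overline\Gamma,\overline\Omega)$, verify it is the Fr\'echet differential via a fundamental-theorem-of-calculus estimate along segments, and induct. The only organizational difference is in the induction step: you invoke the base case to get $\rD\overline\cF=\overline{\rD\cF}$ and then apply the inductive hypothesis to the Banach-valued map $\rD\cF:\cV_\Gamma\to\cL(\overline\Gamma,\overline\Omega)$ (which is already complete, so $\overline{\Omega'}=\Omega'$), whereas the paper unwinds the induction by running the double-limit FTC estimate once more directly on $\rD^{\ell-1}\overline\cF$. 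Your reformulation is arguably cleaner bookkeeping; it buys nothing analytically and loses nothing.

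One remark applies to both your argument and the paper's: the FTC identity $\cF(\gamma_0+h)-\cF(\gamma_0)=\int_0^1\rD\cF(\gamma_0+th)h\,\rd t$ requires the entire segment $[\gamma_0,\gamma_0+h]$ to lie in $\cV_\Gamma$, and when you then pass to approximants $\gamma_i\in\cV_\Gamma$, $h_j\in\Gamma$ with $\gamma_i\to\gamma_\infty\in\cV_{\overline\Gamma}$, there is no general guarantee that $\gamma_i+\lambda h_j\in\cV_\Gamma$ for all $\lambda\in[0,1]$ (an open $\cV_\Gamma\subset\Gamma$ can have $\overline{\cV_\Gamma}\cap\Gamma\not\subset\cV_\Gamma$). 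The paper glosses this too. It is harmless in the intended applications where $\cV_\Gamma$ is convex (as in Definition~\ref{def:fredholm}), and in general it is repaired by localizing: work only with $h$ small enough that $B_{\|h\|}(\gamma_\infty)\subset\cV_{\overline\Gamma}$, and approximate so that $\gamma_i$ and $\gamma_i+h_j$ both lie in a single open ball of $\cV_\Gamma$ around some $\gamma_i$. It would be worth a sentence to make this explicit, but it is not a genuine gap.
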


\begin{proof}
Recall that the completion $\overline\Gamma=\{ [(\gamma_i)_{i\in\bN}] \,|\, (\gamma_i)_{i\in\bN}\subset\Gamma \;\text{Cauchy sequence}\}$ is constructed from Cauchy sequences by identifying sequences $(\gamma_i)\sim (\gamma'_i)$ if $\|\gamma_i-\gamma'_i\|^\Gamma\to 0$. 
Uniform continuity guarantees that $\cF$ maps Cauchy sequences to Cauchy sequences and preserves the equivalence relation as well. So $\overline\cF:[(\gamma_i)]\mapsto [(\cF(\gamma_i))]$ is well defined for any Cauchy sequence $(\gamma_i)_{i\in\bN}\subset\cV_\Gamma$. 
This map satisfies $\overline\cF|_{\cV_\Gamma}=\cF$ since $\cV_\Gamma\subset\overline\Gamma$ is contained via $\gamma \mapsto (\gamma)_{i\in\bN}$. The extension to the closure $\overline{\cV_\Gamma}$ is then uniquely determined, since for any convergent sequence $\cV_\Gamma\ni \gamma_i \to \gamma_\infty\in\overline{\cV_\Gamma}$ continuity requires $\overline\cF(\gamma_\infty)=\lim_{i\to\infty}\cF(\gamma_i)$. This agrees with the construction of $\overline\cF$ since $[(\cF(\gamma_i))]\in\overline\Omega$ is the limit of $(\cF(\gamma_i))_{i\in\bN}$ w.r.t.\ the norm on the completion, 
$$
\| (\omega_i)_{i\in\bN} \|^{\overline\Omega} = \lim_{i\to\infty} \| \omega_i \|^\Omega . 
$$
Indeed, $\| \cF(\gamma_j) - [(\cF(\gamma_i))_{i\in\bN}] \|^{\overline\Omega} = \lim_{i\to\infty} \| \cF(\gamma_j) - \cF(\gamma_i) \|^\Omega \to 0$ as $j\to\infty$ follows from the Cauchy property of $(\cF(\gamma_i))$. 
In fact, the uniform continuity of $\cF$, stated as
$$
\forall\varepsilon>0 \; \exists\delta>0 : \quad \bigl(  \gamma',\gamma\in\Gamma,  \|\gamma'-\gamma\|^\Gamma \leq\delta \;\Rightarrow\;  \| \cF(\gamma') - \cF(\gamma)  \|^\Omega \leq \eps  \bigr) , 
$$
directly transfers to $\overline\cF$ with the same constants: 
\begin{align*}
[(\gamma'_i)], [(\gamma_i)] \in\overline\Gamma,  \| [(\gamma'_i)] - [(\gamma_i)] \|^{\overline\Gamma} \leq\delta 
&\;\Rightarrow\; 
\exists I\in\bN: \forall i\geq I \;  \| \gamma'_i - \gamma_i \|^\Gamma \leq\delta \\
&\;\Rightarrow\; 
\exists I\in\bN: \forall i\geq I \;  \| \cF(\gamma'_i) - \cF(\gamma_i) \|^\Omega \leq \varepsilon \\
&\;\Rightarrow\; 
\| [(\cF(\gamma'_i))] - [(\cF(\gamma_i))] \|^{\overline\Omega} = \lim_{i\to\infty} \| \cF(\gamma'_i) - \cF(\gamma_i) \|^\Omega \leq \varepsilon .
\end{align*}
Next, we assume that $\cF$ is uniformly $\cC^1$, that is it is uniformly continuous, differentiable, and $\rD\cF:\cV_\Gamma\to\cL(\Gamma,\Omega)$ is uniformly continuous. To prove the Lemma for $\ell=1$ we then need to prove that $\overline\cF|_{\cV_{\overline\Gamma}}$ is differentiable with uniformly continuous differential $\rD\overline\cF:\cV_{\overline\Gamma}\to\cL(\overline\Gamma,\overline\Omega)$. We begin by constructing a candidate for this uniformly continuous family of linear operators. 

Since each differential $\rD\cF(\gamma):\Gamma\to\Omega$ at $\gamma\in\Gamma$ is linear and continuous (hence uniformly continuous), it extends to a bounded linear operator between the completions $\overline{\rD\cF(\gamma)}:\overline\Gamma\to\overline\Omega$ with the same norm $\|\overline{\rD\cF(\gamma)}\|^{\cL(\overline\Gamma,\overline\Omega)}=\| \rD\cF(\gamma)\|^{\cL(\Gamma,\Omega)}$. 
Then uniform continuity of the differential $\rD\cF:\cV_\Gamma\to\cL(\Gamma,\Omega)$ can be phrased as
$$
\forall\varepsilon>0 \; \exists\delta>0 : \quad \bigl(  \gamma',\gamma\in\cV_\Gamma,  \|\gamma'-\gamma\|^\Gamma \leq\delta \;\Rightarrow\; \forall\eta\in\Gamma :  \|\rD\cF(\gamma')\eta - \rD\cF(\gamma)\eta \|^\Omega \leq \varepsilon \|\eta\|^\Gamma \bigr). 
$$
By going to the completion in the conclusion this implies that $\gamma\mapsto \overline{\rD\cF(\gamma)}$ is a uniformly continuous map $\cV_\Gamma \to \cL(\overline\Gamma,\overline\Omega)$, 
$$
\forall\varepsilon>0 \; \exists\delta>0 : \quad \bigl(  \gamma',\gamma\in\cV_\Gamma,  \|\gamma'-\gamma\|^\Gamma \leq\delta \;\Rightarrow\; \forall\eta\in\overline\Gamma :  \| \overline{\rD\cF(\gamma')}\eta - \overline{\rD\cF(\gamma)}\eta \|^{\overline\Omega} \leq \varepsilon \|\eta\|^{\overline\Gamma} \bigr). 
$$
Then this uniformly continuous map extends to a map on the completion $\overline{\rD\cF}: \cV_{\overline\Gamma}\to \cL(\overline\Gamma,\overline\Omega), \gamma_\infty=\lim_{i\to\infty}\gamma_i \mapsto \overline{\rD\cF}(\gamma_\infty):=\lim_{i\to\infty} \overline{\rD\cF(\gamma_i)}$ with again the same uniform continuity property
$$
\forall\varepsilon>0 \; \exists\delta>0 : \quad \bigl(  \gamma',\gamma\in\cV_{\overline\Gamma},  \|\gamma'-\gamma\|^{\overline\Gamma} \leq\delta \;\Rightarrow\; \forall\eta\in\overline\Gamma :  \| \overline{\rD\cF}(\gamma')\eta - \overline{\rD\cF}(\gamma)\eta \|^{\overline\Omega} \leq \varepsilon \|\eta\|^{\overline\Gamma} \bigr). 
$$
Moreover, the operator norm of $\overline{\rD\cF}$ at points $\gamma_\infty\in\cV_{\overline\Gamma}\smallsetminus\cV_\Gamma$ is the limit of operator norms of $\rD\cF$, 
$$
\|\overline{\rD\cF}(\gamma_\infty)\|^{\cL(\overline\Gamma,\overline\Omega)}=\lim_{i\to\infty}\|\overline{\rD\cF(\gamma_i)}\|^{\cL(\overline\Gamma,\overline\Omega)}=\lim_{i\to\infty}\| \rD\cF(\gamma_i)\|^{\cL(\Gamma,\Omega)}.
$$ 
It remains to show that this uniformly continuous family of bounded linear operators is in fact the differential of the completed nonlinear map $\overline\cF$. For that purpose we estimate for any $\gamma_\infty=\lim_{i\to\infty}\gamma_i \in\cV_{\overline\Gamma}$ and $\eta_\infty=\lim_{j\to\infty}\eta_j \in\cV_{\overline\Gamma}$ 
\begin{align*}
\bigl\| \overline\cF(\gamma_\infty+\eta_\infty) - \overline\cF(\gamma_\infty) - \overline{\rD\cF}(\gamma_\infty) \eta_\infty \bigr\|^{\overline\Omega} 
&=
\lim_{i\to\infty} \bigl\| \overline\cF(\gamma_i+\eta_\infty) -  \overline\cF(\gamma_i) - \overline{\rD\cF(\gamma_i)} \eta_\infty \bigr\|^{\overline\Omega}
\\
&=
\lim_{i\to\infty} \lim_{j\to\infty}  \bigl\| \cF(\gamma_i+\eta_j) - \cF(\gamma_i) - \rD\cF(\gamma_i) \eta_j \bigr\|^\Omega
\\
&=
\lim_{i\to\infty} \lim_{j\to\infty}  \bigl\| \textstyle\int_0^1  \rD\cF(\gamma_i+\lambda \eta_j) \eta_j \rd\lambda - \rD\cF(\gamma_i) \eta_j \bigr\|^\Omega
\\
&\leq
\lim_{i\to\infty} \lim_{j\to\infty} \bigl( \textstyle\int_0^1   \bigl\| \rD\cF(\gamma_i+\lambda \eta_j) - \rD\cF(\gamma_i) \bigr\|^{\cL(\Gamma,\Omega)} \rd\lambda  \; \| \eta_j  \|^\Gamma  \bigr)
\\
&=
\lim_{i\to\infty} \lim_{j\to\infty} \bigl( \textstyle\int_0^1   \bigl\| \overline{\rD\cF(\gamma_i+\lambda \eta_j)} - \overline{\rD\cF(\gamma_i)} \bigr\|^{\cL(\overline\Gamma,\overline\Omega)} \rd\lambda  \; \| \eta_j  \|^\Gamma  \bigr)
\\
&=
\lim_{i\to\infty}  \textstyle\int_0^1   \bigl\| \overline{\rD\cF}(\gamma_i+\lambda \eta_\infty) - \overline{\rD\cF(\gamma_i)} \bigr\|^{\cL(\overline\Gamma,\overline\Omega)} \rd\lambda  \; \| \eta_\infty  \|^{\overline\Gamma}
\\
&=
  \textstyle\int_0^1   \bigl\| \overline{\rD\cF}(\gamma_\infty+\lambda \eta_\infty) - \overline{\rD\cF}(\gamma_\infty) \bigr\|^{\cL(\overline\Gamma,\overline\Omega)} \rd\lambda \; \| \eta_\infty  \|^{\overline\Gamma} .
\end{align*}
Now for $\eta_\infty\ne 0$ this estimate can be rewritten as
\begin{align*}
\bigl\| \overline\cF(\gamma_\infty+\eta_\infty) - \overline\cF(\gamma_\infty) - \overline{\rD\cF}(\gamma_\infty) \eta_\infty \bigr\|^{\overline\Omega} \; / \; \| \eta_\infty  \|^{\overline\Gamma} 
&\leq
\textstyle\int_0^1   \bigl\| \overline{\rD\cF}(\gamma_\infty+\lambda \eta_\infty) - \overline{\rD\cF}(\gamma_\infty) \bigr\|^{\cL(\overline\Gamma,\overline\Omega)} \rd\lambda , 
\end{align*}
where we can argue with the continuity of $\overline{\rD\cF}$ at $\gamma_\infty$ that the right hand side converges to zero as $\|\eta_\infty\|^{\overline\Gamma}\to 0$. This proves that $\overline{\rD\cF}(\gamma_\infty)$ is indeed the differential of $\overline\cF$ at $\gamma_\infty$. And since $\overline{\rD\cF}:\cV_{\overline\Gamma}\to\cL(\overline\Gamma,\overline\Omega)$ was constructed to be uniformly continuous, this proves that $\overline\cF|_{\cV_{\overline\Gamma}}$ is uniformly $\cC^1$. This proves the Lemma in case $\ell=1$ with the additional fact that $\rD \overline\cF=\overline{\rD\cF}:\cV_{\overline\Gamma} \to \cL(\overline\Gamma,\overline\Omega)$ is the continuous extension of $\cV_\Gamma \to \cL(\overline\Gamma,\overline\Omega), \gamma \mapsto \overline{\rD\cF(\gamma)}$, where each $ \overline{\rD\cF(\gamma)}: \overline\Gamma\to\overline\Omega$ is the continuous extension of $ \rD\cF(\gamma): \Gamma\to\Omega$. 

We will now extend this statement to $\ell\geq 2$ by induction, assuming it is already established that for any uniformly $\cC^{\ell-1}$-map $\cF:\cV_\Gamma\to\Omega$ between open subsets of normed vector spaces the continuous extension $\overline\cF :\cV_{\overline\Gamma}\to\overline\Omega$ is uniformly $\cC^{\ell-1}$ as well, with $\rD^{\ell-1} \overline\cF=\overline{\rD^{\ell-1}\cF}$ given by the continuous extension of 
$\cV_\Gamma \to \cL^{\ell-1}(\overline\Gamma^{\ell-1};\overline\Omega), \gamma \mapsto \overline{\rD^{\ell-1}\cF(\gamma)}$, where each $ \overline{\rD^{\ell-1}\cF(\gamma)}: \overline\Gamma^{\ell-1}\to\overline\Omega$ is the continuous extension of $ \rD\cF^{\ell-1}(\gamma): \Gamma^{\ell-1}\to\Omega$. 

For the induction step, we assume in addition that $\cF : (\cV_\Gamma,\|\cdot\|^\Gamma) \to (\Omega,\|\cdot\|^\Omega)$ is uniformly $\cC^\ell$, and aim to prove that $\overline\cF$ is uniformly $\cC^\ell$ as well, with $\rD^\ell\overline\cF=\overline{\rD^\ell\cF}$. 
From the induction assumption we already know that $\overline\cF$ is uniformly $\cC^{\ell-1}$ with $\rD^{\ell-1}\overline\cF=\overline{\rD^{\ell-1}\cF}$. So it remains to prove that $\rD^{\ell-1}\overline\cF$ is uniformly $\cC^1$ with $\rD\rD^{\ell-1}\overline\cF=\overline{\rD^\ell\cF}$. 
The assumption that $\overline\cF$ is uniformly $\cC^\ell$ implies in particular that $\rD^{\ell-1}\cF : (\cV_\Gamma,\|\cdot\|^\Gamma) \to (\cL^{\ell-1}(\Gamma^{\ell-1};\Omega),\|\cdot\|^{\cL^{\ell-1}(\Gamma^{\ell-1};\Omega)})$ is uniformly $\cC^1$, where the norm on multilinear operators $\Phi:\Gamma^{\ell-1}\to\Omega$ is as in Remark~\ref{rmk:multilinear} 
$$
\| \Phi \|^{\cL^{\ell-1}(\Gamma^{\ell-1};\Omega)} = \sup \bigl\{ \| \Phi(\gamma_1,\ldots,\gamma_{\ell-1}) \|^\Omega \,\big|\, \|\gamma_1\|^\Gamma,\ldots,\|\gamma_{\ell-1}\|^\Gamma \leq 1  \bigr\} . 
$$
The uniform $\cC^1$-regularity means that there is a map $\cV_\Gamma\to\cL(\Gamma, \cL^{\ell-1}(\Gamma^{\ell-1};\Omega)), \gamma \mapsto \rD\rD^{\ell-1}\cF(\gamma)$ that is uniformly continuous and satisfies
$$
 \bigl\|  \rD^{\ell-1}\cF(\gamma+\eta) -  \rD^{\ell-1}\cF(\gamma) -  \rD\rD^{\ell-1}\cF(\gamma)\eta \bigr\|^{\cL^{\ell-1}(\Gamma^{\ell-1};\Omega))}  /  \|\eta\|^\Gamma 
 \;\underset{\|\eta\|^\Gamma \to 0}{\longrightarrow}\; 0 .
$$
Here we follow Remark~\ref{rmk:multilinear} to identify $\cL(\Gamma, \cL^{\ell-1}(\Gamma^{\ell-1};\Omega))\simeq\cL^\ell(\Gamma^\ell;\Omega)$ and call the resulting operator $\rD^\ell\cF(\gamma):=\rD\rD^{\ell-1}\cF(\gamma) \in \cL^\ell(\Gamma^\ell;\Omega)$ the $\ell$-th derivative of $\cF$ at $\gamma\in\cV_\Gamma$. Since each $\rD^\ell\cF(\gamma):\Gamma^\ell\to\Omega$ is uniformly continuous, it extends to a uniformly continuous map between the completions $\overline{\rD^\ell\cF(\gamma)}:\overline{\Gamma^\ell}=\overline\Gamma^\ell\to \overline\Omega$ with the same multilinearity property (by continuity) and the same norm $\|\overline{\rD^\ell\cF(\gamma)}\|^{\cL^\ell(\overline\Gamma^\ell;\overline\Omega)}=\| \rD^\ell\cF(\gamma)\|^{\cL^\ell(\Gamma^\ell;\Omega)}$. Next, as in the $\ell=1$ case above, the fact that the operators $\rD^\ell\cF(\gamma)\in\cL^\ell(\Gamma^\ell;\Omega)$ vary uniformly continuously with $\gamma\in \cV_\Gamma$ implies uniform continuity of the continuous extension $\overline{\rD^\ell\cF}: \cV_{\overline\Gamma} \to \cL^\ell(\overline\Gamma^\ell; \overline\Omega), \gamma\mapsto \overline{\rD^\ell\cF(\gamma)}$. Finally, we claim that this uniformly continuous family of operators -- viewed as a map $\overline{\rD^\ell\cF}: \cV_{\overline\Gamma} \to \cL(\overline\Gamma, \cL^{\ell-1}(\overline\Gamma^{\ell-1}; \overline\Omega))$ -- is the differential of $\rD^{\ell-1}\overline\cF$. Once established, this means that $\rD^{\ell-1}\overline\cF$ is uniformly $\cC^1$ and thus $\overline\cF|_{\cV_{\overline\Gamma}}$ is uniformly $\cC^\ell$, as claimed. 

\vfill
\pagebreak

To show that $\overline{\rD^\ell\cF}$ is in fact the differential of $\rD^{\ell-1}\overline\cF$ we estimate for any $\gamma_\infty=\lim_{i\to\infty}\gamma_i \in\cV_{\overline\Gamma}$, $\eta_\infty=\lim_{j\to\infty}\eta_j \in\overline\Gamma$ such that $\gamma_\infty+\eta_\infty\in\cV_{\overline\Gamma}$, and $\ul\kappa_\infty=(\kappa_{n,\infty})_{n=1,\ldots,\ell-1}=\lim_{j\to\infty}\ul\kappa_j =(\kappa_{n,j})_{n=1,\ldots,\ell-1} \in\overline\Gamma^{\ell-1}$ 
\begin{align*}
& \bigl\| \bigl( \rD^{\ell-1}\overline\cF(\gamma_\infty+\eta_\infty) - \rD^{\ell-1}\overline\cF(\gamma_\infty) - \overline{\rD^\ell\cF}(\gamma_\infty) \eta_\infty \bigr)  \ul\kappa_\infty \bigr\|^{\overline\Omega} 
\\
&= \bigl\| \overline{\rD^{\ell-1}\cF}(\gamma_\infty+\eta_\infty)  \ul\kappa_\infty - \overline{\rD^{\ell-1}\cF}(\gamma_\infty)  \ul\kappa_\infty - \overline{\rD^\ell\cF}(\gamma_\infty) (\eta_\infty, \ul\kappa_\infty) \bigr\|^{\overline\Omega} 
\\
&=
\lim_{i\to\infty} \bigl\| \overline{\rD^{\ell-1}\cF}(\gamma_i+\eta_\infty) \ul\kappa_\infty - \overline{\rD^{\ell-1}\cF(\gamma_i)} \ul\kappa_\infty - \overline{\rD^\ell\cF(\gamma_i)} (\eta_\infty, \ul\kappa_\infty) \bigr\|^{\overline\Omega}
\\
&=
\lim_{i\to\infty} \lim_{j\to\infty}  \bigl\| \rD^{\ell-1}\cF(\gamma_i+\eta_j)\ul\kappa_j - \rD^{\ell-1}\cF(\gamma_i)\ul\kappa_j - \rD^\ell\cF(\gamma_i) (\eta_j,\ul\kappa_j) \bigr\|^{\overline\Omega}
\\
&=
\lim_{i\to\infty} \lim_{j\to\infty}  \bigl\| \rD^{\ell-1}\cF(\gamma_i+\eta_j)\ul\kappa_j - \rD^{\ell-1}\cF(\gamma_i)\ul\kappa_j - \rD^\ell\cF(\gamma_i) (\eta_j,\ul\kappa_j) \bigr\|^{\Omega}
\\
&=
\lim_{i\to\infty} \lim_{j\to\infty}  \bigl\| \textstyle\int_0^1  \rD^\ell\cF(\gamma_i+\lambda \eta_j) (\eta_j,\ul\kappa_j) \rd\lambda - \rD^\ell\cF(\gamma_i) (\eta_j,\ul\kappa_j) \bigr\|^{\Omega}
\\
&\leq
\lim_{i\to\infty} \lim_{j\to\infty} \bigl( \textstyle\int_0^1   \bigl\| \rD^\ell\cF(\gamma_i+\lambda \eta_j) - \rD^\ell\cF(\gamma_i) \bigr\|^{\cL^\ell(\Gamma^\ell;\Omega)} \rd\lambda  \; \| \eta_j \|^\Gamma \prod_{n=1}^{\ell-1} \|\kappa_{n,j}  \|^\Gamma  \bigr)
\\
&=
\lim_{i\to\infty} \lim_{j\to\infty} \bigl( \textstyle\int_0^1   \bigl\| \overline{\rD^\ell\cF(\gamma_i+\lambda \eta_j)} - \overline{\rD^\ell\cF(\gamma_i)} \bigr\|^{\cL^\ell(\overline\Gamma^\ell;\overline\Omega)} \rd\lambda  \;  \| \eta_j \|^{\overline\Gamma} \prod_{n=1}^{\ell-1} \|\kappa_{n,j}  \|^{\overline\Gamma}  \bigr)
\\
&=
\lim_{i\to\infty}  \textstyle\int_0^1   \bigl\| \overline{\rD^\ell\cF}(\gamma_i+\lambda \eta_\infty) - \overline{\rD^\ell\cF}(\gamma_i) \bigr\|^{\cL^\ell(\overline\Gamma^\ell;\overline\Omega)} \rd\lambda  \; \| \eta_\infty \|^{\overline\Gamma} \prod_{n=1}^{\ell-1} \|\kappa_{n,\infty}  \|^{\overline\Gamma} 
\\
&=
  \textstyle\int_0^1   \bigl\| \overline{\rD^\ell\cF}(\gamma_\infty+\lambda \eta_\infty) - \overline{\rD^\ell\cF}(\gamma_\infty) \bigr\|^{\cL^\ell(\overline\Gamma^\ell;\overline\Omega)} \rd\lambda \;  \| \eta_\infty \|^{\overline\Gamma} \prod_{n=1}^{\ell-1} \|\kappa_{n,\infty}  \|^{\overline\Gamma} .
\end{align*}
In the operator norm \eqref{eq:multilinear operator norm} on $(\ell-1)$-linear maps, this implies
\begin{align*}
& \bigl\|  \rD^{\ell-1}\overline\cF(\gamma_\infty+\eta_\infty) - \rD^{\ell-1}\overline\cF(\gamma_\infty) - \overline{\rD^\ell\cF}(\gamma_\infty) \eta_\infty  \bigr\|^{\cL^{\ell-1}(\overline\Gamma^{\ell-1};\overline\Omega)} \\
&=  \sup_{\| \kappa_{1,\infty} \|^{\overline\Gamma}, \ldots, \| \kappa_{\ell-1,\infty} \|^{\overline\Gamma} \leq 1} \bigl\| \bigl( \rD^{\ell-1}\overline\cF(\gamma_\infty+\eta_\infty) - \rD^{\ell-1}\overline\cF(\gamma_\infty) - \overline{\rD^\ell\cF}(\gamma_\infty) \eta_\infty \bigr)  \ul\kappa_\infty \bigr\|^{\overline\Omega} 
\\
&\leq 
  \textstyle\int_0^1   \bigl\| \overline{\rD^\ell\cF}(\gamma_\infty+\lambda \eta_\infty) - \overline{\rD^\ell\cF}(\gamma_\infty) \bigr\|^{\cL^\ell(\overline\Gamma^\ell;\overline\Omega)} \rd\lambda \;  \| \eta_\infty \|^{\overline\Gamma} , 
\end{align*}
which we can rewrite for $\eta_\infty\ne 0$ as
\begin{align*}
& \bigl\| \rD^{\ell-1}\overline\cF(\gamma_\infty+\eta_\infty) - \rD^{\ell-1}\overline\cF(\gamma_\infty) - \overline{\rD^\ell\cF}(\gamma_\infty) \eta_\infty \bigr\|^{\cL^{\ell-1}(\overline\Gamma^{\ell-1};\overline\Omega)} \; / \; \| \eta_\infty  \|^{\overline\Gamma} \\
&\leq
\textstyle\int_0^1   \bigl\| \overline{\rD^\ell\cF}(\gamma_\infty+\lambda \eta_\infty) - \overline{\rD^\ell\cF}(\gamma_\infty) \bigr\|^{\cL^\ell(\overline\Gamma^\ell;\overline\Omega)} \rd\lambda . 
\end{align*}
Here we can argue with the continuity of $\overline{\rD^\ell\cF}$ at $\gamma_\infty$ that the right hand side converges to zero as $\|\eta_\infty\|^{\overline\Gamma}\to 0$. 
This proves that $\overline{\rD^\ell\cF}(\gamma_\infty)$ is indeed the differential of $\rD^{\ell-1}\overline\cF$ at $\gamma_\infty$, and thus $\overline\cF|_{\cV_{\overline\Gamma}}$ is uniformly $\cC^\ell$ with $\rD^{\ell-1}\overline\cF=\overline{\rD^\ell\cF}$, as claimed. 
\end{proof}

Finally, the notions of adiabatic $\cC^\ell$ regularity with respect to $\Delta$ for $\ell\geq 2$ will be defined in terms of tangent maps, rather than differentials, which we define in analogy to \cite[Def.1.1.14-15]{hofer-wysocki-zehnder_polyfold}.\footnote{Note, however, that we are not implementing sc-calculus in this paper, so our definitions agree with \cite{hofer-wysocki-zehnder_polyfold} only in the case of finite dimensional normed spaces.}

\vfill
\pagebreak

\begin{definition}\label{def:tangent map notation}
For any open subset $\cV_V\subset V$ of a topological vector space we define the {\bf tangent space} $\rT\cV_V := \cV_V \times V$ as a subset of $V^2=V\times V=\rT V$. 
For $\ell\geq 2$ we define {\bf higher tangent spaces} iteratively by $\rT^{\ell+1}\cV_V := \rT [\rT^\ell \cV_V]$. Equivalently, we have 
\begin{align*}
\rT^{\ell+1}\cV_V &:=\; \rT [\rT^\ell \cV_V] \;=\; \rT^\ell \cV_V \times \text{ambient vector space of $\rT^\ell \cV_V$} 
\\
&\phantom{:}=\; \rT^\ell \cV_V \times \rT^\ell V  \;=\; \cV_V \times V \times \ldots \times V   \;=\; \cV_V \times V^{N_\ell} \\
&\;\subset\; \rT^\ell V \times \rT^\ell V \;=\; \rT^{\ell+1} V \;=\; V^{N_\ell+1}
\end{align*}
with $N_\ell:=2^\ell-1$. 
We moreover denote $\rT^0 \cV_V:=\cV_V$ for efficiency of notation. 

When $V$ is equipped with a norm $\|\cdot\|^V$, then we equip higher tangent spaces $\rT^\ell V$ with the norm 
$$
\| (v_0,\ldots,v_{N_\ell}) \|^{\rT^\ell V} :=  \|v_0\|^V + \ldots + \|v_{N_\ell}\|^V. 
$$
The entries of a higher tangent vector $\ul v=(v_0,\ldots,v_{N_\ell})$ are referred to as {\bf base point} $v_0\in \cV_V$ and {\bf vector entries} $v_1,\ldots,v_{N_\ell}\in V$.\footnote{This differentiation arises from the different roles of base point and vector entries in the following notion of higher tangent maps. 
The vector entries $v_{i\geq 1}$ are always unbounded and can be multiplied by any scalar -- with linearity properties in the higher differentials making up the higher tangent maps. The base point $v_0$ is usually restricted to a bounded domain and -- even if it was scaleable -- has linearity properties only when studying a linear map.}
We denote the {\bf higher tangent fibers} -- the subsets of higher tangent vectors with fixed base point -- by  
$$
\rT_{v}^\ell \cV_V \,:=\; \bigl\{ \ul v=(v_0,\ldots,v_{N_\ell}) \in \rT^\ell\cV_V  \,\big|\, v_0 = v \bigr\} 
\;\simeq\; \bigl\{ (v_1,\ldots,v_{N_\ell}) \in V^{N_\ell} \bigr\} 
$$
and equip them with the {\bf fiber norm}\footnote{
The choice of this norm simplifies estimates of higher differentials in the space of multilinear maps. 
It is, of course, equivalent to the restriction of $\| \cdot \|^{\rT^\ell V}$ to the fiber -- but by constants depending on $\ell$. 
}
\begin{equation}\label{eq:fiber norm}
\| (v_0,\ldots,v_{N_\ell}) \|^{\rT^\ell_\bullet V} \,:=\;  \max\bigl\{ \|v_1\|^V , \ldots ,  \|v_{N_\ell}\|^V \bigr\}  . 
\end{equation}

Now consider normed vector spaces  $(V,\|\cdot\|^V)$ and $(W,\|\cdot\|^W)$ and a differentiable map $f:\cV_V\to W$ defined on an open subset $\cV_V\subset V$. 
We compile the map and its differential $\rD f : \cV_V \to \cL(V,W)$ (defined in Remark~\ref{rmk:multilinear}) into the {\bf tangent map}
$$
\rT f \,:\; \rT\cV_V = \cV_V \times V  \;\to\; \rT W  = W \times W , \qquad (v_0,v) \;\mapsto\; \bigl( f(v_0) , \rD f (v_0) v \bigr) . 
$$
Then we define higher tangent maps iteratively by $\rT^{\ell+1} f := \rT [\rT^\ell f]$ for $\ell\geq 2$, that is $\rT^{\ell+1} f (\ul v, \ul v') = (\rT^\ell f(\ul v) , \rD [\rT^\ell f](\ul v) \ul v')$ for $(\ul v, \ul v')\in\rT^{\ell+1}\cV_V = \rT^{\ell}\cV_V\times \rT^{\ell} V$. We denote $\rT^0 f:=f$ for efficient notation.
\end{definition}

\begin{rmk}\label{rmk:tangent map} \rm
\begin{itemlist}
\item
The advantage of the tangent map notation is that it keeps track of the base point of the tangent vectors, so that the notationally tricky chain rule for differentials $\rD(f\circ g)(p) = \rD f(g(p)) \circ \rD g (p)$ simplifies to the chain rule for tangent maps as in \cite[Thm.1.3.1.]{hofer-wysocki-zehnder_polyfold}: 

If $f:\cV_V \to W$ and  $g:\cV_U \to V$ are differentiable with $g(\cV_U)\subset\cV_V$, then  
\begin{equation}\label{eq:chain}
\rT(f\circ g) = \rT f\circ \rT g  .
\end{equation}
\item
The higher tangent maps are made up of -- but algebraically quite different from -- higher differentials. 
This difference is already notable for a linear map $F:V\to W$, whose differential is $\rD F (v_0) = F$ for any base point $v_0\in V$, and thus all higher differentials vanish $\rD^{\ell\geq 2}F =0$. In contrast, the tangent map is of the form $\rT F(v_0,v_1)=\bigl(F(v_0), F(v_1)\bigr)$, and since higher tangent maps arise from differentiation in all variables -- rather than just in the base point $v_0$ -- they are given by the linear map in each component. That is, for all $\ell\geq 1$ we have
\begin{align*}
\rT^\ell F \,:\; \rT^\ell V = V \times \ldots  \times V  &\;\to\; \rT^\ell W  = W \times \ldots \times W , \\\qquad (v_0,v_1, \ldots,v_{N_\ell})  &\;\mapsto\; \bigl( F(v_0) , F(v_1), \ldots ,  F(v_{N_\ell}) \bigr) .
\end{align*}
Indeed, arguing by induction, the $i$-th component of $\rD[\rT^{\ell} F ](\ul v) \ul v'$ is $ \tfrac\rd{\rd t}|_{t=0}\bigl(  F(v_i+t v'_i)  \bigr) = F (v'_i)$. 

\item
For a general -- sufficiently differentiable -- map $f:\cV_V\to W$ we can express higher tangent maps $\rT^\ell f$ in terms of higher differentials as follows: For $\ell=2$ we have
\begin{align*}
\rT^2 f \,:\; \rT^2\cV_V = \cV_V \times V  \times V  \times V  &\;\to\; \rT^2 W  = W \times W  \times W  \times W , \\\qquad (v_0,v_1,v_2,v_3)  &\;\mapsto\; \bigl( f(v_0) , \rD f (v_0) v_1 ,  \rD f (v_0) v_2 ,   \rD^2 f (v_0) (v_1,v_2) + \rD f (v_0) v_3 \bigr) ,
\end{align*}
where we can recover $\rD^2 f (v_0) (v_1,v_2)$ as the last entry of $\rT^2 f (v_0,v_1,v_2,0)$. 

Then $\rT^3 f \,:\; \rT^3\cV_V = \rT^2\cV_V \times \rT^2 V   \;\to\; \rT^3 W  = \rT^2 W \times \rT^2 W$ is  
$(\ul v, \ul v') \mapsto \bigl( \rT^2 f (\ul v) , \rD\rT^2 f (\ul v) \ul v' \bigr)$ where  
$\rD\rT^2 f (v_0,v_1,v_2,v_3) (v_4=v'_0, v_5=v'_1, v_6=v'_2, v_7=v'_3) \;=\; (w'_0, w'_1,w'_2,w'_3)$ consists of
\begin{align*}
 w'_0 &= \tfrac\rd{\rd t}|_{t=0} \bigl(  f(v_0+t v'_0) \bigr)
 \;=\; \rD f(v_0) v'_0 , \\
 w'_1 &=  \tfrac\rd{\rd t}|_{t=0}  \bigl(  \rD f (v_0+t v'_0) (v_1+t v'_1) \bigr)
  \;=\;\rD^2 f (v_0) ( v_1, v_0' ) +  \rD f (v_0) v'_1 , \\
 w'_2 &=  \tfrac\rd{\rd t}|_{t=0} \bigl(  \rD f (v_0+t v'_0) (v_2+t v'_2) \bigr)
 \;=\; \rD^2 f (v_0) ( v_2 , v_0' ) +  \rD f (v_0) v'_2 ,  \\
 w'_3 &=  
 \tfrac\rd{\rd t}|_{t=0}  \bigl(  \rD^2 f (v_0+t v'_0) (v_1+t v'_1 , v_2+t v'_2) + \rD f (v_0+t v'_0) (v_3+t v'_3) \bigr) \bigr) \\
&= \rD^3 f (v_0) (v_1, v_2, v'_0 ) + \rD^2 f (v_0) (v'_1, v_2)  + \rD^2 f (v_0) (v_1, v'_2) 
  + \rD^2 f (v_0) (v_3, v'_0)   + \rD f (v_0) v'_3 \\
&= \rD^3 f (v_0) (v_1, v_2 , v_4 ) + \rD^2 f (v_0) (v_5, v_2)  + \rD^2 f (v_0) (v_1,v_6) 
  + \rD^2 f (v_0) (v_3, v_4)   + \rD f (v_0) v_7 .
 \end{align*}
Inductively, we obtain
\begin{align*}
\rT^\ell f \,:\; \rT^\ell\cV_V = \cV_V \times \ldots  \times V  &\;\to\; \rT^\ell W  = W \times \cdots \times W , \\\qquad (v_0,\ldots,v_{N_\ell})  &\;\mapsto\; \bigl( f(v_0) , \cdots ,  \rD^\ell f (v_0) (v_1,\ldots, v_{2^{\ell-1}}) +  \cdots + \rD f (v_0) v_{2^\ell-1} \bigr), 
\end{align*}
where all other terms $\cdots$ are sums of differentials $\rD^k f(v_0) (v_*, \ldots, v_*)$ of order ${1\leq k\leq \ell-1}$ with $(v_*, \ldots, v_*)$ some permutation of a subset of $(v_1,\ldots,v_{2^\ell-1})$. 

\item
In particular, we can recover the $\ell$-th differential $\rD^\ell f (v_0) (v_1,\ldots, v_{2^{\ell-1}})$ as the last entry of $\rT^\ell f (v_0,v_1,v_2,0,v_4,0,\ldots,v_{2^{\ell-1}},0,\ldots,0)$.
Recall here that the $k$-th differential $\rD^k f(v_0)\in \cL^k(V^\ell;W)$ is a $k$-linear map $V^k\to W$ as defined in Remark~\ref{rmk:tangent map}

However, the different summands in each component of $\rT^\ell f$ have different numbers of arguments, thus $\rT^\ell f$ has no evident linearity properties (unless $f$ itself is linear). 
\end{itemlist}
\end{rmk}

The simplification of the chain rule in the tangent map notation is counterbalanced by algebraic complications in the relationship between higher differentials and higher tangent maps -- which we need to analyze in order to transfer analytic bounds between them. 
As seen in the above remark, a higher differential $\rD^\ell f$ can be read off from the last component of the corresponding higher tangent map $\rT^\ell f$, evaluated on specific higher tangent vectors. Thus we can transfer uniform bounds at a base point $v_0\in\cV_W$ by
\begin{align}\label{eq:Df by Tf bound}
\| \rD^\ell f(v_0) \|^{\cL^\ell(V,W)} 
&= 
\sup\bigl\{  \| \rD^\ell f(v_0) (v_1,\ldots,v_\ell) \|^W  \,\big|\, \|v_1\|^V,\ldots,\|v_\ell\|^V\leq 1 \bigr\} 
\\
& \leq 
\sup\bigl\{ \| \rT^\ell f(\ul v) \|^{\rT^\ell_\bullet W}   \,\big|\, \| \ul v \|^{\rT_\bullet V} \leq 1 \bigr\}  .
\nonumber 
\end{align}
Transfering continuity estimates already becomes more tricky, as we need to compare derivatives in the same directions. We can estimate this for $u,v\in\cV_W$ by
\begin{align}\label{eq:Df by Tf continuity}
& \| \rD^\ell f(u) -  \rD^\ell f(v) \|^{\cL^\ell(V,W)} 
 \\
&= \sup\bigl\{  \| \rD^\ell f(u) (x_1,\ldots,x_\ell) - \rD^\ell f(v) (x_1,\ldots,x_\ell) \|^W  \,\big|\, \|x_1\|^V,\ldots,\|x_\ell\|^V\leq 1 \bigr\} 
\nonumber\\
& \leq 
\sup\bigl\{ \| \rT^\ell f( (u,0,\ldots,0) + \ul x ) - \rT^\ell f((v,0,\ldots,0) + \ul x) \|^{\rT^\ell_\bullet W}   \,\big|\, 
\ul x \in \rT_0 V , \| \ul x \|^{\rT_\bullet V} \leq 1 \bigr\} . 
\nonumber
\end{align}
Bounding higher tangent maps in terms of higher differentials is more complicated.
The following estimate is adapted to our application needs. It also inductively bounds $\rT^{\ell+1} f=(\rT^{\ell} f,\rD\rT^{\ell}f)$.  

\begin{lemma} \label{lem:DT}
There exist constants $C^{\ell}_{\rT}$ for all $\ell\geq 0$ such that, given any 
$\ell$-fold differentiable map $f:\cV_V\to W$ between normed vector spaces 
$(V,\|\cdot\|^V)$ and $(W,\|\cdot\|^W)$ defined on an open subset $\cV_V\subset V$, 
we have for all $\ul v =(v_0,\ldots)\in\rT^{\ell}\cV_V$ 
\begin{align*}
\bigl\| \rD\rT^{\ell}f (\ul v) \bigr\|^{\cL(\rT^{\ell}V,\rT^{\ell}W )}  
&\leq 
C^{\ell}_{\rT} \textstyle\sum_{k=1}^{\ell+1} 
 \bigl\| \rD^k f (v_0)  \bigr\|^{\cL^{k}(V^{k},W)}  (\| \ul v \|^{\rT_\bullet^\ell V} \})^k  ,
\end{align*}
and  for all $\ul u =(u_0,\ldots), \ul v =(v_0,\ldots)\in\rT^{\ell}\cV_V$
\begin{align*}
& \bigl\| \rD\rT^{\ell} f (\ul u)  -   \rD\rT^{\ell}f (\ul v) \bigr\|^{\cL(\rT^{\ell}V,\rT^{\ell}W )}   \\
&\qquad\qquad \qquad \qquad \leq 
C^{\ell}_{\rT} \textstyle\sum_{k=1}^{\ell+1} 
\Bigl( 
 \bigl\| \rD^{k}f (u_0) -   \rD^{k}f (v_0) \bigr\|^{\cL^{k}(V^{k},W)} \max\{ \| \ul u \|^{\rT_\bullet^\ell V},  \| \ul v \|^{\rT_\bullet^\ell V} \}^k
 \\
 &\qquad\qquad \qquad \qquad\qquad \qquad \qquad  
 + \bigl\| \rD^k f (v_0)  \bigr\|^{\cL^{k}(V^{k},W)}  \| \ul u - \ul v \|^{\rT_\bullet^\ell V}   \max\{ \| \ul u \|^{\rT_\bullet^\ell V},  \| \ul v \|^{\rT_\bullet^\ell V} \}^{k-1}  
 \Bigr) .
\end{align*}
\end{lemma}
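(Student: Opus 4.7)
\medskip

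\noindent\textbf{Proof plan.}
The plan is to prove both inequalities simultaneously by induction on $\ell\geq 0$, using the recursive definition $\rT^{\ell+1}f=\rT(\rT^\ell f)$. The base case $\ell=0$ reduces to $\rD\rT^0 f=\rD f$ with constant $C^0_\rT=1$, which is immediate from the definitions. The point of the induction is that differentiating $\rT^\ell f$ one more time only produces three simple kinds of new terms, each of which admits an elementary bound -- the inductive hypothesis controls everything except the topmost order $k=\ell+1$, which is produced by differentiating the existing $\rD^\ell f(v_0)$ factor in the base-point variable.

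To make this precise, I would first establish (by a short induction) the structural expansion already indicated in Remark~\ref{rmk:tangent map}: each component of $\rT^\ell f(v_0,v_1,\dots,v_{N_\ell})$ is a finite sum of terms of the shape $\rD^k f(v_0)(v_{i_1},\dots,v_{i_k})$ for $1\leq k\leq \ell$ and multi-indices $(i_1,\dots,i_k)$ with $i_j\geq 1$, the total number of such terms depending only on $\ell$. Differentiating in direction $\ul v'=(v_0',\dots,v_{N_\ell}')$ produces only two types of summands by the product/chain rule: (i) a ``base-point'' term $\rD^{k+1}f(v_0)(v_{i_1},\dots,v_{i_k},v_0')$, raising the order of the differential by one, and (ii) ``vector-entry'' terms $\rD^k f(v_0)(v_{i_1},\dots,v_{i_{j-1}},v_{i_j}',v_{i_{j+1}},\dots,v_{i_k})$, keeping the order but swapping in one $v'_*$. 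Summing bounds over the finite collection of indices and taking the sup over $\|\ul v'\|^{\rT^\ell V}\leq 1$ (which forces every $\|v_i'\|^V\leq 1$), each term is estimated by $\|\rD^k f(v_0)\|^{\cL^k(V^k,W)}$ times a monomial in the fiber norm $\|\ul v\|^{\rT^\ell_\bullet V}$ of degree $\leq k$. This yields the first inequality with a combinatorial constant $C^\ell_\rT$ depending only on $\ell$.

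For the continuity bound, I would apply the triangle inequality to each of the finitely many summands above after replacing $(v_0,v_1,\dots)$ by $(u_0,u_1,\dots)$ versus $(v_0,v_1,\dots)$. A typical term
$\rD^k f(u_0)(u_{i_1},\dots,u_{i_{k-1}},v'_*) - \rD^k f(v_0)(v_{i_1},\dots,v_{i_{k-1}},v'_*)$
is split as
$\bigl(\rD^k f(u_0)-\rD^k f(v_0)\bigr)(u_{i_1},\dots,u_{i_{k-1}},v'_*)$
plus a telescoping sum of $k-1$ differences in which exactly one entry $u_{i_j}-v_{i_j}$ appears with the remaining entries taken from either $\ul u$ or $\ul v$. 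The first piece gives the $\|\rD^k f(u_0)-\rD^k f(v_0)\|$ contribution multiplied by $\max\{\|\ul u\|_\bullet,\|\ul v\|_\bullet\}^{k-1}\leq \max\{\cdots\}^{k}$, and the second gives the $\|\rD^k f(v_0)\|\,\|\ul u-\ul v\|_\bullet\,\max\{\cdots\}^{k-1}$ contribution. The analogous splitting for the base-point-derived terms of order $k+1$ is handled identically, and collecting all finitely many terms yields the second inequality with the same constant $C^\ell_\rT$ (possibly enlarged).

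The main obstacle is purely notational/combinatorial: keeping track of the finite set of summands in $\rT^\ell f$ and of which entries survive on each differentiation, rather than any genuine analytic difficulty. Once the expansion in Remark~\ref{rmk:tangent map} is taken as input, both estimates reduce to collecting the triangle inequality across a bounded number of multilinear terms.
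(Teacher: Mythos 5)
Your proposal is correct and follows essentially the same route as the paper: both arguments rest on the structural expansion of $\rT^\ell f$ from Remark~\ref{rmk:tangent map} into a finite sum of terms $\rD^{k}f(v_0)(v_*,\ldots,v_*)$, observe that $\rD\rT^\ell f(\ul v)\ul x$ places one $x_*$ into a slot (or bumps the order to $k+1$ via the base point), then apply the triangle inequality and, for the continuity estimate, a telescoping sum that isolates one difference $u_*-v_*$ per summand. The only cosmetic difference is that you package the expansion and the estimate into a single induction on $\ell$, whereas the paper takes the expansion as given (having established it in the remark) and estimates directly; this is a presentational choice, not a different argument.
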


\begin{proof}
For $\ul v=(v^0,v^1,\ldots,v^{N_{\ell}}) \in\rT^{\ell}\cV_V$ we have from Remark~\ref{rmk:tangent map}
\begin{align*}
\bigl\| \rD\rT^{\ell}f (\ul v) \bigr\|^{\cL(\rT^{\ell}V,\rT^{\ell}W )}  
 &= 
\sup_{\| \ul x\|^{\rT^{\ell}V}\leq 1}
\bigl\|  \rD\rT^{\ell}f (\ul v) \ul x \bigr\|^{\rT^{\ell}W}  \\
&= 
\sup_{\| \ul x\| \leq 1} \textstyle\sum_{i=0}^{N_\ell}
\Bigl\|   \textstyle\sum_{*}  \rD^{k_*}f (v_0) (v_* \ldots v_* ,x_* ,v_* \ldots v_*)  \Bigr\|^W 
\\
&\leq 
\sup_{\| \ul x\| \leq 1} \textstyle\sum_{i=0}^{N_\ell}  \textstyle\sum_{*}
\bigl\| \rD^{k_*}f (v_0) \bigr\|^{\cL^{k_*}(V^{k_*},W)}   \| v_* \|^V \ldots  \| v_* \|^V   \| x_* \|^V  \| v_* \|^V\ldots  \| v_* \|^V  
\\ 
&\leq 
\textstyle \sum_{i=0}^{N_\ell}  \textstyle\sum_{*}
\bigl\| \rD^{k_*}f (v_0) \bigr\|^{\cL^{k_*}(V^{k_*},W)}   \| v_* \|^V \ldots  \| v_* \|^V   \| v_* \|^V\ldots  \| v_* \|^V  
\\ 
&\leq 
C^{\ell} \textstyle\sum_{k=1}^{\ell+1} 
 \bigl\| \rD^k f (v_0)  \bigr\|^{\cL^{k}(V^{k},W)}  (\| \ul v \|^{\rT_\bullet^\ell V} \})^k  ,
\end{align*}
where $C^{\ell}$ is a universal constant determined by the combinatorics of applying the chain rule to express higher tangent maps in terms of higher differentials, which we will replace below by $C^\ell\leq C^\ell_{\rT}$. 

Similarly, for $\ul u=(u^0,u^1,\ldots,u^{N_{\ell}}) , \ul v=(v^0,v^1,\ldots,v^{N_{\ell}}) \in\rT^{\ell}\cV_V$ we have
\begin{align*}
& \bigl\| \rD\rT^{\ell} f (\ul u)  -   \rD\rT^{\ell}f (\ul v) \bigr\|^{\cL(\rT^{\ell}V,\rT^{\ell}W )}  \\
&= 
\sup_{\| \ul x\|^{\rT^{\ell}V}\leq 1}
\bigl\| \rD\rT^{\ell} f (\ul u) \ul x -   \rD\rT^{\ell}f (\ul v) \ul x \bigr\|^{\rT^{\ell}W}  \\
&= 
\sup_{\| \ul x\| \leq 1} \textstyle \sum_{i=0}^{N_\ell}
\Bigl\|   \textstyle\sum_{*} \rD^{k_*}f (u_0) (u_* \ldots u_* , x_* ,u_* \ldots u_*) -   \rD^{k_*}f (v_0) (v_* \ldots v_* ,x_* ,v_* \ldots v_*)  \Bigr\|^W \\
&\leq 
\sup_{\| \ul x\| \leq 1}  \textstyle \sum_{i=0}^{N_\ell}  \textstyle\sum_{*}
\Bigl( 
 \bigl\| \rD^{k_*}f (u_0) (u_* \ldots u_* , x_* ,u_*\ldots u_*) -   \rD^{k_*}f (v_0) (u_* \ldots u_* , x_* , u_* \ldots u_*)  \bigr\|^W\\
&\qquad \qquad \qquad \qquad \qquad \qquad 
+  \bigl\| \rD^{k_*}f (v_0) (u_* - v_*, u_* \ldots u_* , x_* , u_*\ldots u_*) \bigr\|^W  + \ldots \\
&\qquad \qquad \qquad \qquad \qquad \qquad 
 +  \bigl\| \rD^{k_*}f (v_0) (v_* \ldots v_*, u_* - v_*, x_* , u_*\ldots u_*)  \bigr\|^W   \\
&\qquad \qquad \qquad \qquad \qquad \qquad 
+  \bigl\| \rD^{k_*}f (v_0) (v_* \ldots v_* , x_* , u_* - v_*, u_* \ldots u_*)  \bigr\|^W  + \ldots  \\
&\qquad \qquad \qquad \qquad \qquad \qquad 
+  \bigl\| \rD^{k_*}f (v_0) (v_* \ldots v_*, x_* , v_* \ldots v_*, u_*- v_*)  \bigr\|^W 
\Bigr)  \\
\end{align*}
\begin{align*}
&\leq 
\sup_{\| \ul x\| \leq 1} \textstyle \sum_{i=0}^{N_\ell}  \textstyle\sum_{*}
\Bigl( 
 \bigl\| \rD^{k_*}f (u_0) -   \rD^{k_*}f (v_0) \bigr\|^{\cL^{k_*}(V^{k_*},W)} \| u_* \|^V \ldots  \| u_* \|^V  \| x_* \|^V  \| u_* \|^V \ldots  \| u_* \|^V  \\
&\qquad \qquad \qquad \qquad \qquad 
+  \bigl\| \rD^{k_*}f (v_0)  \bigr\|^{\cL^{k_*}(V^{k_*},W)}  \| u_* - v_* \|^V   \| u_* \|^V \ldots  \| u_* \|^V  \| x_* \|^V \| u_* \|^V\ldots  \| u_* \|^V  + \ldots \\
&\qquad \qquad \qquad \qquad \qquad  
 +  \bigl\| \rD^{k_*}f (v_0) \bigr\|^{\cL^{k_*}(V^{k_*},W)}   \| v_* \|^V \ldots  \| v_* \|^V  \| u_* - v_* \|^V   \| x_* \|^V \| u_* \|^V\ldots  \| u_* \|^V   \\
&\qquad \qquad \qquad \qquad \qquad  
 +  \bigl\| \rD^{k_*}f (v_0) \bigr\|^{\cL^{k_*}(V^{k_*},W) }  \| v_* \|^V \ldots  \| v_* \|^V   \| x_* \|^V  \| u_* - v_* \|^V  \| u_* \|^V\ldots  \| u_* \|^V  + \ldots \\
&\qquad \qquad \qquad \qquad \qquad 
 +  \bigl\| \rD^{k_*}f (v_0) \bigr\|^{\cL^{k_*}(V^{k_*},W)}   \| v_* \|^V \ldots  \| v_* \|^V   \| x_* \|^V  \| v_* \|^V\ldots  \| v_* \|^V   \| u_* - v_* \|^V
  \Bigr)  \\ 
&\leq 
\textstyle\sum_{i=0}^{N_\ell}  \textstyle\sum_{*}
\Bigl( 
 \bigl\| \rD^{k_*}f (u_0) -   \rD^{k_*}f (v_0) \bigr\|^{\cL^{k_*}(V^{k_*},W)} \| u_* \|^V \ldots  \| u_* \|^V  \| u_* \|^V \ldots  \| u_* \|^V   \\
&\qquad \qquad \qquad 
+  \textstyle \sum_{j=1}^{k_*} \bigl\| \rD^{k_*}f (v_0)  \bigr\|^{\cL^{k_*}(V^{k_*},W)}  \| u_j - v_j \|^V \max\{  \| u_* \|^V,  \| v_* \|^V\} \ldots \max\{  \| u_* \|^V,  \| v_* \|^V\} \Bigr)  \\ 
&\leq 
C^{\ell} \textstyle\sum_{k=1}^{\ell+1} 
 \bigl\| \rD^{k}f (u_0) -   \rD^{k}f (v_0) \bigr\|^{\cL^{k}(V^{k},W)} ( \| \ul u \|^{\rT_\bullet^\ell V} )^k
 \\
 &\qquad
+ C^{\ell} \textstyle\sum_{k=1}^{\ell+1} 
k \, \bigl\| \rD^k f (v_0)  \bigr\|^{\cL^{k}(V^{k},W)}  \| \ul u - \ul v \|^{\rT_\bullet^\ell V}   \max\bigl\{ \| \ul u \|^{\rT_\bullet^\ell V},  \| \ul v \|^{\rT_\bullet^\ell V} \bigr\}^{k-1}  
 \Bigr) 
 \\
 &\leq 
C^{\ell}_{\rT} \textstyle\sum_{k=1}^{\ell+1} 
\Bigl( 
 \bigl\| \rD^{k}f (u_0) -   \rD^{k}f (v_0) \bigr\|^{\cL^{k}(V^{k},W)} \max\bigl\{ \| \ul u \|^{\rT_\bullet^\ell V},  \| \ul v \|^{\rT_\bullet^\ell V} \bigr\}^k
 \\
 &\qquad \qquad \qquad 
 + \bigl\| \rD^k f (v_0)  \bigr\|^{\cL^{k}(V^{k},W)}  \| \ul u - \ul v \|^{\rT_\bullet^\ell V}   \max\bigl\{ \| \ul u \|^{\rT_\bullet^\ell V},  \| \ul v \|^{\rT_\bullet^\ell V} \bigr\}^{k-1}  
 \Bigr) 
\end{align*}
with $C^{\ell}_{\rT}:=(\ell+1) C^\ell$.
\end{proof}

\subsection{Regularity Properties} \label{regularity}

This section defines various properties of adiabatic Fredholm families that will be needed to guarantee that their finite dimensional reductions are continuous and/or differentiable in the sense defined in Corollary~\ref{cor:family finite dimensional reduction}. 
The following notion is necessary for all such regularity proofs, including continuity. 
It encodes elliptic regularity of the PDEs -- more specifically the fact that any solution of the inhomogeneous PDE with a smooth right hand side is itself smooth. This is also a corollary of the regularizing notion in \cite[Def.3.1.16]{hofer-wysocki-zehnder_polyfold}.

\begin{definition} \label{def:regularizing}
An adiabatic Fredholm family $\bigl( (\cF_\eps:\cV_\Gamma\to \Omega )_{\eps\in\Delta} , \ldots  \bigr)$ as in Definition~\ref{def:fredholm} is called {\bf regularizing} if for all $\eps\in\Delta$ any solution $\gamma\in \overline\Gamma_\eps$ for the extended adiabatic Fredholm family of a nonlinear equation $\overline\cF_\eps(\gamma)=\omega$ or a linear equation $\rD\overline\cF_\eps(\gamma_0)\gamma=\omega$ at $\gamma_0\in\Gamma$, whose right hand side lies in the $\eps$-independent dense target subspace $\omega\in\Omega\subset\overline\Omega_\eps$, is guaranteed to lie in the $\eps$-independent dense domain subspace $\gamma\in\Gamma\subset\overline\Gamma_\eps$. 
More precisely, we have the two implications
\begin{align}
\gamma\in \cV_{\overline\Gamma,\eps}, \; \overline\cF_\eps(\gamma) \in \Omega \quad \Longrightarrow \quad \gamma\in \cV_\Gamma ,
\label{eq:reg 0}
\\
\gamma_0\in\cV_\Gamma, \; \gamma\in\overline\Gamma_\eps, \; \rD\overline\cF_\eps(\gamma_0)\gamma \in \Omega \quad \Longrightarrow \quad \gamma\in\Gamma .
\label{eq:reg 1}
\end{align}
\end{definition}

\begin{rmk} \label{rmk:regularizing}
Using the tangent map notation from Definition~\ref{def:tangent map notation}, the regularizing property can equivalent be phrased as 
\begin{equation}
\ul\gamma\in \rT\cV_{\overline\Gamma,\eps}, \; \rT\overline\cF_\eps(\ul\gamma) \in \rT\Omega \quad \Longrightarrow \quad \ul\gamma\in \rT\cV_\Gamma .
\label{eq:reg T}
\end{equation}
Indeed, our definitions yield $\rT\Omega = \Omega\times \Omega$ as well as
$\rT\cV_{\Gamma}=  \cV_{\Gamma} \times \Gamma$
and
$\rT\cV_{\overline\Gamma,\eps}=  \cV_{\overline\Gamma,\eps} \times  \overline\Gamma,\eps$. 
Now for $\ul\gamma=(\gamma_0,\gamma_1)\in  \cV_{\overline\Gamma,\eps} \times \overline\Gamma,\eps$ we have
$\rT\overline\cF_\eps(\ul\gamma)=\bigl( \overline\cF_\eps(\gamma_0) , \rD \overline\cF_\eps(\gamma_0)\gamma_1 \bigr)$. 
So if \eqref{eq:reg T} holds, then applying it to $(\gamma_0,0)$ yields \eqref{eq:reg 0}, whereas applying it to 
$(\gamma_0,\gamma_1)\in  \cV_{\Gamma} \times \overline\Gamma,\eps$ yields \eqref{eq:reg 1}. 
Conversely, \eqref{eq:reg 0} and \eqref{eq:reg 1} imply \eqref{eq:reg T}
since the first component gives $\overline\cF_\eps(\gamma_0)\in\Omega$, which by  \eqref{eq:reg 0} implies $\gamma_0\in\cV_\Gamma$. Then the second component is $ \rD \overline\cF_\eps(\gamma_0)\gamma_1\in\Omega$ and \eqref{eq:reg 1} applies to yield $\gamma_1\in\Gamma$. 

Thus it makes sense to define the regularizing property for higher tangent maps $\rT^\ell\overline\cF_\eps$ as well -- which is done in \eqref{def:regularizing T} as a part of the adiabatic $\cC^\ell$ regularity. 
\end{rmk}

The next notion encodes differentiability for fixed $\eps\in\Delta$. 

\begin{definition} \label{def:fibrewise C-l}
Given $\ell\geq 2$, an adiabatic Fredholm family as in Definition~\ref{def:fredholm} is called {\bf fibrewise $\cC^\ell$-regular} if it satisfies
\begin{itemlist}
\item[\bf{[Fibrewise $\mathcal{C}^\ell$ Regularity]}]
$\cF_\eps : (\cV_\Gamma,\|\cdot\|^\Gamma_\eps) \to (\Omega,\|\cdot\|^\Omega_\eps)$ is uniformly $\cC^\ell$ for each $\eps\in\Delta$. 
\end{itemlist}
\end{definition}

Next, we will use the language of tangent maps from Definition~\ref{def:tangent map notation} to formulate the notions of (higher) tangent maps of an adiabatic Fredholm family being continuous with respect to $\Delta$. Formulating these is complicated by the fact that we have not imposed strong enough conditions to guarantee the existence of a topology on $\Delta\times\Gamma$ that restricts on each ``fiber'' $\{\eps\}\times\Gamma$ to the topology induced by the norm $\|\cdot\|^\Gamma_\eps$. Instead, we will work with two types of continuity: Pointwise continuity in $\Delta$ for a fixed $\gamma_0\in\Gamma$ and uniform continuity in $\Gamma$ whose uniformity is independent of $\eps\in\Delta$. These are matters where it is not a priori clear that there are notions that are both weak enough to be satisfied in examples of adiabatic limits and strong enough to provide the desired regularity of finite dimensional reductions.

The following notion of pointwise continuity generalizes the [Near Solution] and [Continuity of Derivatives at $0$] properties of Definition~\ref{def:fredholm} -- equivalent to $\| \cF_\eps(0) - \cF_0(0) \|^\Omega_\eps \underset{\eps\to 0}\to 0$ and
$\bigl\|  \rD\cF_\eps (0) -   \rD\cF_0 (0)  \bigr\|^{\cL(\ol\Gamma_\eps,\ol\Omega_0)}  \underset{\eps\to 0}\to 0$ -- 
to limits $\Delta\ni \eps\to\eps_0\ne 0$, base points points $\gamma_0\ne 0$, and higher tangent maps. However, this is actually not true in the Examples~\ref{ex:adiabatic} unless $\gamma_0$ is sufficiently regular and lies in the subset $\Gamma_0\subset\Gamma$ from the classical adiabatic formulation in the introduction. We can enforce this by restricting the pointwise continuity requirement to $\gamma_0$ that solve $\cF_{\eps_0}(\gamma_0)\in\fC$ -- for appropriate representations of the cokernel $\fC\subset\Omega$ that naturally exist in the examples. 

\begin{definition} \label{def:pointwise regular}
An adiabatic Fredholm family $\bigl( (\cF_\eps:\cV_\Gamma\to \Omega )_{\eps\in\Delta} , \ldots  \bigr)$ of Definition~\ref{def:fredholm} is called 
\begin{itemize}
\item
{\bf pointwise continuous in $\mathbf{\Delta}$ at solutions modulo $\mathbf{\mathfrak C}$} (short: {\bf continuous in $\mathbf{\Delta}$ rel.\ $\mathbf{\mathfrak C}$}) if, given any $\eps_0\in\Delta$ and a solution $\gamma_0\in\cV_\Gamma$ of $\cF_{\eps_0}(\gamma_0)\in \fC$, we have 
$\bigl\|  \cF_\eps (\gamma_0) -   \cF_{\eps_0} (\gamma_0)  \bigr\|^\Omega_\eps \rightarrow 0$ as $\eps\to \eps_0$;
\item
{\bf pointwise $\cC^\ell$-continuous in $\mathbf{\Delta}$ at solutions modulo $\mathbf{\mathfrak C}$}  (short: {\bf $\cC^\ell$-continuous in $\mathbf{\Delta}$ rel.\ $\mathbf{\mathfrak C}$}) for a given $\ell\geq 0$ if 
it is fibrewise $\cC^\ell$ as in Definition~\ref{def:fibrewise C-l} and the family $(\rT^\ell\cF_\eps)_{\eps\in\Delta}$ is 
continuous in $\Delta$ rel.\ $\mathfrak C$, that is  
given any $\eps_0\in\Delta$ and a solution $\ul\gamma_0\in\rT^\ell\cV_\Gamma$ of $\rT^\ell\cF_{\eps_0}(\ul\gamma_0)\in \rT^\ell\fC$ we have 
$\bigl\|  \rT^\ell\cF_\eps (\ul\gamma_0)  -   \rT^\ell\cF_{\eps_0} (\ul\gamma_0) \bigr\|^{\rT^\ell\Omega}_\eps \rightarrow 0$ as $\eps\to \eps_0$.
\end{itemize}
\end{definition}

Recall here that fibrewise $\cC^1$ regularity is built into the notion of adiabatic Fredholm family. 

\begin{rmk} \label{rmk:pointwise C-1}\rm 
Note that pointwise continuity in $\Delta$ rel.\ $\mathfrak C$ is pointwise $\cC^0$-continuity  in $\Delta$ rel.\ $\mathfrak C$.

For $\ell=1$ recall that $\rT\cF_\eps(\gamma_0,\gamma_1)=(\cF_\eps(\gamma_0), \rD\cF_\eps(\gamma_0)\gamma_1)$ for $(\gamma_0,\gamma_1)\in\rT\cV_\Gamma=\cV_\Gamma\times\Gamma$. So an adiabatic Fredholm family is $\cC^1$ in $\Delta$ rel.\ $\mathfrak C$ if and only if it is continuous in $\Delta$ rel.\ $\mathfrak C$ and so is its differential: 
\begin{itemize}
\item
Given any $\eps_0\in\Delta$ and solutions $\gamma_0\in\cV_\Gamma$ of $\cF_{\eps_0}(\gamma_0)\in\fC$ and
$\gamma_1\in\Gamma$ of $\rD\cF_{\eps_0}(\gamma_0)\gamma_1\in \fC$, we have 
$\bigl\|  \rD\cF_\eps (\gamma_0)\gamma_1  -   \rD\cF_{\eps_0} (\gamma_0)\gamma_1 \bigr\|^\Omega_\eps \rightarrow 0$ as $\eps\to \eps_0$.
\end{itemize}
However, for $\ell\geq 2$ there is no evident characterization of $\cC^\ell$-continuity in $\Delta$ rel.\ $\mathfrak C$ in terms of the higher differentials -- since the components of $\rT^\ell\cF_\eps$ are sums of higher differentials.
%
\end{rmk}

Next, the following notion of uniform continuity generalizes the [Quadratic-ish Estimate] in Definition~\ref{def:fredholm} -- equivalent to $
\bigl\|  \rD\cF_\eps (\gamma_0) -   \rD\cF_\eps (0)  \bigr\|^{\cL(\ol\Gamma_\eps,\ol\Omega_\eps)} \leq c(\|\gamma_0 - 0 \|^\Gamma_\eps)$ -- 
to any pair of base points points $\gamma^\fl,\gamma^\fk\in\cV_\Gamma$ and higher tangent maps. 
Then we package pointwise and uniform continuity into notions of adiabatic regularity as follows:

\begin{definition} \label{def:adiabatic C-l}
Given $\ell\geq 0$, an adiabatic Fredholm family $\bigl( (\cF_\eps:\cV_\Gamma\to \Omega )_{\eps\in\Delta} , \ldots  \bigr)$ of Definition~\ref{def:fredholm}  is called {\bf adiabatic $\cC^\ell$-regular} -- or an {\bf adiabatic $\cC^\ell$-regular Fredholm family} --  if it satisfies the following: 
\begin{itemlist}
\item
The family is fibrewise $\cC^\ell$-regular as in Definition~\ref{def:fibrewise C-l}. 
\item
The family is regularizing as in Definition~\ref{def:regularizing}, and in case $\ell\geq 2$ so is the $\ell$-th tangent map in the sense that 
\begin{equation}\label{def:regularizing T}
\ul\gamma\in \rT^\ell\cV_{\overline\Gamma,\eps}, \; \rT^\ell\overline\cF_\eps(\ul\gamma) \in \rT^\ell\Omega \quad \Longrightarrow \quad \ul\gamma\in \rT^\ell\cV_\Gamma .
\end{equation}
\item
The family is pointwise $\cC^\ell$-continuous in $\Delta$ at solutions modulo $\mathbf{\mathfrak C}$ as in Definition~\ref{def:pointwise regular}.
\item[$\bullet$ \bf{[Uniform Continuity of $\rD^k\cF_\eps$ for $\mathbf{1\leq k\leq \ell \,}$]}]
In case $\ell=0$ there is no further condition. In case $\ell\geq 1$ we require monotone continuous functions $c^k_\cF : [0,\infty) \to [0,\infty)$ with $c^k_\cF(0)= 0$ 
for $1\leq k \leq \ell$ so that for all $\eps\in\Delta$ and $\gamma^\fl, \gamma^\fk\in\cV_\Gamma$ we have
$$
\bigl\| \rD^k\cF_\eps(\gamma^\fl) -  \rD^k\cF_\eps (\gamma^\fk) \bigr\|^{\cL^k(\ol\Gamma_\eps^k,\ol\Omega_\eps )} 
\leq c^k_\cF(\|\gamma^\fl-\gamma^\fk\|^{\Gamma}_\eps) . 
$$
\item[$\bullet$ \bf{[Uniform Bound on $\rD^k\cF_\eps(0)$ for $\mathbf{1\leq k\leq \ell \,}$]}]
In case $\ell=0$ there is no further condition. In case $\ell\geq 1$ we require constants $C^k_\cF\geq 1$ for $1\leq k \leq \ell$ so that for all $\eps\in\Delta$ we have
\begin{equation}\label{eq:DF bound}
\bigl\| \rD^k \cF_\eps (0)  \bigr\|^{\cL^k(\ol\Gamma_\eps^k,\ol\Omega_\eps )}  \leq C^k_\cF   \qquad\forall\eps\in\Delta, \gamma\in\cV_\Gamma .
\end{equation}
\end{itemlist}
An adiabatic Fredholm family is called {\bf adiabatic $\cC^\infty$-regular} -- or an {\bf adiabatic $\cC^\infty$-regular Fredholm family} -- if it is adiabatic $\cC^\ell$-regular for all $\ell\in\bN$.
\end{definition}

\begin{rmk} \label{rmk:uniform DF}
If an adiabatic Fredholm family satisfies {\rm [Uniform Continuity of $\rD^k \cF_\eps$ for $1\leq k\leq \ell \,$]} and {\rm [Uniform Bound on $\rD^k\cF_\eps(0)$ for $1\leq k\leq \ell \,$]} for some $\ell\geq 1$, then the uniform continuity transfers to the higher tangent maps of Definition~\ref{def:tangent map notation} and the extended adiabatic Fredholm family of Lemma~\ref{lem:fredholm}: 
{\rm
\begin{itemlist}
\item[\bf{[Uniform Continuity of $\rD\rT^{\ell-1}\ol\cF_\eps$]}] 
Given any $\delta>0$ there is a monotone continuous function 
$c^{\ell,\delta}_{\rT\cF}:[0,\infty)\to [0,\infty)$ with $c^{\ell,\delta}_{\rT\cF}(0)=0$ so that 
for all $\eps\in\Delta$, 
and $\ul\gamma^\fl, \ul\gamma^\fk\in\rT^{\ell-1}\cV_{\overline\Gamma,\eps}$ with 
$\|\gamma^\fl_0\|^\Gamma_\eps, \|\gamma^\fk_0\|^\Gamma_\eps \leq \delta$
we have
$$
\bigl\| \rD\rT^{\ell-1}\ol\cF_\eps (\ul\gamma^\fl)  -   \rD\rT^{\ell-1}\ol\cF_\eps (\ul\gamma^\fk) \bigr\|^{\cL(\rT^{\ell-1}\ol\Gamma_\eps,\rT^{\ell-1}\ol\Omega_\eps )} 
\leq 
c^{\ell,\delta}_{\rT\cF}(\|\ul\gamma^\fl - \ul\gamma^\fk\|^{\rT^{\ell-1}\Gamma}_\eps) \,  \max\bigl\{ 1, \| \ul\gamma^\fl \|^{\rT_\bullet^{\ell-1} \Gamma}_\eps,  \| \ul\gamma^\fk \|^{\rT_\bullet^{\ell-1} \Gamma}_\eps \bigr\}^\ell .
$$
\end{itemlist}


\smallskip
In fact, this estimate for the extended adiabatic Fredholm family follows by continuous extension from the analogous estimate for the adiabatic Fredholm family $\cF_\eps$ since Lemma~\ref{lem:extension} identifies $\rD\rT^{\ell-1}\ol\cF_\eps$ with the continuous extension of $\rD\rT^{\ell-1}\cF_\eps$. 

In case $\ell=1$ we have $\rD\rT^{\ell-1}\cF_\eps=\rD^\ell\cF_\eps$ so that uniform continuity holds with 
$c^1_{\rT\cF}:= c^1_\cF$ and the uniform bound assumptions on $\rD\cF_\eps$ and $\|\gamma\|^
\Gamma_\eps, \|\gamma^\fk\|^\Gamma_\eps$ are not needed. 
Uniform continuity of $\rD\rT^{\ell-1}\cF_\eps$ for $\ell\geq 2$ does require both assumptions. 
To check it we first combine [Uniform Continuity of $\rD^k\cF_\eps$ and Uniform Bound on $\rD^k\cF_\eps(0)$] for $1\leq k\leq \ell$
to bound $\rD^k\cF_\eps(\gamma)$ at $\gamma\neq 0$, 
\begin{align} 
\bigl\| \rD^k \cF_\eps (\gamma)  \bigr\|^{\cL^k(\ol\Gamma_\eps^k,\ol\Omega_\eps )} 
&\leq 
\bigl\| \rD^k \cF_\eps (0)  \bigr\|^{\cL^k(\ol\Gamma_\eps^k,\ol\Omega_\eps )} 
+ 
\bigl\| \rD^k\cF_\eps(\gamma) -  \rD^k\cF_\eps (0) \bigr\|^{\cL^k(\ol\Gamma_\eps^k,\ol\Omega_\eps )} 
\nonumber\\
&\leq C^k_\cF  + c^k_\cF(\|\gamma\|^{\Gamma}_\eps)  
\;\leq\; C^k_\cF  + c^k_\cF(\delta)   
\qquad\qquad
\forall
\; \gamma \in \cV_\Gamma,  \; \|\gamma\|^\Gamma_\eps \leq \delta. 
\label{eq:DF uniform at nonzero base}
\end{align}
Now we can combine the assumptions with Lemma~\ref{lem:DT} to estimate for any given $\delta>0$ and for all $\eps\in\Delta$ and 
$\ul\gamma^\fl=(\gamma^\fl_0\ldots) , \ul\gamma^\fk=(\gamma^\fk_0\ldots) \in\rT^{\ell-1}\cV_\Gamma$ 
with $\|\gamma^\fl_0\|^\Gamma_\eps, \|\gamma^\fk_0\|^\Gamma_\eps \leq \delta$
\begin{align*}
& \bigl\| \rD\rT^{\ell-1}\cF_\eps (\ul\gamma^\fl)  -   \rD\rT^{\ell-1}\cF_\eps (\ul\gamma^\fk) \bigr\|^{\cL(\rT^{\ell-1}\ol\Gamma_\eps,\rT^{\ell-1}\ol\Omega_\eps )}  \\
&\leq 
C^{\ell-1}_{\rT} \textstyle\sum_{k=1}^{\ell} 
\Bigl( 
 \bigl\| \rD^{k}\cF_\eps (\gamma_0^\fl) -   \rD^{k}\cF_\eps (\gamma_0^\fk) \bigr\|^{\cL^{k}(\ol\Gamma_\eps^{k},\ol\Omega_\eps)}  \max\bigl\{ \| \ul\gamma^\fl \|^{\rT_\bullet^{\ell-1} \Gamma}_\eps,  \| \ul\gamma^\fk \|^{\rT_\bullet^{\ell-1} \Gamma}_\eps \bigr\}^k
 \\
 &\qquad\qquad \qquad \qquad  
 + \bigl\| \rD^k \cF_\eps (\gamma_0^\fl)  \bigr\|^{\cL^k(\ol\Gamma_\eps^k,\ol\Omega_\eps )}  \|\ul\gamma^\fl  - \ul\gamma^\fk \|^{\rT_\bullet^{\ell-1} \Gamma}_\eps   \max\bigl\{ \| \ul\gamma^\fl \|^{\rT_\bullet^{\ell-1} \Gamma}_\eps,  \| \ul\gamma^\fk \|^{\rT_\bullet^{\ell-1} \Gamma}_\eps \bigr\}^{k-1}  
 \Bigr)
\\
&\leq 
C^{\ell-1}_{\rT} \textstyle\sum_{k=1}^{\ell} 
\Bigl( c^k_\cF(\|\gamma^\fl-\gamma^\fk\|^{\Gamma}_\eps)  \max\bigl\{ \| \ul\gamma^\fl \|^{\rT_\bullet^{\ell-1} \Gamma}_\eps,  \| \ul\gamma^\fk \|^{\rT_\bullet^{\ell-1} \Gamma}_\eps \bigr\}^k
 \\
 &\qquad\qquad \qquad \qquad  
 + \bigl( C^k_\cF  + c^k_\cF(\delta) \bigr)  \|\ul\gamma^\fl  - \ul\gamma^\fk \|^{\rT_\bullet^{\ell-1} \Gamma}_\eps   \max\bigl\{ \| \ul\gamma^\fl \|^{\rT_\bullet^{\ell-1} \Gamma}_\eps,  \| \ul\gamma^\fk \|^{\rT_\bullet^{\ell-1} \Gamma}_\eps \bigr\}^{k-1}  
 \Bigr)
\\
&\leq c^{\ell,\delta}_{\rT\cF}(\|\ul\gamma^\fl - \ul\gamma^\fk\|^{\rT^{\ell-1}\Gamma}_\eps) \,  \max\bigl\{ 1, \| \ul\gamma^\fl \|^{\rT_\bullet^{\ell-1} \Gamma}_\eps,  \| \ul\gamma^\fk \|^{\rT_\bullet^{\ell-1} \Gamma}_\eps \bigr\}^\ell 
\end{align*}  
with $c^{\ell,\delta}_{\rT\cF}(x):= C^{\ell-1}_{\rT} \textstyle\sum_{k=1}^{\ell} (  c^k_\cF(x) +  C^k_\cF \, x + c^k_\cF(\delta) \, x )$.
}
\end{rmk}

%
%
%

\vfill
\pagebreak

\section{Examples}
\label{examples}

This section sketches how Examples~\ref{ex:adiabatic} [QF] and [AF] can be described in terms of adiabatic Fredholm families.

\subsection{Quilted Floer Theory and Geometric Composition of Lagrangians}

Consider Lagrangian correspondences ${L_{01} \subset M_0^- \times M_1}$ and $L_{12} \subset M_1^- \times M_2$, where $M_\ell=(M_\ell,\om_{M_\ell})$ are symplectic manifolds and \(M_\ell^- \coloneqq (M_\ell, -\om_{M_\ell})\).
The {\bf geometric composition} of such Lagrangian correspondences is 
$L_{01} \circ L_{12} \coloneqq {\rm Pr}_{02}( L_{01} \times_{M_1} L_{12}) $, the image under the projection ${\rm Pr}_{02}\colon M_0^- \times M_1 \times M_1^- \times M_2 \to M_0^- \times M_2 $ of the fiber product
\begin{align*}
L_{01} \times_{M_1} L_{12} \coloneqq (L_{01} \times L_{12})
 \cap (M_0^- \times \Delta_1 \times M_2) .
\end{align*}
Here \(\Delta_1 \subset M_1 \times M_1^-\) denotes the diagonal.
If \(L_{01}\times L_{12}$ intersects $M_0^- \times \Delta_1 \times M_2\) transversely then \( {\rm Pr}_{02}\colon L_{01} \times_{M_1} L_{12} \to M_0^-\times M_2\) is a Lagrangian immersion, in which case we call \(L_{01}\circ L_{12}\) an {\bf immersed composition}.
In the case of {\bf embedded composition}, where the projection is injective and hence a Lagrangian embedding, some strict monotonicity and Maslov index assumptions allowed \cite{wehrheim_woodward_geometric_composition} to establish an isomorphism of quilted Floer cohomologies
\begin{equation} \label{eq:HFiso} 
HF(\ldots, L_{01},L_{12}, \ldots ) \cong HF(\ldots , L_{01} \circ L_{12}, \ldots) .
\end{equation}
This isomorphism can also be stated in terms of classical Lagrangian Floer homologies for Cartesian products. For example, it identifies 
$HF(L_0 \times L_{12},L_{01} \times L_2)\simeq HF(L_0 \times L_2,L_{01} \circ L_{12})$
by relating Floer trajectories in $M_0\times M_1\times M_2$ Floer trajectories in $M_0\times M_2$, as indicated in Figure~\ref{figure:layer}. 

\begin{figure}[ht]
\begin{picture}(0,0)%
\includegraphics{k_layers.pstex}%
\end{picture}%
\setlength{\unitlength}{1865sp}%
\begingroup\makeatletter\ifx\SetFigFont\undefined%
\gdef\SetFigFont#1#2#3#4#5{%
  \reset@font\fontsize{#1}{#2pt}%
  \fontfamily{#3}\fontseries{#4}\fontshape{#5}%
  \selectfont}%
\fi\endgroup%
\begin{picture}(12252,4524)(136,-4213)
\put(226,-3880){\makebox(0,0)[lb]{{{$L_{01}$}}}}
\put(436,-2600){\makebox(0,0)[lb]{{{$L_2$}}}}
\put(2296,-2186){\makebox(0,0)[lb]{{{$M_2$}}}}
\put(2251,-3086){\makebox(0,0)[lb]{{{$M_1^-$}}}}
\put(2251,-3986){\makebox(0,0)[lb]{{{$M_0$}}}}
\put(6211,-350){\makebox(0,0)[lb]{{{$L_{12}$}}}}
\put(5931,-1750){\makebox(0,0)[lb]{{{$L_0$}}}}
\put(8126,-2321){\makebox(0,0)[lb]{{{$M_2$}}}}
\put(8126,-3446){\makebox(0,0)[lb]{{{$M_0$}}}}
\put(5511,-3250){\makebox(0,0)[lb]{{{$L_{01}\circ L_{12}$}}}}
\put(11900,-50){\makebox(0,0)[lb]{{{$L_{2}$}}}}
\put(11900,-1200){\makebox(0,0)[lb]{{{$L_0$}}}}
\end{picture}%
\caption{Tuples of pseudoholomorphic strips that are related by the isomorphism
$HF(L_0 \times L_{12},L_{01} \times L_2)\simeq HF(L_0 \times L_2,L_{01} \circ L_{12})$}
\label{figure:layer}
\end{figure}

The analytic core of the proof was an adiabatic limit called ``strip-shrinking'', in which a triple of pseudoholomorphic strips coupled by Lagrangian seam conditions degenerates to a pair of strips, via the width of the middle strip shrinking to zero. 
Here the monotonicity and embeddedness assumptions allowed for an implicit exclusion of all bubbling, and to apply the classical adiabatic method (as sketched in the introduction) to transversely cut out moduli spaces of dimension $0$. 

For general (compact or geometrically bounded) symplectic manifolds and general immersed composition\footnote{Unlike embeddedness, the transversality required for immersed composition can always be achieved by a small Hamiltonian perturbation of the Lagrangians.} of the Lagrangian correspondences, \cite{bottman_wehrheim} establishes a Gromov compactness theorem for strip shrinking -- including a full geometric understanding of all bubbling. The algebraic impact of bubbling was then cast into the first author's proposal of the symplectic $(A_\infty,2)$-category \cite{abouzaid-bottman}. So the foundational piece missing for a systematic description of the functorial properties of Fukaya categories is a local finite dimensional description for strip-shrinking moduli spaces, which is now provided by Theorem~\ref{thm:charts} -- coupled with gluing constructions as outlined in Remark~\ref{rmk:gluing} -- whenever the moduli space is described as the zero set of an adiabatic Fredholm family. 

In the following we develop this adiabatic Fredholm description for the archetypical sample case of three Floer strips coupled by Lagrangians $L_{01}, L_{12}$, with the middle strip of width $\eps>0$ being replaced in the $\eps=0$ limit by coupling the two remaining strips via the immersed Lagrangian $L_{01}\circ L_{12}$. 
For that purpose we fix the following data: 
\begin{itemlist}
\item
$(M_0,\omega_0, J_0), (M_1,\omega_1, J_1), (M_2,\omega_2, J_2)$ are symplectic manifolds equipped with compatible almost complex structures. 
\item
$L_0\subset M_0$, $L_{01} \subset M_0^- \times M_1$, $L_{12} \subset M_1^- \times M_2$, $L_2 \subset M_2$ are Lagrangian submanifolds satisfying 
\begin{itemize}
\item 
immersed composition $L_{01} \times L_{12} \; \pitchfork \; M_0^- \times \Delta_1 \times M_2$, 
\item
transverse intersection $L_0 \times L_{12} \; \pitchfork \; L_{01} \times L_2$. 
\end{itemize}
\item  $\ul x^\pm = (x_0^\pm,x_1^\pm, x_2^\pm)\in L_0 \times L_{12} \cap L_{01} \times L_2$ are intersection points of the tuples of Lagrangians.
\end{itemlist}
Then the {\bf strip-shrinking moduli space} $\cM_{[0,1]} := \bigcup_{\eps\in[0,1]} \cM_\eps$ is the union of the following moduli spaces for $\eps>0$ resp.\ $\eps=0$ 
\begin{align*}
\cM_\beps := \left\{
\left.
\left( 
\begin{aligned}
u_0:\bR\times[0,1]\to M_0 \\
u_1:\bR\times[0,\beps]\to M_1 \\
u_2:\bR\times[0,1]\to M_2 
\end{aligned}
\right)
\;\right|\;
\begin{aligned}
\partial_s u_0 + J_0(u_0) \partial_t u_0 = 0  ,\\
\partial_s u_1 + J_1(u_1) \partial_t u_1 = 0  ,\\
\partial_s u_2 + J_2(u_2) \partial_t u_2 = 0  , 
\end{aligned}
\;\;
\begin{aligned}
&u_0|_{t=0}\in L_0, \; u_2|_{t=1}\in L_2 ,\\
& (u_0|_{t=1},u_1|_{t=0})\in L_{01}, \\
& (u_1|_{t=\beps},u_2|_{t=0})\in L_{12} 
\end{aligned}
\right\} /  \; \bR , 
\end{align*}
\begin{align*}
\cM_{\bm{0}} := \left\{
\left.
\left( 
\begin{aligned}
u_0:\bR\times[0,1]\to M_0 \\
u_1:\bR\times{\bm{\{0\}}}\to M_1 \\
u_2:\bR\times[0,1]\to M_2 
\end{aligned}
\right)
\;\right|\;
\begin{aligned}
\partial_s u_0 + J_0(u_0) \partial_t u_0 = 0  ,\\
\\
\partial_s u_2 + J_2(u_2) \partial_t u_2 = 0  , 
\end{aligned}
\;\;
\begin{aligned}
&u_0|_{t=0}\in L_0, \; u_2|_{t=1}\in L_2 ,\\
& (u_0|_{t=1},u_1|_{t={\bm{0}}}) \in L_{01}, \\
& (u_1|_{t={\bm{0}}},u_2|_{t=0}) \in L_{12} 
\end{aligned}
\right\} /  \; \bR .
\end{align*}
Here and throughout we require 
$\lim_{s\to\pm\infty} \bigl(u_0(s,\cdot), u_1(s,\cdot), u_2(s,\cdot)\bigr) = (x_0^\pm,x_1^\pm, x_2^\pm)$ and quotient by simultaneous $\bR$-shifts $\bigl(u_0(\cdot,\cdot), u_1(\cdot,\cdot), u_2(\cdot,\cdot)\bigr)\sim \bigl(u_0(R+\cdot,\cdot), u_1(R+\cdot,\cdot), u_2(R+\cdot,\cdot)\bigr)$ for all $R\in\bR$. 

Note here that for embedded geometric composition $L_{01}\circ L_{12}\subset M_0 \times M_2^-$ the $\eps=0$ moduli space can be identified with a more traditional view of Floer strips in $M_0\times M_2^-$, 
\begin{align*}
\cM_{\bm{0}} \simeq \left\{
\left.
\left( 
\begin{aligned}
u_0:\bR\times[0,1]\to M_0 \\
u_2:\bR\times[0,1]\to M_2 
\end{aligned}
\right)
\;\right|\;
\begin{aligned}
\partial_s u_0 + J_0(u_0) \partial_t u_0 = 0  ,\\
\partial_s u_2 + J_2(u_2) \partial_t u_2 = 0  , 
\end{aligned}
\;\;
\begin{aligned}
&u_0|_{t=0}\in L_0, \; u_2|_{t=1}\in L_2 ,\\
& (u_0|_{t=1},u_2|_{t=0}) \in L_{01}\circ L_{12} 
\end{aligned}
\right\} /  \; \bR , 
\end{align*}
since $u_1:\bR \to M_1$ is determined by the unique lift of $(u_0|_{t=1},u_2|_{t=0}):\bR\to {\rm Pr}_{02}( L_{01}\times_{M_1} L_{12})$ to $(u_0|_{t=1},u_1,u_1,u_2|_{t=0}):\bR\to  L_{01}\times_{M_1} L_{12}$.
And when ${\rm Pr}_{02}: L_{01}\times_{M_1} L_{12} \to M_0\times M_1^-\times M_1\times M_2^-$ is an immersion, then the lift $u_1$ is the appropriate analytic data to keep track of. 

This is the perspective used for the adiabatic analysis in \cite{wehrheim_woodward_geometric_composition}. 
To make the connection with the general description of adiabatic limits in the introduction, note that the moduli spaces for $\eps>0$ can be also be identified with moduli spaces of tuples of maps $(u_0,u_1,u_2):\bR\times[0,1]\to M_0\times M_1 \times M_2$ defined on a common strip of width $1$, solving a deformation of the Cauchy-Riemann PDE. This identification arises by rescaling $u^{\rm new}_1(s,t)=u^{\rm old}_1(s,\eps \, t)$, so that $\partial_t u^{\rm new}_1 = \eps \, u^{\rm old}_1$ results in 
\begin{align*}
\cM_\beps \simeq \left\{
\left.
\left( 
\begin{aligned}
u_0:\bR\times[0,1]\to M_0 \\
u_1:\bR\times[0,1]\to M_1 \\
u_2:\bR\times[0,1]\to M_2 
\end{aligned}
\right)
\;\right|\;
\begin{aligned}
& \partial_s u_0 + J_0(u_0) \partial_t u_0 = 0  ,\\
& \partial_s u_1 + \beps^{-1} J_1(u_1) \partial_t u_1 = 0  ,\\
& \partial_s u_2 + J_2(u_2) \partial_t u_2 = 0  , 
\end{aligned}
\;\;
\begin{aligned}
&u_0|_{t=0}\in L_0, \; u_2|_{t=1}\in L_2 ,\\
& (u_0|_{t=1},u_1|_{t=0})\in L_{01}, \\
& (u_1|_{t=1},u_2|_{t=0})\in L_{12} 
\end{aligned}
\right\} /  \; \bR . 
\end{align*}
The breakthrough idea for describing this strip-shrinking moduli space by an adiabatic Fredholm family is to multiply the PDEs with $J_0$, $\eps J_1$, and $J_2$ to obtain a uniform description for all $0\leq \eps \leq 1$
\begin{align*}
\cM_\beps \simeq \left\{
\left.
\left( 
\begin{aligned}
u_0:\bR\times[0,1]\to M_0 \\
u_1:\bR\times[0,1]\to M_1 \\
u_2:\bR\times[0,1]\to M_2 
\end{aligned}
\right)
\;\right|\;
\begin{aligned}
& \partial_t u_0 - J_0(u_0) \partial_s u_0 = 0  ,\\
& \partial_t u_1 - \beps J_1(u_1) \partial_s u_1 = 0  ,\\
& \partial_t u_2 - J_2(u_2) \partial_s u_2 = 0  , 
\end{aligned}
\;\;
\begin{aligned}
&u_0|_{t=0}\in L_0, \; u_2|_{t=1}\in L_2 ,\\
& (u_0|_{t=1},u_1|_{t=0})\in L_{01}, \\
& (u_1|_{t=1},u_2|_{t=0})\in L_{12} 
\end{aligned}
\right\} /  \; \bR . 
\end{align*}
That is, the idea is to view the family of nonlinear differential operators 
$\partial_t - (J_0, \eps J_1, J_2)\partial_s$ as an adiabatic Fredholm family on a space of maps $\ul u : \bR\times[0,1]\to M_0\times M_1 \times M_2$ with Lagrangian boundary conditions, limits $\ul x^\pm$, and a suitable local slice condition for the action of $\bR$. 

In order to be able to import the estimates from \cite{wehrheim_woodward_geometric_composition} and \cite{bottman_wehrheim}) we need one more reformulation of the moduli spaces as tuples of maps to $M_{02}:=M_0^-\times M_2$ and $\hat M := M_{0211}:=M_0\times M_2^-\times M_1^-\times M_1$, with almost complex structures $J_{02}:=(-J_0,J_2)$ and  $\hat J:=(J_0,-J_2,-J_1,J_1)$, which 
sets up the analysis to effectively utilize the transversality of the intersection 
$L_{01} \times L_{12} \, \pitchfork \, M_0^- \times \Delta_1 \times M_2$.
This is described in detail in \cite[Figure 4]{wehrheim_woodward_geometric_composition}, resulting in
\begin{align*}
\cM_\beps \simeq \left\{
\left( 
\begin{aligned}
u_{02}:\bR\times[0,1]\to M_{02} \\
\hat u = (\hat u_{02}, \hat u_{11}) :\bR\times[0,1]\to \hat M
\end{aligned}
\right)
\;\left|\;
\begin{aligned}
& \partial_t u_{02} - J_{02}(u_{02}) \partial_s u_{02} = 0  ,\\
& \partial_t \hat u - \beps \, \hat J(\hat u) \partial_s \hat u = 0  , 
\end{aligned}
\;\;
\begin{aligned}
& u_{02}|_{t=1} \in L_0\times L_2 ,\\
& u_{02}|_{t=0} =\hat u_{02} |_{t=0} ,\\
& \hat u_{11} |_{t=0} \in \Delta_1, \\
& \hat u |_{t=1} \in L_{0211}
\end{aligned}
\right.
\right\} /  \; \bR . 
\end{align*}
Here $L_{0211}\subset\hat M=M_0\times M_2^-\times M_1^-\times M_1$ denotes the Lagrangian obtained by appropriate permutation of the factors in $L_{01} \times L_{12}\subset M_0^-\times M_1\times M_1^- \times M_2$. 
Now fix a solution $\ul u=\bigl(u_{02},\hat u = (\hat u_{02},\hat u_{11}) \bigr)$ for $\eps=0$ -- noting that its second component $\hat u=\hat u(s)$ is independent of $t\in[0,1]$. 
Then we will describe a neighbourhood of $[\ul u] \in \cM_{[0,1]}$ as the zero set
$$
\textstyle\bigcup_{\eps\in\Delta} \{\eps\}\times \cF_\eps^{-1}(0)  \;\; \overset{\sim}{\longrightarrow}\;\; \cU \;\subset\; \bigcup_{\eps\in[0,1]} \cM_\eps  \;=\;  \cM_{[0,1]}
$$
of an adiabatic Fredholm family, starting with the domain space\footnote{Here and throughout we write $\rT_u L$ as a short hand for $u^*\rT L$.} 
\begin{align*}
\Gamma:= \left\{
\left( 
\begin{aligned}
&\xi_{02} \\
&\hat\xi = (\hat\xi_{02},\hat\xi_{11})
\end{aligned}
\right)
\in
\begin{aligned}
\cC^\infty(\bR\times[0,1], \rT_{u_{02}} M_{02}) \\
\times \; \cC^\infty(\bR\times[0,1], \rT_{\hat u} \hat M)
\end{aligned}
\; \left|\; 
\begin{aligned}
& \xi_{02}|_{t=1} \in \rT_{u_{02}|_{t=1}}(L_0\times L_2) ,\\
& \xi_{02}|_{t=0} =\hat \xi_{02} |_{t=0} ,\\
& \hat \xi_{11} |_{t=0} \in \rT_{\hat u_{11}}\Delta_1, \\
& \hat \xi |_{t=1} \in\rT_{\hat u} L_{0211}  , 
\end{aligned}
\;\;
\begin{aligned}
& \text{[slice]} ,\\ 
& \text{[decay]} .
\end{aligned}
\right.
\right\}
\end{align*}
Here [slice] is a slicing condition such as $\xi_{02}(z_0) \in H_{02}$ at a point where $\partial_s u_{02}(z_0)\ne 0$ is nonzero, thus has a codimension $1$ complement  in  $\rT_{u_{02}(z_0)}= \bR \partial_s u_{02}(z_0) \oplus H_{02}$. 
And [decay] is an exponential decay condition such as 
$e^{\delta |R|} \| \xi_{02}|_{ [R,R+1]\times [0,1]} \|_{\cC^2} + \| \hat\xi |_{ [R,R+1]\times [0,1]} \|_{\cC^2} \to 0 $ as $R\to\pm\infty$ with some $\delta>0$ that is automatic for solutions of the PDEs (which requires the uniform exponential decay of \cite[Lemma 3.2.3]{wehrheim_woodward_geometric_composition} 
) and will ensure that we can equip $\Gamma$ with well-defined norms. 

We specify a convex $\cC^0$-open neighbourhood of $0=0_\Gamma\in\Gamma$ by restricting to the injectivity radius of exponential maps $e_{02}: \rT_{u_{02}} M_{02} \to M_{02}$ and $\hat e: \rT_{\hat u} \hat M \to \hat M$, 
$$
\cV_\Gamma \,:=\;  \bigl\{ (\xi_{02}, \hat\xi ) \in \Gamma \,\big|\,  \| \xi_{02} \|_{\cC^0} , \| \hat\xi \|_{\cC^0} < \text{injectivity radius} \bigr\} . 
$$
The target space has no boundary or slicing conditions, just the same exponential decay condition
\begin{align*}
\Omega:= \left\{
\left.\left( 
\begin{aligned}
&\eta_{02} \\
&\hat\eta 
\end{aligned}
\right)
\in
\begin{aligned}
\cC^\infty(\bR\times[0,1], \rT_{u_{02}} M_{02}) \\
\times \; \cC^\infty(\bR\times[0,1], \rT_{\hat u} \hat M)
\end{aligned}
\; \right|\; 
\begin{aligned}
& \text{[limit]}
\end{aligned}
\right\}.
\end{align*}
With that we can specify the maps $\cF_\eps : \cV_\Gamma \to \Omega$ for $\eps\in\Delta:=[0,1]$ by 
\begin{align*}
\cF_\eps (\xi_{02}, \hat\xi ) 
\,:=\; 
\bigl( \; \Phi_{02}(\xi_{02})^{-1}  ( \partial_t - J_{02} \partial_s ) \, e_{02}(\xi_{02}) 
\;,\; 
\hat\Phi(\hat\xi)^{-1}  ( \partial_t - \eps \hat J \partial_s ) \, \hat e(\hat\xi)
\; \bigr) , 
\end{align*}
where $\Phi(\xi)$ denotes the parallel transport
$\rT_{u}M\to \rT_{e (\xi)}M$ along the path $\tau\mapsto e(\tau\xi)$. 
Next, we specify norms on $\Gamma$ and $\Omega$ for $\eps\in(0,1]$ by pullback of the $H^2\cap W^{1,4}$ and $H^1\cap L^4$ norms that were introduced in \cite[\S3.1]{wehrheim_woodward_geometric_composition} to obtain uniform quadratic estimates, 
\begin{align*}
\|(\xi_{02}, \hat\xi) \|^\Gamma_{\eps>0} 
&\,:=\;  \| \xi_{02}\|_{H^2} 
+ \eps^{1/2} \bigl( \| \hat\xi\|_{L^2} +  \| \nabla_s\hat\xi\|_{L^2}  +  \| \nabla_s^2\hat\xi\|_{L^2}   \bigr) 
+ \eps^{1/4} \bigl( \| \hat\xi\|_{L^4} +  \| \nabla_s\hat\xi\|_{L^4}   \bigr) 
\\
&\qquad \qquad 
+ \eps^{-1/2} \bigl( \| \nabla_t\hat\xi\|_{L^2} +  \| \nabla_t\nabla_s\hat\xi\|_{L^2}  +  \| \nabla_s\nabla_t \hat\xi\|_{L^2}   \bigr)
+ \eps^{-3/2}  \| \nabla_t^2\hat\xi\|_{L^2}  + \eps^{-3/4}  \| \nabla_t\hat\xi\|_{L^4}  ,
\\
\|(\eta_{02}, \hat\eta) \|^\Omega_{\eps>0}  
&\,:=\;  \| \eta_{02}\|_{H^1} 
+ \eps^{-1/2} \bigl( \| \hat\eta\|_{L^2} +  \| \nabla_s\hat\eta\|_{L^2}   \bigr) 
+ \eps^{-3/2} \| \nabla_t\hat\eta\|_{L^2}  
+ \eps^{-3/4}  \| \hat\eta\|_{L^4}  .
\end{align*}
The completions w.r.t.\ these norms will be the same Banach spaces for all $\eps>0$ -- they are just equipped with $\eps$-dependent equivalent norms --  
\begin{align*}
\ol\Gamma_{\eps>0} 
&\;\subset\;
[H^2\cap W^{1,4}](\bR\times[0,1], \rT_{u_{02}} M_{02}) \; \times \; [H^2\cap W^{1,4}](\bR\times[0,1], \rT_{\hat u} \hat M) , \\
\ol\Omega_{\eps>0} 
&\;=\;
[H^1\cap L^4](\bR\times[0,1], \rT_{u_{02}} M_{02}) \; \times \; [H^1\cap L^4](\bR\times[0,1], \rT_{\hat u} \hat M) ,
\end{align*}
where $\ol\Gamma_{\eps>0}$ is the closed subspace specified by the Lagrangian boundary conditions and slice condition. 
For $\eps=0$ we developed norms to obtain Fredholm properties for $\cF_0$ guided by the following. 

\begin{rmk}[The auxiliary isomorphism for $\eps=0$]  \label{rmk:auxiso}
The second component of $\rD\cF_0(0)$ is $\hat\xi\mapsto \nabla_t\hat\xi$, which -- with the decoupled boundary conditions in $\Gamma$ -- induces isomorphisms for any $k\in\bN_0$,  $s \geq 0$ 
\begin{align*}
\ti \nabla_t \,:\; 
&\bigl\{ \hat\xi \in W^{k+1,1}([0,1],H^s(\bR, \rT_{\hat u} \hat M))  \,\big|\,  \hat \xi_{11} |_{t=0} \in \rT_{\hat u_{11}}\Delta_1, \hat \xi |_{t=1} \in\rT_{\hat u} L_{0211}  \bigr\} \\
&\qquad\qquad\qquad\qquad\qquad\qquad\qquad
\to \; W^{k,1}([0,1],H^s(\bR, \rT_{\hat u} \hat M)) \times H^s(\bR, \rT_{u_{02}|_{t=0}} L_{02}) \\
& \qquad\qquad\qquad\;
\hat\xi = (\hat\xi_{02}, \hat\xi_{11} ) 
\qquad \;\mapsto\; \qquad  \bigl( \; \nabla_t \hat \xi \;,\; \pi_{02} \, \hat\xi_{02}|_{t=0}  \; \bigr). 
\end{align*}
\rm
Indeed, its inverse is $(\hat \eta , \lambda) \mapsto \hat\xi(t) = {\rm Pr}_{\rT L_{02}}^{-1}\lambda + \int_0^t \hat\eta(x) \rd x - (\pi_{02}^\perp \times \pi_{11}) \bigl(\int_0^1 \hat\eta(x) \rd x\bigr)$. This uses the isomorphism \eqref{eq:immersed tangent spaces} and the projections $\pi_{02}: \rT_{u_{02}|_{t=0}} M_{02} \to \rT_{u_{02}|_{t=0}}  L_{02}$ 
and $\pi_{02}^\perp \times \pi_{11} : \rT_{\hat u} \hat M \to  (\rT_{u_{02}|_{t=0}}  L_{02})^\perp \times \rT_{\hat u_{11}}\Delta_1$ to a complement of $\rT_{\hat u} (M_{02}\times \Delta_1) \cap \rT_{\hat u}  L_{0211} = \rT_{\hat u} \ti L_{02}\subset \rT_{\hat u} (M_{02}\times \Delta_1)$ -- all of which are
constructed in the following remark for immersed geometric compositions.
\end{rmk}

\begin{rmk}[Notation and Splittings for the Lagrangian Immersion]  \label{rmk:immersion} \rm
We denote by $\ti L_{02} := M_{02}^-\times\Delta_1 \cap L_{0211}\subset M_{0211}=M_0\times M_2^-\times M_1^-\times M_1$ the fibre product $L_{01} \times_{M_1} L_{12} \subset M_0^- \times M_1\times M_1^- \times M_2$ after permuting components.
Then the Lagrangian immersion is given by projection to the first factors in $M_{0211}=M_{02}^-\times M_{11}$,  
$$
{\rm Pr}_{L_{02}} \,:\; 
\ti L_{02}  \;\to\; L_{02} \,:=\; L_{01} \circ L_{12}  \;\subset\; M_{02}  . 
$$
Next, note that $\hat u :\bR\to \ti L_{02}$ is a lift of ${\rm Pr}_{L_{02}}\circ\hat u = u_{02}|_{t=0}  :\bR\to L_{02}$. So if ${\rm Pr}_{L_{02}}$ is an embedding, then the linearizations ${\rm Pr}_{\rT L_{02}} := \rT {\rm Pr}_{L_{02}}: \rT \ti L_{02} \to \rT L_{02}$ induce isomorphisms 
\begin{equation}\label{eq:immersed tangent spaces}
{\rm Pr}_{\rT L_{02}} \,:\; 
\rT_{\hat u} \ti L_{02}  \;\overset{\sim}{\to}\; \rT_{u_{02}|_{t=0}} L_{02}  . 
\end{equation}
When ${\rm Pr}_{L_{02}}$ is an immersion, then ${\rm Pr}_{\rT L_{02}}(\rT_{\hat u(s)} \ti L_{02})\subset \rT_{u_{02}(s,0)} M_{02}$ are still Lagrangian subspaces for all $s\in\bR$ (see  \cite[Lemma 2.0.5]{wehrheim2010quilted}), so that it makes sense to define the immersed tangent spaces
$\rT_{u_{02}(s,0)} L_{02}:= {\rm Pr}_{\rT L_{02}}(\rT_{\hat u(s)} \ti L_{02})$ -- which makes \eqref{eq:immersed tangent spaces} an isomorphism by definition. 
This allows us to introduce an orthogonal splitting for the  immersed Lagrangian tangent spaces 
$$
(\pi_{02}\times \pi_{02}^\perp) \, :\;  
\rT_{u_{02}|_{t=0}} M_{02} 
\;\overset{\sim}{\to}\;  
\rT_{u_{02}|_{t=0}} L_{02} \times  (\rT_{u_{02}|_{t=0}} L_{02})^\perp  . 
$$
Now another consequence of immersed composition \cite[(18)]{wehrheim_woodward_geometric_composition} is the direct sum
\begin{align*}
\rT_{\hat u} \hat M
&\;=\;  
C_{\hat u}  \;\oplus\; \rT_{\hat u} \ti L_{02}
 \;\oplus\; (\rT_{u_{02}|_{t=0}}  L_{02})^\perp \times \{0\} 
 \;\oplus\; \{0\}  \times \rT_{\hat u_{11}}\Delta_1 
\\
\text{with }\qquad 
\rT_{\hat u}  L_{0211} & \;=\;  C_{\hat u}  \;\oplus\; \rT_{\hat u} \ti L_{02} , \\
\rT_{\hat u} (M_{02}\times \Delta_1) & \;=\;  \rT_{\hat u} \ti L_{02}   \;\oplus\; (\rT_{u_{02}|_{t=0}}  L_{02})^\perp \times \{0\} 
 \;\oplus\;   \{0\}  \times \rT_{\hat u_{11}}\Delta_1   . 
\end{align*}
This yields an explicit projection $\pi_{02}^\perp \times \pi_{11} : \rT_{\hat u} \hat M \to  (\rT_{u_{02}|_{t=0}}  L_{02})^\perp \times \rT_{\hat u_{11}}\Delta_1$ to a complement of $\rT_{\hat u} (M_{02}\times \Delta_1) \cap \rT_{\hat u}  L_{0211} = \rT_{\hat u} \ti L_{02}\subset \rT_{\hat u} (M_{02}\times \Delta_1)$. 
Here we fix a global projection to the diagonal such as
$\pi_{11}: \rT M_{11}\to (\rT\Delta_1)^\perp , (\xi_1,\xi'_1) \mapsto \frac 12 (\xi_1+\xi'_1 , \xi_1+\xi'_1 )$ with $M_{11}:=M_1^-\times M_1$. 
\end{rmk}

Based on Remark~\ref{rmk:auxiso} we will show Fredholm properties for $\cF_0$ in Lemma~\ref{lem:fred 0} -- after completion with respect to the norms
\begin{align*}
\|(\xi_{02}, \hat\xi) \|^\Gamma_{0} 
&\,:=\;  \| \xi_{02}\|_{H^{3/2}} 
+ \| \hat\xi\|_{L^1([0,1],H^1(\bR))} +  \| \nabla_t\hat\xi\|_{L^1([0,1],H^1(\bR))}  +  \| \nabla_t^2\hat\xi\|_{L^1([0,1],L^2(\bR))}  ,
\\
\|(\eta_{02}, \hat\eta) \|^\Omega_{0}  
&\,:=\;  \| \eta_{02}\|_{H^{1/2}} 
+ \| \hat\eta\|_{L^1([0,1],H^1(\bR))} +  \| \nabla_t\hat\eta\|_{L^1([0,1],L^2(\bR))}   .
\end{align*}
These norms are chosen to complete $\Gamma$ and $\Omega$ to the Banach spaces
\begin{align*}
\ol\Gamma_{0} 
&\;\subset\;
H^{3/2}(\bR\times[0,1], \rT_{u_{02}} M_{02}) \; \times \; 
\bigl( W^{1,1}([0,1],H^1(\bR, \rT_{\hat u} \hat M))\cap W^{2,1}([0,1],L^2(\bR, \rT_{\hat u} \hat M)) \bigr)  , \\
\ol\Omega_{0} 
&\;=\;
H^{1/2}(\bR\times[0,1], \rT_{u_{02}} M_{02}) \; \times \; 
\bigl( L^1([0,1],H^1(\bR, \rT_{\hat u} \hat M))\cap W^{1,1}([0,1],L^2(\bR, \rT_{\hat u} \hat M)) \bigr)  ,
\end{align*}
where $\ol\Gamma_{0}$ is the closed subspace specified by the Lagrangian boundary conditions and slice condition.
Now we need to understand the kernel and cokernel of the linearization $\rD\cF_0(0)$ at $\cF_0(0)=0$ 
to prove its Fredholm property and -- towards completing the data of an adiabatic Fredholm family -- construct an appropriate projection $\pi_\fK:\Gamma\to\fK:=\ker \rD\cF_0(0)$ and representation $\coker\rD\cF_0(0) \simeq \fC \subset \Omega$ by a subspace of smooth sections. 

\begin{lemma} \label{lem:fred 0}
The continuous extension of $\ol{\rD\cF_0(0)}:\ol\Gamma_0\to\ol\Omega_0$ to the completions of 
$(\Gamma,\|\cdot\|^\Gamma_0)$ and $(\Omega,\|\cdot\|^\Omega_0)$ is a Fredholm operator with 
kernel 
\begin{align*}
\ker \ol{\rD\cF_0(0)}
&\;=\;\ker \rD\cF_0(0)  \;\simeq\; \ker \rD_{u_{02}}^{\rm Lag}
\\
&\;=\; \bigl\{ (\fk_{02}, \hat\fk) \,\big|\, \fk_{02} \in \ker \rD_{u_{02}}^{\rm Lag}, \hat\fk= {\rm Pr}_{\rT L_{02}}^{-1} \fk_{02}|_{t=0} \bigr\}  \;=:\,  \fK \;\subset\;\Gamma 
\end{align*}
and cokernel $\coker \ol{\rD\cF_0(0)} \simeq \coker \ker \rD_{u_{02}}^{\rm Lag} \simeq (\im \rD_{u_{02}}^{\rm Lag})^{\perp_{L^2}}$ 
represented by
$$
\fC \,:=\; \bigl\{ (\fc_{02}, 0) \,\big|\, \fc_{02} \in (\im \rD_{u_{02}}^{\rm Lag})^{\perp_{L^2}} \bigr\}  \;\subset\;\Omega  . 
$$
Here $\rD_{u_{02}}^{\rm Lag}$ is the linearized operator of the original $\eps=0$ PDE -- the Cauchy-Riemann equation for strips in $M_{02}$ with Lagrangian boundary conditions, 
\begin{align*}
& \rD_{u_{02}}^{\rm Lag}\,:\; \bigl\{ \xi_{02} \in H^1(\bR\times[0,1],\rT_{u_{02}} M_{02}) \,\big|\,   
 \xi_{02}|_{t=0} \in \rT_{u_{02}|_{t=0}}L_{02}
 ,  \xi_{02}|_{t=1} \in \rT_{u_{02}|_{t=1}}(L_0\times L_2) 
\bigr\} 
\\
&\qquad\qquad\qquad\qquad\qquad\qquad\qquad\qquad\; 
\;\to\; \qquad L^2(\bR\times[0,1],\rT_{u_{02}} M_{02}) 
 \\
& 
\qquad\qquad\qquad\qquad\qquad\qquad\quad
\xi_{02}\qquad \;\mapsto\;  \nabla_t \xi_{02} - J_{02}(u_{02}) \nabla_s \xi_{02} - \nabla_{\xi_{02}} J_{02} \partial_s u_{02} . 
\end{align*}
In particular, this identifies the Fredholm index of $\ol{\rD\cF_0(0)}$ with the index of $\rD_{u_{02}}^{\rm Lag}$.
Moreover, for any projection $\pi_{\fK_{02}}: H^1(\ldots) \to\fK_{02}:= \ker \rD_{u_{02}}^{\rm Lag}$ and choice of norm on $\fK_{02}$ we obtain a stabilized Fredholm estimate with some constant $C_0$
for all $(\xi_{02}, \hat\xi)\in\ol\Gamma_0, (\fc_{02},0)\in\fC$,  
$$
\| (\xi_{02}, \hat\xi) \|^\Gamma_0 + \| (\fc_{02},0) \|^\Omega_0 \leq C_0 \bigl( \|\pi_{\fK_{02}}(\xi_{02}) \|^{\fK_{02}} + \| \ol{\rD\cF_0(0)} (\xi_{02}, \hat\xi) - (\fc_{02},0) \|^\Omega_0  \bigr). 
$$
\end{lemma}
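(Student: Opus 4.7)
The plan is to exploit the block structure $\rD\cF_0(0)\colon(\xi_{02},\hat\xi)\mapsto(\rD^{\rm CR}_{u_{02}}\xi_{02},\nabla_t\hat\xi)$, where the two blocks are coupled only via the boundary condition $\xi_{02}|_{t=0}=\hat\xi_{02}|_{t=0}$ and $\rD^{\rm CR}_{u_{02}}$ denotes the Cauchy--Riemann linearization on $M_{02}$ \emph{without} an explicit Lagrangian condition at $t=0$. Boundedness on the $\ol\Gamma_0\to\ol\Omega_0$ completions is routine from the norm definitions, and the auxiliary isomorphism $\ti\nabla_t$ of Remark~\ref{rmk:auxiso} will fully invert the $\hat\xi$-block once one prescribes the boundary value $\lambda:=\pi_{02}\hat\xi_{02}|_{t=0}\in\rT_{u_{02}|_{t=0}}L_{02}$, reducing the Fredholm analysis to the classical theory of $\rD_{u_{02}}^{\rm Lag}$.

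For the kernel, $\nabla_t\hat\fk=0$ forces $\hat\fk$ to be $t$-constant, and the boundary conditions $\hat\fk|_{t=1}\in\rT_{\hat u}L_{0211}$ and $\hat\fk_{11}|_{t=0}\in\rT_{\hat u_{11}}\Delta_1$ confine it to $\rT_{\hat u}L_{0211}\cap\rT_{\hat u}(M_{02}\times\Delta_1)=\rT_{\hat u}\ti L_{02}$; the coupling then promotes $\fk_{02}$ into the domain of $\rD_{u_{02}}^{\rm Lag}$ and forces $\hat\fk={\rm Pr}_{\rT L_{02}}^{-1}\fk_{02}|_{t=0}$, while smoothness of both components (hence $\fK\subset\Gamma$) follows from ODE and elliptic regularity. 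For surjectivity modulo $\fC$ and the stabilized estimate, given $(\eta_{02},\hat\eta)\in\ol\Omega_0$ I would first apply Remark~\ref{rmk:auxiso} in the appropriate Sobolev scale to produce, for a chosen $\lambda\in H^1(\bR,\rT L_{02})$, a unique $\hat\xi$ solving $\nabla_t\hat\xi=\hat\eta$ with $\pi_{02}\hat\xi_{02}|_{t=0}=\lambda$ and the required $\Gamma$-boundary conditions, controlled by $\|\hat\eta\|^\Omega_0+\|\lambda\|_{H^1}$. The trace $\hat\xi_{02}|_{t=0}=\lambda-\pi_{02}^\perp\bigl(\int_0^1\hat\eta_{02}\,\rd t\bigr)$ then has a normal component determined by $\hat\eta$; choosing a bounded extension $\chi\in H^{3/2}(\bR\times[0,1],\rT_{u_{02}}M_{02})$ with $\chi|_{t=0}=\hat\xi_{02}|_{t=0}$ and $\chi|_{t=1}=0$ and substituting $\xi_{02}':=\xi_{02}-\chi$ yields an element in the domain of $\rD_{u_{02}}^{\rm Lag}$ satisfying $\rD_{u_{02}}^{\rm Lag}\xi_{02}'=\eta_{02}-\rD^{\rm CR}\chi$.

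I would then invoke the classical stabilized Fredholm estimate for $\rD_{u_{02}}^{\rm Lag}\colon H^{3/2}\to H^{1/2}$ (obtained from the standard $H^1\to L^2$ version by elliptic regularity, with kernel $\fK_{02}$ and cokernel represented by the smooth subspace $\fC_{02}:=(\im\rD_{u_{02}}^{\rm Lag})^{\perp_{L^2}}$), namely
\[
\|\xi_{02}'\|_{H^{3/2}}+\|\fc_{02}\|_{H^{1/2}}\;\leq\; C\bigl(\|\pi_{\fK_{02}}\xi_{02}'\|^{\fK_{02}}+\|\rD_{u_{02}}^{\rm Lag}\xi_{02}'-\fc_{02}\|_{H^{1/2}}\bigr).
\]
Setting $\lambda:=\pi_{02}\xi_{02}|_{t=0}$ makes the coupling automatic, and combining this estimate with the bounds on $\hat\xi$, on the extension $\chi$, and on the trace and projections $\pi_{02},\pi_{02}^\perp\times\pi_{11}$ gives the stated Fredholm stabilization estimate; the cokernel identification $\coker\ol{\rD\cF_0(0)}\simeq\fC_{02}$ represented by $\fC=\{(\fc_{02},0):\fc_{02}\in\fC_{02}\}\subset\Omega$ and the index formula then follow. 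The main obstacle is the Sobolev-scale bookkeeping between the $M_{02}$-block (in $H^{3/2}/H^{1/2}$) and the $\hat M$-block (in mixed $W^{k,1}([0,1],H^s(\bR))$-norms): one must carefully use the trace theorem $H^{3/2}(\bR\times[0,1])\to H^1(\bR)$ at $t=0$ and a matching bounded extension vanishing at $t=1$, and verify that the projections $\pi_{02}$, $\pi_{02}^\perp\times\pi_{11}$ and the inverse ${\rm Pr}_{\rT L_{02}}^{-1}$ are bounded between the relevant spaces.
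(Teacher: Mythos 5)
Your plan is close in spirit to the paper's, and all the right tools appear (the block structure, the inverse of $\ti\nabla_t$ from Remark~\ref{rmk:auxiso}, the classical Fredholm theory of $\rD_{u_{02}}^{\rm Lag}$ on the $H^{3/2}\to H^{1/2}$ scale, and the soft index-zero argument for the stabilized estimate). The kernel identification and the cokernel representation are handled exactly as the paper does. However, there is a genuine gap in the surjectivity step, concentrated in the choice of the cutoff extension $\chi$ and the role of $\lambda$.

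You prescribe $\lambda$ first, construct $\hat\xi=\ti\nabla_t^{-1}(\hat\eta,\lambda)$, and then choose $\chi$ with $\chi|_{t=0}=\hat\xi_{02}|_{t=0}$ (the \emph{full} trace, including the tangential part $\lambda$) and $\chi|_{t=1}=0$. After solving the Lagrangian boundary value problem $\rD_{u_{02}}^{\rm Lag}\xi_{02}'=\eta_{02}-\fc_{02}-\rD_{u_{02}}\chi$ you set $\xi_{02}:=\xi_{02}'+\chi$. But the Lagrangian condition only gives $\xi_{02}'|_{t=0}\in\rT_{u_{02}|_{t=0}}L_{02}$, not $\xi_{02}'|_{t=0}=0$, so $\xi_{02}|_{t=0}=\xi_{02}'|_{t=0}+\hat\xi_{02}|_{t=0}\ne\hat\xi_{02}|_{t=0}$ in general, and the coupling constraint $\xi_{02}|_{t=0}=\hat\xi_{02}|_{t=0}$ is violated by the tangential mismatch $\xi_{02}'|_{t=0}$. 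Your closing remark \lq\lq Setting $\lambda:=\pi_{02}\xi_{02}|_{t=0}$ makes the coupling automatic\rq\rq\ is the right fix but is incompatible with having chosen $\lambda$ at the outset. The construction closes only if you (i) observe that the normal component $\nu:=\pi_{02}^\perp\hat\xi_{02}|_{t=0}=-\pi_{02}^\perp\int_0^1\hat\eta_{02}$ is $\lambda$-independent, (ii) take $\chi$ to extend only $\nu$ (with vanishing tangential trace at $t=0$ and $\chi|_{t=1}=0$), (iii) solve the Lagrangian BVP for $\xi_{02}'$ and set $\xi_{02}:=\xi_{02}'+\chi$, and (iv) only then define $\lambda:=\pi_{02}\xi_{02}|_{t=0}$ and $\hat\xi:=\ti\nabla_t^{-1}(\hat\eta,\lambda)$, verifying $\hat\xi_{02}|_{t=0}=\xi_{02}|_{t=0}$. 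With this reordering your argument works.

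The paper sidesteps the cutoff extension altogether by introducing the auxiliary operator $\ti\rD_{u_{02}}^{\rm Lag}:\xi_{02}\mapsto(\rD_{u_{02}}\xi_{02},\pi_{02}^\perp\xi_{02}|_{t=0})$, which encodes the $t=0$ Lagrangian condition as a \emph{natural} (projection-of-trace) boundary component of the output rather than as an essential constraint on the domain. This cleanly separates the tangential and normal parts of the trace and makes the coupled system solvable in one substitution, giving the direct-sum decomposition without choosing an arbitrary extension. Your cutoff-extension route buys familiarity (it reduces to the standard Lagrangian BVP one already knows) at the cost of the extra bookkeeping above; the paper's route buys a one-shot decoupling and a simpler proof of the stabilized Fredholm estimate (the operator $(\xi_{02},\hat\xi,\fc_{02})\mapsto(\pi_{\fK_{02}}\xi_{02},\ol{\rD\cF_0(0)}(\xi_{02},\hat\xi)-(\fc_{02},0))$ is bounded, surjective, Fredholm of index $0$, hence has a bounded inverse — no estimate-chasing needed).
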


The proof is deferred to the end of this section. 
Comparing with the abstract formulation in Definition~\ref{def:fredholm}, the corresponding projection to the kernel
$\pi_\fK:\Gamma\to\fK$ can then be obtained by $\pi_\fK(\xi_{02}, \hat\xi):=\bigl( \fk_{02}=\pi_{\fK_{02}}(\xi_{02}), {\rm Pr}_{\rT L_{02}}^{-1} \fk_{02}|_{t=0} \bigr)$. These particular constructions of projection to the kernel and cokernel representation only interacting with the first component in $\Gamma$ are crucial to obtain the $\eps$-dependent properties of an adiabatic Fredholm family in the following announcement of a theorem -- whose full proof is in the process of being written up in the general context of multi-strip-shrinking in any quilt.

\begin{theorem} \label{thm:QF}
Given a choice of projection $\pi_\fK:\Gamma\to\fK$ and cokernel representation $\fC\subset\Omega$ 
as in Lemma~\ref{lem:fred 0}, there exist constants $C_0, C_1, C_\fC \in (0,\infty)$ and continuous functions
$c : [0,\infty) \to [0,\infty)$, $c_\Delta : \Delta \to [0,\infty)$
that supplement the above data\footnote{
As defined, the norm bounds $\|\cdot\|^\bullet_0\leq \|\cdot\|^\bullet_\eps$ will hold up to a uniform constant. Alternatively, we can meet these bounds precisely by multipling the norms $\|\cdot\|^\Gamma_0$ and $\|\cdot\|^\Omega_0$ with fixed constants.} 
$\bigl( (\cF_\eps:\cV_\Gamma\to \Omega )_{\eps\in[0,1]} ,\|\cdot\|^\Gamma_\eps, \|\cdot\|^\Omega_\eps \bigr)$ 
to form a regularizing $\cC^1$-regular adiabatic Fredholm family as in Definition~\ref{def:fredholm}, \ref{def:adiabatic C-l}. 
\end{theorem}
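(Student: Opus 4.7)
The strategy is to verify the long list of axioms in Definitions~\ref{def:fredholm} and \ref{def:adiabatic C-l} in three groups. The structural items are built into the construction: \textbf{[Openness of Domain]} holds by definition of $\cV_\Gamma$; \textbf{[Lower Bound on Norms]} is checked term-by-term using the embeddings $H^2\hookrightarrow H^{3/2}$ and $H^1\hookrightarrow H^{1/2}$ for the $M_{02}$-components, and Cauchy--Schwarz in $t\in[0,1]$ for the $\hat M$-components (the Bochner-type norms in $\|\cdot\|^\bullet_0$ are dominated by the pointwise-in-$t$ terms in $\|\cdot\|^\bullet_\eps$), after rescaling $\|\cdot\|^\bullet_0$ as permitted by the footnote; \textbf{[Uniform Cokernel Bound]} holds since $\fC$ is a finite-dimensional subspace of smooth sections supported in the $M_{02}$-component, where all $\eps$-dependent norms collapse to the $\eps$-independent $H^1$-norm on a finite-dimensional space; and \textbf{[Near-Solution]} follows from $\cF_0(0)=0$ together with $\cF_\eps(0)-\cF_0(0)=(0,-\eps\hat J(\hat u)\partial_s\hat u)$ being $O(\eps^{1/2})$ in $\|\cdot\|^\Omega_\eps$, since $\hat u$ is $t$-independent, smooth, and exponentially decaying in $s$.

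For the Fredholm and index data, classical Cauchy--Riemann theory on a strip with transverse Lagrangian boundary conditions makes $\rD\cF_\eps(0)$ Fredholm on the (common) Banach space $\ol\Gamma_{\eps>0}$ for every $\eps>0$. The Fredholm property at $\eps=0$, the formulas for $\fK$ and $\fC$, and the \textbf{[$\eps{=}0$ Fredholm Estimate]} are exactly Lemma~\ref{lem:fred 0}. Constancy of the index \textbf{[Index]} follows from identifying both $\ind\ol{\rD\cF_\eps(0)}$ and $\ind\ol{\rD\cF_0(0)}$ with $\ind\rD^{\rm Lag}_{u_{02}}$: for $\eps>0$ the $\hat M$-component with its Lagrangian boundary conditions $\hat u_{11}|_{t=0}\in\Delta_1$ and $\hat u|_{t=1}\in L_{0211}$, coupled via $\xi_{02}|_{t=0}=\hat\xi_{02}|_{t=0}$, contributes no net index thanks to the immersed-Lagrangian splitting \eqref{eq:immersed tangent spaces} and the homotopy invariance of the Fredholm index (deform $\eps\hat J$ to $0$ keeping all boundary conditions fixed); for $\eps=0$ this identification is part of Lemma~\ref{lem:fred 0}. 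Fibrewise $\cC^1$-regularity is the standard smooth dependence of the nonlinear Cauchy--Riemann operator on its argument in the chosen Sobolev setting.

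The analytic heart of the proof is the package of $\eps$-uniform estimates, imported from the strip-shrinking analysis of \cite[\S3--4]{wehrheim_woodward_geometric_composition} (together with the figure-eight compactness results of \cite{bottman_wehrheim}). \textbf{[Quadratic-ish Estimate]}, \textbf{[Uniform Continuity of $\rD\cF_\eps$]}, and \textbf{[Uniform Bound on $\rD\cF_\eps(0)$]} are precisely the Moser-type estimates for which the $H^2\cap W^{1,4}$/$H^1\cap L^4$ norm choice was originally devised, with modulus of continuity $c$ proportional to the $\cC^0$-size of the base point inside the injectivity radius. \textbf{[Continuity of Derivatives at $0$]} again reduces to the $O(\eps)$-dependence of the middle-strip term, checked via the explicit formula for $\rD\cF_\eps$. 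The \textbf{Regularizing} property is classical elliptic bootstrap for the Cauchy--Riemann equation (with totally real boundary conditions, including the immersed Lagrangian $L_{02}$ via the lift $\hat u$) and for the $\nabla_t$-operator; pointwise $\cC^1$-continuity in $\Delta$ rel.\ $\fC$ then follows, since solutions modulo $\fC$ are automatically smooth and the explicit $\eps$-dependence in the middle-strip PDE makes $\eps\mapsto\cF_\eps(\gamma_0)$ and $\eps\mapsto\rD\cF_\eps(\gamma_0)$ continuous into $(\Omega,\|\cdot\|^\Omega_\eps)$ and $\cL(\ol\Gamma_\eps,\ol\Omega_\eps)$, respectively.

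The main obstacle -- and the technical heart of the theorem -- is the \textbf{[Uniform Fredholm-ish Estimate]} with its lower-order term measured in the genuinely weaker norm $\|\cdot\|^\Gamma_0$, rather than in a pure $L^2$-norm as in the classical strip-shrinking proof. The plan is to revisit the cascade of elliptic, commutator, and interpolation estimates in \cite[\S3.1]{wehrheim_woodward_geometric_composition} and verify that every lower-order error term produced there can be absorbed into the $L^1([0,1],H^1(\bR))$- and $W^{1,1}([0,1],L^2(\bR))$-strata of $\|\cdot\|^\Gamma_0$ uniformly in $\eps$; the positive fractional powers of $\eps$ weighting the $\hat\xi$-terms in $\|\cdot\|^\Gamma_\eps$ are precisely what make such an absorption $\eps$-independent after one application of Cauchy--Schwarz in $t$. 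Executing this absorption in full, uniformly for $\eps\in[0,1]$ and in the generality required for the general multi-strip-shrinking moduli spaces, is the subject of the authors' manuscript in preparation, from which Theorem~\ref{thm:QF} emerges as the three-strip sample case.
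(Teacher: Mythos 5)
Your proposal gets the overall architecture right (import Lemma~\ref{lem:fred 0}, cite the WW strip-shrinking estimates, verify the axioms), and several items — [Openness of Domain], [Uniform Cokernel Bound], [Quadratic-ish Estimate] via a linear $c$, the regularizing property via elliptic bootstrap plus $\ti\nabla_t$ — match the paper. But there are concrete gaps in the places that carry the actual content of the theorem.

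First, the [Lower Bound on $\Gamma$ Norms] cannot be obtained by ``Cauchy--Schwarz in $t$ plus a fixed rescaling,'' as you suggest. The only term in $\|\cdot\|^\Gamma_\eps$ that controls $\hat\xi$ \emph{without} a $\nabla_t$ applied carries the weight $\eps^{1/2}$, which degenerates as $\eps\to 0$; so $\|\hat\xi\|_{L^1([0,1],H^1(\bR))}$ is not dominated by $\|\cdot\|^\Gamma_\eps$ with an $\eps$-independent constant by any pointwise-in-$t$ comparison alone. The paper instead bounds $\|\hat\xi\|_{L^1([0,1],H^1(\bR))}$ by $\|\nabla_t\hat\xi\|_{L^1([0,1],H^1(\bR))}+\|\xi_{02}\|_{H^{3/2}}$, and this step is nontrivial: it uses the seam coupling $\hat\xi_{02}|_{t=0}=\xi_{02}|_{t=0}$ baked into the domain $\Gamma$ together with the bounded inverse of the auxiliary isomorphism $\ti\nabla_t$ of Remark~\ref{rmk:auxiso}. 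Without this boundary-coupling argument the lower bound fails, and the entire Banach completion $\ol\Gamma_0$ would not sit correctly relative to the $\ol\Gamma_\eps$.

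Second, your index argument — ``deform $\eps\hat J$ to $0$ keeping all boundary conditions fixed'' — does not bridge $\eps>0$ to $\eps=0$, because that is precisely the regime in which the Banach completions $\ol\Gamma_\eps$ and $\ol\Omega_\eps$ change and Fredholmness is lost without the $\eps=0$ renormalization. Homotopy invariance on a fixed Banach space gives index constancy on $(0,1]$ only; the $\eps=0$ endpoint must be handled by the separate identification $\ind \ol{\rD\cF_0(0)}=\ind\rD_{u_{02}}^{\rm Lag}$ from Lemma~\ref{lem:fred 0}, matched against $\ind\ol{\rD\cF_1(0)}=\ind\rD_{u_{02}}^{\rm Lag}$ from \cite[Lemma 2.1.3]{wehrheim_woodward_geometric_composition}. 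This is exactly what the paper does.

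Third, you misattribute where the difficulty lies. You call the [Uniform Fredholm-ish Estimate] the ``technical heart'' requiring a full revisit of the elliptic/commutator/interpolation cascade, but the paper simply quotes \cite[Lemma 3.2.1]{wehrheim_woodward_geometric_composition} and then shows its lower-order error terms ($\|\xi_{02}\|_{L^2}$, $\eps^{1/2}\|\hat\xi\|_{L^2}$, $\|\hat\xi|_{t=1}\|_{H^1}$, $\|\xi_{02}|_{t=1}\|_{H^1}$) are all bounded by $\|\cdot\|^\Gamma_0$ via the Sobolev trace theorem and $W^{1,1}([0,1])\subset\cC^0([0,1])\subset L^2([0,1])$ — a short absorption, not a rework. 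Meanwhile, the two genuine subtleties are barely touched in your write-up: (i) the pointwise $\cC^1$-continuity in $\Delta$ holds only at solutions modulo $\fC$, and specifically because $\cF_0(\xi_{02},\hat\xi)\in\fC$ forces $\nabla_t\hat\xi=0$, which is exactly what kills the $\eps^{-1/2}$-weighted term in $\|\cdot\|^\Omega_\eps$; and (ii) the [Uniform Bound on $\rD\cF_\eps(0)$] requires $\nabla_t(\hat J(\hat u))=0$ and $\nabla_t(\nabla\hat J\,\partial_s\hat u)=0$, i.e.\ a $t$-independent almost complex structure and connection on the shrinking strip, which is a design constraint the paper explicitly flags and that does not follow from the WW Moser estimates as cited. (Also a minor slip: the near-solution rate is $O(\eps^{1/4})$, from the $\eps^{-3/4}\|\cdot\|_{L^4}$ term, not $O(\eps^{1/2})$.)
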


\begin{proof}[Sketch of Proof of Theorem~\ref{thm:QF}]
We begin by going through the conditions Definition~\ref{def:fredholm}.

\begin{itemlist}
\item[\bf{Openness of Domain:}]  
$\cV_\Gamma \,:=\;  \bigl\{ (\xi_{02}, \hat\xi ) \in \Gamma \,\big|\,  \| \xi_{02} \|_{\cC^0} , \| \hat\xi \|_{\cC^0} < \text{injectivity radius} \bigr\}$ is open in the relative topology of $\Gamma\subset \ol\Gamma_{0}$ 
by the Sobolev embedding $H^{3/2}\subset\cC^0$ on domains of dimension $2$ and the Sobolev embeddings $W^{1,1}\subset\cC^0$ and $H^1\subset\cC^0$ on domains of dimension $1$. 

\item[\bf{Lower Bound on $\Gamma$ Norms: }]  
$\|(\xi_{02}, \hat\xi )\|^\Gamma_0 \leq \|(\xi_{02}, \hat\xi ) \|^\Gamma_\eps$ 
holds up to a constant for all $(\xi_{02}, \hat\xi )\in\Gamma$ and $\eps\in [0,1]$ by the Sobolev embedding
$H^2\subset H^{3/2}$ on domains of dimension $2$ in the first component, and in the second component by the characterization
$$
H^2(\bR\times[0,1]) \;=\; H^2([0,1],L^2(\bR))  \cap H^1([0,1],H^1(\bR))  \cap L^2([0,1],H^2(\bR)) 
$$
and the inclusions $H^2=W^{2,2}\subset W^{2,1}$ and $H^1=W^{1,2}\subset W^{1,1}$ on compact domains. However, to obtain an $\eps$-independent constant, note that for $0<\eps\leq 1$ we can use $\eps^{-1}\geq 1$ to obtain
\begin{align*}
\|(\xi_{02}, \hat\xi) \|^\Gamma_{\eps} 
&\geq  \| \xi_{02}\|_{H^2} 
+  \| \nabla_t\hat\xi\|_{L^2} +  \| \nabla_t\nabla_s\hat\xi\|_{L^2}  +  \| \nabla_s\nabla_t \hat\xi\|_{L^2} 
+  \| \nabla_t^2\hat\xi\|_{L^2}  
\\
&\geq \| \xi_{02}\|_{H^{3/2}} 
 +  \| \nabla_t\hat\xi\|_{L^2([0,1],H^1(\bR))}  +  \| \nabla_t^2\hat\xi\|_{L^2([0,1],L^2(\bR))} 
\\
&\geq \| \xi_{02}\|_{H^{3/2}} 
 +  \| \nabla_t\hat\xi\|_{L^1([0,1],H^1(\bR))}  +  \| \nabla_t^2\hat\xi\|_{L^1([0,1],L^2(\bR))} 
 \\
&= \|(\xi_{02}, \hat\xi) \|^\Gamma_{0}  - \| \hat\xi\|_{L^1([0,1],H^1(\bR))} , 
\end{align*}
so to obtain a uniform constant in $\|(\xi_{02}, \hat\xi )\|^\Gamma_0 \leq C \|(\xi_{02}, \hat\xi ) \|^\Gamma_\eps$ 
(which can then be used to rescale the $\eps=0$ norm to make the bound hold without a constant)
it remains to bound 
$\| \hat\xi\|_{L^1([0,1],H^1(\bR))}$ by  $\| \xi_{02}\|_{H^{3/2}} +  \| \nabla_t\hat\xi\|_{L^1([0,1],H^1(\bR))}$. 
This follows from the boundary condition $\hat\xi_{02}|_{t=0}=\xi_{02}|_{t=0}$ and the bounded inverse of the isomorphism in Remark~\ref{rmk:auxiso}, 
\begin{align*}
\| \hat\xi\|_{L^1([0,1],H^1(\bR))}
&\leq C \bigl( \|\nabla_t \hat \xi \|_{L^1([0,1],H^1(\bR))}   +  \| \pi_{02} \, \hat\xi_{02}|_{t=0} \|_{H^1(\bR)}  \bigr)
\\
&\leq C \bigl( \|\nabla_t \hat \xi \|_{L^1([0,1],H^1(\bR))}   +  \| \pi_{02} \, \xi_{02}|_{t=0} \|_{H^1(\bR)}  \bigr)
\\
&\leq C' \bigl( \|\nabla_t \hat \xi \|_{L^1([0,1],H^1(\bR))}   +  \| \xi_{02} \|_{H^{3/2}(\bR\times[0,1])}  \bigr). 
\end{align*}

\item[\bf{Lower Bound on $\Omega$ Norms:}]  
For $\eps\in (0,1]$ we can use $\eps^{-1}\geq 1$ to obtain
\begin{align*}
\|(\eta_{02}, \hat\eta) \|^\Omega_{\eps}  
&\geq  \| \eta_{02}\|_{H^1} 
+  \| \hat\eta\|_{L^2} +  \| \nabla_s\hat\eta\|_{L^2}  
+  \| \nabla_t\hat\eta\|_{L^2}  
\\
&\geq  \| \eta_{02}\|_{H^1}  +   \tfrac 12 \| \hat\eta\|_{L^2([0,1],H^1(\bR))} +  \tfrac 12 \| \hat\eta\|_{H^1([0,1],L^2(\bR))} 
\\
&\geq  \| \eta_{02}\|_{H^{1/2}} 
+ \tfrac 12  \| \hat\eta\|_{L^1([0,1],H^1(\bR))} +  \tfrac 12 \| \nabla_t\hat\eta\|_{L^1([0,1],L^2(\bR))} 
\;\geq\; \tfrac 12 \|(\eta_{02}, \hat\eta) \|^\Omega_0  . 
\end{align*}
So $\|(\eta_{02}, \hat\eta) \|^\Omega_0 \leq \|(\eta_{02}, \hat\eta)\|^\Omega_\eps$
holds up to a uniform constant which can then be used to rescale the $\eps=0$ norm to meet the exact requirement. 

\item[\bf{Fibrewise $\mathcal{C}^1$ Regularity:}]
The differentials of $\cF_\eps : (\cV_\Gamma,\|\cdot\|^\Gamma_\eps) \to (\Omega,\|\cdot\|^\Omega_\eps)$ are uniformly $\cC^0$ by the Uniform Continuity of $\rD\cF_\eps$ proven below -- so in fact this uniform continuity in $\Gamma$ is uniform in $\eps\in[0,1]$. 
So it remains to check that $\cF_\eps : (\cV_\Gamma,\|\cdot\|^\Gamma_\eps) \to (\Omega,\|\cdot\|^\Omega_\eps)$ is uniformly $\cC^0$ for a fixed $\eps\in[0,1]$ -- which follows (even uniformly in $\eps$) from the Uniform Bounds on $\rD\cF_\eps$ proven below. 

\item[\bf{Fredholm:}]
The continuous extensions of the linearization $\overline{\rD\cF_\eps(0)}: \overline\Gamma_\eps\to \overline\Omega_\eps$ are Fredholm operators for $\eps=0$ by Lemma~\ref{lem:fred 0} and for $\eps>0$ by \cite{wehrheim_woodward_geometric_composition}

\item[\bf{Index:}]
\cite[Lemma 2.1.3]{wehrheim_woodward_geometric_composition} identifies the Fredholm index of $\overline{\rD\cF_1(0)}$ with that of $\rD_{u_{02}}^{\rm Lag}$, which Lemma~\ref{lem:fred 0} identifies with the index of $\overline{\rD\cF_0(0)}$. Moreover, the Fredholm indices for $0<\eps\leq1$ are constant since this is a continuous family of Fredholm operators between fixed Banach spaces.

Furthermore, Lemma~\ref{lem:fred 0} shows that the kernel $\fK=\ker \overline{\rD\cF_0(0)}$ is contained in the dense subset of smooth sections $\Gamma$, and that the cokernel $\coker\overline{\rD\cF_0(0)}\simeq \fC$ can be represented by a subset $\fC\subset\Omega$ of the smooth sections.

\item[\bf{$\mathbf{\epsilon=0}$ Fredholm Estimate:}] This is proven in Lemma~\ref{lem:fred 0}. 

\item[\bf{Uniform Fredholm-ish Estimate:}]  
\cite[Lemma 3.2.1]{wehrheim_woodward_geometric_composition} shows that for all $(\xi_{02}, \hat\xi)\in\Gamma$ and $\eps\in(0,1]$ we have
$$
c_1\| (\xi_{02}, \hat\xi) \|^\Gamma_\eps 
\leq  \| \rD\cF_\eps(0) (\xi_{02}, \hat\xi) \|^\Omega_\eps   
+  \| \xi_{02}\|_{L^2} + \eps^{1/2}  \| \hat\xi\|_{L^2} 
+ \| \hat\xi|_{t=1}\|_{H^1} + \| \xi_{02}|_{t=1}\|_{H^1} . 
$$
Then we bound $\| \xi_{02}\|_{L^2}+ \| \xi_{02}|_{t=1}\|_{H^1} \leq \| \xi_{02}\|_{H^{3/2}} \leq  \| (\xi_{02}, \hat\xi) \|^\Gamma_0$ (up to a uniform constant) by the Sobolev trace theorem. For the second component the estimates for $W^{1,1}([0,1])\subset\cC^0([0,1])\subset L^2([0,1])$ give (up to a uniform constant)
\begin{align*}
\eps^{1/2}  \| \hat\xi\|_{L^2} 
+ \| \hat\xi|_{t=1}\|_{H^1} 
&\leq 
  \| \hat\xi\|_{W^{1,1}([0,1],L^2(\bR))}
+ \| \hat\xi\|_{\cC^0([0,1],H^1(\bR))} 
\\
&\leq
  \| \hat\xi\|_{W^{1,1}([0,1],H^1(\bR))} \\
&= \| \hat\xi\|_{L^1([0,1],H^1(\bR))} +  \| \nabla_t\hat\xi\|_{L^1([0,1],H^1(\bR))} \;\leq\; \| (\xi_{02}, \hat\xi) \|^\Gamma_0 . 
\end{align*}
So this combines to a uniform constant $C_1$ in 
$\| (\xi_{02}, \hat\xi) \|^\Gamma_\eps \leq C_1 \bigl( \| \rD\cF_\eps(0) (\xi_{02}, \hat\xi) \|^\Omega_\eps + \| (\xi_{02}, \hat\xi) \|^\Gamma_0 \bigr)$.

\item[\bf{Uniform Cokernel Bound:}] 
With the specific choice of cokernel representation from Lemma~\ref{lem:fred 0} we have for $\eps\in (0,1]$
and for all $\fc=(\fc_{02},0)\in\fC$
$$ 
\| (\fc_{02},0)  \|^\Omega_\eps  \;=\;  \| \fc_{02}\|_{H^1}    \;\leq\; C_\fC \| \fc_{02}\|_{H^{1/2}}  
\;=\; C_\fC  \| (\fc_{02},0)  \|^\Omega_0 
$$ 
with a uniform constant $C_\fC$ since all norms on a finite dimensional space such as $\fC_{02}$ are equivalent. 

\item[\bf{Quadratic-ish Estimate:}] 
\cite[Lemma 3.1.5]{wehrheim_woodward_geometric_composition} proves
$$
\bigl\|  \rD\cF_\eps (\gamma_{02}, \hat\gamma)(\xi_{02}, \hat\xi) -   \rD\cF_\eps (0,0)(\xi_{02}, \hat\xi)  \bigr\|^\Omega_\eps \leq c(\|(\gamma_{02}, \hat\gamma)\|^\Gamma_\eps)  \| (\xi_{02}, \hat\xi) \|^\Gamma_\eps
$$ 
for all $(\gamma_{02}, \hat\gamma)\in\cV_\Gamma$, $(\xi_{02}, \hat\xi)\in\Gamma$, and $\eps\in[0,1]$ with a linear function $c(x)=C_2 x$. 

\item[\bf{Continuity of Derivatives at $\mathbf 0$:}] 
For $\xi=(\xi_{02}, \hat\xi)\in\Gamma$ and $0<\eps\leq 1$ we have -- since the first component is independent of $\eps$ -- 
\begin{align*}
&\bigl\|  \rD\cF_\eps (0)\xi -   \rD\cF_0 (0)\xi  \bigr\|^\Omega_0 \\
&=
\bigl\|  \bigl(\; 0 \,,\, 
\nabla_t \hat\xi - \eps \, \hat J(\hat u) \nabla_s \hat \xi -  \eps \, \nabla_{\hat \xi} \hat J \, \partial_s \hat u
\;-\;
\nabla_t \hat\xi 
\;\bigr) \bigr\|^\Omega_0  \\
&=
\eps \, \bigl(  \bigl\| 
\hat J(\hat u) \nabla_s \hat \xi -  \eps \, \nabla_{\hat \xi} \hat J \, \partial_s \hat u
\bigr\|_{L^1([0,1],H^1(\bR))} 
\;+\;  \bigl\| \nabla_t \bigl( 
\hat J(\hat u) \nabla_s \hat \xi -  \eps \, \nabla_{\hat \xi} \hat J \, \partial_s \hat u \bigr)
\bigr\|_{L^1([0,1],L^2(\bR))}  \bigr)
\\
&\leq
\eps \, C_u \bigl(  \|  \hat \xi \|_{L^2} + \| \nabla_s \hat \xi \|_{L^2}  + \|  \nabla_s^2 \hat \xi \|_{L^2}  + \|  \nabla_t \hat \xi \|_{L^2}  + \|  \nabla_t \nabla_s \hat \xi \|_{L^2}    \bigr)
\\
&\leq
\eps \, C_u \eps^{-1/2} \bigl( \eps^{1/2} \|  \hat \xi \|_{L^2} + \eps^{1/2}\| \nabla_s \hat \xi \|_{L^2}  + \eps^{1/2}\|  \nabla_s^2 \hat \xi \|_{L^2}  + \eps^{-1/2}\|  \nabla_t \hat \xi \|_{L^2}  + \eps^{-1/2}\|  \nabla_t \nabla_s \hat \xi \|_{L^2}    \bigr)
\\
&\leq c_\Delta(\eps) \|(\xi_{02}, \hat\xi) \|^\Gamma_\eps 
\end{align*} 
with $c_\Delta(x)= x^{1/2} \, C_u$ with a constant $C_u$ that is determined by $\hat J$ and $\hat u$. 

\item[\bf{Near-Solution:}] 
The family was constructed near a solution $(u_{02},\hat u)$ of the $\eps=0$ problem, which corresponds to $\cF_0(0)=0$, and \cite[Lemma 3.1.5]{wehrheim_woodward_geometric_composition} proves
$\| \cF_\eps(0) \|^\Omega_\eps \leq C_1 \eps^{1/4} \to 0$ as $\eps\to 0$.
\end{itemlist}

Then it remains to spell out and verify the conditions of Definition~\ref{def:adiabatic C-l}. 

\begin{itemlist}
\item[\bf{Regularizing:}] 
The meaning of this property is, first, that solutions of a nonlinear equation with smooth right hand side are smooth
$$
(\xi_{02}, \hat\xi)\in \cV_{\overline\Gamma,\eps}, \quad \overline\cF_\eps(\xi_{02}, \hat\xi) \in \cC^\infty \quad \Longrightarrow \quad (\xi_{02}, \hat\xi)\in \cC^\infty .
$$
Then the second requirement is that solutions of a linearized equation -- at a smooth base point -- with smooth right hand side are smooth
$$
(\xi_{02}, \hat\xi)\in\cC^\infty, \quad (\xi'_{02}, \hat\xi') \in\overline\Gamma_\eps, \quad \rD\overline\cF_\eps(\xi_{02}, \hat\xi) (\xi'_{02}, \hat\xi') \in \cC^\infty \quad \Longrightarrow \quad (\xi'_{02}, \hat\xi')\in\cC^\infty .
$$
For $\eps>0$ both statements follow from standard elliptic regularity. 
For $\eps=0$ the elliptic regularity for the Cauchy-Riemann operators on $M_{02}$ needs to be combined with the regularity that follows from the isomorphisms of Remark~\ref{rmk:auxiso} for arbitrarily large $k\in\bN_0$ and $s \geq 0$:  
Suppose we have already established $\xi_{02}\in H^s$ for some $s\geq 3/2$. 
Then $\hat\xi \in W^{1,1}([0,1],H^1(\bR, \rT_{\hat u} \hat M))$ satisfies $\nabla_t \hat \xi =\hat\eta \in\cC^\infty$ and the boundary conditions
$\hat \xi_{11} |_{t=0} \in \rT_{\hat u_{11}}\Delta_1$, $\hat \xi |_{t=1} \in\rT_{\hat u} L_{0211}$, 
and $\pi_{02} \, \hat\xi_{02}|_{t=0} = \pi_{02} \, \xi_{02}|_{t=0}\in H^{s-1/2}(\bR)$, and thus 
\begin{align*}
\hat\xi(\cdot,t) = 
{\rm Pr}_{\rT L_{02}}^{-1}\pi_{02} \, \xi_{02}|_{t=0}  
\;+\; \textstyle\int_0^t \hat \eta   \;-\; (\pi_{02}^\perp \times \pi_{11}) \bigl(\textstyle\int_0^1 \hat\eta\bigr)
\in \cC^\infty([0,1], H^{s-1/2}(\bR)). 
\end{align*} 

\item[\bf{Pointwise Continuity in $\mathbf{\Delta}$ at Solutions Modulo $\mathbf{\mathfrak C}$:}]
Given any $\eps_0\in[0,1]$ and a solution $(\xi_{02}, \hat\xi)\in\cV_\Gamma$ of $\cF_{\eps_0}(\xi_{02}, \hat\xi)=(\fc_{02},0)\in \fC$, we have 
\begin{align*}
\bigl\|  \cF_\eps (\xi_{02}, \hat\xi) -   \cF_{\eps_0} (\xi_{02}, \hat\xi)  \bigr\|^\Omega_\eps 
&= \bigl\| \bigl(\; 0 \,,\, 
 \hat\Phi(\hat\xi)^{-1} ( \partial_t - \eps \hat J \partial_s ) \, \hat e(\hat\xi)  
 - \hat\Phi(\hat\xi)^{-1}  ( \partial_t - \eps_0 \hat J \partial_s ) \, \hat e(\hat\xi)
\;\bigr)  \bigr\|^\Omega_\eps 
\\
&=  |\eps - \eps_0| \bigl\|  \bigl(\; 0 \,,\,  \hat\Phi(\hat\xi)^{-1} ( \hat J \partial_s ) \, \hat e(\hat\xi) \;\bigr) \bigr\|^\Omega_\eps .
\end{align*}
To show that this converges to $0$ as $\eps\to \eps_0$ we need to show that the $\eps$-norm of $\hat\eta:= \hat\Phi(\hat\xi)^{-1} ( \hat J \partial_s ) \, \hat e(\hat\xi)$ doesn't blow up faster than $|\eps-\eps_0|^{-1}$. 
For $\eps >0$ this norm is  
\begin{align*}
& \eps^{-1/2} \bigl( \| \hat\eta\|_{L^2} +  \| \nabla_s\hat\eta\|_{L^2}   \bigr) 
+ \eps^{-3/2} \| \nabla_t\hat\eta\|_{L^2}  
+ \eps^{-3/4}  \| \hat\eta\|_{L^4} 
\\
&
\qquad\qquad\qquad\qquad\qquad\qquad=
\eps^{-1/2} \bigl( \| \partial_s \hat e(\hat\xi) \|_{L^2} + \bigl\| \nabla_s \bigl( \hat\Phi(\hat\xi)^{-1} ( \hat J \partial_s ) \, \hat e(\hat\xi) \bigr) \bigr\|_{L^2}   \bigr)  \\
&\qquad\qquad\qquad\qquad\qquad\qquad\qquad
+ \eps^{-3/2} \bigl\| \nabla_t \bigl(\hat\Phi(\hat\xi)^{-1} ( \hat J \partial_s ) \, \hat e(\hat\xi) \bigr) \bigr\|_{L^2}  
+ \eps^{-3/4}  \| \partial_s \hat e(\hat\xi)  \|_{L^4} . 
\end{align*}
This is bounded in case $\eps\to\eps_0 >0$. 
However, for $\eps\to\eps_0=0$ we obtain 
\begin{align*}
\bigl\|  \cF_\eps (\xi_{02}, \hat\xi) -   \cF_0(\xi_{02}, \hat\xi)  \bigr\|^\Omega_\eps 
&\;=\; 
\eps^{1/2} \bigl( \| \partial_s \hat e(\hat\xi) \|_{L^2} +  \| \nabla_s \bigl( \hat\Phi(\hat\xi)^{-1} ( \hat J \partial_s ) \, \hat e(\hat\xi) \bigr) \|_{L^2}   \bigr) \\
&
\qquad
+ \eps^{-1/2} \| \nabla_t \bigl(\hat\Phi(\hat\xi)^{-1} ( \hat J \partial_s ) \, \hat e(\hat\xi) \bigr) \|_{L^2}  
+ \eps^{1/4}  \| \partial_s \hat e(\hat\xi)  \|_{L^4}, 
\end{align*}
which will converge to $0$ only if $\nabla_t \bigl(\hat\Phi(\hat\xi)^{-1} ( \hat J \partial_s ) \, \hat e(\hat\xi)= 0$, that is if we can ensure that all components are independent of $t\in[0,1]$. This is true for $\hat u$ since it arises from an $\eps_0=0$ solution. It holds for parallel transport $\Phi$, exponential map $\hat e$, and almost complex structure $\hat J$ by construction.\footnote{This shows that we need to set up the moduli spaces and their description with a $t$-independent almost complex structure on the shrinking strip, and need to work with $t$-independent connections and exponential map on this strip.}
For the section $\hat\xi$ this is where we crucially use the restriction of the pointwise convergence requirement to ``solutions modulo $\fC$'' and the fact that we represented the cokernel in the form $(\fc_{02},0)$. So the second component of the equation $\cF_0(\xi_{02}, \hat\xi)=(\fc_{02},0)\in \fC$ becomes $0=\hat\Phi(\hat\xi)^{-1} ( \partial_t \hat e(\hat\xi) ) =\nabla_t\xi$, 
which guarantees the desired pointwise convergence.

\item[\bf{Pointwise $\cC^1$-Continuity in $\mathbf{\Delta}$ at Solutions Modulo $\mathbf{\mathfrak C}$:}] 
By the previous item and Remark~\ref{rmk:pointwise C-1} it remains to consider the differentials at 
solutions $(\gamma_{02}, \hat\gamma)\in\cV_\Gamma$ of $\cF_{\eps_0}(\gamma_{02}, \hat\gamma)\in\fC$, in particular in case $\eps_0=0$ this ensures that the section $\hat\gamma$ is independent of $t\in[0,1]$. 
Moreover, the only dependence on $\eps$ in the differentials comes from linearizing 
$\hat\xi \mapsto \eps  \hat\Phi(\hat\xi)^{-1}  ( \hat J \partial_s ) \, \hat e(\hat\xi) =: \eps \,\cH(\hat\xi)$, where we denote $\cH(\hat\xi):=  \hat\Phi(\hat\xi)^{-1}  ( \hat J \partial_s ) \, \hat e(\hat\xi)$. 
Now given any $\eps_0\in[0,1]$ and a solution
$(\xi_{02}, \hat\xi)\in\Gamma$ of $\rD\cF_{\eps_0}(\gamma_{02}, \hat\gamma) (\xi_{02}, \hat\xi)\in \fC$ we have
\begin{align*}
& \bigl\|  \rD\cF_\eps (\gamma_{02}, \hat\gamma)(\xi_{02}, \hat\xi)  -   \rD\cF_{\eps_0} (\gamma_{02}, \hat\gamma)(\xi_{02}, \hat\xi) \bigr\|^\Omega_\eps 
\\
&
=
\bigl\| \bigl( \; 0 \; , \; (\eps-\eps_0) \rD\cH(\hat\gamma) \hat\xi  \; \bigr) \bigr\|^\Omega_\eps 
\\
&
=
| \eps-\eps_0|\bigl( 
\eps^{-1/2} \bigl( \| \rD\cH(\hat\gamma) \hat\xi\|_{L^2} +  \bigl\| \nabla_s\bigl( \rD\cH(\hat\gamma) \hat\xi \bigr\|_{L^2}   \bigr) 
+ \eps^{-3/2} \bigl\|\nabla_t\bigl(\rD\cH(\hat\gamma) \hat\xi \bigr)\|_{L^2}  
+ \eps^{-3/4}  \|\rD\cH(\hat\gamma) \hat\xi\|_{L^4} 
\bigr)
\end{align*}
which as before converges to $0$ when $\eps\to \eps_0>0$. For $\eps\to \eps_0=0$ the convergence requires 
$\nabla_t\bigl(\rD\cH(\hat\gamma) \hat\xi =0$, which is guaranteed by working at a solution modulo $\fC$. 
Indeed, 
$\rD\cF_0(\gamma_{02}, \hat\gamma) (\xi_{02}, \hat\xi) = (\fc_{02},0)$ implies 
$\rD\cK(\hat\gamma) \hat\xi =0$ for the linearization of $\cK(\hat\xi):= \hat\Phi(\hat\xi)^{-1}  \partial_t \, \hat e(\hat\xi)$, which amounts to the equation $\nabla_t\hat\xi=0$. 

\item[\bf{Uniform Continuity of $\rD\cF_\eps$:}]  
Generalizing the computations for \cite[Lemma 3.1.5]{wehrheim_woodward_geometric_composition} to a second nonzero base point $(\gamma^\fk_{02}, \hat\gamma^\fk)\ne(0,0)$ proves
$$
\bigl\|  \rD\cF_\eps (\gamma^\fl_{02}, \hat\gamma^\fl)(\xi_{02}, \hat\xi) -   \rD\cF_\eps (\gamma^\fk_{02}, \hat\gamma^\fk)(\xi_{02}, \hat\xi)  \bigr\|^\Omega_\eps \leq c(\|(\gamma^\fl_{02}, \hat\gamma^\fl)-(\gamma^\fk_{02}, \hat\gamma^\fk)\|^\Gamma_\eps)  \| (\xi_{02}, \hat\xi) \|^\Gamma_\eps
$$ 
for all $(\gamma^\fl_{02}, \hat\gamma^\fl), (\gamma^\fk_{02}, \hat\gamma^\fk)\in\cV_\Gamma$, $(\xi_{02}, \hat\xi)\in\Gamma$, and $\eps\in[0,1]$ with a linear function $c^1_\cF(x)=C_2 x$. 

\item[\bf{Uniform Bound on $\rD\cF_\eps(0)$:}] 
For $0<\eps\leq 1$  we have
\begin{align*}
& \bigl\| \rD \cF_\eps (0)  \bigr\|^{\cL(\ol\Gamma_\eps,\ol\Omega_\eps )}  
\\
&=
\sup_{\|\xi\|^\Gamma_\eps\leq 1}
\bigl\| \bigl( \; 
\nabla_t \xi_{02} -J_{02}(u_{02}) \nabla_s\xi_{02} -  \nabla_{\xi_{02}} J_{02} \, \partial_s u_{02}
\;,\;
\nabla_t \hat\xi - \eps \, \hat J(\hat u) \nabla_s \hat \xi -  \eps \, \nabla_{\hat \xi} \hat J \, \partial_s \hat u 
\; \bigr) \bigr\|^\Omega_\eps .
\end{align*}
This is bounded by the sum of the two components: From the first component we obtain
\begin{align*}
\sup_{\|\xi_{02}\|_{H^2}\leq 1}
\bigl\|
\nabla_t \xi_{02} -J_{02}(u_{02}) \nabla_s\xi_{02} -  \nabla_{\xi_{02}} J_{02} \, \partial_s u_{02}
\bigr\|_{H^1}
\end{align*}
which is independent of $\eps$. 
From the second component we obtain the supremum over
\begin{align*}
1\geq \|\hat\xi \|_\eps
&\,:=\;  
 \eps^{1/2} \bigl( \| \hat\xi\|_{L^2} +  \| \nabla_s\hat\xi\|_{L^2}  +  \| \nabla_s^2\hat\xi\|_{L^2}   \bigr) 
+ \eps^{1/4} \bigl( \| \hat\xi\|_{L^4} +  \| \nabla_s\hat\xi\|_{L^4}   \bigr) 
\\
&\qquad \qquad 
+ \eps^{-1/2} \bigl( \| \nabla_t\hat\xi\|_{L^2} +  \| \nabla_t\nabla_s\hat\xi\|_{L^2}  +  \| \nabla_s\nabla_t \hat\xi\|_{L^2}   \bigr)
+ \eps^{-3/2}  \| \nabla_t^2\hat\xi\|_{L^2}  + \eps^{-3/4}  \| \nabla_t\hat\xi\|_{L^4} 
\end{align*}
of the terms arising from the second component of the $\|\cdot\|^\Omega_\eps$ norm, 
\begin{align*}
&\sup_{\|\hat\xi\|_\eps\leq 1} \Bigl( 
 \eps^{-1/2} \bigl\| \nabla_t \hat\xi - \eps \, \hat J(\hat u) \nabla_s \hat \xi -  \eps \, \nabla_{\hat \xi} \hat J \, \partial_s \hat u \bigr\|_{L^2}
 +
  \eps^{-1/2} \bigl\| \nabla_s\bigl( \nabla_t \hat\xi - \eps \, \hat J(\hat u) \nabla_s \hat \xi -  \eps \, \nabla_{\hat \xi} \hat J \, \partial_s \hat u \bigr) \bigr\|_{L^2} \\
&\qquad\quad
 +
  \eps^{-3/2} \bigl\| \nabla_t\bigl( \nabla_t \hat\xi - \eps \, \hat J(\hat u) \nabla_s \hat \xi -  \eps \, \nabla_{\hat \xi} \hat J \, \partial_s \hat u \bigr) \bigr\|_{L^2} 
 +
\eps^{-3/4} \bigl\| \nabla_t \hat\xi - \eps \, \hat J(\hat u) \nabla_s \hat \xi -  \eps \, \nabla_{\hat \xi} \hat J \, \partial_s \hat u \bigr\|_{L^4} \Bigr) \\
&\leq
\sup_{\|\hat\xi\|_\eps\leq 1} \Bigl( 
 \eps^{-1/2} \bigl( \bigl\| \nabla_t \hat\xi \bigr\|_{L^2} + \eps \, \bigl\| \nabla_s \hat \xi \bigr\|_{L^2} +  \eps \, \bigl\| \nabla \hat J \bigr\|_{L^\infty} \| \partial_s \hat u \|_{L^\infty}  \bigl\| \hat \xi  \bigr\|_{L^2} \bigr) \\
&\qquad\qquad
 +
  \eps^{-1/2} \bigl( \bigl\| \nabla_s\nabla_t \hat\xi \bigr\|_{L^2}  
+ \eps \bigl\| \nabla_s^2\hat \xi \bigr\|_{L^2}
+ \eps \bigl\| \nabla_s\bigl( \hat J(\hat u) \bigr) \bigr\|_{L^\infty}\bigl\| \nabla_s \hat \xi   \bigr\|_{L^2} \\
&\qquad\qquad\qquad\qquad
+ \eps  \bigl\| \nabla \hat J \bigr\|_{L^\infty} \| \partial_s \hat u \|_{L^\infty}  \bigl\|  \nabla_s \hat \xi  \bigr\|_{L^2} 
+\eps  \bigl\| \nabla_s\bigl( \nabla \hat J \, \partial_s \hat u \bigr) \bigr\|_{L^\infty } \bigl\|  \hat\xi \bigr\|_{L^2}
  \bigr) 
  \\
&\qquad\qquad
 +
  \eps^{-3/2} \bigl( \bigl\| \nabla_t^2 \hat\xi \bigr\|_{L^2}  
+ \eps \bigl\| \nabla_t\nabla_s\hat \xi \bigr\|_{L^2}
+ \eps \bigl\| \nabla_t\bigl( \hat J(\hat u) \bigr) \bigr\|_{L^\infty}\bigl\| \nabla_s \hat \xi   \bigr\|_{L^2} \\
&\qquad\qquad\qquad\qquad
+ \eps  \bigl\| \nabla \hat J \bigr\|_{L^\infty} \| \partial_s \hat u \|_{L^\infty}  \bigl\|  \nabla_t \hat \xi  \bigr\|_{L^2} 
+\eps  \bigl\| \nabla_t\bigl( \nabla \hat J \, \partial_s \hat u \bigr) \bigr\|_{L^\infty } \bigl\|  \hat\xi \bigr\|_{L^2}
  \bigr)  \\
&\qquad\qquad 
 +
\eps^{-3/4} \bigl( \bigl\| \nabla_t \hat\xi \bigr\|_{L^4} + \eps \, \bigl\| \nabla_s \hat \xi \bigr\|_{L^4} +  \eps \, \bigl\| \nabla \hat J \bigr\|_{L^\infty} \| \partial_s \hat u \|_{L^\infty}  \bigl\| \hat \xi  \bigr\|_{L^4} \bigr) \Bigr) 
\end{align*}
\begin{align*}
&\leq
\sup_{\|\hat\xi\|_\eps\leq 1} \Bigl( 
 \eps^{-1/2} \, \bigl\| \nabla_t \hat\xi \bigr\|_{L^2} 
 +  \eps^{1/2} \, \bigl\| \nabla_s \hat \xi \bigr\|_{L^2} 
 +   \eps^{1/2} \, \bigl\| \nabla \hat J \bigr\|_{L^\infty} \| \partial_s \hat u \|_{L^\infty}  \bigl\| \hat \xi  \bigr\|_{L^2} \\
&\qquad\qquad
 +  \eps^{-1/2} \bigl\| \nabla_s\nabla_t \hat\xi \bigr\|_{L^2}  
+  \eps^{1/2}\bigl\| \nabla_s^2\hat \xi \bigr\|_{L^2}
+  \eps^{1/2} \bigl\| \nabla_s\bigl( \hat J(\hat u) \bigr) \bigr\|_{L^\infty}\bigl\| \nabla_s \hat \xi   \bigr\|_{L^2} \\
&\qquad\qquad\qquad\qquad
+  \eps^{1/2}  \bigl\| \nabla \hat J \bigr\|_{L^\infty} \| \partial_s \hat u \|_{L^\infty}  \bigl\|  \nabla_s \hat \xi  \bigr\|_{L^2} 
+ \eps^{1/2}  \bigl\| \nabla_s\bigl( \nabla \hat J \, \partial_s \hat u \bigr) \bigr\|_{L^\infty } \bigl\|  \hat\xi \bigr\|_{L^2}
  \\
&\qquad\qquad
 +
  \eps^{-3/2} \bigl\| \nabla_t^2 \hat\xi \bigr\|_{L^2}  
+ \eps^{-1/2}  \bigl\| \nabla_t\nabla_s\hat \xi \bigr\|_{L^2}
+  \eps^{-1/2}  \bigl\| \nabla_t\bigl( \hat J(\hat u) \bigr) \bigr\|_{L^\infty}\bigl\| \nabla_s \hat \xi   \bigr\|_{L^2} \\
&\qquad\qquad\qquad\qquad
+  \eps^{-1/2}   \bigl\| \nabla \hat J \bigr\|_{L^\infty} \| \partial_s \hat u \|_{L^\infty}  \bigl\|  \nabla_t \hat \xi  \bigr\|_{L^2} 
+ \eps^{-1/2}  \bigl\| \nabla_t\bigl( \nabla \hat J \, \partial_s \hat u \bigr) \bigr\|_{L^\infty } \bigl\|  \hat\xi \bigr\|_{L^2}
 \\
&\qquad\qquad 
 +
\eps^{-3/4} \, \bigl\| \nabla_t \hat\xi \bigr\|_{L^4} 
 + \eps^{1/4} \, \bigl\| \nabla_s \hat \xi \bigr\|_{L^4} 
 +  \eps^{1/4} \, \bigl\| \nabla \hat J \bigr\|_{L^\infty} \| \partial_s \hat u \|_{L^\infty}  \bigl\| \hat \xi  \bigr\|_{L^4}  \Bigr) \\
&\leq
\sup_{\|\hat\xi\|_\eps\leq 1} \|\hat\xi\|_\eps \Bigl( 
1 + \bigl\| \nabla \hat J \bigr\|_{L^\infty} \| \partial_s \hat u \|_{L^\infty} 
+ \bigl\| \nabla_s\bigl( \hat J(\hat u) \bigr) \bigr\|_{L^\infty}\bigl\| 
+  \eps^{-1} \bigl\| \nabla_t\bigl( \hat J(\hat u) \bigr) \bigr\|_{L^\infty} 
+   \eps^{-1} \bigl\| \nabla_t\bigl( \nabla \hat J \, \partial_s \hat u \bigr) \bigr\|_{L^\infty } \Bigr) 
\\
&\leq 
1 + \bigl\| \nabla \hat J \bigr\|_{L^\infty} \| \partial_s \hat u \|_{L^\infty} 
+ \bigl\| \nabla_s\bigl( \hat J(\hat u) \bigr) \bigr\|_{L^\infty}\bigl\| \bigr) =: C^1_\cF ,  
\end{align*}
where we again crucially use the fact that on the shrinking strip the almost complex structure $\hat J$ and base point $\hat u$ are constant in $t\in[0,1]$ so that we can cancel the terms with negative exponents of $\eps$ due to 
$\nabla_t\bigl( \hat J(\hat u) \bigr) =0$ and $ \nabla_t\bigl( \nabla \hat J \, \partial_s \hat u\bigr) = 0$. 
\end{itemlist}
This finishes the sketch of proof of Theorem~\ref{thm:QF} up to the following proof of its key lemma. 
\end{proof}

\begin{proof}[Proof of Lemma~\ref{lem:fred 0}]
The linearization of $\cF_0$ or $\ol\cF_0$ at $0$ with respect to any Sobolev norm is 
$$
\rD\cF_0(0) \,:\;  (\xi_{02}, \hat\xi) 
\mapsto 
\bigl( \;  \rD_{u_{02}}\xi_{02} \; \;,\; \nabla_t \hat\xi  \;\bigr)  , 
$$
where 
$\rD_{u_{02}}\xi_{02}:= \nabla_t \xi_{02} - J_{02}(u_{02}) \nabla_s \xi_{02} - \nabla_{\xi_{02}} J_{02} \partial_s u_{02}$ is a linearized Cauchy-Riemann operator at $u_{02}$,  
and the boundary conditions encoded in the domain $\Gamma$ are
\begin{equation}\label{eq:bc Gamma}
 \xi_{02}|_{t=1} \in \rT_{u_{02}|_{t=1}}(L_0\times L_2) , \quad
 \xi_{02}|_{t=0} =\hat \xi_{02} |_{t=0} ,\quad
 \hat \xi_{11}|_{t=0} \in \rT_{\hat u_{11}}\Delta_1, \quad
 \hat \xi |_{t=1} \in\rT_{\hat u} L_{0211} .
\end{equation}
To understand the kernel, note that for $\nabla_t\hat\xi=0$ the boundary conditions reduce to
\begin{equation}\label{eq:bc 0}
 \xi_{02}|_{t=1} \in \rT_{u_{02}|_{t=1}}(L_0\times L_2) , \qquad
 \xi_{02}|_{t=0} \in {\rm Pr}_{02}\bigl( \rT_{\hat u}(M_{02}^-\times\Delta_1) \cap \rT_{\hat u} L_{0211}\bigr) = 
 \rT_{u_{02}|_{t=0}}L_{02}  
\end{equation}
with the notation $\rT_{u_{02}(s,0)} L_{02}:= {\rm Pr}_{\rT L_{02}}(\rT_{\hat u(s)} \ti L_{02})$ from 
Remark~\ref{rmk:immersion}. 
Then the remaining boundary condition $\xi_{02}|_{t=0} =\hat \xi_{02} |_{t=0}$ determines $\hat \xi_{02} |_{t=0}\in  \rT_{u_{02}|_{t=0}}L_{02}$, which -- via the isomorphisms in Remark~\ref{rmk:auxiso} and \eqref{eq:immersed tangent spaces} uniquely determines $\hat\xi=\hat\xi(s)$ as $\hat \xi ={\rm Pr}_{\rT L_{02}}^{-1}(\xi_{02}|_{t=0})$. 

Thus the kernel of $\rD\cF_0(0)$ is identified with solutions of $\rD_{u_{02}}\xi_{02}=0$ with boundary conditions \eqref{eq:bc 0} -- that is, by the kernel of the linearized operator for two strips, which is a Fredholm operator 
$$
\rD_{u_{02}}^{\rm Lag}\,:\; \bigl\{ \xi_{02} \in H^{s+1}(\bR\times[0,1],\rT_{u_{02}} M_{02}) \,\big|\,   
\eqref{eq:bc 0}
  \bigr\} \;\to\; H^s(\bR\times[0,1],\rT_{u_{02}} M_{02}) . 
$$
Here the choice of Sobolev regularity $s\geq 0$ is immaterial -- by elliptic regularity the kernel is always the same finite dimensional space of smooth sections.
Thus we have identified the kernel as
$$
\ker \ol{\rD\cF_0(0)} \;=\; \ker \rD\cF_0(0) \;=\; \bigl\{ (\fk_{02}, \hat\fk) \,\big|\, \fk_{02} \in \ker \rD_{u_{02}}^{\rm Lag}, \hat\fk= {\rm Pr}_{\rT L_{02}}^{-1} \circ \fk_{02}|_{t=0} \bigr\}  \;\subset\;\Gamma .
$$
Similarly, the cokernel of $\rD_{u_{02}}^{\rm Lag}$ is independent of $s$. More precisely, the $L^2$-orthogonal complement $\fC_{02}:=(\im \rD_{u_{02}}^{\rm Lag})^\perp\subset L^2(\bR\times[0,1],\rT_{u_{02}} M_{02})$ consists of smooth sections by elliptic regularity,  
so that we have a direct sum
$H^s(\bR\times[0,1],\rT_{u_{02}} M_{02}) \;=\; \im \rD_{u_{02}}^{\rm Lag} \oplus \fC_{02}$ for all $s\geq 0$. 
Yet another equivalent Fredholm operator for this boundary value problem is  
\begin{align*}
\ti \rD_{u_{02}}^{\rm Lag}\,:\; \{\xi_{02}\in H^{3/2} \,|\,  \xi_{02}|_{t=1} \in \rT(L_0\times L_2) \bigr\} &\;\to\; H^{1/2}(\bR\times[0,1],\rT_{u_{02}} M_{02}) \times H^{1} (\bR,\rT_{u_{02}|_{t=0}} L_{02}^\perp) , 
\\
\xi_{02}\quad &\;\mapsto\; \quad \bigl( \; \rD_{u_{02}}\xi_{02} \;,\; \pi_{02}^\perp \circ \xi_{02}|_{t=0} \;\bigr) .
\end{align*}
This encodes the $t=0$ boundary condition in the operator by restriction $H^{3/2}(\bR\times[0,1])\to H^1(\bR), \xi_{02}\mapsto \xi_{02}|_{t=0}$ and projection to the orthogonal complement of the immersed Lagrangian tangent spaces $\pi_{02}^\perp : \rT_{u_{02}|_{t=0}} M_{02} \to  (\rT_{u_{02}|_{t=0}} L_{02})^\perp := \bigl( {\rm Pr}_{\rT L_{02}}\rT_{\hat u} \ti L_{02} \bigr)^\perp$ as in Remark~\ref{rmk:immersion}. 
This operator has the same kernel as $\rD_{u_{02}}^{\rm Lag}$ and isomorphic cokernel, so that we obtain the direct sum 
\begin{equation}\label{eq:auxstab}
H^{1/2}(\bR\times[0,1],\rT_{u_{02}} M_{02}) \times H^{1} (\bR,\rT_{u_{02}|_{t=0}} L_{02}^\perp) 
\;=\; \im \ti \rD_{u_{02}}^{\rm Lag} \;\oplus\; \{ ( \fc_{02} , 0 ) \,|\, \fc_{02} \in\fC_{02} \} . 
\end{equation}
Now we can establish a representation of the cokernel of $\ol{\rD\cF_0(0)}$ by proving the direct sum
\begin{align*}
&H^{1/2}(\bR\times[0,1],\rT_{u_{02}} M_{02}) \times \bigl( L^1([0,1],H^1(\bR,\rT_{\hat u} \hat M)) \cap W^{1,1}([0,1],L^2(\bR,\rT_{\hat u} \hat M)) \bigr) 
\\
&\quad\;=\; \im \ol{\rD\cF_0(0)} \;\oplus\; \{ ( \fc_{02} , 0 ) \,|\, \fc_{02} \in\fC_{02} \} . 
\end{align*}
To do so we rewrite the sum claim for any given  $( \eta_{02} , \hat \eta )$ in the left hand side 
\begin{align*}
&( \eta_{02} , \hat \eta ) =  \ol{\rD\cF_0(0)}(\xi_{02},\hat\xi) + (\fc_{02} , 0 ) , \qquad (\xi_{02},\hat\xi)\in\ol\Gamma_0 \\
\Leftrightarrow \qquad &
\eta_{02} = \rD_{u_{02}}\xi_{02} + \fc_{02} , \quad \hat \eta=\nabla_t\hat\xi , 
\qquad \xi_{02}\in H^{3/2}, 
\hat\xi \in W^{1,1}([0,1],H^1) \cap W^{2,1}([0,1],L^2)  , 
\eqref{eq:bc Gamma} 
\\
\Leftrightarrow \qquad &
\rD_{u_{02}}\xi_{02} = \eta_{02} - \fc_{02} , \quad \nabla_t\hat\xi = \hat \eta , \qquad  \eta_{02}\in{\rm dom} \ti \rD_{u_{02}}^{\rm Lag} ,  \hat \eta \in {\rm dom} \ti \nabla_t    , \quad  \xi_{02}|_{t=0} =\hat \xi_{02} |_{t=0} 
\\
\Leftrightarrow \qquad &
\ti \rD_{u_{02}}^{\rm Lag} \xi_{02} = (\eta_{02} - \fc_{02},  \pi_{02}^\perp \hat\xi_{02}|_{t=0}  )  , \quad 
\ti\nabla_t\hat\xi = (\hat \eta , \pi_{02} \xi_{02}|_{t=0}  )  , \qquad  \eta_{02}\in{\rm dom} \ti \rD_{u_{02}}^{\rm Lag} ,  \hat \eta \in {\rm dom} \ti \nabla_t . 
\end{align*}
Here we split $\xi_{02}|_{t=0} =\hat \xi_{02} |_{t=0}  \;\Leftrightarrow\; \pi_{02} \xi_{02}|_{t=0} = \pi_{02}\hat \xi_{02} |_{t=0} \;\text{and}\; \pi_{02}^\perp\xi_{02}|_{t=0} = \pi_{02}^\perp \hat \xi_{02} |_{t=0}$ and recognized each of these conditions as boundary conditions encoded in the operators $\ti \rD_{u_{02}}^{\rm Lag}$ and $\ti\nabla_t$. 
Abstractly, the resulting equations are too coupled to have evident solutions. However, we can utilize the explicit inverse 
$(\hat\eta,\lambda)\mapsto {\rm Pr}_{\rT L_{02}}^{-1}\lambda + \int_0^\bullet \hat\eta - (\pi_{02}^\perp \times \pi_{11}) \int \hat\eta$
of $\ti\nabla_t$ from Remark~\ref{rmk:auxiso} to solve the equations: 
Applied to $( \eta_{02} , \hat \eta )=(0,0)$ the equivalence identifies the intersection of the subspaces 
\begin{align*}
\ol{\rD\cF_0(0)}(\xi_{02},\hat\xi) = (\fc_{02} , 0 )  
\qquad\Leftrightarrow \qquad &
\ti \rD_{u_{02}}^{\rm Lag}\xi_{02} = (- \fc_{02},  \pi_{02}^\perp \hat\xi_{02}|_{t=0}  )  , \quad 
\ti\nabla_t\hat\xi = (0 , \pi_{02} \xi_{02}|_{t=0}  )
\\
\Leftrightarrow \qquad &
\ti \rD_{u_{02}}^{\rm Lag}\xi_{02} = (- \fc_{02},  \pi_{02}^\perp \hat\xi_{02}|_{t=0}  )  , \quad 
\hat\xi = {\rm Pr}_{\rT L_{02}}^{-1}  \pi_{02} \xi_{02}|_{t=0} 
\\
\Leftrightarrow \qquad &
\ti \rD_{u_{02}}^{\rm Lag}\xi_{02} = (- \fc_{02},  0 )  , \quad 
\hat\xi = {\rm Pr}_{\rT L_{02}}^{-1}  \pi_{02} \xi_{02}|_{t=0} 
\\
\Leftrightarrow \qquad &
\ti \xi_{02} = 0 , \quad 
\hat\xi = {\rm Pr}_{\rT L_{02}}^{-1}  \pi_{02} \, 0 = 0 . 
\end{align*}
And given 
$( \eta_{02} , \hat \eta )\in H^{1/2}(\bR\times[0,1],\rT_{u_{02}} M_{02}) \times \bigl( L^1([0,1],H^1(\bR,\rT_{\hat u} \hat M)) \cap W^{1,1}([0,1],L^2(\bR,\rT_{\hat u} \hat M)) \bigr)$ we can solve 
\begin{align*}
& ( \eta_{02} , \hat \eta ) =  \ol{\rD\cF_0(0)}(\xi_{02},\hat\xi) + (\fc_{02} , 0 )   \\
&\qquad \Leftrightarrow \quad 
\ti \rD_{u_{02}}^{\rm Lag}\xi_{02} = (\eta_{02} - \fc_{02},  \pi_{02}^\perp \hat\xi_{02}|_{t=0}  )  , \quad 
\ti\nabla_t\hat\xi = (\hat \eta , \pi_{02} \xi_{02}|_{t=0}  )  
\\
&\qquad \Leftrightarrow \quad 
\ti \rD_{u_{02}}^{\rm Lag}\xi_{02} =(\eta_{02} - \fc_{02},  \pi_{02}^\perp \hat\xi_{02}|_{t=0}  )   , \quad 
\hat\xi = {\rm Pr}_{\rT L_{02}}^{-1} \pi_{02} \xi_{02}|_{t=0}   + \textstyle\int_0^\bullet \hat \eta  - (\pi_{02}^\perp \times \pi_{11}) \int \hat\eta
\\
&\qquad\Leftrightarrow \quad 
\ti \rD_{u_{02}}^{\rm Lag}\xi_{02} = (\eta_{02} - \fc_{02},  \pi_{02}^\perp \textstyle \int \hat\eta_{11} )   , \quad 
\hat\xi = {\rm Pr}_{\rT L_{02}}^{-1} \pi_{02} \xi_{02}|_{t=0}   + \textstyle\int_0^\bullet \hat \eta  
\\
\end{align*}
by first finding a (not necessarily unique) pair $(\xi_{02},\fc_{02})$ that solves the stabilized equation for $\ti \rD_{u_{02}}^{\rm Lag}$ and then computing $\hat\xi$ from the above formula. 
This proves the claimed direct sum and thus identifies the cokernel of $\ol{\rD\cF_0(0)}$. Since both kernel and cokernel are finite dimensional, this establishes $\ol{\rD\cF_0(0)}$ as a Fredholm operator -- with index equal to the index of $\rD_{u_{02}}^{\rm Lag}$. 

Finally, the stabilized Fredholm estimate follows from the fact that the linear operator
$$
(\xi_{02}, \hat\xi,\fc_{02}) \mapsto \bigl( \pi_{\fK_{02}}(\xi_{02})  , \ol{\rD\cF_0(0)} (\xi_{02}, \hat\xi) - (\fc_{02},0) \bigr) 
$$
is bounded, surjective, and has Fredholm index $0$ -- hence has a bounded inverse. 
\end{proof}

\subsection{Instanton and Symplectic Floer Theory}

There are several conjectures of Atiyah-Floer type, which all relate the instanton Floer homology of a $3$-manifold -- defined from moduli spaces of anti-self-dual instantons -- to a symplectic Floer homology -- defined from moduli spaces of pseudoholomorphic maps to a representation space arising from dimensional reduction of the anti-self-duality equation. 
 
In the nondegenerate Atiyah-Floer conjecture $HF^{\rm inst}(Y_h)\simeq HF(\phi_f)$  proved in \cite{dostoglou-salamon} the $3$-manifold is the mapping torus $Y_h$ of a diffeomorphism $h:\Sigma\to\Sigma$ of a closed Riemann surface. It is induced by a bundle automorphism $f:Q\to Q$ of a nontrivial $G=SO(3)$-bundle $Q\to\Sigma$, which induces a nontrivial bundle $Q_f\to Y_h$ and the differential for the instanton Floer homology counts anti-self-dual connections on $\bR\times Q_f$. 
The bundle automorphism also induces a symplectomorphism $\phi_f:\cR(Q)\to\cR(Q)$ of the moduli space $\cR(Q)$ of flat connections on $Q$ (which can be identified with a $G$-representation space), and the differential for the symplectic Floer homology counts pseudoholomorphic maps $\bR\times[0,1]\to\cR(Q)$ with the boundary values matching via $\phi_f$. 

In the (original) Atiyah-Floer conjecture $HF^{\rm inst}(Y)\simeq HF^{\rm symp}(L_{H_0},L_{H_1})$ the $3$-manifold is a homology $3$-sphere $Y$. Then the instanton Floer differential counts anti-self-dual connections on the trivial $G=SU(2)$ bundle over $\bR\times Y$. 
A choice of Heegard splitting $Y=H_0\cup_\Sigma H_1$ gives rise to a singular representation space $\cR(\Sigma)$ with two Lagrangians $L_{H_0},L_{H_1}\subset\cR(\Sigma)$ whose intersection points correspond to flat connections on $Y$. Then the -- only conjecturally defined -- Lagrangian Floer homology is thought to be counting pseudoholomorphic maps $\bR\times[0,1]\to\cR(\Sigma)$ with boundary values in $L_{H_0}$ and $L_{H_1}$. 

The adiabatic limit approach to proving these relationships was developed by Salamon \cite{salamon_ICM}.  
After several localizations -- in the moduli space, in a local slice (centered at a reference connection $A_0+\Phi_0\rd s + \Psi_0\rd t$), and locally on the domain -- it studies the family of PDEs for $0<\eps\leq 1$ 
\begin{equation}
\label{eqep}
\left\{
\begin{aligned}
\pd_s A -\rd_A\P + *\bigl(\pd_t A - * \rd_A\Psi\bigr) &= 0, \\
\pd_s\Psi - \pd_t\P + [\P,\Psi] + \ep^{-2} * F_A & = 0 , \\
\nabla^0_s(\Phi-\Phi_0) + \nabla^0_t(\Psi-\Psi_0)  + \ep^{-2} * \rd_{A_0} *(A-A_0) & = 0 
\end{aligned}\right.
\end{equation}
for a triple of maps ${A:[-1,1]\times[0,1]\to\Omega^1(\Sigma,\fg_Q)}$ 
and ${\P,\Psi:[-1,1]\times[0,1]\to\Omega^0(\Sigma,\fg_Q)}$ to spaces of $1$- resp.\ $0$-forms with values in the associated bundle $\fg_Q= Q \times_{\rm Ad} \fg$.  
Their energy 
$$
\cE(A,\Phi,\Psi) =
\int_{[-1,1]\times[0,1]\times\Sigma} |\pd_s A - \rd_A\Phi|^2 
+ \ep^{-2} |F_A|^2
$$
is bounded, independently of $\eps$, by a global monotonicity formula. 
Thus the expectation is to obtain in the $\eps\to 0$ limit 
 a triple of maps ${A:[-1,1]\times[0,1]\to\Omega^1(\Sigma,\fg_Q)}$ 
and ${\P,\Psi:[-1,1]\times[0,1]\to\Omega^0(\Sigma,\fg_Q)}$ satisfying 
\begin{equation}\left\{   \label{eq0}
\begin{aligned}
\pd_s A -\rd_A\P + *\bigl( \pd_t A - \rd_A\Psi \bigr) &= 0, \\
*F_A &=0, \\
\rd_{A_0} *(A-A_0) & = 0 . 
\end{aligned}\right.
\end{equation}
Such an $\eps=0$ solution can then be interpreted as a map $[A] :[-1,1]\times[0,1]\to\cR(Q)$ to the moduli space of flat connections on $Q$ that solves the Cauchy-Riemann equation with respect to the almost complex structure $J$ induced by the Hodge $*$ operator
\begin{equation*} \label{eq hol}
\pd_s [A] + J([A]) \pd_t [A] = 0 . 
\end{equation*}
Our proposal for casting this classical adiabatic limit as an adiabatic Fredholm family is to multiply \eqref{eqep} by $\eps^2$ and change variables to $\eps^{\rm new}=(\eps^{\rm old})^2$ to obtain a family of PDEs for $0\leq\eps\leq 1$ which naturally combines \eqref{eqep} and \eqref{eq0}, 
\begin{equation*}
\cF_\eps(\alpha,\varphi,\psi)\,:=\; \left(
\begin{aligned}
\pd_s A -\rd_A\P + *\bigl(\pd_t A - * \rd_A\Psi\bigr)  \\
\ep \bigl( \pd_s\Psi - \pd_t\P + [\P,\Psi] \bigr)+  * F_A \\
\ep \bigl( \nabla^0_s(\Phi-\Phi_0) + \nabla^0_t(\Psi-\Psi_0) \bigr) + *\rd_{A_0} *(A-A_0) 
\end{aligned}
\right)
\;=\; 0 ,
\end{equation*}
where $A=A_0+\alpha,\Phi=\Phi_0+\phi,\Psi=\Psi_0+\psi$. 
This formulates the adiabatic limit near a solution $A_0+\Phi_0\rd s + \Psi_0\rd t$ and locally on $Z:=[-1,1]\times[0,1]$ -- as a family of maps $(\cF_\eps)_{\eps\in[0,1]}$ with common domain and target space
$$
\Gamma \, :=\;\cC^\infty(Z, \Omega^1(\Sigma,\fg_Q))\times \cC^\infty(Z, \Omega^0(\Sigma,\fg_Q))\times \cC^\infty(Z, \Omega^0(\Sigma,\fg_Q)) \;=:\, \Omega . 
$$
The norms for $\eps>0$ from \cite{dostoglou-salamon} become
\begin{align*}
\|(\alpha,\phi,\psi)\|^\Gamma_\eps &\,:=\; \eps^{-1/2} \|\alpha\|_{L^p(Z, W^{1,p}(\Sigma))} +  \|\nabla_Z\alpha\|_{L^p} + \|(\varphi,\psi)\|_{L^p(Z, W^{1,p}(\Sigma))}  + \eps^{1/2} \|(\varphi,\psi)\|_{L^p} ,
\\
\|(\alpha,\phi,\psi)\|^\Omega_\eps &\,:=\; 
 \|\alpha\|_{L^p}  + \eps^{-1/2} \|(\varphi,\psi)\|_{L^p} . 
\end{align*}
Now the key step is to find norms $\|\cdot \|^\Gamma_0\leq \|(\alpha,\phi,\psi)\|^\Gamma_\eps$ and
$\|\cdot \|^\Omega_0\leq \|(\alpha,\phi,\psi)\|^\Omega_\eps$ which give the $\eps=0$ linearized operator the Fredholm property 
\begin{equation*}
\overline{\rD\cF_0(0)}: (\alpha,\varphi,\psi) \;\mapsto\; 
\left(
\begin{aligned}
\pd_s \alpha -\rd_{A_0}\Phi + *\bigl(\pd_t \alpha - * \rd_{A_0}\psi\bigr)  \\
 * \rd_{A_0}\alpha  \qquad\;\; \\
 *\rd_{A_0} * \alpha  \qquad
\end{aligned}
\right). 
\end{equation*}

We have a conjecture that we would be happy to share with folks who are interested in making righteous use of this adiabatic limt. 

\begin{conjecture} \label{conj:AF}
The above data
$\bigl( (\cF_\eps:\cV_\Gamma\to \Omega )_{\eps\in[0,1]} ,\|\cdot\|^\Gamma_\eps, \|\cdot\|^\Omega_\eps \bigr)$ 
can be supplemented 
to form a regularizing $\cC^1$-regular adiabatic Fredholm family as in Definition~\ref{def:fredholm}, \ref{def:adiabatic C-l}. 
\end{conjecture}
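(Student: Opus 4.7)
The plan is to mirror the proof strategy of Theorem~\ref{thm:QF} for the QF case, using Hodge theory on $\Sigma$ as the $\eps=0$ auxiliary isomorphism in place of the $\nabla_t$-isomorphism of Remark~\ref{rmk:auxiso}. The linearization $\rD\cF_0(0)$ has three components: a Cauchy--Riemann-like operator $\partial_s\alpha + *\partial_t\alpha$ coupled with gauge terms $\rd_{A_0}(\psi-\varphi)$, the linearized curvature $*\rd_{A_0}\alpha$, and the linearized gauge-fixing $*\rd_{A_0}*\alpha = \rd_{A_0}^*\alpha$ (plus the $\eps$-dependent terms $\eps\nabla^0\Phi, \eps\nabla^0\Psi$ which vanish at $\eps=0$). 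The last two components depend only on $\alpha$ and assemble into $(\rd_{A_0}\alpha,\rd_{A_0}^*\alpha)\in\Omega^2\oplus\Omega^0$, which via Hodge decomposition on $\Sigma$ is a first-order self-adjoint operator whose restriction to the $L^2$-orthogonal complement of the harmonic forms $\cH^1_{A_0}=\ker\rd_{A_0}\cap\ker\rd_{A_0}^*$ is an isomorphism. This is the direct analogue of the isomorphism $\tilde\nabla_t$ for QF: the ``transverse to kernel'' part of $\alpha$ is determined pointwise in $(s,t)\in Z$ by the right-hand side, and the harmonic part $\alpha_{\rm harm}(s,t)\in\cH^1_{A_0}\simeq\rT_{[A_0]}\cR(Q)$ then satisfies a Cauchy--Riemann equation for maps $Z\to\cR(Q)$.

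Based on this, I would define $\|\cdot\|^\Gamma_0$ and $\|\cdot\|^\Omega_0$ by splitting every $\alpha$ into its $\Sigma$-harmonic and $\Sigma$-orthogonal parts $\alpha=\alpha_{\rm harm}+\alpha^\perp$: the harmonic part should be normed by a Sobolev norm on $Z$ of regularity matching the classical Floer-theoretic operator on $\cR(Q)$ (analogous to $H^{3/2}$ in QF), while $\alpha^\perp$ and $(\varphi,\psi)$ should be measured in a mixed norm such as $L^1_Z(W^{1,p}_\Sigma)\cap W^{1,1}_Z(L^p_\Sigma)$ that captures only pointwise-on-$Z$ information, so that the Hodge-theoretic inverse is bounded. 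Verifying $\|\cdot\|^\Omega_0\leq C\|\cdot\|^\Omega_\eps$ and $\|\cdot\|^\Gamma_0\leq C\|\cdot\|^\Gamma_\eps$ then follows, as in the QF case, from Sobolev embeddings on $Z$ in the transverse directions and from the harmonic/anti-harmonic split giving the bound on $\alpha_{\rm harm}$ via restriction. With these norms, the Fredholm property of $\overline{\rD\cF_0(0)}$ with kernel canonically identified with $\ker\rD_{[A_0]}^{\rm Lag}$ on $\cR(Q)$ (and cokernel represented concretely in $\Omega$ by smooth sections via elliptic regularity) would be proved exactly as Lemma~\ref{lem:fred 0}: one combines the $\Sigma$-Hodge isomorphism on $\alpha^\perp$ with the Fredholm theory of the $\cR(Q)$-Cauchy--Riemann operator via a block argument, and reads off the stabilized estimate from the bounded inverse of the resulting linear operator $(\alpha,\varphi,\psi,\fc)\mapsto(\pi_{\fK}\alpha_{\rm harm},\overline{\rD\cF_0(0)}(\alpha,\varphi,\psi)-\fc)$.

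For the remaining conditions of Definitions~\ref{def:fredholm}, \ref{def:adiabatic C-l}, the quadratic-ish estimate and uniform Fredholm-ish estimate are precisely the estimates already written up in \cite{dostoglou-salamon} under our rescaling $\eps^{\rm new}=(\eps^{\rm old})^2$; uniform continuity of $\rD\cF_\eps$ and uniform bounds on $\rD\cF_\eps(0)$ follow by inspection, with all $\eps$-negative powers appearing in the computation canceled using the fact that the reference connection $A_0+\Phi_0\rd s+\Psi_0\rd t$ is held fixed (the analog of $t$-independence of $\hat u,\hat J$ in QF). The [Near Solution] and [Continuity of Derivatives at $0$] conditions reduce to the classical ``near-solution'' estimates of Dostoglou--Salamon after the multiplication by $\eps^2$. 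Pointwise $\cC^1$-continuity in $\Delta$ at solutions modulo $\fC$ hinges, as in QF, on choosing $\fC\subset\Omega$ so that a solution modulo $\fC$ has the second and third components of $\cF_{\eps_0}$ in the orthogonal complement of a fixed cokernel representative, ensuring that the extra $\eps$-terms one must subtract when passing $\eps\to\eps_0$ are themselves controlled. Finally, the regularizing property is standard elliptic regularity for the gauge-fixed anti-self-duality operator combined with Hodge regularity on $\Sigma$.

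The main obstacle I anticipate is the construction of the $\eps=0$ norms: unlike QF, where the degenerate direction is a single compact interval $[0,\eps]$, the degenerate directions here are the infinitely many ``gauge modes'' of $\alpha^\perp$ on $\Sigma$ and the zero modes $\varphi,\psi$, and moreover the gauge-fixing in the third component of $\cF_\eps$ is coupled to $\alpha$ via $*\rd_{A_0}*$. Striking the right balance — weak enough norms that the Hodge-theoretic inverse at $\eps=0$ is bounded and that $\|\cdot\|^\bullet_0\leq \|\cdot\|^\bullet_\eps$ holds uniformly, but strong enough that [Pointwise $\cC^1$-continuity in $\Delta$ rel.\ $\fC$] holds for the $\eps\bigl(\pd_s\Psi-\pd_t\Phi+[\Phi,\Psi]\bigr)$ and $\eps\nabla^0$ terms — will require iterating between candidate norms and the quadratic estimates, just as it did in the development of Definition~\ref{def:fredholm} itself. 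A secondary difficulty is the choice of cokernel representation $\fC\subset\Omega$ so that the two nonzero components of the cokernel (the analog of $(\fc_{02},0)$) correctly cancel the $\eps^{-1}$-divergent terms in the pointwise continuity estimate; I expect this will force $\fC$ to live entirely in the first component of $\Omega$, leveraging that $\coker\overline{\rD\cF_0(0)}\simeq\coker\rD^{\rm Lag}_{[A_0]}$ is already a cokernel of the moduli-space Cauchy--Riemann operator.
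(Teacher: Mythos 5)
This statement is posed as an explicit \emph{conjecture} in the paper; the authors give only the setup and explicitly state that they have a conjecture they would be ``happy to share,'' so there is no paper proof to compare your attempt against.

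That said, your strategy is the natural analogue of the proof of Theorem~\ref{thm:QF} in the QF case and aligns with what the paper gestures at: the Hodge decomposition on $\Sigma$ replaces the $\nabla_t$-isomorphism of Remark~\ref{rmk:auxiso}, the harmonic part $\alpha_{\rm harm}$ plays the role of $\xi_{02}$, the block-triangular structure against the moduli-space Cauchy--Riemann operator $\rD^{\rm Lag}_{[A_0]}$ plays the role of the block structure against $\rD^{\rm Lag}_{u_{02}}$ in Lemma~\ref{lem:fred 0}, and your guess that $\fC$ should live entirely in the first component is exactly the analogue of the $(\fc_{02},0)$ choice that makes pointwise $\cC^1$-continuity rel.\ $\fC$ work in the QF proof. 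You also correctly name the genuine obstacle: in QF the degenerate direction is a compact interval $[0,1]$ with a one-dimensional $\nabla_t$, whereas here the non-harmonic modes $\alpha^\perp$ together with $(\varphi,\psi)$ form an infinite-dimensional space, so the mixed $\eps=0$ norm has to be calibrated simultaneously against a boundedly-invertible $\Sigma$-elliptic operator and the $\eps>0$ anisotropic $L^p/W^{1,p}$ norms, and that calibration is the open problem. One overclaim: you say the quadratic-ish and uniform Fredholm-ish estimates are ``precisely'' those of \cite{dostoglou-salamon} under rescaling, but the classical estimate bounds the remainder in a weighted $L^p$ norm, not in the as-yet-unconstructed $\|\cdot\|^\Gamma_0$; establishing that the classical remainder term is dominated by a candidate $\|\cdot\|^\Gamma_0$ is new work, exactly as the paper had to do for QF (cf.\ the {\rm [Uniform Fredholm-ish Estimate]} bullet in the proof of Theorem~\ref{thm:QF}, where \cite[Lemma~3.2.1]{wehrheim_woodward_geometric_composition} must be supplemented by Sobolev trace and $W^{1,1}\subset\cC^0\subset L^2$ arguments). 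Overall this is a credible research plan consistent with the paper's hinted-at approach, not a proof --- which is appropriate for a statement the paper leaves open.
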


%
%
%
%
%
%


\vfill
\pagebreak

\section{Proofs}  \label{proofs}

This section proves Theorem~\ref{thm:charts}. 
To make the strategy of this proof more accessible, we will work through each step first in the case of a classical Fredholm section -- reproving the following classical fact, which underlies most constructions of invariants from moduli spaces of PDEs. 

\begin{theorem} \label{thm:charts classical}
Suppose an open subset $\cU\subset\overline\cM$ of a topological space (such as a compactified moduli space) is described as the zero set of a $\cC^\ell$-regular Fredholm map $\cF:\cV_{\ol\Gamma}\to \ol\Omega$ for $\ell\geq 1$. More precisely, we assume the following: 
\begin{itemize}
\item
$(\ol\Gamma,\|\cdot\|^\Gamma)$ and $(\ol\Omega,\|\cdot\|^\Omega)$ are Banach spaces.
\item
$\cV_{\ol\Gamma}\subset\ol\Gamma$ is a an open neighbourhood of $0=0_{\ol\Gamma}\in\ol\Gamma$.
\item
$\cF : \cV_{\ol\Gamma} \to \ol\Omega$ is a $\cC^\ell$-map with $\cF(0)=0$, whose linearization $\rD\cF (0) : \ol\Gamma \to \ol\Omega$ is Fredholm, that is, $\fK:=\ker\rD\cF(0)$ and $\fC:=\coker\rD\cF(0)=\ol\Omega / \im\rD\cF(0)$ are finite dimensional.  
\item
$\psi : \cF^{-1}(0) \to \cU$  is a homeomorphism.
\end{itemize}
Then any choice of projection $\pi_\fK:\ol\Gamma\to\fK$ and inclusion $\fC\subset\ol\Omega$ such that $\ol\Omega=\fC\oplus\im\rD\cF(0)$ provides a Fredholm stabilization of $\rD\cF(0)$ in the sense that for some constant $C_0$ we have
$$
\| \gamma \|^\Gamma + \| \fc  \|^\Omega  \leq C_0 \bigl( \|\pi_\fK(\gamma) \|^\Gamma + \| \rD\cF(0) \gamma - \fc \|^\Omega  \bigr)\qquad\text{for all}\; (\gamma,\fc)\in\ol\Gamma\times \fC .
$$
And this induces a finite dimensional reduction that describes $\ol\cM$ locally as the zero set of a map between finite dimensional spaces, 
$$
f: \cV_\fK\to\fC, 
\qquad\text{and}\qquad 
\psi_f: f^{-1}(0) \to \ol\cM  
$$ 
defined on an open subset $\cV_\fK\subset\fK$. It describes $\cU\subset\overline\cM$ locally by composition $\psi_f=\psi\circ\phi$ with a homeomorphism for some $\delta_\sigma>0$
$$
\phi \,:\; f^{-1}(0) \;\overset{\sim}{\to}\;
 \cF^{-1}(0)  \cap \bigl\{ \gamma\in\ol\Gamma \,\big|\, \|\gamma\|^\Gamma < \delta_\sigma \bigr\} . 
$$
Moreover, this finite dimensional reduction is $\cC^\ell$ in the sense that the differentials of order $0\leq k \leq \ell$ (see Remark~\ref{rmk:multilinear}) form continuous maps 
$\cV_\fK \to \cL^k(\fK^k,\fC)$, $\fk_0\mapsto \rD^k f(\fk_0)$.
\end{theorem}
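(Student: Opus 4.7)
The plan is to carry out the classical Lyapunov--Schmidt reduction in exactly the form advertised in \S\ref{proofs} -- rewriting $\cF(\gamma)=0$ as the vanishing of a finite-dimensional map composed with a contraction fixed-point -- so that every argument can later be repeated ``$\eps$-uniformly'' when the classical proof is upgraded to the adiabatic setting.

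First I would establish the Fredholm stabilization estimate. The hypothesis $\ol\Omega=\fC\oplus\im\rD\cF(0)$ together with $\fK=\ker\rD\cF(0)$ shows that the bounded linear map $P_0\colon \ol\Gamma\times\fC\to \fK\times\ol\Omega$, $(\gamma,\fc)\mapsto\bigl(\pi_\fK(\gamma),\rD\cF(0)\gamma-\fc\bigr)$ is a Fredholm operator of index zero (the index equals the Fredholm index of $\rD\cF(0)$ shifted by $\dim\fC-\dim\fK=-\ind\rD\cF(0)$), and that it is injective: if $\pi_\fK(\gamma)=0$ and $\rD\cF(0)\gamma=\fc\in\fC$, then $\fc\in\fC\cap\im\rD\cF(0)=\{0\}$, whence $\gamma\in\fK\cap\ker\pi_\fK=\{0\}$. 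An injective index-zero Fredholm map between Banach spaces is an isomorphism, so $P_0$ has a bounded inverse, which gives the desired estimate with $C_0:=\|P_0^{-1}\|$.

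Next I would set up the fixed-point formulation. Define the nonlinear ``remainder'' $N\colon\cV_{\ol\Gamma}\to\ol\Omega$ by $N(\gamma):=\rD\cF(0)\gamma-\cF(\gamma)$; by the $\cC^1$-hypothesis and $\cF(0)=0$ one has $N(0)=0$, $\rD N(0)=0$, and therefore a quadratic-type estimate $\|N(\gamma)-N(\gamma')\|^\Omega\le c(r)\|\gamma-\gamma'\|^\Gamma$ on the ball $\{\|\gamma\|^\Gamma,\|\gamma'\|^\Gamma\le r\}$ with $c(r)\to 0$ as $r\to 0$. The equation $\cF(\gamma)=0$ supplemented by the tautology $\fc=0$ is equivalent to $P_0(\gamma,\fc)=(\fk,N(\gamma))$ where $\fk:=\pi_\fK(\gamma)$. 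Writing $P_0^{-1}=(Q_\fK,Q_\fC)$, this becomes the coupled system
\begin{equation*}
\gamma \;=\; Q_\fK(\fk,N(\gamma)), \qquad \fc \;=\; Q_\fC(\fk,N(\gamma)).
\end{equation*}
For $\fk$ in a sufficiently small ball $\cV_\fK\subset\fK$ and $\gamma$ in a small ball of radius $\delta_\sigma$ in $\ol\Gamma$, the map $\gamma\mapsto Q_\fK(\fk,N(\gamma))$ is a contraction, so the Banach fixed-point theorem delivers a unique solution $\gamma=:\sigma(\fk)$. Uniqueness plus the linear estimate on $Q_\fK$ also yields continuity in $\fk$.

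Then I would define the finite-dimensional reduction $f\colon\cV_\fK\to\fC$ by
\begin{equation*}
f(\fk) \;:=\; Q_\fC\bigl(\fk,\,N(\sigma(\fk))\bigr).
\end{equation*}
By construction $f(\fk)=0$ iff $(\sigma(\fk),0)$ solves $P_0(\gamma,\fc)=(\fk,N(\gamma))$, iff $\cF(\sigma(\fk))=0$ and $\pi_\fK\sigma(\fk)=\fk$. Setting $\phi(\fk):=\sigma(\fk)$ on $f^{-1}(0)$ then produces the required homeomorphism onto $\cF^{-1}(0)\cap\{\|\gamma\|^\Gamma<\delta_\sigma\}$, with continuous inverse $\gamma\mapsto\pi_\fK(\gamma)$; composing with $\psi$ gives $\psi_f$. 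Finally, the $\cC^\ell$ regularity follows from the implicit function theorem applied to the $\cC^\ell$ map $G(\fk,\gamma):=\gamma-Q_\fK(\fk,N(\gamma))$: $\rD_\gamma G(0,0)=\id_{\ol\Gamma}$ is invertible, so $\sigma$ is $\cC^\ell$ on a neighbourhood, and $f=Q_\fC\circ(\id,N\circ\sigma)$ is then $\cC^\ell$ as a composition of $\cC^\ell$ maps. The main obstacle in the step-by-step template set by \S\ref{proofs} is not the classical calculation itself but keeping the construction sufficiently ``modular'' -- isolating the isomorphism $P_0$, the contraction $B$, the solution map $\sigma$, and the reduction $f$ as separate building blocks -- so that the adiabatic analogue can then inject the $\eps$-dependence into each block without any structural change.
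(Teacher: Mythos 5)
Your proposal is correct and mirrors the paper's own construction (Lemmas~\ref{lem:inverses classical}, \ref{lem:contraction classical}, \ref{lem:solution classical}, \ref{lem:finite dimensional reduction classical}): you build the same Fredholm stabilization isomorphism $P_0$ from injectivity plus index zero, use $P_0^{-1}$ to recast $\cF(\gamma)=0$ as a parametrized contraction, and read off $f$ from the $\fC$-component of the solution. Two deviations are worth noting. First, you pose the contraction on $\ol\Gamma$ alone and recover $\fc$ a posteriori, whereas Lemma~\ref{lem:contraction classical} keeps the redundant $\fC$-factor inside the contraction domain $\ol W=\fC\times\ol\Gamma$; for the classical case these are interchangeable since the paper's $B$ does not actually depend on $\fc$, but the paper's bookkeeping is chosen to match the adiabatic version where $\ol W_\eps$ carries the $\eps$-dependent norms. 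Second, and more substantively, you obtain $\cC^\ell$ regularity of $\sigma$ by a single appeal to the implicit function theorem applied to $G(\fk,\gamma)=\gamma-Q_\fK(\fk,N(\gamma))$; Lemma~\ref{lem:solution classical} instead re-derives $\cC^\ell$ regularity by hand, exhibiting $\rD\sigma$ as the solution map of a fresh contraction $\widetilde B$ and iterating. Your shortcut is valid for the fixed-$\eps$ problem but deliberately hides the quantitative ingredients --- the explicit contraction constant $\theta$, the inverse bound $\frac{1}{1-\theta}$, the continuity modulus of $Q$ --- which Theorem~\ref{thm:family solution} needs to make uniform in $\eps$; the paper pays that cost already in the classical proof precisely so that it templates the adiabatic one.
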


The traditional approach (see e.g.\ \cite[Rmk.4.2]{wehrheim_Fredholm}) to this proof is based on the {\bf Fredholm splitting} given by the direct sums
\begin{equation}\label{eq:splitting classical}
\ol\Gamma=\fK\oplus\ker\pi_\fK 
\qquad\text{and}\qquad
\ol\Omega=\fC\oplus\im\rD\cF(0) 
\end{equation}
and the fact that the linearization restricts to an isomorphism $\rD\cF(0)|_{\ker\pi_\fK}: \ker\pi_\fK\to \im\rD\cF(0)$. One then uses its inverse $Q:=(\rD\cF(0)|_{\ker\pi_\fK})^{-1}$ to rewrite for $\gamma = \fk + w \in \fK\oplus\ker\pi_\fK=\ol\Gamma$ 
$$
\cF(\gamma)=0 \qquad\Longleftrightarrow\qquad  \pi_\fC (\cF(\fk + w) ) = 0  \quad\text{and} \quad Q \, ({\rm Id}_{\ol\Omega}- \pi_\fC )\, \cF(\fk+w) = 0  . 
$$
Here the second equation for sufficiently small $\fk\in\cV_\fK$ is a fixed point equation for a retraction on $\ker\pi_\fK$ by $\cC^1$ regularity of $\cF$. The unique fixed points then define a solution map $\sigma:\cV_\fK \to \ker\pi_\fK$ such that 
$Q \, ({\rm Id}_{\ol\Omega}- \pi_\fC )\, \cF(\fk+w) = 0 \;\Leftrightarrow\; w=\sigma(\fk)$ and hence we obtain the desired finite dimensional reduction $f:\cV_\fK\to\ker\pi_\fK$, $\phi:f^{-1}(0)\to \cF^{-1}(0)$ from 
$$
\cF(\gamma)=0 \qquad\Longleftrightarrow\qquad  \gamma=\phi(\fk):=\fk+\sigma(\fk) \quad\text{with}\quad f(\fk):=\pi_\fC (\cF(\fk + \sigma(\fk) ) = 0    . 
$$
This formulation, however, is not suitable for the adiabatic analysis, as it heavily depends on direct sums -- which are hard to formulate on dense subsets with $\eps$-dependent norms. To remedy this, we will in \S\ref{inverses} replace in the Fredholm splitting by a {\bf Fredholm stabilization isomorphism}
\begin{equation}\label{eq:stabilization classical}
 \fC \times \ol\Gamma \;\rightarrow\; \fK \times \ol\Omega, \qquad
(\fc, \gamma) \;\mapsto\; ( \pi_\fK(\gamma), \rD\cF(0)\gamma - \fc ).
\end{equation}

The remaining proof proceeds analogous to the above outline:
\begin{itemlist}
\item[$\cdot$]
\S\ref{contraction} uses the inverse of this isomorphism to rewrite $\cF(\gamma)=0$ as a finite dimensional equation and a fixed point problem for a family of contractions. 
\item[$\cdot$]
\S\ref{solution} establishes the solution maps for this fixed point problem, in particular proves their $\cC^\ell$ regularity.
\item[$\cdot$]
\S\ref{reduction} deduces the finite dimensional reduction given the solution map and the remaining finite dimensional equation. 
\end{itemlist}
A useful zero-th step in this approach is to identify the zero set of the nonlinear map $\cF^{-1}(0)\simeq\cG^{-1}(0)$ with the zero set of a stabilized map of the same Fredholm index. 

\begin{lemma}\label{lem:equivalent-fredholm classical} 
Consider any map $\cF:\cV \to \ol\Omega$ from a set $\cV$ to a vector space $\ol\Omega$.  
Given any auxiliary map $\pi_\fK: \cV\to\fK$ to another vector space $\fK$ and auxiliary subspace $\fC\subset\ol\Omega$, the zero set $\cF^{-1}(0)$ is naturally identified with the zero set of the map
\begin{align*}
\cG: \fK \times \fC \times \cV &\;\rightarrow\; \fC \times \fK \times \ol\Omega, \\
(\fk, \fc, \gamma) &\;\mapsto\; ( \fc , \pi_\fK(\gamma)-\fk, \cF(\gamma) - \fc ).
\end{align*}
\end{lemma}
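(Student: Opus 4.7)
The plan is to exhibit an explicit bijection $\cF^{-1}(0) \;\overset{\sim}{\leftrightarrow}\; \cG^{-1}(0)$ by reading off the components of $\cG$. The forward map will send $\gamma \in \cF^{-1}(0)$ to the triple $(\pi_\fK(\gamma), 0, \gamma) \in \fK \times \fC \times \cV$, while the inverse will simply project $(\fk,\fc,\gamma) \mapsto \gamma$. The content of the lemma is that these maps are well-defined and mutually inverse.

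First, I would verify the forward map lands in $\cG^{-1}(0)$. If $\cF(\gamma) = 0$, then evaluating each component of $\cG$ at $(\pi_\fK(\gamma),0,\gamma)$ gives $\fc = 0$, $\pi_\fK(\gamma) - \fk = \pi_\fK(\gamma) - \pi_\fK(\gamma) = 0$, and $\cF(\gamma) - \fc = 0 - 0 = 0$. Conversely, if $(\fk,\fc,\gamma) \in \cG^{-1}(0)$, then the first component forces $\fc = 0$, the third component then forces $\cF(\gamma) = \fc = 0$ so that $\gamma \in \cF^{-1}(0)$, and the second component forces $\fk = \pi_\fK(\gamma)$, so the triple is necessarily of the form $(\pi_\fK(\gamma), 0, \gamma)$.

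This simultaneously shows that projection to the $\gamma$-component is a well-defined map $\cG^{-1}(0) \to \cF^{-1}(0)$ and that it is inverse to $\gamma \mapsto (\pi_\fK(\gamma), 0, \gamma)$. There is no real obstacle here; this is essentially bookkeeping to record the standard observation that stabilizing by auxiliary variables along with equations that pin those variables down creates an equivalent zero set. The purpose of the lemma is setup for the next section, where choosing $\pi_\fK$ to be the kernel projection and $\fC$ to represent the cokernel allows one to study $\cG$ in place of $\cF$, with the advantage that the linearization of $\cG$ at $(0,0,0)$ has a built-in Fredholm stabilization via \eqref{eq:stabilization classical}.
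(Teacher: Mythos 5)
Your proof is correct and matches the paper's: the paper states the same bijection $\gamma \mapsto (\pi_\fK(\gamma),0,\gamma)$ with inverse $(\pi_\fK(\gamma),0,\gamma)\mapsto\gamma$ without elaboration. You have simply spelled out the routine verification that these maps are well-defined and mutually inverse, which is fine.
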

\begin{proof}
The identification is given by the map $(\pi_\fK(\gamma),0, \gamma) \mapsto \gamma$ with inverse $\gamma \mapsto (\pi_\fK(\gamma),0, \gamma)$. 
\end{proof}

This is the idea that we use to ``externalize'' the finite dimensional factors $\fK,\fC$ when dealing with adiabatic Fredholm families. 

\begin{lemma} \label{lem:equivalent-fredholm}
Given an adiabatic Fredholm family $\bigl( (\cF_\eps:\cV_\Gamma\to \Omega )_{\eps\in\Delta} , \ldots  \bigr)$ as in Definition~\ref{def:fredholm}, its union of zero sets $\bigcup_{\eps\in\Delta} \{\eps\}\times \cF_\eps^{-1}(0) \subset \Delta\times \Gamma$ is naturally identified with the zero set of the map
\begin{align*}
\cG: \Delta \times \fK \times \fC \times \cV_\Gamma &\;\rightarrow\; \fC \times \fK \times \Omega, \\
(\eps, \fk, \fc, \gamma) &\;\mapsto\; ( \fc , \pi_\fK(\gamma)-\fk, \cF_\eps(\gamma) - \fc ).
\end{align*}
The same holds for the extended adiabatic Fredholm family, that is $\overline\cF_\eps^{-1}(0)\subset\cV_{\overline\Gamma,\eps}$ is naturally identified with $\overline\cG(\eps,\cdot)^{-1}(0)\subset\fK\times\fC\times\cV_{\overline\Gamma,\eps}$. More precisely, the union of zero sets $\bigcup_{\eps\in\Delta} \{\eps\}\times \overline\cF_\eps^{-1}(0) \simeq \overline\cG^{-1}(0_\Delta)$ is identified with the preimage of the "zero section" $0_\Delta := \bigcup_{\eps\in\Delta} \{\eps\}\times \{(0,0,0)\} \subset \bigcup_{\eps\in\Delta} \{\eps\}\times \fC\times\fC\times \cV_{\overline\Gamma,\eps}$ under the "section" 
\begin{align*}
\overline\cG: \textstyle\bigcup_{\eps\in\Delta} \{\eps\}\times \fK \times \fC \times \cV_{\overline\Gamma,\eps} &\rightarrow  \textstyle\bigcup_{\eps\in\Delta} \{\eps\}\times \fC \times \fK \times \overline\Omega_\eps, \\
(\eps, \fk, \fc, \gamma) &\mapsto ( \fc , \overline\pi_\fK(\gamma)-\fk, \overline\cF_\eps(\gamma) - \fc ).
\end{align*}
In both cases, the identification for fixed $\eps\in\Delta$ is given by the map $(\overline\pi_\fK(\gamma),0, \gamma) \mapsto \gamma$ with inverse $\gamma \mapsto (\overline\pi_\fK(\gamma),0, \gamma)$; in the first case this restricts to $\cV_\Gamma\subset\Gamma$ with $\overline\pi_\fK|_{\Gamma\subset\overline\Gamma_\eps}=\pi_\fK$. 
 
If, moreover, the adiabatic Fredholm family is regularizing in the sense of Definition~\ref{def:regularizing}, then the zero sets of the extended adiabatic Fredholm family agree with the original zero sets $\overline\cF_\eps^{-1}(0)=\cF_\eps^{-1}(0)\subset\cV_\Gamma$ for each $\eps\in\Delta$ and $\overline\cG^{-1}(0)=\cG^{-1}(0)\subset  \Delta\times\fK \times \fC \times \cV_\Gamma$. 
\end{lemma}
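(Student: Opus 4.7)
The argument is essentially a bookkeeping unpacking of the definition of $\cG$. For any fixed $\eps\in\Delta$, the equation $\cG(\eps,\fk,\fc,\gamma)=(0,0,0)$ decomposes into three components: the first forces $\fc=0$; the third then becomes $\cF_\eps(\gamma)=0$; and the second becomes $\fk=\pi_\fK(\gamma)$. Hence the $\eps$-fiber of the zero set of $\cG$ is exactly
\[
\{(\pi_\fK(\gamma),0,\gamma)\,|\, \gamma\in\cF_\eps^{-1}(0)\}\;\subset\; \fK\times\fC\times\cV_\Gamma,
\]
which is in bijection with $\cF_\eps^{-1}(0)$ via the projection $(\fk,\fc,\gamma)\mapsto\gamma$ and its section $\gamma\mapsto(\pi_\fK(\gamma),0,\gamma)$, as claimed. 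Taking the union over $\eps\in\Delta$ gives the first statement.

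For the extended family, the plan is to repeat the same three-line decomposition with $\overline\cF_\eps$ and $\overline\pi_\fK$ in place of $\cF_\eps$ and $\pi_\fK$, so that the $\eps$-fiber of $\overline\cG^{-1}(0_\Delta)$ becomes $\{(\overline\pi_\fK(\gamma),0,\gamma)\,|\,\gamma\in\overline\cF_\eps^{-1}(0)\}$. The only point worth noting is that Lemma~\ref{lem:fredholm} constructs $\overline\pi_\fK$ as the continuous extension of $\pi_\fK:(\Gamma,\|\cdot\|^\Gamma_\eps)\to(\fK,\|\cdot\|^\fK)$, so $\overline\pi_\fK|_\Gamma=\pi_\fK$; in particular the bijections for $\cF_\eps$ and for $\overline\cF_\eps$ are compatible with the inclusion $\cV_\Gamma\hookrightarrow\cV_{\overline\Gamma,\eps}$.

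For the final statement, I would invoke the regularizing property \eqref{eq:reg 0} of Definition~\ref{def:regularizing}. Given any $\gamma\in\overline\cF_\eps^{-1}(0)\subset\cV_{\overline\Gamma,\eps}$, we have $\overline\cF_\eps(\gamma)=0\in\Omega\subset\overline\Omega_\eps$, so \eqref{eq:reg 0} directly yields $\gamma\in\cV_\Gamma$. Since $\overline\cF_\eps$ extends $\cF_\eps$ continuously, this forces $\cF_\eps(\gamma)=\overline\cF_\eps(\gamma)=0$, and conversely any $\gamma\in\cF_\eps^{-1}(0)\subset\cV_\Gamma\subset\cV_{\overline\Gamma,\eps}$ satisfies $\overline\cF_\eps(\gamma)=0$. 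Thus $\overline\cF_\eps^{-1}(0)=\cF_\eps^{-1}(0)$ for each $\eps\in\Delta$. Combined with the identifications established in the first two paragraphs, and with $\overline\pi_\fK|_{\Gamma}=\pi_\fK$, this yields $\overline\cG^{-1}(0_\Delta)=\cG^{-1}(0)$ as subsets of $\bigcup_\eps\{\eps\}\times\fK\times\fC\times\cV_\Gamma$.

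There is no real technical obstacle here; the content of the lemma is tautological once the definitions have been unwound. The only mild subtlety is ensuring that all extensions are compatible with restriction to the $\eps$-independent dense subspaces $\Gamma\subset\overline\Gamma_\eps$ and $\Omega\subset\overline\Omega_\eps$, which is precisely guaranteed by Lemma~\ref{lem:fredholm} and by the regularizing property.
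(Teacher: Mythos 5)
Your argument is correct and takes essentially the same approach as the paper's own proof: unwind the three component equations of $\cG=0$ (and $\overline\cG=0$) to identify the $\eps$-fiber with $\cF_\eps^{-1}(0)$ (resp. $\overline\cF_\eps^{-1}(0)$) via $\gamma\leftrightarrow(\pi_\fK(\gamma),0,\gamma)$, then invoke the regularizing implication \eqref{eq:reg 0} to pull each $\gamma\in\overline\cF_\eps^{-1}(0)$ back into $\cV_\Gamma$. The one minor presentational difference is that for the extended family the paper obtains the bijection by extending the maps $\gamma\mapsto(\pi_\fK(\gamma),0,\gamma)$ and $(\fk,\fc,\gamma)\mapsto\gamma$ uniformly continuously as in Lemma~\ref{lem:extension}, whereas you simply unwind the explicit formula for $\overline\cG$ directly; both are valid since $\overline\cG$ is given in the statement by the same componentwise formula, and your route is arguably a touch more transparent.
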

\begin{proof}
By construction, the zero set $\cG^{-1}(0) \subset \Delta \times \fK \times \fC \times \cV_\Gamma$ is the subset of $(\eps, \fk =\pi_\fK(\gamma), \fc=0, \gamma)$ with $\cF_\eps(\gamma)=0$, thus a bijection to $\bigcup_{\eps\in\Delta} \{\eps\}\times \cF_\eps^{-1}(0)$ is given by $(\eps, \pi_\fK(\gamma),0, \gamma) \mapsto (\eps,\gamma)$ with inverse $(\eps,\gamma) \mapsto (\eps, \pi_\fK(\gamma),0, \gamma)$.

For fixed $\eps\in\Delta$, these maps are uniformly continuous w.r.t.\ the $\eps$-dependent norm on $\cV_\Gamma$, hence extend to the claimed natural bijection between $\overline\cF_\eps^{-1}(0)$ and $\overline\cG(\eps,\cdot)^{-1}(0)$ as in Lemma~\ref{lem:extension}.
These are the fibres of the preimage of the "zero section" $\overline\cG^{-1}(0_\Delta)=\bigcup_{\eps\in\Delta} \{\eps\}\times\overline\cG(\eps,\cdot)^{-1}(0)$.

Now suppose that the adiabatic Fredholm family is regularizing, in particular that we have the implication $\overline\cF_\eps(\gamma) \in \Omega \; \Rightarrow \; \gamma\in\cV_\Gamma$. Then every $\gamma\in\overline\cF_\eps^{-1}(0)\subset\cV_{\overline\Gamma,\eps}$ must lie in $\cF_\eps^{-1}(0)$ since $\overline\cF_\eps(\gamma)=0$ lies in $\Omega$, so that the regularizing property implies $\gamma\in\cV_\Gamma$ with $\cF_\eps(\gamma)=0$.
Analogously, $(\fk,\fc,\gamma)\in \overline\cG(\eps,\cdot)^{-1}(0)\subset\fK\times\fC\times\cV_{\overline\Gamma,\eps}$ implies $\overline\cF_\eps(\gamma)=\fc\in\fC\subset\Omega$, and hence $\gamma\in\cV_\Gamma$ with $\cG(\eps,\fk,\fc,\gamma)=0$. 
This proves the inclusions
$\overline\cF_\eps^{-1}(0)\subset\cF_\eps^{-1}(0)\subset\cV_\Gamma$ and $\overline\cG(\eps,\cdot)^{-1}(0)\subset\cG(\eps,\cdot)^{-1}(0)\subset  \fK \times \fC \times \cV_\Gamma$, the converse inclusions hold by the construction of $\overline\cF_\eps$ resp.\ $\overline\cG(\eps,\cdot)$ as extensions of $\cF_\eps$ resp.\ $\cG(\eps,\cdot)$.
\end{proof}

\subsection{Uniform Inverses} \label{inverses}

This section constructs the analogue of the right inverse used in the Newton-Picard Iteration of classical adiabatic limit methods. Conceptually, our inverse operators will arise from the fact that any Fredholm splitting \eqref{eq:splitting classical} is equivalent to a Fredholm stabilization \eqref{eq:stabilization classical}. Moreover, we will quantify the continuity of the resulting inverse operators when varying the base point, which will play a crucial role in establishing regularity of the finite dimensional charts resulting from an adiabatic Fredholm family. 

Starting in the classical Fredholm setting of Theorem~\ref{thm:charts classical}, we start by noting that any $\cC^1$ Fredholm map satisfies an estimate that -- for our present purposes -- can play the role of the [Quadratic-ish Estimate] and [Uniform Continuity of $\rD\cF_\eps$] in Definitions~\ref{def:fredholm} and \ref{def:adiabatic C-l}.

\begin{lemma} \label{lem:inverses classical}
Any Fredholm map $\cF:\cV_{\ol\Gamma}\to \Omega$ as in Theorem~\ref{thm:charts classical} satisfies a {\rm [Quadratic-ish Estimate]} 
\begin{equation} \label{eq:quadratic classical}
 \bigl\| \rD\cF(\gamma_0') \gamma - \rD\cF(\gamma_0) \gamma  \bigr\|^\Omega  \leq 
\tilde c^1_\cF(\gamma'_0,\gamma_0)  \| \gamma\|^{\ol\Gamma}
\qquad
\forall \gamma'_0,  \gamma_0\in \cV_{\overline\Gamma}, \gamma\in\Gamma , 
\end{equation}
where $\tilde c^1_\cF:\cV_{\ol\Gamma}\times \cV_{\ol\Gamma}\to [0,\infty)$ is a continuous map that vanishes on the diagonal $\tilde c^1_\cF(\gamma_0,\gamma_0)=0$. If, moreover, $\cV_{\ol\Gamma}$ is convex and $\|\rD^2\cF(\gamma_0)\|^{\cL^2(\ol\Gamma^2,\ol\Omega)}\leq C^2_\cF$ is bounded for $\gamma_0\in\cV_{\ol\Gamma}$,
then $\tilde c^1_\cF(\gamma'_0,\gamma_0)\leq C^2_\cF \|\gamma'_0-\gamma_0\|^\Gamma$ makes \eqref{eq:quadratic classical} a truly quadratic estimate. 
Further, there is a constant $\delta_Q>0$ so that for any $\gamma_0\in  \cV_{\overline\Gamma}$ with $\|\gamma_0\|^\Gamma\leq\delta_Q$ we have a {\bf Fredholm stabilization isomorphism}
\begin{align}  \label{eq:P classical}
P(\gamma_0) \, :  \; \fC \times \overline\Gamma&\rightarrow  \fK \times \overline\Omega, \\
( \fc, \gamma) &\mapsto (\pi_\fK(\gamma), \rD\cF(\gamma_0) \gamma - \fc )  .
\nonumber
\end{align}
Its inverse operators $Q(\gamma_0):=P(\gamma_0)^{-1}:   \fK \times \overline\Omega\to \fC \times \overline\Gamma$ are uniformly bounded -- where we equip $\fC\subset\ol\Omega$ and $\fK\subset\ol\Gamma$ with the induced norms $\|\fk\|^\fK:=\|\fk\|^\Gamma$ resp.\ $\|\fc\|^\fC:=\|\fc\|^\Omega$ -- 
\begin{align}\label{eq:Q-estimate classical}
 \| Q(\gamma_0)  \|^{\cL(\fC\times\overline\Gamma,\fK\times\overline\Omega)}
&\leq
C_Q  \qquad\forall \gamma_0\in\cV_{\overline\Gamma} , \|\gamma_0\|^\Gamma\leq\delta_Q . 
\end{align}
Moreover, the inverse operators $Q(\gamma_0)$ vary continuously with $\gamma_0\in\cV_{\overline\Gamma}$. More precisely, for all 
$\gamma_0,\gamma'_0\in\cV_{\overline\Gamma}$ with $\|\gamma_0\|^\Gamma, \|\gamma'_0\|^\Gamma\leq\delta_Q$ we have with $c:\cV_{\ol\Gamma}\times \cV_{\ol\Gamma}\to [0,\infty)$ from \eqref{eq:quadratic classical}
\begin{equation}\label{eq:Q cont classical}
 \|  Q(\gamma'_0) -  Q(\gamma_0) \|^{\cL(\fK\times\ol\Omega,\fC\times\overline\Gamma)}
\leq
(C_Q)^2 \; \tilde c^1_\cF(\gamma'_0, \gamma_0) . 
\end{equation}
\end{lemma}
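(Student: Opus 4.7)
The plan is to handle the four assertions in sequence: the quadratic-ish estimate, the invertibility of $P(0)$, the Neumann-series perturbation for $P(\gamma_0)$ at small $\gamma_0$, and finally the continuity of the inverses via the standard resolvent identity.

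First I would note that the quadratic-ish estimate \eqref{eq:quadratic classical} is almost a tautology: since $\cF\in\cC^1$, the map $\rD\cF:\cV_{\ol\Gamma}\to \cL(\ol\Gamma,\ol\Omega)$ is continuous, so setting $\tilde c^1_\cF(\gamma_0',\gamma_0):=\|\rD\cF(\gamma_0')-\rD\cF(\gamma_0)\|^{\cL(\ol\Gamma,\ol\Omega)}$ gives a continuous function that vanishes on the diagonal, and the bound is just the operator norm estimate. For the sharpening under the $\cC^2$-hypothesis on a convex $\cV_{\ol\Gamma}$, the fundamental theorem of calculus yields
$\rD\cF(\gamma'_0)-\rD\cF(\gamma_0) = \int_0^1 \rD^2\cF\bigl(\gamma_0+t(\gamma'_0-\gamma_0)\bigr)(\gamma'_0-\gamma_0,\cdot)\,\rd t$, which estimates $\tilde c^1_\cF(\gamma'_0,\gamma_0)\leq C^2_\cF\|\gamma'_0-\gamma_0\|^\Gamma$.

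Next, I would show that $P(0)$ is a bijection. Given $(\fk,\omega)\in\fK\times\ol\Omega$, the splitting $\ol\Omega=\fC\oplus\im\rD\cF(0)$ determines a unique $\fc\in\fC$ with $\omega+\fc\in\im\rD\cF(0)$. The splitting $\ol\Gamma=\fK\oplus\ker\pi_\fK$ shows that $\rD\cF(0)|_{\ker\pi_\fK}$ is a continuous bijection to $\im\rD\cF(0)$, hence an isomorphism by the open mapping theorem (the image is closed since $\rD\cF(0)$ is Fredholm). Thus there is a unique $w\in\ker\pi_\fK$ with $\rD\cF(0)w=\omega+\fc$, and $\gamma:=\fk+w$ is the unique preimage, verifying $\pi_\fK(\gamma)=\fk$ and $\rD\cF(0)\gamma-\fc=\omega$. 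Boundedness of $P(0)$ is immediate, so $Q(0):=P(0)^{-1}$ is bounded by the open mapping theorem. Let $C_{0}:=\|Q(0)\|^{\cL(\fK\times\ol\Omega,\fC\times\ol\Gamma)}$.

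For the uniform invertibility at small $\gamma_0$, I would write $P(\gamma_0)-P(0):(\fc,\gamma)\mapsto\bigl(0,(\rD\cF(\gamma_0)-\rD\cF(0))\gamma\bigr)$, so that $\|P(\gamma_0)-P(0)\|\leq\tilde c^1_\cF(\gamma_0,0)$. Continuity of $\tilde c^1_\cF$ with $\tilde c^1_\cF(0,0)=0$ lets me choose $\delta_Q>0$ with $\tilde c^1_\cF(\gamma_0,0)\leq\tfrac1{2C_0}$ whenever $\|\gamma_0\|^\Gamma\leq\delta_Q$. Factoring $P(\gamma_0)=P(0)\bigl(I+Q(0)(P(\gamma_0)-P(0))\bigr)$, a Neumann series argument gives invertibility with $\|Q(\gamma_0)\|\leq 2C_0=:C_Q$, establishing \eqref{eq:Q-estimate classical}.

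Finally, for the continuity estimate \eqref{eq:Q cont classical}, I would apply the resolvent-style identity
\[
Q(\gamma'_0)-Q(\gamma_0) \;=\; Q(\gamma'_0)\bigl[P(\gamma_0)-P(\gamma'_0)\bigr]Q(\gamma_0),
\]
which follows from subtracting $Q(\gamma'_0)P(\gamma'_0)=I=Q(\gamma_0)P(\gamma_0)$ composed appropriately. Bounding the middle factor by $\tilde c^1_\cF(\gamma'_0,\gamma_0)$ and each outer factor by $C_Q$ yields the claim. No step is genuinely hard; the main care required is keeping the two direct-sum structures straight when proving $P(0)$ is bijective. This is precisely the point that becomes unavailable for adiabatic families, motivating the subsequent adiabatic generalization where the Fredholm stabilization isomorphism rather than a splitting is the fundamental object.
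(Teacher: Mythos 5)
Your proof is correct, and it takes a genuinely different route from the paper in two places. For the bijectivity of $P(0)$, you construct the inverse explicitly by chasing through the two direct sums $\ol\Omega=\fC\oplus\im\rD\cF(0)$ and $\ol\Gamma=\fK\oplus\ker\pi_\fK$; the paper instead uses those splittings only to prove injectivity, then observes that $P(0)$ is a compact perturbation of a Fredholm operator with index $\ind\rD\cF(0)+\dim\fC-\dim\fK=0$, and deduces surjectivity from injectivity plus index zero. Your construction is more elementary and even yields an explicit formula for $Q(0)$, but---as you yourself anticipate at the end---it leans on the direct-sum structure, which is unavailable in the adiabatic analogue (Lemma~\ref{lem:inverses}). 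The paper's injectivity-plus-index-zero argument is exactly the one that survives there, with injectivity supplied by the stabilized Fredholm estimate rather than a splitting; writing the classical case the paper's way makes the parallel visible. For the continuity of the inverses, you invoke the resolvent identity $Q(\gamma'_0)-Q(\gamma_0)=Q(\gamma'_0)\bigl[P(\gamma_0)-P(\gamma'_0)\bigr]Q(\gamma_0)$ directly, giving $(C_Q)^2\,\tilde c^1_\cF(\gamma'_0,\gamma_0)$ in one line. The paper detours through the Neumann-series operators $T=Q_0(P_0-P_1)$, $T'=Q_0(P_0-P'_1)$ and a resolvent estimate for $({\rm Id}-T)^{-1}-({\rm Id}-T')^{-1}$; the constant comes out the same, but your version is shorter and more transparent (and sidesteps a harmless transcription slip in the paper's intermediate line, where a stray trailing factor of $T$ appears and is silently dropped in the next inequality). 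In short: both of your deviations buy simplicity here, at the cost of not foreshadowing the adiabatic argument---a trade-off you correctly identify.
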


\begin{proof}
To begin, the $\cC^1$ regularity of $\cF$ implies a [Quadratic-ish Estimate]  for $\gamma'_0,  \gamma_0\in \cV_{\overline\Gamma}, \gamma\in\Gamma$, 
$$
\bigl\| \rD\cF(\gamma_0') \gamma - \rD\cF(\gamma_0) \gamma  \bigr\|^\Omega  
\;\leq\;
\bigl\| \rD\cF(\gamma_0') - \rD\cF(\gamma_0) \bigr\|^{\cL(\ol\Gamma,\ol\Omega)} \| \gamma\|^\Gamma
\;=\; 
 \tilde c^1_\cF(\gamma'_0,\gamma_0)  \| \gamma\|^\Gamma
$$
with $\tilde c^1_\cF:\cV_{\ol\Gamma}\times \cV_{\ol\Gamma}\to [0,\infty)$ 
given by $\tilde c^1_\cF(\gamma'_0,\gamma_0) := \bigl\| \rD\cF(\gamma_0') - \rD\cF(\gamma_0) \bigr\|^{\cL(\ol\Gamma,\ol\Omega)}$. 
This map evidently vanishes on the diagonal $\tilde c^1_\cF(\gamma_0,\gamma_0)=0$, and 
$\cC^0$ regularity of $\rD\cF:\cV_{\ol\Gamma}\to\cL(\ol\Gamma,\ol\Omega)$ exactly is the statement that the map $c$ is continuous. 
If, moreover, $\|\rD^2\cF(\gamma_0)\|^{\cL^2(\ol\Gamma^2,\ol\Omega)}\leq C^2_\cF$ is uniformly bounded for $\gamma_0\in\cV_{\ol\Gamma}$, then -- utilizing the convexity of $\cV_\Gamma$ -- we have as claimed 
\begin{align*}
\tilde c^1_\cF(\gamma'_0,\gamma_0)
&= \bigl\| \rD\cF(\gamma_0') - \rD\cF(\gamma_0) \bigr\|^{\cL(\ol\Gamma,\ol\Omega)} 
\;=\; \bigl\| \textstyle \int_0^1 \rD[\rD\cF](\gamma_0 + \lambda(\gamma_0'-\gamma_0)) \; (\gamma_0'-\gamma) \; \rd\lambda \bigr\|^{\cL(\ol\Gamma,\ol\Omega)}\\
&\leq  \textstyle \int_0^1 \bigl\| \rD^2\cF(\gamma_0 + \lambda(\gamma_0'-\gamma_0))\bigr\|^{\cL^2(\ol\Gamma^2,\ol\Omega)}   \rd\lambda  \; \bigl\| \gamma_0'-\gamma_0) \bigr\|^\Gamma 
\;\leq\; C^2_\cF \,\|\gamma'_0-\gamma_0\|^\Gamma . 
\end{align*}
Next, for $\gamma_0=0$ we can show injectivity of $P(0)$ as follows: 
Given $(\fc,\gamma)\in\ker P(0)$, the direct sum $\ol\Omega=\fC\oplus\im\rD\cF(0)$ and $\rD\cF(0) \gamma - \fc = 0$ imply $\fc=0$ and $\gamma\in\ker\rD\cF(0)$. 
Now the direct sum $\ol\Gamma=\fK\oplus\ker\pi_\fK$ together with $\pi_\fK(\gamma)=0$ implies $\gamma=0$. 

Moreover, $P(0)$ is a stabilization of the Fredholm map $\rD\cF(0)$ in the sense that it result by Cartesian product with finite dimensional factors in the domain and codomain, and addition of a linear operator $(\fc,\gamma)\mapsto (\pi_\fK(\gamma), -\fc)\in \fK\times\fC$ composed with the compact embedding $ \fK\times\fC\subset\fK\times\overline\Omega$. 
Here the embedding $\fC\hookrightarrow \overline\Omega$ is compact since $\fC$ is finite dimensional. 
Thus $P(0)$ is a Fredholm operator of index 
$$
\ind P(0) \;=\; \ind \bigl( (\fc,\gamma)\mapsto (0, \rD\cF(0)\gamma) \bigr) 
\;=\;   \ind \rD\cF(0)+ \dim\fC - \dim\fK \;=\;  0 . 
$$
Since $P(0)$ is injective, the Fredholm index $0$ guarantees that $P(0)$ is also surjective, and hence a bijection. Since $P(0)$ is a bounded operator, its inverse $Q(0):=P(0)^{-1}:\fK\times\ol\Omega \to\fC\times\ol\Gamma$ is continuous as well, that is we have a bound with a preliminary constant $C^{\scriptscriptstyle \rm prelim}_Q$, 
\begin{align} \label{eq:P-estimate classical}
\| Q(0) (\fk,\omega) \|^{\fC\times \Gamma}
&\leq
C^{\scriptscriptstyle \rm prelim}_Q \| (\fk,\omega) \|^{\fK\times\Omega}   \qquad\forall (\fk,\omega)\in\fK\times\ol\Omega .
\end{align}
This invertibility extends to $\gamma_0\ne 0$ whenever $P(\gamma_0)$ is sufficiently close to $P(0)$ in the space of bounded linear operators. This proximity is controlled by \eqref{eq:quadratic classical}: For all $\gamma'_0,\gamma_0\in\cV_{\ol\Gamma}$ we have  
\begin{align}
\bigl\| P(\gamma'_0) - P(\gamma_0) \bigr\|^{\cL(\fC \times \overline\Gamma ,\fK \times \overline\Omega)}
&= \sup_{\| \fc \|^\fC + \|\gamma\|^\Gamma\leq 1}     \bigl\| \bigl( \pi_\fK(\gamma) , \rD\cF(\gamma'_0) \gamma - \fc  \bigr) - \bigl( \pi_\fK(\gamma) , \rD\cF(\gamma_0) \gamma - \fc  \bigr) \bigr\|^{\fK \times \overline\Omega} 
 \nonumber\\
&= \sup_{\| \fc \|^\fC + \|\gamma\|^\Gamma\leq 1}     \bigl\| \rD\cF(\gamma'_0) \gamma - \rD\cF(\gamma_0) \gamma  \bigr\|^\Omega  
\;\leq\; 
 \tilde c^1_\cF(\gamma'_0,\gamma_0) . 
\label{eq:P continuity classical}
\end{align}
Set $\gamma'_0=0$ here, then continuity of $\tilde c^1_\cF:\cV_{\ol\Gamma}\times \cV_{\ol\Gamma}\to [0,\infty)$ ensures an $\delta_Q>0$ so that $\|\gamma_0\|^\Gamma\leq\delta_Q$ implies 
$\tilde c^1_\cF(0,\gamma_0) = \tilde c^1_\cF(0,\gamma_0) - \tilde c^1_\cF(0,0) \leq \frac 1{2C^{\scriptscriptstyle \rm prelim}_Q}$ and hence 
$C^{\scriptscriptstyle \rm prelim}_Q\bigl\| P(0) - P(\gamma_0) \bigr\|^{\cL(\fC \times \overline\Gamma ,\fK \times \overline\Omega)}\leq \frac 12$. 
Then a classical construction yields uniformly bounded inverses of  $P_1:=P(\gamma_0)$ from the inverses $Q_0:=Q(0)$ of $P_0:=P(0)$ whenever $\gamma_0\in \cV_{\overline\Gamma}$ satisfies $\|\gamma_0\|^\Gamma\leq\delta_Q$ as follows: First the composed operator
$T:=Q_0 ( P_0 - P_1 ) : \overline W\to \overline W$ on $\overline W:=\fC\times\overline\Gamma$ is small in the sense that
\begin{align} \label{eq: T small}
\| T \|^{\cL(\overline W,\overline W)} 
&\leq 
\bigl\| Q_0 \bigr\|^{\cL(\fK \times \overline\Omega,\fC \times \overline\Gamma)}  \bigl\|  P_0 -  P_1 \bigr\|^{\cL(\fC \times \overline\Gamma ,\fK \times \overline\Omega)}  \\
&\leq C^{\scriptscriptstyle \rm prelim}_Q \, \tilde c^1_\cF(0,\gamma_0) 
\;\leq\; \tfrac 12 
\qquad\qquad\qquad
\forall \gamma_0\in \cV_{\overline\Gamma}, \|\gamma_0\|^\Gamma\leq\delta_Q. 
 \nonumber 
\end{align}
With that we can express $P_1 = P_0 ( {\rm Id} - T )$
since $Q_0$ is the left and right inverse of $P_0$ and thus
$$
P_0 ( {\rm Id} - T )
 \;=\;  
P_0   -  P_0 Q_0 ( P_0 - P_1) 
 \;=\; 
P_0 -  ( P_0 - P_1)  
 \;=\; 
 P_1.
$$
In this decomposition, $({\rm Id} - T)^{-1}=\sum_{n=0}^\infty T^n$ exists with operator norm bounded by 
\begin{equation} \label{eq:inverse bound}
\textstyle
\bigl\|  ({\rm Id} - T)^{-1} \bigr\|^{\cL(\overline W,\overline W)}  
\;\leq\; \sum_{n=0}^\infty (\|T \|^{\cL(\overline W,\overline W)})^n
\;=\; \tfrac 1{1-\|T\|^{\cL(\overline W,\overline W)} } 
\;\leq\; 2 .
\end{equation}
Now $Q_1 := ({\rm Id} - T)^{-1} Q_0$ is the left and right inverse of $P_1$ since 
\begin{align*}
Q_1 P_1 
 &=\;  
({\rm Id} - T)^{-1}  Q_0 P_0 ( {\rm Id} - T )
 \;=\;  
({\rm Id} - T)^{-1} \bigl(  {\rm Id} - T  \bigr) 
 \;=\;  {\rm Id} \\
\text{and}\qquad\qquad & \\
P_1 Q_1 
 &=\;  
 P_0 ( {\rm Id} - T )  ({\rm Id} - T)^{-1} Q_0
 \;=\;  
P_0 Q_0 
 \;=\;  {\rm Id} . 
\end{align*}
Going back to unabbreviated notation, this shows that $P(\gamma_0) =P_1$ is invertible for all $\gamma_0\in\cV_{\ol\Gamma}$, $\|\gamma_0\|^\Gamma\leq\delta_Q$ with inverses $Q(\gamma_0):=P(\gamma_0)^{-1}=Q_1$ that satisfy the uniform bound 
\begin{align*}
 \| Q(\gamma_0) (\fk,\omega) \|^{\fC\times\overline\Gamma}
&\leq \bigl\|  ({\rm Id}_{\overline W} - T)^{-1} \bigr\|^{\cL(\overline W,\overline W)}  
 \bigl\|  Q(0) (\fk,\omega)  \bigr\|^{\cL(\fK\times\Omega,\overline W)} \\
&\leq 
2 \,C^{\scriptscriptstyle \rm prelim}_Q \,
\bigl\| (\fk,\omega)  \bigr\|^{\fK\times\Omega} \;=\; C_Q \, \bigl\| (\fk,\omega)  \bigr\|^{\fK\times\Omega} . 
\end{align*}
This confirms \eqref{eq:Q-estimate} with the constant $C_Q= 2 \, C^{\scriptscriptstyle \rm prelim}_Q$.  

Next, we will show that these inverses vary continuously with $\gamma_0\in\cV_{\overline\Gamma}$. For that purpose we abbreviate $\ol V:=\fK \times \ol\Omega$ and consider $\gamma'_0,\gamma_0\in\cV_{\overline\Gamma}$ with $\|\gamma'_0\|^\Gamma, \|\gamma_0\|^\Gamma\leq\delta_Q$ to compare the classically constructed inverses of $P_1:=P(\gamma_0)$ and $P'_1:=P(\gamma'_0)$ with $\| P'_1 - P_1 \|^{\cL(\ol W,\ol V)}\leq \tilde c^1_\cF(\gamma'_0, \gamma_0)$ by \eqref{eq:P continuity classical}. 
These inverses are
$Q_1:=Q(\gamma_0) = ({\rm Id} - T )^{-1} Q_0$ with $T=Q_0 ( P_0 - P_1 )$ and 
$Q'_1:=Q(\gamma'_0) = ({\rm Id} - T' )^{-1} Q_0$ with $T'=Q_0 ( P_0 - P'_1 )$, where $P_0=P(0)$ and $Q_0=Q(0)$ as above.
So to estimate $\| Q'_1 - Q_1\|^{\cL(\ol V,\ol W)}$ we first keep track of 
\begin{align*}
\| T' - T \|^{\cL(\ol W,\ol W)} 
&= \| Q_0 ( P_0 - P'_1 ) - Q_0 ( P_0 - P_1 ) \|^{\cL(\ol W,\ol W)}  \\
&= \| Q_0 \|^{\cL(\ol V,\ol W)}   \| P_1 - P'_1  \|^{\cL(\ol W,\ol V)} 
\;\leq\; \tfrac 12  C_Q \,  \tilde c^1_\cF(\gamma'_0, \gamma_0) ,
\end{align*}
where we used \eqref{eq:P continuity classical} and $\| Q_0 \|^{\cL(\ol V,\ol W)}\leq \frac 12 C_Q$ from \eqref{eq:P-estimate classical}. 
Next we use the classical fact that inverting linear operators is a continuous map. In particular the following
map is uniformly continuous 
$$
\bigl\{ T\in \cL(\overline W,\overline W) \,\big|\, \|T\|<1 \bigr\} \to \cL(\overline W,\overline W) , \qquad 
T \mapsto ({\rm Id}_{\overline W} - T)^{-1} = \textstyle \sum_{n=0}^\infty T^n . 
$$
Indeed, we can use the fact that $T$ commutes with $({\rm Id} - T)^{-1}=\sum_{n=0}^\infty T^n$ to estimate
\begin{align}
 \bigl\| ({\rm Id} - T' )^{-1} - ({\rm Id} - T )^{-1}   \bigr\| 
&=  \bigl\| ({\rm Id} - T' )^{-1}({\rm Id} - T )^{-1} ({\rm Id} - T )  -   ({\rm Id} - T' ) ({\rm Id} - T' )^{-1}({\rm Id} - T )^{-1}   \bigr\|  \nonumber \\
&=  \bigl\| - ({\rm Id} - T' )^{-1}({\rm Id} - T )^{-1} T  + T' ({\rm Id} - T' )^{-1}({\rm Id} - T )^{-1}   \bigr\|  \nonumber \\
&=  \bigl\| ({\rm Id} - T' )^{-1} (T' - T) ({\rm Id} - T )^{-1} T   \bigr\|  \label{eq:inverse est} \\
&\leq  \bigl\| ({\rm Id} - T' )^{-1} \bigr\|   \| T' - T \|  \bigl\|  ({\rm Id} - T )^{-1}   \bigr\|   \nonumber\\
&\leq \tfrac 1{1-\|T'\|} \| T' - T \|   \tfrac 1{1-\|T\|}  . \nonumber
\end{align}
Here we bounded the norms of inverses by  $\| ({\rm Id} - T )^{-1} \| = \| \sum_{n=0}^\infty T^n \| \leq \sum_{n=0}^\infty \|T\|^n=\tfrac 1{1-\|T\|}$, and we can further use \eqref{eq: T small} to bound $\|T'\|,\|T\| \leq \frac 12$.
Putting this all together we obtain for any $\gamma'_0,\gamma_0\in \cV_{\overline\Gamma}$ with $\|\gamma'_0\|^\Gamma,\|\gamma_0\|^\Gamma\leq\delta_Q$
\begin{align*}
 \| Q(\gamma'_0) - Q(\gamma_0) \|^{\cL(\ol V,\ol W)} 
&= 
\bigl\|  ({\rm Id}_{\ol W} - T' )^{-1} Q_0 - ({\rm Id}_{\ol W} - T )^{-1} Q_0 \bigr\|^{\cL(\ol V,\ol W)} \\
&\leq 
\bigl\|  ({\rm Id}_{\ol W} - T' )^{-1} - ({\rm Id}_{\ol W} - T )^{-1} \bigr\|^{\cL(\ol W,\ol W)}   \| Q_0 \|^{\cL(\ol V,\ol W)} \\
&\leq 
\frac{\frac12 C_Q}{( 1-\|T'\|^{\cL(\ol W,\ol W)})( 1-\|T\|^{\cL(\ol W,\ol W)}  ) } \; \| T' - T \|^{\cL(\ol W,\ol W)}      \\
&\leq 
\tfrac{\frac12 C_Q}{( 1-\frac12)( 1-\frac12  ) } \;  \tfrac12 C_Q \; \tilde c^1_\cF(\gamma'_0 ,\gamma_0) 
\;=\; (C_Q)^2 \; \tilde c^1_\cF(\gamma'_0, \gamma_0)  .
\end{align*} 
This confirms \eqref{eq:Q cont}, and then continuity of $\gamma_0\mapsto  Q(\gamma_0)$ follows from $\tilde c^1_\cF(\gamma'_0 ,\gamma_0)\to 0$ for $\gamma'_0\to\gamma_0$. 
\end{proof}

When implementing this analysis for an adiabatic Fredholm family, it is instructive to recognize the Fredholm stabilization isomorphism \eqref{eq:P classical} for $\cF=\ol\cF_\eps$ as a differential of the map 
\begin{align*}
\ol g_{\eps,\fk}:  \fC \times \cV_{\ol\Gamma,\eps} &\rightarrow  \fK \times \ol\Omega_\eps, \quad
( \fc, \gamma) \mapsto (\ol\pi_\fK(\gamma)-\fk, \ol\cF_\eps(\gamma) - \fc ) 
\end{align*}
which arises from the last two components of the map 
$\ol\cG: \Delta \times \fK \times \fC \times \cV_{\ol\Gamma,\eps} \to \fC \times \fK \times \ol\Omega)_\eps$
from Lemma~\ref{lem:equivalent-fredholm}, that is we express
$\cG(\eps, \fk, \fc, \gamma)= ( \fc , \ol\pi_\fK(\gamma)-\fk, \ol\cF_\eps(\gamma) - \fc ) =: (\fc, \ol g_{\eps,\fk} (\fc,\gamma) )$. In terms of solving the equation $\cF_\eps(\gamma)=0 \;\Leftrightarrow\; \ol\cG(\eps, \fk, \fc, \gamma)=0$, this means we split off the equation $\fc=0$, which will in \S\ref{reduction} give rise to the finite dimensional reduction $f:\cV_\fK\to\fC$ -- after \S\ref{solution} solves the remaining equations 
\begin{equation} \label{eq:stabilized equation}
 \overline g_{\eps,\fk}(\fc,\gamma) = 0 \qquad \Longleftrightarrow\qquad  \overline\pi_\fK(\gamma)=\fk \quad \text{and}\quad \overline \cF_\eps(\gamma)=\fc .
\end{equation}
Alternatively, we can also understand $\overline  g_{\eps,\fk}$ as the continuous extension of the maps
\begin{align*}
 g_{\eps,\fk}:  \fC \times \cV_\Gamma &\rightarrow  \fK \times \Omega, \quad
( \fc, \gamma) \mapsto (\pi_\fK(\gamma)-\fk, \cF_\eps(\gamma) - \fc ) 
\end{align*}
given for each $\eps\in\Delta$ and $\fk\in\fK$ by the last two components of the map 
$\cG(\eps,\fk,\cdot,\cdot)$ from Lemma~\ref{lem:equivalent-fredholm}. 
That is the context for generalizing Lemma~\ref{lem:inverses classical} to adiabatic Fredholm families. 
Note here moreover that the estimate \eqref{eq:quadratic classical} no longer even makes sense as we have no universal topology on $\cV_{\ol\Gamma,\eps}\times\cV_{\ol\Gamma,\eps}$. 
This is why we need to enhance the $\cC^1$ regularity of each Fredholm section $\overline\cF_\eps$ by 
the [Quadratic-ish Estimate] and [Uniform Continuity of $\rD\cF_\eps$] in Definitions~\ref{def:fredholm} and \ref{def:adiabatic C-l}, which provide estimates that -- with respect the $\eps$-dependent norms -- are uniform in $\eps\in\Delta$. 

\begin{lemma} \label{lem:inverses}
Given an adiabatic Fredholm family $\bigl(  (\cF_\eps:\cV_\Gamma\to \Omega )_{\eps\in\Delta} , \ldots  \bigr)$ as in Definition~\ref{def:fredholm}, there exists a neighbourhood $\Delta_Q\subset\Delta$ of $0$ and a constant $\delta_Q>0$ such that for any $(\eps,\fk_0)\in  \Delta_Q\times\fK$ the linearizations of $\overline g_{\eps,\fk_0}$ at $(\fc_0,\gamma_0)\in  \fC \times \cV_{\overline\Gamma,\eps}$ with $\|\gamma_0\|^\Gamma_\eps\leq\delta_Q$ are isomorphisms
\begin{align}  \label{eq:P}
\overline P_\eps(\gamma_0) := \rD \overline g_{\eps,\fk_0} (\fc_0,\gamma_0) :  \; \fC \times \overline\Gamma_\eps &\rightarrow  \fK \times \overline\Omega_\eps, \\
( \fc, \gamma) &\mapsto (\overline \pi_\fK(\gamma), \rD\overline\cF_\eps(\gamma_0) \gamma - \fc )  
\nonumber
\end{align}
whose inverse operators $\overline Q_\eps(\gamma_0):=\overline P_\eps(\gamma_0)^{-1}:   \fK \times \overline\Omega_\eps\to \fC \times \overline\Gamma_\eps$ are uniformly bounded\footnote{
Recall from Lemma~\ref{lem:fredholm} that we equip $\fC\subset\Omega$ with the norm $\|\cdot\|^\fC:=\|\cdot\|^\Omega_0$ and $\fK\subset\Gamma$ with the norm $\|\cdot\|^\fK:=\|\cdot\|^\Gamma_0$.
} 
\begin{align}\label{eq:Q-estimate}
 \| \overline Q_\eps(\gamma_0) (\fk,\omega) \|^{\fC\times\Gamma}_\eps
&\leq
C_Q \| (\fk,\omega) \|^{\fK\times\Omega}_\eps   \qquad\forall  \eps\in\Delta_Q, \gamma_0\in\cV_{\overline\Gamma,\eps} , \|\gamma_0\|^\Gamma_\eps\leq\delta_Q,   (\fk,\omega)\in\fK\times\Omega 
\end{align}
by an $\eps$-independent constant $C_Q:=\max\{1,4 (C_1 + C_0 +C_0C_1 +C_0C_\fC)\}$.

If the adiabatic Fredholm family satisfies {\rm [Uniform Continuity of $\rD\cF_\eps$]} as in Definition~\ref{def:adiabatic C-l} then the inverse operators $ \overline Q_\eps(\gamma_0)$ vary uniformly continuously with $\gamma_0\in\cV_{\overline\Gamma,\eps}$, that is for all $\eps\in\Delta_Q$ and 
$\gamma_0,\gamma'_0\in\cV_{\overline\Gamma,\eps}$ with $\|\gamma_0\|^\Gamma_\eps, \|\gamma'_0\|^\Gamma_\eps\leq\delta_Q$ we have
\begin{equation}\label{eq:Q cont}
 \| \overline Q_\eps(\gamma'_0) -  \overline Q_\eps(\gamma_0) \|^{\cL(\fK\times\ol\Omega_\eps,\fC\times\overline\Gamma_\eps)}
\leq
(C_Q)^2 \; c^1_\cF(\|\gamma'_0- \gamma_0\|^\Gamma_\eps),
\end{equation}
where $c^1_\cF : [0,\infty) \to [0,\infty)$ is the monotone continuous function from Definition~\ref{def:adiabatic C-l} with $c^1_\cF(0)= 0$.

If the adiabatic Fredholm family is regularizing in the sense of Definition~\ref{def:regularizing}, then the inverse operators at base points $\gamma_0\in\cV_\Gamma$ restrict to bijections $Q_\eps(\gamma_0):=\overline Q_\eps(\gamma_0)|_{\fK\times\Omega}:   \fK \times \Omega \to \fC \times \Gamma$, which are the inverses of 
\begin{align*}
P_\eps(\gamma_0) := \rD g_{\eps,\fk_0} (\fc_0,\gamma_0) :  \; \fC \times \Gamma &\rightarrow  \fK \times \Omega, \\
( \fc, \gamma) &\mapsto (\pi_\fK(\gamma), \rD\cF_\eps(\gamma_0) \gamma - \fc )  .
\end{align*}
\end{lemma}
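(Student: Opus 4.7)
The plan is to prove the invertibility of $\overline P_\eps(\gamma_0)$ by directly establishing the quantitative estimate
\[
\|\fc\|^\fC + \|\gamma\|^\Gamma_\eps \;\leq\; C_Q \bigl( \|\pi_\fK(\gamma)\|^\fK + \|\rD\cF_\eps(\gamma_0)\gamma - \fc\|^\Omega_\eps \bigr)
\qquad \forall (\fc,\gamma)\in\fC\times\overline\Gamma_\eps ,
\]
under the hypotheses on $(\eps,\gamma_0)$. This simultaneously yields injectivity of $\overline P_\eps(\gamma_0)$ and the norm bound on its left inverse. Surjectivity then follows because $\overline P_\eps(\gamma_0)$ is a compact perturbation of $(\fc,\gamma)\mapsto(0,\rD\overline\cF_\eps(\gamma_0)\gamma)$, and the finite-dimensional factors $\fC$ in the domain and $\fK$ in the codomain precisely cancel the Fredholm index $\dim\fC-\dim\fK$ supplied by the [Index] axiom, leaving $\overline P_\eps(\gamma_0)$ of index zero.

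The derivation of the injectivity estimate is the technical heart of the lemma and combines five of the axioms of Definition~\ref{def:fredholm} in a careful order. Writing $\omega := \rD\cF_\eps(\gamma_0)\gamma - \fc$, the first ingredient — [Uniform Fredholm-ish] together with [Uniform Cokernel Bound] and [Quadratic-ish Estimate] — yields
\[
\|\gamma\|^\Gamma_\eps \;\leq\; C_1\|\omega\|^\Omega_\eps + C_1 C_\fC\|\fc\|^\fC + C_1 c(\|\gamma_0\|^\Gamma_\eps)\|\gamma\|^\Gamma_\eps + C_1\|\gamma\|^\Gamma_0 .
\]
The second ingredient uses [Lower Bound on Norms], [Continuity of Derivatives at $0$], and [Quadratic-ish] to rewrite $\rD\cF_0(0)\gamma-\fc = \omega - (\rD\cF_\eps(\gamma_0)\gamma - \rD\cF_0(0)\gamma)$, with the perturbation bounded in $\|\cdot\|^\Omega_0$ by $\bigl(c_\Delta(\eps)+c(\|\gamma_0\|^\Gamma_\eps)\bigr)\|\gamma\|^\Gamma_\eps$. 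Feeding this into the [$\eps=0$ Fredholm Estimate] gives
\[
\|\gamma\|^\Gamma_0 + \|\fc\|^\fC \;\leq\; C_0\|\pi_\fK(\gamma)\|^\fK + C_0\|\omega\|^\Omega_\eps + C_0\bigl(c_\Delta(\eps)+c(\|\gamma_0\|^\Gamma_\eps)\bigr)\|\gamma\|^\Gamma_\eps .
\]
Substituting this into the first estimate — once for $\|\gamma\|^\Gamma_0$ and once for $C_\fC\|\fc\|^\fC$ — collects all remaining occurrences of $\|\gamma\|^\Gamma_\eps$ on the right-hand side with a coefficient of the form $M\bigl(c_\Delta(\eps)+c(\|\gamma_0\|^\Gamma_\eps)\bigr)$ for a universal constant $M$ depending only on $C_0, C_1, C_\fC$. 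Continuity of $c_\Delta$ and $c$ at $0$ then permits shrinking $\Delta_Q\ni 0$ and $\delta_Q>0$ so that this coefficient is at most $\tfrac12$; absorbing it into the left-hand side yields the estimate with the constant of the claimed form.

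The continuity estimate \eqref{eq:Q cont} follows from the classical algebraic identity
\[
\overline Q_\eps(\gamma'_0) - \overline Q_\eps(\gamma_0) \;=\; \overline Q_\eps(\gamma'_0)\bigl(\overline P_\eps(\gamma_0) - \overline P_\eps(\gamma'_0)\bigr)\overline Q_\eps(\gamma_0) ,
\]
combined with the uniform bound $\|\overline Q_\eps(\cdot)\|\leq C_Q$ just established and the bound $\|\overline P_\eps(\gamma'_0) - \overline P_\eps(\gamma_0)\|^{\cL(\fC\times\overline\Gamma_\eps,\fK\times\overline\Omega_\eps)} \leq c^1_\cF(\|\gamma'_0-\gamma_0\|^\Gamma_\eps)$ supplied by [Uniform Continuity of $\rD\cF_\eps$] in Definition~\ref{def:adiabatic C-l}. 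For the regularizing case, given $\gamma_0\in\cV_\Gamma$ and $(\fk,\omega)\in\fK\times\Omega\subset\fK\times\overline\Omega_\eps$, the unique preimage $(\fc,\gamma):=\overline Q_\eps(\gamma_0)(\fk,\omega)$ satisfies $\rD\overline\cF_\eps(\gamma_0)\gamma = \omega+\fc\in\Omega$, so \eqref{eq:reg 1} of Definition~\ref{def:regularizing} forces $\gamma\in\Gamma$, while $\fc\in\fC\subset\Omega$ is immediate. The main obstacle is the bookkeeping in the injectivity estimate: one must carefully ensure that the $\eps$-dependent small parameters $c_\Delta(\eps)$ and $c(\|\gamma_0\|^\Gamma_\eps)$ only appear as multipliers of $\|\gamma\|^\Gamma_\eps$ (which can be absorbed), and never of $\|\omega\|^\Omega_\eps$ or $\|\pi_\fK(\gamma)\|^\fK$, which would obstruct any uniform-in-$\eps$ bound.
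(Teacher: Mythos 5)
The injectivity estimate you derive is correct and uses exactly the right chain of axioms: [Uniform Fredholm-ish] $\to$ triangle inequality $\to$ [Uniform Cokernel Bound] $\to$ [$\eps=0$ Fredholm Estimate] $\to$ [Continuity of Derivatives at $0$], with [Quadratic-ish] spliced in to handle the base point $\gamma_0\neq 0$ directly. The paper derives the same kind of estimate but \emph{only at $\gamma_0=0$}, establishes that $\overline P_\eps(0)$ is an isomorphism there via the Fredholm-index argument, and then extends invertibility to $\gamma_0\neq 0$ by a Neumann-series perturbation. Your one-pass approach is cleaner in deriving the estimate uniformly, and your proof of \eqref{eq:Q cont} via the resolvent identity $\overline Q_\eps(\gamma'_0)-\overline Q_\eps(\gamma_0)=\overline Q_\eps(\gamma'_0)(\overline P_\eps(\gamma_0)-\overline P_\eps(\gamma'_0))\overline Q_\eps(\gamma_0)$ is arguably more direct than the paper's expansion of $({\rm Id}-T')^{-1}-({\rm Id}-T)^{-1}$, while giving the same constant $(C_Q)^2$. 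The regularizing step matches the paper.

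However, there is a genuine gap in your surjectivity argument at $\gamma_0\neq 0$. You invoke the [Index] axiom to conclude $\overline P_\eps(\gamma_0)$ is Fredholm of index zero, but [Fredholm] and [Index] only assert Fredholmness and the index for $\rD\overline\cF_\eps(0)$ — not for $\rD\overline\cF_\eps(\gamma_0)$ at a nonzero base point, which is what your compact-perturbation argument needs. To get Fredholmness and the right index for $\rD\overline\cF_\eps(\gamma_0)$ one must invoke stability of the Fredholm property under small perturbations, and the smallness threshold for that stability depends on the specific operator $\rD\overline\cF_\eps(0)$ — there is no a priori $\eps$-uniform threshold. The only way to make the threshold uniform is to first observe that $\overline P_\eps(0)$ is an \emph{isomorphism} with a uniform inverse bound (your injectivity estimate at $\gamma_0=0$, together with the index argument you do have at $0$, supplies exactly this), after which a perturbation smaller than that uniform bound's reciprocal is again invertible — but that observation \emph{is} the Neumann series argument. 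So your argument silently folds back into the paper's two-step proof. The gap is easily filled: specialize your estimate to $\gamma_0=0$, conclude $\overline P_\eps(0)$ is bijective with $\|\overline Q_\eps(0)\|\leq C_Q^{\rm prelim}$, note $\|\overline P_\eps(\gamma_0)-\overline P_\eps(0)\|\leq c(\|\gamma_0\|^\Gamma_\eps)\leq c(\delta_Q)$ from [Quadratic-ish], and choose $\delta_Q$ so $c(\delta_Q)<1/C_Q^{\rm prelim}$ — then surjectivity at $\gamma_0\neq 0$ follows by Neumann series. Once this is done, your version of the constant (with one absorption by $\tfrac12$ rather than two) would naturally produce a slightly different but equivalent form of $C_Q$.
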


\begin{proof}
First note that the linearizations
$\overline P_\eps(\gamma_0) := \rD \overline g_{\eps,\fk_0} (\fc_0,\gamma_0) :  \fC \times \overline\Gamma_\eps \to  \fK \times \overline\Omega_\eps$ are independent of the base point data $\fk_0\in\fK$ and $\fc_0\in\fC$, as they are given by $(\fc, \gamma) \mapsto (\overline\pi_\fK(\gamma), \rD\overline\cF_\eps(\gamma_0) \gamma - \fc )$. 

For $\gamma_0=0$ and $\eps\in\Delta_Q$ to be determined we can prove injectivity of $\overline P_\eps(0)$ by starting from the [Uniform Fredholm-ish Estimate] in Lemma~\ref{lem:fredholm} to estimate for any $(\fc,\gamma)\in\fC\times\overline\Gamma_\eps$
\begin{align*}
 \| \fc \|^\fC + \|\gamma\|^\Gamma_\eps
&\leq
\| \fc \|^\fC 
+ C_1 \bigl( \| \rD\overline\cF_\eps(0) \gamma \|^\Omega_\eps + \| \gamma \|^\Gamma_0 \bigr)   \\
&\qquad\qquad
\text{\small using the triangle inequality for $\|\cdot\|^\Omega_\eps$} \\
&\leq
\| \fc \|^\fC
+ C_1 \bigl(  \| \rD\overline\cF_\eps(0) \gamma - \fc \|^\Omega_\eps + \| \fc \|^\Omega_\eps +  \| \gamma \|^\Gamma_0 \bigr)   \\
&\qquad\qquad
\text{\small using the [Uniform Cokernel Bound] in Lemma~\ref{lem:fredholm}} \\
&\leq
C_1 \| \rD\overline\cF_\eps(0) \gamma - \fc \|^\Omega_\eps + (1+C_1 + C_\fC) \bigl(  \| \gamma \|^\Gamma_0 + \| \fc \|^\fc  \bigr)  \\
&\qquad\qquad
\text{\small using the [$\epsilon=0$ Fredholm Estimate] in Lemma~\ref{lem:fredholm}} \\
&\leq
C_1 \| \rD\overline\cF_\eps(0) \gamma - \fc \|^\Omega_\eps  
+ (1+ C_1  + C_\fC) C_0 \bigl( \|\overline\pi_\fK(\gamma) \|^\fK + \| \rD\overline\cF_0(0) \gamma - \fc \|^\Omega_0  \bigr) 
\end{align*}
\begin{align*}
&\qquad\qquad
\text{\small using the triangle inequality for $\|\cdot\|^\Omega_0$ and the [Lower Bound on $\Omega$ Norms] in Lemma~\ref{lem:fredholm}} \\
&\leq
C_1 \| \rD\overline\cF_\eps(0) \gamma - \fc \|^\Omega_\eps   
+ (1+ C_1  + C_\fC) C_0  \|\overline\pi_\fK(\gamma) \|^\fK  \\
&\qquad 
+ (1+ C_1  + C_\fC) C_0  \bigl(  \| \rD\overline\cF_0(0) \gamma - \rD\overline\cF_\eps(0) \gamma \|^\Omega_0    + \| \rD\overline\cF_\eps(0) \gamma - \fc \|^\Omega_\eps  \bigr) \\ 
&\qquad\qquad
\text{\small using the [Continuity of Derivatives at $0$] in Lemma~\ref{lem:fredholm}} \\
&\leq
C_1 \| \rD\overline\cF_\eps(0) \gamma - \fc \|^\Omega_\eps    
+ (1+ C_1  + C_\fC) C_0  \|\overline\pi_\fK(\gamma) \|^\fK  \\
&\qquad 
+ (1+ C_1  + C_\fC) C_0 \bigl( c_\Delta(\eps)  \|  \gamma \|^\Gamma_\eps  + \| \rD\overline\cF_\eps(0) \gamma - \fc \|^\Omega_\eps  \bigr) \\ 
&\leq
(C_1 + (1+ C_1  + C_\fC) C_0 ) \bigl( \|\overline\pi_\fK(\gamma) \|^\fK +  \| \rD\overline\cF_\eps(0) \gamma - \fc \|^\Omega_\eps  \bigr) 
+ (1+ C_1  + C_\fC) C_0 c_\Delta(\eps)  \|  \gamma \|^\Gamma_\eps   \\ 
&\leq
 \tfrac 12 C^{\scriptscriptstyle prelim}_Q \| \rD \overline g_{\eps,\fk_0} (\fc_0,0) (\fc,\gamma) \|^{\fK\times\Omega}_\eps
+ \tfrac 12 \|  \gamma \|^\Gamma_\eps    
\end{align*}
with the preliminary constant $C^{\scriptscriptstyle prelim}_Q:= 2 (C_1 + C_0 +C_0C_1 +C_0C_\fC)$. 
Here the last step chooses $\Delta_Q\subset\Delta$ so that $c_\Delta(\eps)\leq  \frac 1{2(1+ C_1  + C_\fC) C_0}$ for all $\eps\in\Delta_Q$. 
Then we can rearrange this estimate to prove and quantify the injectivity of the linearization at base points $(\eps, \fk_0, \fc_0, \gamma_0=0)$ with $\eps\in\Delta_Q$, 
\begin{align} \label{eq:P-estimate}
 \| (\fc, \gamma) \|^{\fC\times\Gamma}_\eps
&\leq
C^{\scriptscriptstyle prelim}_Q \| \rD \overline g_{\eps,\fk_0} (\fc_0,0) (\fc,\gamma) \|^{\fK\times\Omega}_\eps  
\qquad \forall (\fc,\gamma)\in\fC\times\overline\Gamma_\eps . 
\end{align}
Moreover, the linearizations $\overline P_\eps(0) = \rD \overline g_{\eps,\fk_0} (\fc_0,0) :  \fC \times \overline\Gamma_\eps \to  \fK \times \overline\Omega_\eps$ at $\gamma_0=0$ are stabilizations of the Fredholm maps $\rD\overline\cF_\eps(0)$ in the sense that they result by Cartesian product with finite dimensional factors in the domain and codomain, and addition of a linear operator $(\fc,\gamma)\mapsto (\overline\pi_\fK(\gamma), -\fc)\in \fK\times\fC$ composed with the compact embedding $ \fK\times\fC\subset\fK\times\overline\Omega_\eps$. 
Here the embedding $\fC\hookrightarrow \overline\Omega_\eps$ is compact since $\fC$ is finite dimensional and the embedding is continuous by the [Uniform Cokernel Bound] property.
Thus $\overline P_\eps(0)$ are Fredholm operators, and we can use the [Index] property $\ind \rD\overline\cF_\eps(0)=\ind \rD\overline\cF_0(0)$ to show that their Fredholm index is $0$, 
\begin{align*}
\ind \overline P_\eps(0) &= \ind \bigl( (\fc,\gamma)\mapsto (0, \rD\overline\cF_\eps(0)\gamma) \bigr) \\
&=   \ind \rD\overline\cF_\eps(0)+ \dim\fC - \dim\fK \;=\; \ind \rD\overline\cF_\eps(0) - \ind \rD\overline\cF_0(0) \;=\; 0 . 
\end{align*}
Since we previously established that each $\overline P_\eps(0)$ is injective, the Fredholm index $0$ guarantees that each $\overline P_\eps(0)$ is also surjective, and hence a bijection. Moreover, the above injectivity estimate \eqref{eq:P-estimate} implies the uniform bound \eqref{eq:Q-estimate} for the inverse operators $\overline Q_\eps(0):=\overline P_\eps(0)^{-1} : \fK \times \overline\Omega_\eps\to \fC \times \overline\Gamma_\eps$ with  the preliminary constant $C^{\scriptscriptstyle prelim}_Q$. 

This invertibility extends to $\gamma_0\ne 0$ whenever $\overline P_\eps(\gamma_0)$ is sufficiently close to $\overline P_\eps(0)$ in the space of bounded linear operators. To achieve this for all $\|\gamma_0\|^\Gamma_\eps\leq\delta_Q$ with an $\eps$-independent constant $\delta_Q>0$ 
we use the [Quadratic-ish Estimate] in Lemma~\ref{lem:fredholm} to estimate for all $\eps\in\Delta_Q$ and $(\fc_0,\gamma_0)\in  \fC \times \cV_{\overline\Gamma,\eps}$ 
\begin{align*}
\bigl\| \overline P_\eps(\gamma_0) - \overline P_\eps(0) \bigr\|^{\cL(\fC \times \overline\Gamma_\eps ,\fK \times \overline\Omega_\eps)}
&= \sup_{\| \fc \|^\fC + \|\gamma\|^\Gamma_\eps\leq 1}     \bigl\| \bigl( \overline \pi_\fK(\gamma) , \rD\overline\cF_\eps(\gamma_0) \gamma - \fc  \bigr) - \bigl( \overline \pi_\fK(\gamma) , \rD\overline\cF_\eps(0) \gamma - \fc  \bigr) \bigr\|^{\fK \times \Omega}_\eps \\
&= \sup_{\| \fc \|^\fC + \|\gamma\|^\Gamma_\eps\leq 1}     \bigl\| \rD\overline\cF_\eps(\gamma_0) \gamma - \rD\overline\cF_\eps(0) \gamma  \bigr\|^\Omega_\eps  \leq 
 c(\|\gamma_0\|^\Gamma_\eps)  \;\leq\;  \tfrac 12(C^{\scriptscriptstyle prelim}_Q)^{-1} . 
\end{align*}
Here $c : [0,\infty) \to [0,\infty)$ is a monotone continuous function with $c(0)= 0$ that is a part of the data included in an adiabatic Fredholm family in Definition~\ref{def:fredholm}. So we can choose  $\delta_Q>0$ so that $c(x)\leq\frac 12(C^{\scriptscriptstyle prelim}_Q)^{-1}$ for all $0\leq x\leq \delta_Q$. 
This means that for each $\eps\in\Delta_Q$ the Fredholm map $\overline\cF_\eps:\cV_{\Gamma,\eps}\to\ol\Omega_\eps$ fits into Lemma~\ref{lem:inverses classical}, where the [Quadratic-ish Estimate] validates 
\eqref{eq:P continuity classical} for $\gamma'_0=0$ with the function 
$\tilde c^1_{\ol\cF_\eps}(0,\gamma_0)= c(\|\gamma_0\|^\Gamma_\eps)$. 
Thus the classical construction shows that
$\overline P_\eps(\gamma_0) = \rD \overline g_{\eps,\fk_0} (\fc_0,\gamma_0)$ is invertible for all $\eps\in\Delta_Q$ and $\|\gamma_0\|^\Gamma_\eps\leq\delta_Q$ with inverses $\overline Q_\eps(\gamma_0):=\overline P_\eps(\gamma_0)^{-1}$ that satisfy the uniform bound 
\eqref{eq:Q-estimate} with the constant $C_Q=2\, C^{\scriptscriptstyle prelim}_Q$. 
We then replace this by the final constant $C_Q:=\max\{1,2\, C^{\scriptscriptstyle prelim}_Q\}$ to simplify later estimates in \eqref{eq:DQ bound}.  

Next, we will assume [Uniform Continuity of $\rD\cF_\eps$] as in Definition~\ref{def:adiabatic C-l} and show that these inverses vary continuously with $\gamma_0\in\cV_{\overline\Gamma,\eps}$ -- and that this continuity is locally uniform. This again follows by applying the classical proof in Lemma~\ref{lem:inverses classical}
for each $\eps\in\Delta_Q$ to the Fredholm map $\overline\cF_\eps:\cV_{\Gamma,\eps}\to\ol\Omega_\eps$, which now satisfies \eqref{eq:P continuity classical} for $\gamma'_0,\gamma_0\in\cV_{\overline\Gamma,\eps}$ with the function $\tilde c^1_{\ol\cF_\eps}(\gamma'_0,\gamma_0)= c^1_\cF(\|\gamma'_0 - \gamma_0\|^\Gamma_\eps)$ from Definition~\ref{def:adiabatic C-l}. 
Thus \eqref{eq:Q cont} follows directly from Lemma~\ref{lem:inverses classical}.

Finally, suppose that the adiabatic Fredholm family is regularizing in the sense of Definition~\ref{def:regularizing}, 
which in particular means the implication
$\rD\overline\cF_\eps(\gamma_0)\gamma \in \Omega \Rightarrow \gamma\in\Gamma$ for $\gamma_0\in\cV_\Gamma$. 
Then consider the value $(\fc,\gamma)=\overline Q_\eps(\gamma_0)(\fk,\omega)$ of $\overline Q_\eps(\gamma_0)$ at some $(\fk,\omega)\in\fK\times\Omega$. This is the solution $(\fc,\gamma)\in\fC\times\overline\Gamma_\eps$ of 
$$
\overline\pi_\fK(\gamma)=\fk \quad\text{and}\quad \rD\overline\cF_\eps(\gamma_0) \gamma - \fc =\omega  . 
$$
In particular, $\gamma$ solves $\rD\overline\cF_\eps(\gamma_0) \gamma = \fc + \omega \in \Omega$, which implies $\gamma\in\Gamma$ by the regularizing property. 
This shows that the restriction $\overline Q_\eps(\gamma_0)|_{\fK \times \Omega}$ for $\gamma_0\in\cV_\Gamma$ takes values in $\fC \times \Gamma$. Then the injectivity and surjectivity of $\overline Q_\eps(\gamma_0)$ implies injectivity and surjectivity of the restriction, which establishes it as the claimed bijection $Q_\eps(\gamma_0):=\overline Q_\eps(\gamma_0)|_{\fK\times\Omega}:   \fK \times \Omega \to \fC \times \Gamma$. Moreover, $(\fc,\gamma)=Q_\eps(\gamma_0)(\fk,\omega)$ satisfies $\pi_\fK(\gamma)=\fk$ and $\rD\cF_\eps(\gamma_0) \gamma - \fc =\omega$, thus $Q_\eps$ is the inverse of $P_\eps(\gamma_0) := \rD g_{\eps,\fk_0} (\fc_0,\gamma_0)$.
\end{proof}

\subsection{Contractions}  \label{contraction}

The next step towards constructing finite dimensional reductions of adiabatic Fredholm families is to utilize the inverse operators from the previous section to rewrite the equations $\cF_\eps(\gamma)=0$ -- up to a finite dimensional factor -- as a fixed point problem for a family of contractions. 
As in \cite[Def.3.1.11]{hofer-wysocki-zehnder_polyfold} this will play the role of the Fredholm property in classical Fredholm descriptions of moduli spaces. 

In the classical setting of Theorem~\ref{thm:charts classical}, this is based on showing that any $\cC^1$ Fredholm map satisfies the [Quadratic-ish Estimate] of Definition~\ref{def:fredholm} -- after shrinking its domain. 

\begin{lemma} \label{lem:contraction classical}
Any Fredholm map $\cF:\cV_{\ol\Gamma}\to \Omega$ as in Theorem~\ref{thm:charts classical}
satisfies a {\rm [Quadratic-ish Estimate]}
\begin{equation} \label{eq:quadratic-ish classical}
 \bigl\| \rD\cF(\gamma_0) \gamma - \rD\cF(0) \gamma  \bigr\|^\Omega  \leq 
c(\|\gamma_0\|^\Gamma)  \| \gamma\|^{\ol\Gamma}
\qquad
\forall  \gamma_0\in \cV'_{\overline\Gamma}, \gamma\in\Gamma , 
\end{equation}
where $c:[0,\infty)\to [0,\infty)$ is a monotone continuous map with $c(0)=0$ 
and $\cV'_{\ol\Gamma}\subset\Gamma$ is a convex open neighbourhood of $0$ contained in $\cV_{\ol\Gamma}$.  
Further, let $Q(0): \fK \times \overline\Omega \to \fC \times \overline\Gamma$ be the isomorphism from Lemma~\ref{lem:inverses classical}, then the zero set $\cF^{-1}(0) \simeq \cG_Q^{-1}(0)$ is naturally identified with the zero set of the map 
$$
\cG_Q \,:\; \fK \times \fC \times \cV_{\overline\Gamma} \;\rightarrow\;  \fC \times \fC \times \overline\Gamma, 
\qquad
(\fk, \fc, \gamma) \;\mapsto\; \bigl(\fc ,  Q(0) (\pi_\fK(\gamma)-\fk, \cF(\gamma) - \fc ) \bigr) . 
$$
Moreover, this map is close to the identity map on
$\cV_{\overline W}:=\fC\times\cV'_{\overline\Gamma}\subset \fC\times \overline\Gamma=:\ol W$ 
up to finite dimensional factors and a contraction in the following sense: We can write
$$
\cG_Q  \,:\; \fK \times \cV_{\overline W} \;\rightarrow\;  \fC \times \overline W, 
\qquad
(\fk, w) \;\mapsto\; \bigl( A(\fk,w), w - B(\fk,w) \bigr) , 
$$
where $A : \fK \times\cV_{\overline W} \to \fC$ maps to the finite dimensional space $\fC$ by projection $A(\fk,\fc,\gamma)=\fc$,
and $B : \fK \times \cV_{\overline W} \to  \overline W$ given by $B(\fk,\fc,\gamma) = Q(0) \bigl( \fk   , \rD\cF(0) \gamma -  \cF(\gamma)   \bigr)$ is a family of contractions near $0\in \overline W$ parametrized by $\fk\in\fK$. 
More precisely, given any $\theta\in (0,1)$ there exists $\delta_\theta>0$ such that for all $\fk\in\fK, w,w'\in\cV_{\overline W}$ we have
\begin{align}\label{eq:B-contraction classical}
  \|w\|^W,  \|w'\|^W \leq \delta_\theta  \quad \Longrightarrow \quad
\| B (\fk, w' ) - B(\fk, w) \|^W \leq  \theta \| w' - w \|^W . 
\end{align}
In addition, given any $\delta>0$ we can find an open neighbourhood $\cV_{\fK,\delta}\subset\fK$ of $0$ with
\begin{align}\label{eq:B-small classical}
\fk\in\cV_{\fK,\delta}  \quad \Longrightarrow \quad
\| B (\fk, 0 )  \|^W <  \delta . 
\end{align}
If the map $\cF:\cV_{\ol\Gamma}\to\ol\Omega$ is $\cC^\ell$ for some $\ell\geq 1$, then the contraction $ B : \fK \times \cV_{\overline W} \to  \overline W$ is $\cC^\ell$ as well.
\end{lemma}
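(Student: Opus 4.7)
The plan is to work through the six assertions in the stated order, treating each as a quick consequence of the material already developed.

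First I would obtain the quadratic-ish estimate from $\cC^1$ regularity of $\cF$. Shrinking if necessary, pick a convex open neighborhood $\cV'_{\ol\Gamma}\subset\cV_{\ol\Gamma}$ of $0$ on which $\rD\cF$ is bounded, and define
\[
c(r)\,:=\;\sup\bigl\{\|\rD\cF(\gamma_0)-\rD\cF(0)\|^{\cL(\ol\Gamma,\ol\Omega)}\,\big|\,\gamma_0\in\cV'_{\ol\Gamma},\,\|\gamma_0\|^\Gamma\leq r\bigr\},
\]
extended to be monotone continuous with $c(0)=0$ by continuity of $\rD\cF$ at $0$. This gives \eqref{eq:quadratic-ish classical} immediately.

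Next, the identification $\cF^{-1}(0)\simeq\cG_Q^{-1}(0)$ is a direct consequence of the isomorphism property of $Q(0)$: since $Q(0)$ has trivial kernel, $\cG_Q(\fk,\fc,\gamma)=0$ forces $\fc=0$, $\pi_\fK(\gamma)=\fk$, and $\cF(\gamma)=0$; conversely $\gamma\mapsto(\pi_\fK(\gamma),0,\gamma)$ identifies $\cF^{-1}(0)$ with $\cG_Q^{-1}(0)$. Then I would compute, using $P(0)(\fc,\gamma)=(\pi_\fK(\gamma),\rD\cF(0)\gamma-\fc)$ and $Q(0)=P(0)^{-1}$, that
\[
Q(0)\bigl(\pi_\fK(\gamma)-\fk,\cF(\gamma)-\fc\bigr)
\,=\,(\fc,\gamma)-Q(0)\bigl(\fk,\rD\cF(0)\gamma-\cF(\gamma)\bigr),
\]
so with $W=\fC\times\ol\Gamma$ and $w=(\fc,\gamma)$ we get exactly $\cG_Q(\fk,w)=(A(\fk,w),w-B(\fk,w))$ with $A(\fk,\fc,\gamma)=\fc$ and $B(\fk,\fc,\gamma)=Q(0)(\fk,\rD\cF(0)\gamma-\cF(\gamma))$.

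The main (though routine) estimate is the contraction property. For $w=(\fc,\gamma),\,w'=(\fc',\gamma')$, one has
\[
B(\fk,w')-B(\fk,w)=Q(0)\bigl(0,\,\rD\cF(0)(\gamma'-\gamma)-(\cF(\gamma')-\cF(\gamma))\bigr),
\]
and the fundamental theorem of calculus rewrites the second component as $\int_0^1\bigl(\rD\cF(0)-\rD\cF(\gamma+s(\gamma'-\gamma))\bigr)(\gamma'-\gamma)\,\rd s$. Applying \eqref{eq:quadratic-ish classical} on the segment (which lies in $\cV'_{\ol\Gamma}$ by convexity when $\|w\|^W,\|w'\|^W\leq\delta$ for small enough $\delta$) yields
\[
\|B(\fk,w')-B(\fk,w)\|^W\,\leq\,\|Q(0)\|\cdot c(\delta)\cdot\|\gamma'-\gamma\|^\Gamma\,\leq\,\|Q(0)\|\,c(\delta)\,\|w'-w\|^W.
\]
Choosing $\delta_\theta>0$ so that $\|Q(0)\|\,c(\delta_\theta)\leq\theta$ gives \eqref{eq:B-contraction classical}. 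The hardest conceptual point here is simply noticing that the quadratic-ish estimate is exactly what is needed to convert the nonlinear remainder $\cF(\gamma)-\rD\cF(0)\gamma$ into a contraction after composing with the bounded inverse $Q(0)$.

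Finally, \eqref{eq:B-small classical} follows since $\cF(0)=0$ forces $B(\fk,0)=Q(0)(\fk,0)$, which is linear in $\fk$ with norm $\leq\|Q(0)\|\,\|\fk\|^\fK$, so $\cV_{\fK,\delta}:=\{\fk\in\fK\,|\,\|\fk\|^\fK<\delta/\|Q(0)\|\}$ works. For the $\cC^\ell$ regularity, the map $(\fk,\fc,\gamma)\mapsto(\fk,\rD\cF(0)\gamma-\cF(\gamma))$ is $\cC^\ell$ as soon as $\cF$ is $\cC^\ell$ (the $\fk$ component is linear, and $\rD\cF(0)$ is a bounded linear operator), and post-composition with the bounded linear map $Q(0)$ preserves $\cC^\ell$.
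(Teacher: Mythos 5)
Your proof is correct and takes essentially the same route as the paper: you build the quadratic-ish estimate from continuity of $\rD\cF$ at $0$ on a shrunken convex neighborhood, compute $B(\fk,\fc,\gamma)=Q(0)(\fk,\rD\cF(0)\gamma-\cF(\gamma))$ via $Q(0)=P(0)^{-1}$, bound the nonlinear remainder along the segment with the fundamental theorem of calculus to get the contraction constant, and use $\cF(0)=0$ with boundedness of $Q(0)$ for \eqref{eq:B-small classical} and the $\cC^\ell$ claim. The only cosmetic difference is that the paper routes the zero-set identification through Lemma~\ref{lem:equivalent-fredholm classical} and its earlier function $\tilde c^1_\cF$ from Lemma~\ref{lem:inverses classical}, whereas you re-derive those two points directly — the content is the same.
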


\begin{proof}
From Lemma~\ref{lem:inverses classical} we have 
$$ \bigl\| \rD\cF(\gamma_0') \gamma - \rD\cF(\gamma_0) \gamma  \bigr\|^\Omega  \leq 
\tilde c^1_\cF(\gamma'_0,\gamma_0)  \| \gamma\|^{\ol\Gamma}
\qquad
\forall \gamma'_0,  \gamma_0\in \cV_{\overline\Gamma}, \gamma\in\Gamma , 
$$
where $\tilde c^1_\cF:\cV_{\ol\Gamma}\times \cV_{\ol\Gamma}\to [0,\infty)$ is a continuous map that vanishes on the diagonal $\tilde c^1_\cF(\gamma_0,\gamma_0)=0$.
So to obtain \eqref{eq:quadratic-ish classical} we are tempted to define $c:[0,\infty)\to[0,\infty)$ by $c^{\scriptscriptstyle prelim}(x):=\sup_{\|\gamma_0\|^\Gamma\leq x} \tilde c^1_\cF(\gamma_0,0)$. While those supremums might not be finite for all $x>0$, the continuity $\tilde c^1_\cF(\gamma_0,0)\to 0$ as $\|\gamma_0\|^\Gamma\to 0$ guarantees $c^{\scriptscriptstyle prelim}(x)\to 0$ as $x\to 0$. Now pick $x_0>0$ sufficiently small so that $c^{\scriptscriptstyle prelim}(x_0)<\infty$ and $\cV'_{\ol\Gamma}:=\{ \gamma_0\in \Gamma \,|\, \|\gamma_0\|^\Gamma < x_0 \}\subset\cV_{\ol\Gamma}$. 
Then we obtain the desired estimate by setting $c(x):=\min\{ c^{\scriptscriptstyle prelim}(x), x_0\}$. This function is monotone by construction and inherits continuity from $\tilde c^1_\cF$. 

Next, Lemma~\ref{lem:equivalent-fredholm classical} identifies $\cF^{-1}(0)\simeq\cG^{-1}(0)$ for $\cG:\fK \times \fC \times \cV_\Gamma \rightarrow \fC \times \fK \times \Omega$ given by $\cG(\fk, \fc, \gamma) = ( \fc , \pi_\fK(\gamma)-\fk, \cF(\gamma) - \fc )$. 
The map considered here is obtained by composing $\cG$ with ${\rm Id}_\fC \times Q(0):\fC\times  \fK \times \Omega \to \fC\times  \fC \times \Gamma$. Here $Q(0):\fK\times\Omega\to\fC\times\Gamma$ is a bijection by Lemma~\ref{lem:inverses classical}, so this composition does not affect the zero set. 

To check the expression for $B : \fK \times \cV_{\overline W} \to  \overline W$ given by $B(\fk,w) := w- {\rm Pr}_{\ol W} ( \cG_Q(\fk,w) )$ we first use the fact $Q(0)$ is the inverse of the Fredholm stabilization isomorphism 
 $P(0) : \fC \times \overline\Gamma \to  \fK \times \overline\Omega$, given by 
$P(0) ( \fc, \gamma)= (\pi_\fK(\gamma), \rD\cF(0) \gamma - \fc )$ in \eqref{eq:P classical}, 
to rewrite
\begin{align*}
 B(\fk,\fc,\gamma)
&=
 (\fc,\gamma) - Q(0) (  \pi_\fK(\gamma)-\fk,  \cF(\gamma) - \fc )   \\
&= 
Q(0) \bigl(P(0) (\fc,\gamma) -  ( \pi_\fK(\gamma)-\fk   ,  \cF(\gamma) - \fc   )  \bigr)   \\
&= 
 Q(0) \bigl( ( \pi_\fK(\gamma)   , \rD \cF(0) \gamma - \fc   ) -  ( \pi_\fK(\gamma)-\fk   ,  \cF(\gamma) - \fc   )  \bigr)   
 \;=\;
Q(0) \bigl( \fk   , \rD \cF(0) \gamma -   \cF(\gamma)   \bigr)  .
\end{align*}
Now given any $w=(\fc,\gamma), w'=(\fc',\gamma')\in \cV_W=\fC\times\cV'_{\overline \Gamma}$, we estimate
\begin{align*}
\bigl\|   B(\fk,w') -   B(\fk,w) \bigr\|^W
&=
\bigl\| Q(0) (\fk, \rD \cF(0) \gamma' -   \cF(\gamma')  )   
- Q(0) ( \fk, \rD \cF(0) \gamma -   \cF(\gamma)  )   
\bigr\|^W \\
&=
\bigl\| Q(0) \bigl( 0 , \rD \cF(0) \gamma' -   \cF(\gamma') - \rD \cF(0) \gamma +   \cF(\gamma)  \bigr)   \bigr\|^W \\
&\qquad\qquad
\text{\small using \eqref{eq:Q-estimate classical}  in Lemma~\ref{lem:inverses classical}} \\
&\leq 
C_Q \bigl( \| 0 \|^\fK + \bigl\|   \cF(\gamma) -   \cF(\gamma') - \rD \cF(0) (\gamma -\gamma')    \bigr\|^\Omega  \bigr) \\
&\qquad\qquad
\text{\small using the convexity of $\cV'_{\overline \Gamma}$ to ensure $\gamma'+\lambda (\gamma-\gamma')\in\cV'_{\overline \Gamma}$ for $0\leq\lambda\leq 1$} \\
&= 
C_Q  \bigl\|  \textstyle\int_0^1 \bigl( \rD \cF(\gamma+\lambda (\gamma-\gamma') ) (\gamma-\gamma')  - \rD \cF(0) (\gamma -\gamma')   \bigr) \rd\lambda \bigr\|^\Omega  \\
&\leq 
C_Q   \textstyle\int_0^1 \bigl\|  \rD \cF(\gamma'+\lambda (\gamma-\gamma') ) (\gamma-\gamma') - \rD\cF(0) (\gamma -\gamma')   \bigr)  \bigr\|^\Omega  \rd\lambda \\
&\qquad\qquad
\text{\small using  \eqref{eq:quadratic-ish classical} } \\
&\leq 
C_Q   \textstyle\int_0^1 c\bigl( \gamma'+\lambda (\gamma-\gamma'), 0 \bigr)  \rd\lambda  \;  \| \gamma -\gamma'\|^\Gamma  , 
\end{align*}
where $c : [0,\infty) \to [0,\infty)$ is the monotone continuous function with $c(0)= 0$ from  \eqref{eq:quadratic-ish classical}. 
So, given any $\theta\in(0,1)$ we can find $\delta_\theta>0$ so that $c(\delta)\leq \frac1{C_Q}\theta$ for all $0\leq\delta\leq\delta_\theta$. Then $w=(\fc,\gamma),w'=(\fc',\gamma')\in\{v\in \cV_W | \|v\|^W\leq \delta_\theta \}$ implies $\|\gamma\|^\Gamma, \|\gamma'\|^\Gamma\leq \delta_\theta$, which guarantees $\| \gamma'+\lambda (\gamma-\gamma')\|^\Gamma \leq  \lambda \|\gamma\|^\Gamma + (1-\lambda) \|\gamma'\|^\Gamma \leq \delta_\theta$ for all $\lambda\in[0,1]$, and hence the desired contraction property  \eqref{eq:B-contraction classical}
\begin{align*}
\bigl\|   B(\fk,w') -   B(\fk,w) \bigr\|^W
&\leq 
C_Q   \textstyle\int_0^1 \frac1{C_Q}\theta  \rd\lambda    \| \gamma -\gamma'\|^\Gamma 
=
\theta \| \gamma -\gamma'\|^\Gamma .
\end{align*}
To establish smallness on $B(\fk,0)$ we again use \eqref{eq:Q-estimate classical} and the assumption $\cF(0)=0$ to obtain
$$
\bigl\| B(\fk,0) \bigr\|^W
\;=\; \bigl\| \overline Q(0) \bigl( \fk   , \rD\cF(0) 0 -  \cF(0)   \bigr)   \bigr\|^W
\;\leq\; C_Q \|\fk\|^\fK \;<\; \delta 
$$
for all $\fk\in \cV_{\fK,\delta}:=\{ \fk\in\fK \,|\, \|\fk\|^\fK < \frac \delta{C_Q} \}$, as claimed in \eqref{eq:B-small classical}. 
Finally, if the Fredholm map is $\cC^\ell$-regular, then the maps $B: (\fk,\gamma)\mapsto  Q(0) \bigl( \fk   , \rD\cF(0) \gamma -  \cF(\gamma) \bigr)$ are $\cC^\ell$ as well. Indeed, $\overline Q(0)$ and $\rD\cF(0)$ are linear operators, so that $B$ inherits its regularity from $\cF$. 
\end{proof}

To generalize this contraction formulation to an adiabatic Fredholm family, we first formulate the simpler version when the family is regularizing, then provide the general contraction formulation.  

\begin{lemma} \label{lem:contraction-regularizing}
Given a regularizing adiabatic Fredholm family $\bigl( (\cF_\eps:\cV_\Gamma\to \Omega )_{\eps\in\Delta} , \ldots  \bigr)$ as in Definitions~\ref{def:fredholm} and \ref{def:regularizing}, let $Q_\eps(0)=\overline Q_\eps(0)|_{\fK\times\Omega}:   \fK \times \Omega \to \fC \times \Gamma$ be the bijections from Lemma~\ref{lem:inverses} for $\eps\in\Delta_Q\subset\Delta$.
Then the union of zero sets $\bigcup_{\eps\in\Delta_Q} \{\eps\}\times \cF_\eps^{-1}(0) \subset \Delta_Q\times \cV_\Gamma$ is naturally identified with the zero set of the map
\begin{align*}
\cG_Q \,:\; 
\Delta_Q \times \fK \times \fC \times \cV_\Gamma &\rightarrow \fC \times \fC \times \Gamma, \\
(\eps, \fk, \fc, \gamma) &\mapsto \bigl( \fc , Q_\eps(0) ( \pi_\fK(\gamma)-\fk, \cF_\eps(\gamma) - \fc ) \bigr),
\end{align*} 
Moreover, each of the maps $\cG_Q(\eps,\fk,\cdot,\cdot)$ is close to the identity map on $\cV_W:=\fC\times \cV_\Gamma \subset W:=\fC \times \Gamma$ up to finite dimensional factors and a contraction in the following sense: 
We can write 
$$
\cG_Q \,:\; \Delta_Q\times \fK \times \cV_W \;\rightarrow\;  \fC \times W, 
\qquad
(\eps, \fk, w) \;\mapsto\; \bigl( A_\eps(\fk,w), w - B_\eps(\fk,w) \bigr) , 
$$
where $A_\eps : \fK \times\cV_W \to \fC$ maps to the finite dimensional space $\fC$ by projection $A_\eps(\fk,\fc,\gamma)=\fc$, and
\begin{equation} \label{eq:B}
B_\eps : \fK \times \cV_W \to W, \quad
B_\eps(\fk,\fc,\gamma) = Q_\eps(0) \bigl( \fk   , \rD\cF_\eps(0) \gamma -  \cF_\eps(\gamma)   \bigr)
\end{equation}
is a family of contractions near $0\in W$ parametrized by $(\eps,\fk)\in\Delta_Q\times\fK$. 
More precisely, if we equip $W$ with the norms $\| w=(\fc,\gamma) \|^W_\eps:= \|\fc\|^\fC + \|\gamma\|^\Gamma_\eps=\|\fc\|^\Omega_0 + \|\gamma\|^\Gamma_\eps$, then given any $\theta\in (0,1)$ there exists $\delta_\theta>0$ such that for all $\eps\in\Delta_Q, \fk\in\fK, w,w'\in\cV_W$ we have
\begin{align*}
  \|w\|^W_\eps,  \|w'\|^W_\eps \leq \delta_\theta  \quad \Longrightarrow \quad
\| B_\eps (\fk, w' ) - B_\eps(\fk, w) \|^W_\eps \leq  \theta \| w' - w \|^W_\eps . 
\end{align*}
In addition, given any $\delta>0$ we can find open neighbourhoods $\cV_{\fK,\delta}\subset\fK$ of $0$ and $\Delta_\delta\subset\Delta_Q$ of $0$ with
\begin{align*}
(\eps,\fk)\in\Delta_\delta\times\cV_{\fK,\delta}  \quad \Longrightarrow \quad
\| B_\eps (\fk, 0 )  \|^W_\eps <  \delta . 
\end{align*}
\end{lemma}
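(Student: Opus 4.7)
The plan is to mirror the classical argument in Lemma~\ref{lem:contraction classical}, replacing the single inverse $Q(0)$ with the $\eps$-family of inverses $Q_\eps(0)$ from Lemma~\ref{lem:inverses}, and then controlling constants uniformly in $\eps\in\Delta_Q$. First, the identification of zero sets is essentially formal: Lemma~\ref{lem:equivalent-fredholm} identifies $\bigcup_\eps \{\eps\}\times\cF_\eps^{-1}(0)$ with the zero set of $\cG(\eps,\fk,\fc,\gamma)=(\fc,\pi_\fK(\gamma)-\fk,\cF_\eps(\gamma)-\fc)$, and the map $\cG_Q$ is obtained by post-composing the last two components with $Q_\eps(0):\fK\times\Omega\to\fC\times\Gamma$. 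Here we crucially use the regularizing hypothesis together with Lemma~\ref{lem:inverses}, which tells us that $Q_\eps(0)$ restricts to a bijection between the $\eps$-independent dense subspaces. Since bijections preserve zero sets, $\cG^{-1}(0)=\cG_Q^{-1}(0)$.

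Next, I would verify the algebraic form $\cG_Q(\eps,\fk,w)=(A_\eps(\fk,w),w-B_\eps(\fk,w))$ with $B_\eps$ as in \eqref{eq:B}. The first component is tautological. For the second, using $Q_\eps(0)P_\eps(0)=\id$ on $\fC\times\Gamma$ we can rewrite $(\fc,\gamma)=Q_\eps(0)(\pi_\fK(\gamma),\rD\cF_\eps(0)\gamma-\fc)$, subtract $Q_\eps(0)(\fk,\rD\cF_\eps(0)\gamma-\cF_\eps(\gamma))$, and use linearity of $Q_\eps(0)$ to collapse the difference to $Q_\eps(0)(\pi_\fK(\gamma)-\fk,\cF_\eps(\gamma)-\fc)$, which is exactly the second component of $\cG_Q$. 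This is a short computation requiring only that $\gamma\in\cV_\Gamma$ so that both $\rD\cF_\eps(0)\gamma-\cF_\eps(\gamma)$ and $\rD\cF_\eps(0)\gamma-\fc$ lie in $\Omega$ — again the regularizing setting.

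The main technical step — and the one I expect to be the main obstacle to controlling constants uniformly in $\eps$ — is the contraction estimate. For $w=(\fc,\gamma),w'=(\fc',\gamma')\in\cV_W$, linearity of $Q_\eps(0)$ gives
\[
B_\eps(\fk,w')-B_\eps(\fk,w) \,=\, Q_\eps(0)\bigl(0,\,\rD\cF_\eps(0)(\gamma'-\gamma)-\cF_\eps(\gamma')+\cF_\eps(\gamma)\bigr),
\]
and applying \eqref{eq:Q-estimate} from Lemma~\ref{lem:inverses} reduces the $\|\cdot\|^W_\eps$ norm to $C_Q$ times the $\|\cdot\|^\Omega_\eps$ norm of the second entry. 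The fundamental theorem of calculus — justified because $\cV_\Gamma$ is convex, $\rD\cF_\eps$ is uniformly continuous with respect to $\|\cdot\|^\Gamma_\eps$ by fibrewise $\cC^1$ regularity, and the map factors through the convex domain — expresses that entry as $\int_0^1\bigl[\rD\cF_\eps(0)-\rD\cF_\eps(\gamma+\lambda(\gamma'-\gamma))\bigr](\gamma-\gamma')\rd\lambda$. The [Quadratic-ish Estimate] of Definition~\ref{def:fredholm} then yields a bound by $\int_0^1 c(\|\gamma+\lambda(\gamma'-\gamma)\|^\Gamma_\eps)\rd\lambda \cdot \|\gamma'-\gamma\|^\Gamma_\eps$. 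Under $\|w\|^W_\eps,\|w'\|^W_\eps\leq\delta_\theta$, convexity of $\|\cdot\|^\Gamma_\eps$ bounds the integrand uniformly by $c(\delta_\theta)$. Given $\theta\in(0,1)$, monotonicity and continuity of $c$ with $c(0)=0$ let us choose $\delta_\theta>0$ with $C_Q\,c(\delta_\theta)\leq\theta$, yielding the contraction estimate uniformly in $\eps\in\Delta_Q$ and $\fk\in\fK$.

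Finally, for the smallness of $B_\eps(\fk,0)=Q_\eps(0)(\fk,-\cF_\eps(0))$, another application of \eqref{eq:Q-estimate} gives $\|B_\eps(\fk,0)\|^W_\eps\leq C_Q\bigl(\|\fk\|^\fK+\|\cF_\eps(0)\|^\Omega_\eps\bigr)$. The [Near-Solution] property in Definition~\ref{def:fredholm} ensures $\|\cF_\eps(0)\|^\Omega_\eps\to 0$ as $\eps\to 0$, so one can choose an open neighborhood $\Delta_\delta\subset\Delta_Q$ of $0$ on which this term is less than $\delta/(2C_Q)$, together with $\cV_{\fK,\delta}:=\{\fk\in\fK \,|\, \|\fk\|^\fK<\delta/(2C_Q)\}$, to obtain the claim. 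The subtlety worth emphasizing throughout is that every constant — $C_Q$, the constant in the Quadratic-ish Estimate, and the threshold $\delta_\theta$ — is $\eps$-independent, which is precisely why Lemmas~\ref{lem:inverses} and Definition~\ref{def:fredholm} were engineered with these uniformities.
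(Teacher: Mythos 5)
Your proof is correct and follows essentially the same route as the paper: reduce via Lemma~\ref{lem:equivalent-fredholm}, post-compose with $\Id_\fC\times Q_\eps(0)$, verify the algebraic form of $B_\eps$ using $Q_\eps(0)P_\eps(0)=\Id$, and then run the contraction and smallness estimates with $\eps$-independent constants from \eqref{eq:Q-estimate} and the [Quadratic-ish Estimate].

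One place where you actually improve on the paper's write-up: the paper disposes of the estimates by saying one should ``apply Lemma~\ref{lem:contraction classical} to $\ol\cF_\eps$ for each $\eps\in\Delta_Q$,'' using the $\eps$-independent function $c$. That fiberwise delegation does cleanly give the contraction estimate \eqref{eq:B-contraction}, since the relevant hypothesis there is only [Quadratic-ish]. But it does \emph{not} give \eqref{eq:B-small} out of the box, because Lemma~\ref{lem:contraction classical} establishes $\|B(\fk,0)\|<\delta$ using the hypothesis $\cF(0)=0$, and $\cF_\eps(0)\neq 0$ in general for $\eps\neq 0$. The correct replacement is exactly what you wrote: $B_\eps(\fk,0)=Q_\eps(0)(\fk,-\cF_\eps(0))$, so $\|B_\eps(\fk,0)\|^W_\eps\leq C_Q(\|\fk\|^\fK+\|\cF_\eps(0)\|^\Omega_\eps)$, and the second term is controlled by [Near-Solution] only on a neighborhood $\Delta_\delta$ of $0_\Delta$, which is precisely why \eqref{eq:B-small} needs the extra factor $\Delta_\delta$ that has no classical analogue. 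You correctly flag and resolve this, whereas the paper leaves it implicit.
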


\begin{lemma} \label{lem:contraction}
Given an adiabatic Fredholm family $\bigl( (\cF_\eps:\cV_\Gamma\to \Omega )_{\eps\in\Delta} , \ldots  \bigr)$ as in Definitions~\ref{def:fredholm}, 
let $\overline Q_\eps(0):   \fK \times \overline\Omega_\eps \to \fC \times \overline\Gamma_\eps$ be the isomorphisms from Lemma~\ref{lem:inverses} for $\eps\in\Delta_Q\subset\Delta$.
Then the union of completed zero sets $\bigcup_{\eps\in\Delta_Q} \{\eps\}\times \overline\cF_\eps^{-1}(0) \simeq \overline\cG_Q^{-1}(0_{\Delta_Q})$ is naturally identified with the preimage of the "zero section" $0_{\Delta_Q} := \bigcup_{\eps\in\Delta_Q} \{\eps\}\times \{(0,0,0)\} \subset \bigcup_{\eps\in\Delta_Q} \{\eps\}\times \fC\times\fC\times \cV_{\overline\Gamma,\eps}$ under the fibrewise continuous extension of $\cG_Q$, 
\begin{align*}
\overline\cG_Q \,:\;
\textstyle \bigcup_{\eps\in\Delta_Q} \{\eps\} \times \fK \times \fC \times \cV_{\overline\Gamma,\eps} &\rightarrow \textstyle \bigcup_{\eps\in\Delta_Q} \{\eps\} \times \fC \times \fC \times \overline\Gamma_\eps, \\
(\eps, \fk, \fc, \gamma) &\mapsto \bigl( \eps , \fc , \overline Q_\eps(0) ( \overline\pi_\fK(\gamma)-\fk, \overline\cF_\eps(\gamma) - \fc ) \bigr) . 
\end{align*}
Moreover, each of the maps $\overline\cG_Q(\eps,\fk,\cdot,\cdot)$ is -- up to finite dimensional factors and a contraction -- close to the identity map on the interior $\cV_{\overline W,\eps}:={\rm int}(\overline{\cV_W}) =\fC\times\cV_{\overline\Gamma,\eps}$ of the closure of $\cV_W=\fC\times\cV_\Gamma$ inside the completed space $\overline W_\eps:= \overline W^{\|\cdot\|^W_\eps}=\fC\times\overline\Gamma_\eps$ in the following sense: We can write for $\eps\in\Delta_Q$ 
$$
\overline\cG_Q (\eps,\cdot,\cdot,\cdot)  \,:\;  \fK \times \cV_{\overline W,\eps} \;\rightarrow\;  \fC \times \overline W_\eps, 
\qquad
(\fk, w) \;\mapsto\; \bigl( \overline A_\eps(\fk,w), w - \overline B_\eps(\fk,w) \bigr) , 
$$
where $\overline A_\eps : \fK \times\cV_{\overline W,\eps} \to \fC$ maps to the finite dimensional space $\fC$ by projection $\overline A_\eps(\fk,\fc,\gamma)=\fc$,
and $\overline B_\eps : \fK \times \cV_{\overline W,\eps} \to  \overline W_\eps$ given by $\overline B_\eps(\fk,\fc,\gamma) = \overline Q_\eps(0) \bigl( \fk   , \rD\overline\cF_\eps(0) \gamma -  \overline\cF_\eps(\gamma)   \bigr)$ is a family of contractions near $0\in \overline W_\eps$ parametrized by $(\eps,\fk)\in\Delta_Q\times\fK$. 
More precisely,  if we equip $\overline W_\eps$ with the norm $\| w=(\fc,\gamma) \|^W_\eps:= \|\fc\|^\fC + \|\gamma\|^\Gamma_\eps = \|\fc\|^\Omega_0 + \|\gamma\|^\Gamma_\eps$, then given any $\theta\in (0,1)$ there exists $\delta_\theta>0$ such that for all $\eps\in\Delta_Q, \fk\in\fK, w,w'\in\cV_W$ we have
\begin{align}\label{eq:B-contraction}
  \|w\|^W_\eps,  \|w'\|^W_\eps \leq \delta_\theta  \quad \Longrightarrow \quad
\| \overline B_\eps (\fk, w' ) - \overline B_\eps(\fk, w) \|^W_\eps \leq  \theta \| w' - w \|^W_\eps . 
\end{align}
In addition, given any $\delta>0$ we can find open neighbourhoods $\cV_{\fK,\delta}\subset\fK$ of $0$ and $\Delta_\delta\subset\Delta_Q$ of $0$ with
\begin{align}\label{eq:B-small}
(\eps,\fk)\in\Delta_\delta\times\cV_{\fK,\delta}  \quad \Longrightarrow \quad
\| \overline B_\eps (\fk, 0 )  \|^W_\eps <  \delta . 
\end{align}

If the adiabatic Fredholm family is fibrewise $\cC^\ell$-regular as in Definition~\ref{def:fibrewise C-l} for some $\ell\geq 1$, then each contraction $\overline B_\eps : \fK \times \cV_{\overline W,\eps} \to  \overline W_\eps$ is $\cC^\ell$ as well.

Finally, if the adiabatic Fredholm family is regularizing, then the maps $\overline\cG_Q(\eps,\cdot)$, $\overline A_\eps$ and $\overline B_\eps$ are the continuous extensions of the maps $\cG_Q(\eps,\cdot)$, $A_\eps$ and $B_\eps$ in Lemma~\ref{lem:contraction-regularizing} w.r.t.\ the norms $\|\cdot\|^W_\eps$ on $W$, $\|\cdot\|^\fK=\|\cdot\|^\Gamma_0$ on $\fK$, and $\|\cdot\|^\fC=\|\cdot\|^\Omega_0$ on $\fC$. 
\end{lemma}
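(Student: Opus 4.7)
The plan is to reduce this to the classical case (Lemma~\ref{lem:contraction classical}) applied fibrewise, after recasting the adiabatic data into its extended form. First I would invoke Lemma~\ref{lem:equivalent-fredholm} to identify the completed zero set $\overline\cF_\eps^{-1}(0)$ with the zero set of the fibrewise map $\overline\cG(\eps,\cdot) : \fK\times\fC\times\cV_{\overline\Gamma,\eps} \to \fC\times\fK\times\overline\Omega_\eps$ whose last two components are $\overline g_{\eps,\fk}(\fc,\gamma)=(\overline\pi_\fK(\gamma)-\fk, \overline\cF_\eps(\gamma)-\fc)$. Composing with $\Id_\fC\times \overline Q_\eps(0)$, which is an isomorphism by Lemma~\ref{lem:inverses}, does not affect zero sets and yields $\overline\cG_Q$. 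This establishes the claimed identification $\bigcup_{\eps\in\Delta_Q}\{\eps\}\times\overline\cF_\eps^{-1}(0)\simeq \overline\cG_Q^{-1}(0_{\Delta_Q})$.

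Next, I would identify $\overline A_\eps(\fk,\fc,\gamma)=\fc$ as the projection to $\fC$ by direct inspection of $\overline\cG_Q$, and compute $\overline B_\eps(\fk,\fc,\gamma)=(\fc,\gamma) - \overline Q_\eps(0)(\overline\pi_\fK(\gamma)-\fk,\overline\cF_\eps(\gamma)-\fc)$. Using that $\overline Q_\eps(0)$ is the inverse of $\overline P_\eps(0)(\fc,\gamma) = (\overline\pi_\fK(\gamma), \rD\overline\cF_\eps(0)\gamma - \fc)$, I rewrite
\[
\overline B_\eps(\fk,\fc,\gamma)
= \overline Q_\eps(0)\bigl( \overline P_\eps(0)(\fc,\gamma) - (\overline\pi_\fK(\gamma)-\fk,\overline\cF_\eps(\gamma)-\fc) \bigr)
= \overline Q_\eps(0)\bigl( \fk , \rD\overline\cF_\eps(0)\gamma - \overline\cF_\eps(\gamma)\bigr),
\]
exactly mirroring the computation in Lemma~\ref{lem:contraction classical}.

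For the contraction estimate \eqref{eq:B-contraction}, given $w=(\fc,\gamma),w'=(\fc',\gamma')\in\cV_{\overline W,\eps}$ with $\|w\|^W_\eps,\|w'\|^W_\eps\leq\delta$, I would apply the uniform bound $\|\overline Q_\eps(0)\|\leq C_Q$ from \eqref{eq:Q-estimate} and then the fundamental theorem of calculus (using convexity of $\cV_\Gamma$, inherited by $\cV_{\overline\Gamma,\eps}$) to get
\[
\bigl\|\overline B_\eps(\fk,w')-\overline B_\eps(\fk,w)\bigr\|^W_\eps
\leq C_Q \textstyle\int_0^1 \bigl\|\rD\overline\cF_\eps(\gamma+\lambda(\gamma'-\gamma))-\rD\overline\cF_\eps(0)\bigr\|^{\cL(\overline\Gamma_\eps,\overline\Omega_\eps)} \rd\lambda \; \|\gamma'-\gamma\|^\Gamma_\eps.
\]
The crucial ingredient here is the \textbf{[Quadratic-ish Estimate]} in the extended form from Lemma~\ref{lem:fredholm}, which bounds the integrand by $c(\lambda\|\gamma\|^\Gamma_\eps+(1-\lambda)\|\gamma'\|^\Gamma_\eps)\leq c(\delta)$ with $c$ monotone and $c(0)=0$. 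Choosing $\delta_\theta>0$ so that $C_Q c(\delta_\theta)\leq \theta$ yields the claim. For the smallness \eqref{eq:B-small} I evaluate $\overline B_\eps(\fk,0)=\overline Q_\eps(0)(\fk,-\overline\cF_\eps(0))$ and bound $\|\overline B_\eps(\fk,0)\|^W_\eps\leq C_Q(\|\fk\|^\fK+\|\overline\cF_\eps(0)\|^\Omega_\eps)$; the \textbf{[Near-Solution]} property $\|\overline\cF_\eps(0)\|^\Omega_\eps\to 0$ as $\eps\to 0$ lets me choose $\Delta_\delta\subset\Delta_Q$ and $\cV_{\fK,\delta}:=\{\fk\,|\,\|\fk\|^\fK<\tfrac{\delta}{2C_Q}\}$ to make this $<\delta$.

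Finally, fibrewise $\cC^\ell$-regularity of $\overline B_\eps$ is inherited from $\overline\cF_\eps$ since $\overline B_\eps$ is a composition of the bounded linear $\overline Q_\eps(0)$ and the $\cC^\ell$-map $(\fk,\fc,\gamma)\mapsto(\fk,\rD\overline\cF_\eps(0)\gamma-\overline\cF_\eps(\gamma))$, using Lemma~\ref{lem:extension} to ensure $\cC^\ell$-regularity on $\cV_{\overline\Gamma,\eps}$. For the regularizing case, Lemma~\ref{lem:inverses} gives that $\overline Q_\eps(0)$ restricts to the bijection $Q_\eps(0):\fK\times\Omega\to\fC\times\Gamma$, so $B_\eps$ takes values in $W=\fC\times\Gamma$ and $\overline B_\eps, \overline\cG_Q$ agree on $\fC\times\cV_\Gamma$ with their non-extended versions; their $\|\cdot\|^W_\eps$-continuity then identifies them as the claimed continuous extensions. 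I expect no serious obstacle beyond bookkeeping: the hardest judgement call is verifying that the [Quadratic-ish Estimate] from Definition~\ref{def:fredholm} is precisely the ``uniform in $\eps$'' substitute for the $\cC^1$-continuity argument that drove Lemma~\ref{lem:contraction classical}, which is exactly why that property was built into the definition of an adiabatic Fredholm family.
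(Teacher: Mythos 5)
Your proposal is correct and follows essentially the same route as the paper: identify the completed zero set via Lemma~\ref{lem:equivalent-fredholm}, compose with $\Id_\fC\times\overline Q_\eps(0)$, recognize $\overline B_\eps(\fk,\fc,\gamma)=\overline Q_\eps(0)(\fk,\rD\overline\cF_\eps(0)\gamma-\overline\cF_\eps(\gamma))$ via $\overline Q_\eps(0)\overline P_\eps(0)=\Id$, and derive the contraction from the $\eps$-independent \textbf{[Quadratic-ish Estimate]} and the uniform bound \eqref{eq:Q-estimate}. One small point in your favor: for the smallness estimate \eqref{eq:B-small} you correctly make explicit that $\overline\cF_\eps(0)\neq 0$ for $\eps\neq 0$, so the \textbf{[Near-Solution]} property is what supplies the shrinking of $\Delta_\delta$; the paper's phrasing ``apply Lemma~\ref{lem:contraction classical} to each $\overline\cF_\eps$'' glosses over this, since that lemma assumes $\cF(0)=0$, and your version is the more honest account of the step.
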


\begin{proof}[Proof of Lemmas~\ref{lem:contraction-regularizing} and \ref{lem:contraction}]
From Lemma~\ref{lem:equivalent-fredholm} we have $\bigcup_{\eps\in\Delta} \{\eps\}\times \cF_\eps^{-1}(0)\simeq\cG^{-1}(0)$ for $\cG:\Delta \times \fK \times \fC \times \cV_\Gamma \rightarrow \fC \times \fK \times \Omega$ given by $\cG(\eps, \fk, \fc, \gamma) = ( \fc , \pi_\fK(\gamma)-\fk, \cF_\eps(\gamma) - \fc )= (\fc, \overline g_{\eps,\fk}(\fc,\gamma))$. 
The map considered here is obtained by composing $\cG$ with ${\rm Id}_\fC \times Q_\eps(0):\fC\times  \fK \times \Omega \to \fC\times  \fC \times \Gamma$. Here $Q_\eps(0):\fK\times\Omega\to\fC\times\Gamma$ is a bijection by Lemma~\ref{lem:inverses}, so this composition does not affect the zero set. 

When we drop the regularizing condition, then Lemma~\ref{lem:inverses} only asserts bijectivity of the linearized operators $\rD \overline g_{\eps,\fk} (\fc_0,\gamma_0)$ after taking $\epsilon$-dependent completions, that is we need to work with the inverse operators $\overline Q_\eps(0):\fK\times\overline\Omega_\eps\to\fC\times\overline\Gamma_\eps$. 
These can be composed with the "section" $\overline\cG: \bigcup_{\eps\in\Delta} \{\eps\}\times \fK \times \fC \times \cV_{\overline\Gamma,\eps} \rightarrow \bigcup_{\eps\in\Delta} \{\eps\}\times \fC \times \fK \times \overline\Omega_\eps$  from Lemma~\ref{lem:equivalent-fredholm} to obtain the claimed identification
$$
\bigcup_{\eps\in\Delta_Q} \{\eps\}\times \overline\cF_\eps^{-1}(0) \simeq \overline\cG^{-1}(0_{\Delta_Q})  \simeq  \overline\cG_Q^{-1}(0_{\Delta_Q}) . 
$$
Here $\overline\cG_Q=({\rm Id}_\fC \times \overline Q_\eps(0))\circ\overline\cG : \bigcup_{\eps\in\Delta} \{\eps\}\times \fK \times \fC \times\cV_{\overline\Gamma,\eps} \rightarrow \bigcup_{\eps\in\Delta} \{\eps\}\times \fC \times \fC \times \overline\Gamma_\eps$ is given by 
$$
\overline\cG_Q(\eps, \fk, \fc, \gamma)= \bigl( \eps, \fc , \overline Q_\eps(0)( \overline\pi_\fK(\gamma)-\fk, \overline\cF_\eps(\gamma) - \fc ) \bigr) = \bigl( \eps, \overline A_\eps( \fk, \fc, \gamma), (\fc,\gamma) - \overline B_\eps( \fk, \fc, \gamma)  \bigr)  , 
$$
where 
$\overline B_\eps(\fk,\fc,\gamma) = \overline Q_\eps(0) \bigl( \fk   , \rD\overline\cF_\eps(0) \gamma -  \overline\cF_\eps(\gamma)   \bigr)$ follows as in Lemma~\ref{lem:contraction classical}. 
When the family is regularizing, then Lemma~\ref{lem:inverses} ensures that the inverse $ \overline Q_\eps(0)$ restricts to $Q_\eps(0)=\overline Q_\eps(0)|_{\fK\times\Omega}:   \fK \times \Omega \to \fC \times \Gamma$, and thus $\overline\cG_Q(\eps,\cdot)$, $\overline A_\eps$, resp.\ $\overline B_\eps$ restrict to the claimed expressions for $\cG_Q(\eps,\cdot)$, $A_\eps$ and $B_\eps$. 

Next, we will establish continuity of $\overline A_\eps$ and $\overline B_\eps$, and hence of $\overline\cG_Q(\eps,\cdot)=(\overline A_\eps, {\rm Id}_{\overline W_\eps} - \overline B_\eps)$, which in particular implies that these are in fact the continuous extensions of $A_\eps$, $B_\eps$, and  $\cG_Q(\eps,\cdot)$. 
Continuity of the linear map $\overline A_\eps(\fk,\fc,\gamma)=\fc$ is evident since $\|\fc\|^\fC \leq \|(\fc,\gamma)\|^W_\eps=\|\fc\|^\fC+\|\gamma\|^\Gamma_0$.  
To establish continuity of $\overline B_\eps$ for fixed $\eps\in\Delta_Q$ we use boundedness of the operators 
$\overline Q_\eps(0)$ in Lemma~\ref{lem:inverses} and the [Fibrewise $\mathcal{C}^1$ Regularity] of $\overline\cF_\eps$ to deduce that for $(\fk_i,\fc_i,\gamma_i)\to(\fk,\fc,\gamma)\in \fK\times \fC\times \cV_{\overline\Gamma,\eps}$ we have
\begin{align*}
\bigl\| \overline B_\eps(\fk_i,\fc_i,\gamma_i) - \overline B_\eps(\fk,\fc,\gamma) \bigr\|^W_\eps 
&= \bigl\|  \overline Q_\eps(0) \bigl( \fk_i-\fk   , \rD\overline \cF_\eps(0)( \gamma_i-\gamma) - \overline \cF_\eps(\gamma)+\overline \cF_\eps(\gamma_i)   \bigr)\bigr\|^W_\eps  \\
&\leq  C_Q \bigl( \| \fk_i-\fk \|^\fK +  \bigl\| \rD\overline \cF_\eps(0)( \gamma_i-\gamma)  \bigr\|^\Omega_\eps +     \bigl\|  \overline \cF_\eps(\gamma)- \overline \cF_\eps(\gamma_i)  \bigr\|^\Omega_\eps   \bigr)  \;\to\; 0. 
\end{align*}
It remains to check the estimates for $\overline B_\eps$ which then imply the same estimates for its restriction $B_\eps$ in the regularizing case. 
This is achieved by applying Lemma~\ref{lem:contraction classical} to the $\cC^1$ Fredholm map $\overline\cF_\eps:\cV_{\Gamma,\eps}\to\ol\Omega_\eps$ for each $\eps\in\Delta_Q$ and noting that the 
[Quadratic-ish Estimate] \eqref{eq:quadratic-ish classical} is provided by Lemma~\ref{lem:fredholm} with the $\eps$-independent function $c:[0,\infty)\to [0,\infty)$ that is a part of the data of the adiabatic Fredholm family. 

Finally, the identification of the completion $\overline W_\eps:= \overline W^{\|\cdot\|^W_\eps}=\fC\times\overline\Gamma_\eps$ results from the fact that $\fC$ is finite dimensional, so complete w.r.t.\ any norm, and $\overline\Gamma_\eps=\overline \Gamma^{\|\cdot\|^\Gamma_\eps}$ is the completion w.r.t.\ the norm on the infinite dimensional factor of $W=\fC\times\Gamma$. 
Similarly, the closure of $\cV_W=\fC\times\cV_\Gamma$ inside the completed space $\overline W_\eps=\fC\times\overline\Gamma_\eps$ is the product of closures $\overline{\cV_W}=\fC\times\overline{\cV_\Gamma}$, and then the interior is the product of interiors  $\cV_{\overline W,\eps}={\rm int}( \overline{\cV_W}) = {\rm int}(\fC\times\overline{\cV_\Gamma})= \fC\times\cV_{\overline\Gamma,\eps}$.

If the adiabatic Fredholm family is fibrewise $\cC^\ell$-regular, that is $\cF_\eps : (\cV_\Gamma,{\|\cdot\|^\Gamma_\eps}) \to (\Omega,\|\cdot\|^\Omega_\eps)$ is uniformly $\cC^\ell$ for each $\eps\in\Delta$, then the maps $\overline B_\eps: (\fk,\gamma)\mapsto \overline Q_\eps(0) \bigl( \fk   , \rD\overline\cF_\eps(0) \gamma -  \overline\cF_\eps(\gamma) \bigr)$ are uniformly $\cC^\ell$ as well for each fixed $\eps\in\Delta_Q$. Indeed, $\overline Q_\eps(0)$ and $\rD\overline\cF_\eps(0)$ are linear operators and $\overline\cF_\eps$ inherits its regularity from $\cF_\eps$ by Lemma~\ref{lem:extension}.  
\end{proof}

\subsection{Solution Maps} \label{solution}
This section constructs solution maps $\sigma_\eps:\cV_\fK \to W=\fC\times \Gamma$ that solve the fixed point equations $w=B_\eps(\fk,w)$ from \S\ref{contraction}. 
We also show how various regularity assumptions on the adiabatic Fredholm family $\cF_\eps$ transfer to the contractions $B_\eps$ and then the solution maps $\sigma_\eps$. 
The latter is the technically hardest -- and longest -- part of the proof. 
It can be understood as analogous to the properties of solution germs in \cite[Thm.3.3.3]{hofer-wysocki-zehnder_polyfold}, which arguably are the technical core that allows polyfold regularization theory to construct transverse perturbations over spaces of maps modulo reparametrization. 
Thus this section finishes the argument that adiabatic Fredholm theory is compatible with polyfold regularization theory. 

Starting again in the classical Fredholm setting of Theorem~\ref{thm:charts classical}, we work here just with the second component of the map $\cG_Q$ that was constructed in Lemma~\ref{lem:contraction classical}.

\begin{lemma} \label{lem:solution classical}
Consider a Banach space $(\overline W,\|\cdot\|)$, an open subset $\cV_{\overline W}\subset\overline W$ 
containing $0=0_{\overline W}\in\cV_{\overline W}$, a set $\cV_K$, and a map of the form
$$
\cV_K \times \cV_{\overline W} \;\rightarrow \; \overline W,  \qquad
(k,w) \;\mapsto\;   w - B(k,w) ,
$$ 
where
$B : \cV_K \times \cV_{\overline W} \to \overline W$ is a contraction near $0\in \cV_{\overline W}$ parametrized by $k\in \cV_K$. 
More precisely, assume that there exists $\theta\in (0,1)$ and $\delta>0$ such that for all $k\in\cV_K, w,w'\in  \cV_{\overline W}$ we have
\begin{align}\label{eq:contraction} 
\|w\|, \|w'\|\leq \delta \quad \Longrightarrow \quad
\| B (k, w' ) - B(k, w) \| \leq  \theta \| w' - w \| , 
\end{align}
and moreover $\{ w\in \overline W \,|\, \|w \|^{\overline W}\leq\delta \}\subset \cV_{\overline W}$ as well as
\begin{align}\label{eq:small}
\forall k \in \cV_K \quad 
\| B(k,0) \| < (1-\theta)\delta . 
\end{align}
Then there is a unique solution map $\sigma: \cV_K \to \cV_{\overline W}$ that parametrizes the zero set of the map (i.e.\ the fixed points of $B$) near $\cV_K\times 0\subset \cV_K\times\cV_{\overline W}$, that is
$$
\bigl\{ (k,w)\in \cV_K \times  \cV_{\overline W} \,\big|\, w - B(k,w) = 0 ,  \| w\| < \delta\bigr\} 
= 
\bigl\{  (k , \sigma (k) )  \,\big|\, k \in \cV_K \bigr\} . 
$$
In case $\cV_{\overline W}=\overline W$ we can drop the assumption \eqref{eq:small} as long as \eqref{eq:contraction} holds for all $w,w'\in\overline W$. In that case the conclusion is 
$\bigl\{ (k,w)\in \cV_K \times  \overline W \,\big|\, w - B(k,w) = 0 \bigr\} 
= \bigl\{  (k , \sigma(k) )  \,\big|\, k \in \cV_K \bigr\}$. 

If, moreover, $\cV_K$ is equipped with a topology such that $B : \cV_K \times \cV_{\overline W} \to \overline W$ is continuous, then the solution map $\sigma: \cV_K \to \cV_{\overline W}$ is continuous.

Furthermore, if $\cV_K\subset K$ is an open subset of a normed vector space
so that $B : \cV_K \times \cV_{\overline W} \to \overline W$ is $\cC^\ell$, then the solution map $\sigma: \cV_K \to \cV_{\overline W}$ is $\cC^\ell$ with $\rD\sigma(k)=\bigl(\rm Id -  \rD_W B (k,\sigma(k)) \bigr)^{-1} \rD_K B (k,\sigma(k))$.
\end{lemma}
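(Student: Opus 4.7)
The plan is to prove the three assertions in sequence, corresponding to existence/uniqueness, continuity, and $\cC^\ell$ regularity of $\sigma$.

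For existence and uniqueness, I would apply the Banach fixed point theorem fibrewise. For each $k\in\cV_K$, I would first check that $B(k,\cdot)$ maps the closed ball $\overline B_\delta:=\{w\in\overline W\,|\,\|w\|\leq\delta\}\subset\cV_{\overline W}$ to itself: using \eqref{eq:contraction} and \eqref{eq:small},
\[
\|B(k,w)\|\leq \|B(k,w)-B(k,0)\|+\|B(k,0)\|\leq \theta\delta +(1-\theta)\delta =\delta
\qquad\forall \|w\|\leq\delta.
\]
Since $\overline B_\delta$ is closed in the Banach space $\overline W$, it is complete, and $B(k,\cdot)$ is a $\theta$-contraction on it, so Banach's fixed point theorem furnishes a unique fixed point $\sigma(k)\in\overline B_\delta$. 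Uniqueness within the open slab $\{\|w\|<\delta\}\subset\overline B_\delta$ is then immediate. The simplified case $\cV_{\overline W}=\overline W$ follows by applying Banach's theorem directly to the global contraction, without using \eqref{eq:small}.

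For continuity I would apply the parametric fixed-point trick. Writing $\sigma(k')-\sigma(k)=B(k',\sigma(k'))-B(k,\sigma(k))$, adding and subtracting $B(k',\sigma(k))$, and invoking \eqref{eq:contraction}, I would obtain
\[
\|\sigma(k')-\sigma(k)\|\leq \theta\|\sigma(k')-\sigma(k)\|+\|B(k',\sigma(k))-B(k,\sigma(k))\|,
\]
hence $\|\sigma(k')-\sigma(k)\|\leq (1-\theta)^{-1}\|B(k',\sigma(k))-B(k,\sigma(k))\|$, and continuity of $\sigma$ now follows from continuity of $B$ in its first argument.

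For $\cC^\ell$ regularity I would invoke the classical $\cC^\ell$ implicit function theorem applied to $F:\cV_K\times \cV_{\overline W}\to \overline W$, $F(k,w):=w-B(k,w)$. The key input is that at every point $(k,\sigma(k))$ the operator $\rD_W B(k,\sigma(k))\in\cL(\overline W,\overline W)$ has norm at most $\theta$: differentiating $\|B(k,w+tv)-B(k,w)\|\leq\theta\|tv\|$ at $t=0$ (valid whenever $\|w\|<\delta$, so that $\|w+tv\|\leq\delta$ for small $t$) yields $\|\rD_W B(k,w)v\|\leq\theta\|v\|$. Thus $\rD_W F(k,\sigma(k))=\mathrm{Id}-\rD_W B(k,\sigma(k))$ is invertible via a convergent Neumann series of norm at most $(1-\theta)^{-1}$, and the $\cC^\ell$ implicit function theorem provides a $\cC^\ell$ local solution which must agree with $\sigma$ by the uniqueness already established. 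Implicit differentiation of $\sigma(k)=B(k,\sigma(k))$ then gives the stated formula $\rD\sigma(k)=\bigl(\mathrm{Id}-\rD_W B(k,\sigma(k))\bigr)^{-1}\rD_K B(k,\sigma(k))$.

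The only genuinely delicate points are (a) verifying that the solution $\sigma(k)$ lands in the open set where $B$ can be differentiated, which is ensured by the hypothesis $\{\|w\|\leq\delta\}\subset\cV_{\overline W}$ together with the fibrewise bound $\|\sigma(k)\|\leq\delta$ produced above, and (b) phrasing the bound $\|\rD_W B\|\leq\theta$ so that it holds not just at the solution but on a full open neighbourhood of $(k,\sigma(k))$ — needed in order to apply the implicit function theorem as stated. Both are handled by working on the open ball $\{\|w\|<\delta\}$ throughout.
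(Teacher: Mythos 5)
Your existence/uniqueness argument via Banach's fixed point theorem on the closed ball and your continuity argument via the parametric trick both match the paper's proof. However, there is a small but genuine gap in the set equality: Banach's theorem gives you $\|\sigma(k)\|\leq\delta$, but the stated equality
$\{(k,w) \,|\, w-B(k,w)=0,\; \|w\|<\delta\} = \{(k,\sigma(k))\,|\,k\in\cV_K\}$
requires the \emph{strict} bound $\|\sigma(k)\|<\delta$ so that the graph of $\sigma$ is actually contained in the left-hand set. This is where the strict inequality in \eqref{eq:small} is used in the paper's proof: one estimates
$\|\sigma(k)\| = \|B(k,\sigma(k))\| \leq \theta\|\sigma(k)\| + \|B(k,0)\| < \theta\delta + (1-\theta)\delta = \delta$.
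Without this observation, your argument only proves that every solution with $\|w\|<\delta$ equals $\sigma(k)$, not that $\sigma(k)$ itself lies in the open ball.

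For the $\cC^\ell$ part you take a genuinely different route. You invoke the Banach-space $\cC^\ell$ implicit function theorem applied to $F(k,w)=w-B(k,w)$, using the fact that $\|\rD_W B\|\leq\theta<1$ makes $\rD_W F=\mathrm{Id}-\rD_W B$ invertible via a Neumann series, and patching the local IFT solutions against $\sigma$ by uniqueness. That is a correct argument, and it is shorter. The paper instead proves $\cC^1$ by hand (estimating $\|\sigma(k_0+k)-\sigma(k_0)-\Phi(k_0)k\|$ directly with the explicit candidate $\Phi(k_0)=(\mathrm{Id}-\rD_W B(k_0,\sigma(k_0)))^{-1}\rD_K B(k_0,\sigma(k_0))$) and then inducts on $\ell$ by observing that $\widetilde\sigma:=\rD\sigma$ is itself the solution map of the derived contraction $\widetilde B(k,\widetilde w)=\rD_K B(k,\sigma(k))+\rD_W B(k,\sigma(k))\circ\widetilde w$ on $\cL(K,\overline W)$, which satisfies \eqref{eq:contraction} with the same $\theta$ on the whole of $\cL(K,\overline W)$. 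The two approaches prove the same thing here, but the paper's self-contained version is deliberately structured to carry over, with explicit uniform constants and the derived-contraction structure, to the $\eps$-parametrized adiabatic setting of Theorem~\ref{thm:family solution} -- the off-the-shelf IFT would not give that control. Also, one small imprecision: the standard IFT needs invertibility of $\rD_W F$ only at the base point, not on an open neighbourhood, so your point (b) is not actually a hypothesis you need to supply, though the neighbourhood bound costs nothing.
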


\begin{proof}
To apply the Banach Fixed Point Theorem to one of the contractions $B(k,\cdot)$, we need to find a closed subset of $\cV_{\overline W}$ that is preserved by this map. This role will be played by the closed ball $\overline\cV_{\overline W,\delta}:=\{ w\in \overline W \,\|\, \|w \|^{\overline W}\leq\delta \}\subset \cV_{\overline W}$ of radius $\delta$ centered at $0$, which by assumption is contained in the domain where $B(k,\cdot)$ is defined. 
Moreover, this ball is invariant under $B(k,\cdot)$ if we further restrict ourselves to $k\in \cV_K$. Indeed, we have $B(k,\overline\cV_{\overline W,\delta})\subset \overline\cV_{\overline W,\delta}$ since for all $w\in\overline\cV_{\overline W,\delta}$
$$
\| B(k,w) \| 
\leq 
\| B(k,w)-B(k,0) \| +   \| B(k,0) \|
\leq 
\theta \| w - 0 \|  +  (1-\theta)\delta
\leq 
\theta \delta +  (1-\theta)\delta
=\delta . 
$$
Now the Banach Fixed Point Theorem applies to the contraction $B(k,\cdot): \overline\cV_{\overline W,\delta} \to \overline\cV_{\overline W,\delta}$ for each $k\in \cV_K$ to guarantee that each such map has a unique fixed point $w_\fk \in \overline\cV_{\overline W,\delta}$. This defines a map $\sigma: \cV_K \to \overline\cV_{\overline W,\delta}, k\mapsto w_k$, which by definition parametrizes the solutions: For $w\in\overline\cV_{\overline W,\delta}$ 
we have 
$$
w - B(k,w) = 0 , \|w\|\leq\delta
\quad \Leftrightarrow \quad
w =B(k,w) , w\in\overline\cV_{\overline W,\delta}
\quad \Leftrightarrow \quad
w=\sigma(k).
$$ 
Here we can replace the condition $\|w\|\leq\delta$ by  $\|w\|<\delta$ 
because the norm of the actual solutions is strictly less than $\delta$ due to the strict inequality in \eqref{eq:small}, 
\begin{align*}
\| \sigma(k) \| 
&\;=\;
\| B(k,\sigma(k)) \| 
\;\leq\;
\| B(k,\sigma(k))-B(k,0) \| +   \| B(k,0) \| \\
&\;\leq\; 
\theta \| \sigma(k) - 0 \|  +   \| B(k,0) \| 
\;\leq\; 
\theta \delta  +   \| B(k,0) \| 
\;<\; 
\theta \delta +  (1-\theta)\delta
\;=\; \delta . 
\end{align*}
In case $\cV_{\overline W}=\overline W$ the Banach Fixed Point Theorem applies for all $k\in\cV_K$ with $\overline\cV_{\overline W,\delta}$ replaced by $\overline W$ to prove the same conclusion with no need for assumption \eqref{eq:small} or a bound $\|w\|\leq\delta$.

If, moreover, $B : \cV_K \times \cV_{\overline W} \to \overline W$ is continuous, then we use the defining equation $\sigma(k) = B(k,\sigma(k))$ of the solution map and the contraction property \eqref{eq:contraction} to estimate for $k_0,k_1\in\cV_K$
\begin{align*}
\| \sigma(k_1) - \sigma(k_0) \| 
& = \| B(k_1,\sigma(k_1)) - B(k_0,\sigma(k_0)) \|   \\
& \leq \| B(k_1,\sigma(k_1)) - B(k_1,\sigma(k_0)) \|  +  \| B(k_1,\sigma(k_0)) - B(k_0,\sigma(k_0)) \|  \\
& \leq  \theta \| \sigma(k_1) - \sigma(k_0) \|  +  \| B(k_1,\sigma(k_0)) - B(k_0,\sigma(k_0)) \|  
\end{align*}
\begin{align} \label{eq:sigma-est}
\Rightarrow\qquad 
\| \sigma(k_1) - \sigma(k_0) \| 
& \leq  \tfrac 1{1-\theta}  \| B(k_1,\sigma(k_0)) - B(k_0,\sigma(k_0)) \| .
\end{align}
So continuity of $B$ at $(k_0,\sigma(k_0))$ implies continuity of $\sigma$ at $k_0$.  

If, furthermore, $B : \cV_K \times \cV_{\overline W} \to \overline W$ is $\cC^1$, then we will show that the solution map $\sigma: \cV_K \to \cV_{\overline W}$ is $\cC^1$ as well. Towards that, we write the differential $\rD B(k_0,w_0): K \times \overline W \to \overline W$ at $(k_0,w_0)$ as the sum $\rD B(k_0,w_0) (k,w) = \rD_K B(k_0,w_0) k  + \rD_W B(k_0,w_0) w$ of its partial differentials $\rD_K B(k_0,w_0): K \to \overline W$ and $\rD_W B(k_0,w_0): \overline W \to \overline W$. 
Each of these partial differentials is a bounded linear operator and varies continuously with the base point $(k_0,w_0)\in\cV_K\times\cV_{\overline W}$. Moreover, $\rD_W B(k_0,w_0)\in\cL(\overline W,\overline W)$ inherits a linear contraction property from \eqref{eq:contraction}: For all $w\in\overline W$ we have 
$$
\| \rD_W B (k_0, w_0 ) w \| = \lim_{h\to 0} h^{-1} \| B(k_0, w_0+hw ) -  B(k_0, w_0 )  \| 
 \leq  \lim_{h\to 0} h^{-1} \theta \| h w \| = \theta \|w\| .
$$
This can also be stated as $\| \rD_W B (k_0, w_0 ) \|^{\cL(\overline W,\overline W)} \leq \theta$, and since $\theta<1$ this implies that the operator ${\rm Id}_{\overline W} -  \rD_W B (k_0, w_0 )\in \cL(\overline W,\overline W)$ is invertible with $\| ({\rm Id}_{\overline W} -  \rD_W B (k_0, w_0 ))^{-1} \|^{\cL(\overline W,\overline W)} \leq  \tfrac 1{1-\theta}$. Indeed, this holds for any $T\in \cL(\overline W,\overline W)$ with $\|T\|<1$, since $({\rm Id}_{\overline W} - T)^{-1}=\sum_{n=0}^\infty T^n$ converges with norm bounded by $\sum_{n=0}^\infty \|T\|^n=\tfrac 1{1-\|T\|}$. 
Recall moreover that inverting these operators is a continuous map as in \eqref{eq:inverse est}.
Thus we have established that the inverse operators $({\rm Id}_{\overline W} -  \rD_W B (k_0, w_0 ))^{-1}$ exist and vary continuously with $(k_0,w_0)\in\cV_K\times\cV_{\overline W}$. 
This is useful for proving differentiability of the solution map since symbolic differentiation of the defining equation $\sigma(k) = B(k,\sigma(k))$ yields $\rD\sigma(k_0) k  = \rD_K B (k_0,\sigma(k_0)) k  + \rD_W B (k_0,\sigma(k_0)) \rD\sigma(k_0) k$, and hence $\bigl(\rm Id -  \rD_W B (k_0,\sigma(k_0)) \bigr) \rD\sigma(k_0) k  = \rD_K B (k_0,\sigma(k_0)) k$. While this computation isn't valid until differentiability of $\sigma$ is established, we use it to recognize $\Phi(k_0):=\bigl(\rm Id -  \rD_W B (k_0,\sigma(k_0)) \bigr)^{-1} \rD_K B (k_0,\sigma(k_0)) \in \cL(K,\overline W)$ as the well defined candidate for the differential of $\sigma$ at $(k_0,w_0)\in\cV_K\times\cV_{\overline W}$.
To establish that it is indeed the differential, we denote $\Delta\sigma(k_0,k):= \sigma(k_0+k) - \sigma(k_0)$ and aim to show that its best linear approximation for small $\|k\|^K$ is $\Phi(k_0) k$. 
For that purpose we first note that continuity of $\rD B$ at $(k_0,w_0:=\sigma(k_0))$ means that, given any $\varepsilon>0$ there is $\delta_\varepsilon>0$ such that 
\begin{equation}\label{eq:DB continuity}
\|(k_1, w_1) - (k_0,\sigma(k_0)) \|^{K\times\overline W}<\delta_\varepsilon \quad\Rightarrow\quad
 \bigl\|  \rD B(k_1,w_1) - \rD B (k_0,\sigma(k_0))   \bigr\|^{\cL(K\times\overline W, \overline W)} < \varepsilon . 
\end{equation}
We can use this fact for $(k_1,w_1)=(k_0+k,\sigma(k_0))$ to strengthen the continuity estimate \eqref{eq:sigma-est} for $\sigma$ by deducing that for $\|k\|^K < \delta_\varepsilon$ we have
\begin{align*}
\| \Delta\sigma(k_0,k) \| \;=\; \| \sigma(k_0+k) - \sigma(k) \| 
&\leq  \tfrac 1{1-\theta}  \| B(k_0+k,\sigma(k_0)) - B(k,\sigma(k_0)) \|  \\
&\leq  \tfrac 1{1-\theta} \bigl\| \textstyle \int_0^1 \rD_K B(k_0+\lambda k,\sigma(k_0)) \, k \, \rd\lambda  \bigr\|  \\
&\leq  \tfrac 1{1-\theta}  \bigl( \| \rD_K B(k_0,\sigma(k_0))\|^{\cL(K,\overline W)} + \varepsilon \bigr)  \| k\|^K . 
\end{align*}
Next, we use the defining equation $\sigma(k) = B(k,\sigma(k))$ again to rewrite 
\begin{align*}
& \bigl(\rm Id -  \rD_W B (k_0,\sigma(k_0)) \bigr)\bigl( \Delta\sigma(k_0,k) - \Phi(k_0) k \bigr)  \\
&=  B(k_0+k,\sigma(k_0+k)) - B(k_0,\sigma(k_0))  - \rD_W B (k_0,\sigma(k_0))\bigl( \sigma(k_0+k) - \sigma(k_0) \bigr) -  \rD_K B (k_0,\sigma(k_0)) k
\\
&= \textstyle \int_0^1 \rD B(k_0+\lambda k,\sigma(k_0) + \lambda \Delta\sigma(k_0,k))  \bigl( k , \Delta\sigma(k_0,k) \bigr) \rd\lambda  - \rD B (k_0,\sigma(k_0)) \bigl( k , \Delta\sigma(k_0,k) \bigr) \\
&= \textstyle \int_0^1 \bigl( \rD B(k_0+\lambda k,\sigma(k_0) + \lambda \Delta\sigma(k_0,k)) - \rD B (k_0,\sigma(k_0))\bigr) \bigl( k , \Delta\sigma(k_0,k) \bigr)   \rd\lambda  . 
\end{align*}
Thus for any $k_0\in\cV_K$ and sufficiently small $k\in K$ -- guaranteeing that $\{k_0+\lambda k\,|\, 0\leq\lambda\leq 1\} \subset\cV_K$ -- we can estimate 
\begin{align*}
& \bigl\| \sigma(k_0+k) - \sigma(k_0) - \bigl(\rm Id -  \rD_W B (k_0,\sigma(k_0)) \bigr)^{-1} \rD_K B (k_0,\sigma(k_0)) k \bigr\|  \\
&\leq \| \bigl(\rm Id -  \rD_W B (k_0,\sigma(k_0)) \bigr)^{-1} \|^{\cL(\overline W,\overline W)}
\bigl\|   \bigl(\rm Id -  \rD_W B (k_0,\sigma(k_0)) \bigr)\bigl( \Delta\sigma(k_0,k) - \Phi(k_0) k \bigr) \bigr\| 
\\
&\leq \tfrac 1{1-\theta}   \textstyle \int_0^1  \bigl\|  \rD B(k_0+\lambda k,\sigma(k_0) + \lambda \Delta\sigma(k_0,k)) - \rD B (k_0,\sigma(k_0))   \bigr\|^{\cL(K\times\overline W,\overline W)}  \rd\lambda   \bigl\|  \bigl( k , \Delta\sigma(k_0,k) \bigr)   \bigr\|^{K\times\overline W}. 
\end{align*}
To proceed, we wish to use \eqref{eq:DB continuity} at $(k_1,w_1)=(k_\lambda,w_\lambda):=(k_0+\lambda k,\sigma(k_0) + \lambda \Delta\sigma(k_0,k))$. To check the assumption we use continuity of $\sigma$ to note that given $\varepsilon>0$ and the above $\delta_\varepsilon>0$ there is $0<\delta'_\varepsilon\leq \frac 12 \min\{ \delta_\varepsilon,\varepsilon\}$ such that for all $k\in K$ with $\| k \|^K < \delta'_\varepsilon$ we have $ \| \sigma(k) - \sigma(k_0) \| < \frac 12  \min\{\delta_\varepsilon,\varepsilon\}$, and hence 
\begin{align*}
\forall\; 0\leq\lambda\leq 1 : \quad & \| (k_\lambda,w_\lambda)   - (k_0,\sigma(k_0)) \|^{K\times\overline W}  \\
& = \| (k_0+\lambda k,\sigma(k_0) + \lambda \Delta\sigma(k_0,k))   - (k_0,\sigma(k_0)) \|^{K\times\overline W} \\
& =  \lambda \| k \|^K +   \lambda \| \Delta\sigma(k_0,k) \| 
\;<\;  \min\{\delta_\varepsilon,\varepsilon\} . 
\end{align*}
Taking all this together, for every $\varepsilon>0$ there is $\delta'_\varepsilon>0$ such that for $k\in K$ with $\| k \|^K < \delta'_\varepsilon$ we have
\begin{align*}
& \bigl\| \sigma(k_0+k) - \sigma(k_0) - \bigl(\rm Id -  \rD_W B (k_0,\sigma(k_0)) \bigr)^{-1} \rD_K B (k_0,\sigma(k_0)) k \bigr\| \\
&\leq \tfrac 1{1-\theta}   \textstyle \int_0^1  \bigl\|  \rD B(k_\lambda,w_\lambda) - \rD B (k_0,\sigma(k_0))   \bigr\|^{\cL(K\times\overline W,\overline W)}  \rd\lambda   \bigl\|  \bigl( k , \Delta\sigma(k_0,k) \bigr)   \bigr\|^{K\times\overline W}  \\
&\leq \tfrac 1{1-\theta}  \, \varepsilon \, \bigl( 1 +   \tfrac 1{1-\theta}  \bigl( \| \rD_K B(k_0,\sigma(k_0))\|^{\cL(K,\overline W)} + \varepsilon \bigr) \bigr) \| k \|^K  ,
\end{align*}
which proves for $\|k\|^K\to 0$ the convergence 
$$
\bigl\| \sigma(k_0+k) - \sigma(k_0) - \bigl(\rm Id -  \rD_W B (k_0,\sigma(k_0)) \bigr)^{-1} \rD_K B (k_0,\sigma(k_0)) k \bigr\| / \|k\|^K \;\longrightarrow\; 0 . 
$$
Thus $\sigma$ is differentiable at all $k_0\in\cV_K$ with $\rD\sigma(k_0)=\bigl(\rm Id -  \rD_W B (k_0,\sigma(k_0)) \bigr)^{-1} \rD_K B (k_0,\sigma(k_0))$. To verify that this differential varies continuously with $k_0\in\cV_K$ in $\cL(K,\overline W)$, recall that $B$ was assumed to be $\cC^1$, thus $\rD_W B$ and $\rD_K B$ vary continuously in the operator norm with their base point in $\cV_K\times\cV_{\overline W}$. Then continuity of $k_0 \mapsto \rD_* B (k_0,\sigma(k_0))$ for $*=K$ and $*=W$ follows from the continuity of $\sigma$. Finally, we already established above that the inverses $(\rm Id - T )^{-1}$ exist and vary continuously with $T=\rD_W B (k_0,\sigma(k_0))$. Thus $\rD\sigma: \cV_K\to\cL(K,\overline W)$ is $\cC^0$ and $\sigma: \cV_K\to\cV_{\overline W}$ is $\cC^1$. This proves the Lemma for $\ell\leq 1$.  

Towards proving the Lemma for $\ell\geq 2$ note that the previous symbolic identity  
$\rD\sigma(k_0)  = \rD_K B (k_0,\sigma(k_0))  + \rD_W B (k_0,\sigma(k_0)) \rD\sigma(k_0)$ is now rigorous and can be understood to say that $\widetilde\sigma: \cV_K \to \widetilde W:=\cL(K,\overline W), k_0 \mapsto \rD\sigma(k_0)$ is the solution map for the equation $\widetilde w = \widetilde B(k,\widetilde w)$. Here
$$
\widetilde B : \cV_K \times \widetilde W \to  \widetilde W=\cL(K,\overline W), \quad
(k, \widetilde w ) \mapsto  \rD_K B (k,\sigma(k))   + \rD_W B (k,\sigma(k)) \circ \widetilde w 
$$
is a contraction that satisfies \eqref{eq:contraction}. Indeed, we have for all $k\in\cV_K$ and $\widetilde w,\widetilde w'\in\widetilde W$ 
\begin{align*}
\bigl\| \widetilde B (k, \widetilde w' ) - \widetilde B(k, \widetilde w) \bigr\| 
&= \bigl\|  \rD_K B (k,\sigma(k))   + \rD_W B (k,\sigma(k)) \circ \widetilde w'  
-   \rD_K B (k,\sigma(k))   -  \rD_W B (k,\sigma(k)) \circ \widetilde w \bigr\|  \\
&= \bigl\| \rD_W B (k,\sigma(k)) \circ ( \widetilde w' - \widetilde w) \bigr\|  \; \leq \;  \theta \| \widetilde w' - \widetilde w \| . 
\end{align*}
Thus $\widetilde B$ satisfies \eqref{eq:contraction} with the same $\theta$ as $B$, and on the entire Banach space $ \widetilde W=\cL(K,\overline W)$. And the above proof shows that $\cC^1$ regularity of $\widetilde B$ -- which would follow from $\cC^2$ regularity of $B$ -- implies $\cC^1$ regularity of $\widetilde\sigma=\rD\sigma$, and hence $\cC^2$ regularity of $\sigma$. We can iterate this argument to prove the Lemma for all $\ell$. 

Indeed, assume by induction that $\cC^\ell$ regularity of a contraction for any $\ell\leq L$ implies $\cC^\ell$ regularity of the solution map. Then consider a contraction $B$ of regularity $\cC^{L+1}$. As above, the differential of its solution map $\rD\sigma=\widetilde\sigma$ is the solution map of a contraction of the form $\widetilde B: \cV_K \times \widetilde W \to  \widetilde W$, which is of regularity $\cC^L$. Then the induction hypothesis ensures that its solution map $\widetilde\sigma=\rD\sigma$ is of regularity $\cC^L$, and hence $\sigma$ is of regularity $\cC^{L+1}$. This finishes the induction and thus the proof.
\end{proof}

Finally, we are prepared for the technical core of this paper, where we specify the meaning of adiabatic regularity for the solution maps -- and show how it follows from the adiabatic regularity of the adiabatic Fredholm family as formalized in Definition \ref{def:adiabatic C-l}.  
Note here that the crucial contribution of this paper is not just in proving this implication, but in doing so for a notion of adiabatic regularity that (a) is satisfied in Examples~\ref{ex:adiabatic} and (b) yields a reasonably regular finite dimensional reduction in \S\ref{reduction}. The latter in particular requires continuity of the global solution map $(\eps,\fk)\mapsto \sigma_\eps(\fk)$ -- and its derivatives in $\fk$ -- in some global topology.  This is what the following theorem establishes with the property of [Continuity w.r.t.\ $\|\cdot\|_0$]. 
To readers interested in strengthening the results or weakening the assumptions we recommend starting with this proof in \eqref{eq:continuity} and \eqref{eq:continuous derivatives} and reading backwards to analyze its ingredients.

\begin{theorem} \label{thm:family solution}
Given an adiabatic Fredholm family $\bigl( (\cF_\eps:\cV_\Gamma\to \Omega )_{\eps\in\Delta} , \ldots  \bigr)$ as in Definition~\ref{def:fredholm}, the contractions $\overline B_\eps: \fK\times\cV_{\overline W,\eps} \to \overline W_\eps$ from Lemma~\ref{lem:contraction} satisfy the assumptions of Lemma~\ref{lem:solution classical}, resulting in solution maps $\bigl( \sigma_\eps: \cV_\fK \to\cV_{ \overline W,\eps} \bigr)_{\eps\in\Delta_\sigma}$ 
defined on neighbourhoods $\Delta_\sigma\subset\Delta$ of $0$ and $\cV_\fK\subset\fK$ of $0$ 
such that for some $0<\delta_\sigma\leq\delta_Q$ we have
\begin{align*}
\bigl\{ (\fk,\fc,\gamma)\in \cV_\fK \times \fC\times \cV_{\overline\Gamma,\eps} \,\big|\,
 \overline\pi_\fK(\gamma)=\fk, 
\overline\cF_\eps(\gamma)  =  \fc , 
\|\fc\|^\fC + \|\gamma\|^\Gamma_\eps < \delta_\sigma \bigr\} & \\
=\; \bigl\{ (\fk,w)\in \cV_\fK \times \cV_{\overline W,\eps} \,\big|\, w- \overline B_\eps(\fk,w) =0 , \|w\|^W_\eps < \delta_\sigma   \bigr\}
& \\
=\; \bigl\{ (\fk,w)\in \cV_\fK \times \cV_{\overline W,\eps} \,\big|\, \overline g_{\eps,\fk}(w) =0 , \|w\|^W_\eps < \delta_\sigma   \bigr\}
& = \bigl\{  (\fk,  \sigma_\eps(\fk)  ) \,\big|\,  \fk\in\cV_\fK  \bigr\} .
\end{align*}
Moreover, each solution map $\sigma_\eps: \cV_\fK \to\cV_{ \overline W,\eps}$ is continuous and $\cC^1$, and the family of solution maps $(\sigma_\eps)_{\eps\in\Delta_\sigma}$ is uniformly bounded and uniformly continuous: 
$$
\bigl\| \sigma_\eps (\fk_0) \bigr\|^{W}_\eps  < \delta_\sigma
\qquad\text{and}\qquad
\bigl\| \sigma_\eps (\fk)  - \sigma_\eps (\fk_0) \bigr\|^{W}_\eps 
\leq    \tfrac {C_Q}{1-\theta}  \| \fk  -  \fk_0  \|^\fK
\qquad \forall \eps\in\Delta_\sigma, \fk,\fk_0\in\cV_\fK    
$$
with the contraction constant $\theta<1$ from Lemma~\ref{lem:contraction} and the uniform constant $C_Q$ from \eqref{eq:Q-estimate} .  

If the adiabatic Fredholm family is fibrewise $\cC^\ell$-regular as in Definition~\ref{def:fibrewise C-l} for some $\ell>1$, then each solution map $\sigma_\eps$ is $\cC^\ell$.

If the adiabatic Fredholm family is regularizing as in Definition~\ref{def:regularizing}, then each solution map takes values $\sigma_\eps: \cV_\fK \to\cV_W$ in the $\eps$-independent dense subset $\cV_W=\fC\times\cV_\Gamma\subset\cV_{\overline W,\eps}$. 

If the adiabatic Fredholm family is adiabatic $\cC^\ell$-regular as in Definition~\ref{def:adiabatic C-l}, then the family of solution maps $\Delta_\sigma\times\cV_\fK \to \cV_W, (\eps,\fk)\mapsto \sigma_\eps(\fk)$ is adiabatic $\cC^\ell$-regular in the following sense:\footnote{Note that the higher regularizing property is necessary to even make sense of the norm in the pointwise continuity.}
\begin{itemlist}
\item[{\bf[Higher Regularizing Property]}]
The $\ell$-fold fibrewise tangent map $\Delta_\sigma\times\rT^\ell\cV_\fK \to \rT^\ell\cV_W$, $(\eps,\ul\fk)\mapsto \rT^\ell\sigma_\eps(\ul\fk)$ takes values in the $\eps$-independent dense subspace $\rT^\ell\cV_W\subset \rT^\ell\cV_{\ol W,\eps}$. 

\item[\bf{[Pointwise Continuity in $\mathbf\Delta$]}]
$
\forall \eps_0\in\Delta_\sigma, \ul \fk_0 \in \rT^\ell\cV_\fK  \quad
\bigl\| \rT^\ell\sigma_\eps(\ul \fk_0) - \rT^\ell\sigma_{\eps_0}(\ul \fk_0) \bigr\|^{\rT^\ell W}_\eps \underset{\eps\to\eps_0}{\longrightarrow} 0 .
$
\item[\bf{[Uniform Continuity]}]
There are monotone continuous functions $c^\ell_\sigma : [0,\infty) \to [0,\infty)$ 
and $b^\ell_\sigma : [0,\infty) \to [1,\infty)$ with $c^\ell_\sigma(0)= 0$ 
so that for all $\eps\in\Delta_\sigma$ and $\ul\fk,\ul\fl\in\rT^\ell\cV_\fK$ we have\footnote{
Here the $b^\ell_\sigma$ factor accounts for the fact that tangent maps have mixed scaling properties with respect to the unbounded vector entries, as discussed in Remark~\ref{rmk:tangent map}.}
$$
\bigl\|  \rT^\ell\sigma_\eps (\ul\fl)  -   \rT^\ell\sigma_\eps (\ul\fk) \bigr\|^{\rT^\ell W}_\eps 
\leq c^\ell_\sigma(\|\ul\fl- \ul\fk\|^{\rT^\ell\fK}) \, b^\ell_\sigma(\max\{ \|\fl\|^{\rT_\bullet^\ell\fK} ,  \|\fk\|^{\rT_\bullet^\ell\fK} \}) . 
$$
\end{itemlist}
In particular, this guarantees
\begin{itemlist}
\item[\bf{[Continuity w.r.t.\ $\mathbf{\|\cdot\|_0}$]}]   $(\eps,\ul\fk)\mapsto \rT^\ell\sigma_\eps(\ul\fk)$ is a continuous map
$\Delta_\sigma\times\rT^\ell\cV_\fK \to \bigl( \rT^\ell\cV_W , \|\cdot\|^{\rT^\ell W}_0\bigr)$. 
\item[\bf{[Uniform Bound]}]
For all $\eps\in\Delta_\sigma$ and $\ul\fk \in\rT^\ell\cV_\fK$ we have
\begin{equation} \label{eq:uniform bound}
\bigl\|  \rT^\ell\sigma_\eps (\ul\fk)  \bigr\|^{\rT^\ell W}_\eps 
\leq \delta_\sigma +  c^\ell_\sigma(\| \ul\fk \|^{\rT_\bullet^\ell\fK}) \, b^\ell_\sigma(\|\fk\|^{\rT_\bullet^\ell\fK})    . 
\end{equation}
\end{itemlist}
\end{theorem}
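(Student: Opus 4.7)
The strategy is to apply the classical solution-map construction of Lemma~\ref{lem:solution classical} fibrewise to each contraction $\overline B_\eps$ from Lemma~\ref{lem:contraction}, and then use the adiabatic structure to upgrade the fibrewise regularity to global regularity in $(\eps,\fk)$. I would fix the contraction constant $\theta=\tfrac12$, let $\delta_\theta>0$ be the radius from \eqref{eq:B-contraction}, and set $\delta_\sigma:=\min\{\delta_\theta,\delta_Q\}$. Choosing $\Delta_\sigma\subset\Delta_\delta$ and $\cV_\fK\subset\cV_{\fK,\delta}$ from \eqref{eq:B-small} with $\delta=(1-\theta)\delta_\sigma$ guarantees the smallness hypothesis of Lemma~\ref{lem:solution classical}, so each $\overline B_\eps(\fk,\cdot)$ has a unique fixed point $\sigma_\eps(\fk)\in\overline W_\eps$ with $\|\sigma_\eps(\fk)\|^W_\eps<\delta_\sigma$; the three equivalent descriptions of its graph in the theorem follow from unpacking $\overline\cG_Q$. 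Uniform Lipschitz continuity in $\fk$ is immediate from the argument behind \eqref{eq:sigma-est}, because $\overline B_\eps(\fk,w)=\overline Q_\eps(0)\bigl(\fk,\rD\overline\cF_\eps(0)\gamma-\overline\cF_\eps(\gamma)\bigr)$ is linear in $\fk$ with Lipschitz constant bounded by $C_Q$ from \eqref{eq:Q-estimate}.

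Fibrewise $\cC^\ell$-regularity of $\sigma_\eps$ follows by induction from Lemma~\ref{lem:solution classical} once we know $\overline B_\eps$ is $\cC^\ell$; the latter is given by Lemma~\ref{lem:contraction}. The regularizing property $\sigma_\eps(\cV_\fK)\subset\cV_W$ is a direct consequence of Definition~\ref{def:regularizing}: writing $\sigma_\eps(\fk)=(\fc,\gamma)$, the fixed-point equation yields $\overline\cF_\eps(\gamma)=\fc\in\fC\subset\Omega$, so \eqref{eq:reg 0} forces $\gamma\in\cV_\Gamma$. The higher regularizing property for $\rT^\ell\sigma_\eps$ is then obtained by differentiating the fixed-point identity $\ell$ times and applying \eqref{def:regularizing T} to each resulting tangent-level equation, since the derived right-hand sides all live in $\rT^\ell\Omega$.

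The main analytic obstacle is to prove the pointwise continuity in $\Delta$ and uniform continuity of the higher tangent maps $\rT^\ell\sigma_\eps$ in the $\eps$-dependent norms, which I plan to handle by induction on $\ell$. For $\ell=0$ the key identity is
\begin{align*}
\bigl\|\sigma_\eps(\fk_0)-\sigma_{\eps_0}(\fk_0)\bigr\|^W_\eps
&\leq \tfrac{1}{1-\theta}\bigl\|\overline B_\eps(\fk_0,\sigma_{\eps_0}(\fk_0))-\overline B_{\eps_0}(\fk_0,\sigma_{\eps_0}(\fk_0))\bigr\|^W_\eps ,
\end{align*}
which reduces pointwise continuity of $\sigma_\eps$ to pointwise continuity of $\overline B_\eps$ at a fixed base point. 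Expanding, I would control the difference of $\overline Q_\eps(0)$ operators by repeating the inverse-continuity argument of \eqref{eq:Q cont} while invoking the [Continuity of Derivatives at $0$] property of Lemma~\ref{lem:fredholm}, and I would control the difference $\overline\cF_\eps(\gamma_0)-\overline\cF_{\eps_0}(\gamma_0)$ at the fixed $\gamma_0$ by precisely the pointwise continuity in $\Delta$ at solutions modulo $\fC$ from Definition~\ref{def:pointwise regular} -- which applies because $\sigma_{\eps_0}(\fk_0)=(\fc_0,\gamma_0)$ satisfies $\overline\cF_{\eps_0}(\gamma_0)=\fc_0\in\fC$. Uniform continuity in $\fk$ comes out of the same identity with varied $\fk$, using [Uniform Continuity of $\rD\cF_\eps$] to absorb differences of $\rD\overline\cF_\eps(0)\gamma-\overline\cF_\eps(\gamma)$ at a common $w$.

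For the inductive step $\ell\geq 1$, I would recognize $\rT^\ell\sigma_\eps$ as the solution map of a derived fixed-point equation $\widetilde w=\widetilde B_\eps(\ul\fk,\widetilde w)$, exactly as at the end of the proof of Lemma~\ref{lem:solution classical}, with contraction constant still $\theta$. The estimates of Remark~\ref{rmk:uniform DF} for $\rD\rT^{\ell-1}\overline\cF_\eps$ combined with the induction hypothesis on $\rT^{\ell-1}\sigma_\eps$ yield the analogous uniform and pointwise continuity bounds for $\widetilde B_\eps$, which then transfer to $\rT^\ell\sigma_\eps$ via the identity above; pointwise continuity at $\eps_0$ uses the $\cC^\ell$-continuity in $\Delta$ rel.\ $\fC$ applied to the higher tangent vector $\rT^\ell\sigma_{\eps_0}(\ul\fk_0)$, which represents a tangent-level solution modulo $\rT^\ell\fC$. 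The factor $b^\ell_\sigma(\max\{\|\ul\fl\|^{\rT_\bullet^\ell\fK},\|\ul\fk\|^{\rT_\bullet^\ell\fK}\})$ reflects the polynomial scaling of higher tangent maps in their vector entries described in Remark~\ref{rmk:tangent map} and tracked quantitatively by Lemma~\ref{lem:DT}. Finally, [Continuity w.r.t.\ $\|\cdot\|_0$] follows from the [Lower Bound on Norms] together with a three-$\varepsilon$ argument that splits $\|\rT^\ell\sigma_\eps(\ul\fk)-\rT^\ell\sigma_{\eps_0}(\ul\fk_0)\|^{\rT^\ell W}_0$ into a pointwise-in-$\eps$ piece and a uniform-in-$\ul\fk$ piece, and \eqref{eq:uniform bound} is the triangle inequality applied to the uniform continuity estimate at base point $0$ together with $\|\rT^\ell\sigma_\eps(0)\|^{\rT^\ell W}_\eps\leq\delta_\sigma$ inherited from the $\ell=0$ bound and the linear structure of the derived equations.
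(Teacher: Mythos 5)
Your outline of the initial setup is correct and matches the paper: fixing $\theta$, shrinking $\delta_\sigma$, applying Lemma~\ref{lem:solution classical} fibrewise to get the solution maps, deducing the uniform Lipschitz bound from linearity of $\ol B_\eps$ in $\fk$ and \eqref{eq:Q-estimate}, extracting fibrewise $\cC^\ell$-regularity from Lemma~\ref{lem:solution classical}, and pulling the regularizing property out of the fixed-point equation $\ol\cF_\eps(\gamma)=\fc\in\fC$. These parts are fine.

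The gap is in how you propose to prove the pointwise continuity in $\Delta$. You plan to expand
$\ol B_\eps(\fk_0,w_0)-\ol B_{\eps_0}(\fk_0,w_0)$
and ``control the difference of $\ol Q_\eps(0)$ operators'' by a $\eps$-vs-$\eps_0$ analogue of the inverse-continuity argument of \eqref{eq:Q cont}. But $\ol Q_\eps(0)$ and $\ol Q_{\eps_0}(0)$ are isomorphisms between \emph{different} Banach pairs $\fK\times\ol\Omega_\eps\to\fC\times\ol\Gamma_\eps$ versus $\fK\times\ol\Omega_{\eps_0}\to\fC\times\ol\Gamma_{\eps_0}$, so their difference is not a bounded operator on any common space, and there is no operator norm in which ``$\ol Q_\eps(0)-\ol Q_{\eps_0}(0)\to 0$'' even makes sense in this framework. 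Moreover, [Continuity of Derivatives at $0$] is only a comparison of $\rD\cF_\eps(0)$ to $\rD\cF_0(0)$ in the weak $\|\cdot\|^\Omega_0$ norm, which does not directly yield a comparison of $\rD\cF_\eps(0)$ to $\rD\cF_{\eps_0}(0)$ in $\|\cdot\|^\Omega_\eps$, which is what would be needed for your splitting.

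The paper's proof avoids the comparison entirely: since $w_0=\sigma_{\eps_0}(\fk_0)$ lies in the $\eps$-independent $\cV_W$ by regularizing, one writes $w_0=Q_\eps(0)\,P_\eps(0)\,w_0$ using the identity $Q_\eps(0)P_\eps(0)={\rm Id}$ at the \emph{same} $\eps$, so that
\begin{equation*}
B_\eps(\fk_0,w_0)-w_0
= Q_\eps(0)\bigl(\fk_0-\pi_\fK(\gamma_0),\,\fc_0-\cF_\eps(\gamma_0)\bigr)
= Q_\eps(0)\bigl(0,\,\cF_{\eps_0}(\gamma_0)-\cF_\eps(\gamma_0)\bigr) ,
\end{equation*}
using the defining equations $\pi_\fK(\gamma_0)=\fk_0$ and $\cF_{\eps_0}(\gamma_0)=\fc_0$. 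Only a single $Q_\eps(0)$ appears, and the whole $\eps$-dependence collapses to $\cF_\eps(\gamma_0)-\cF_{\eps_0}(\gamma_0)$, which is precisely what [Pointwise Continuity in $\Delta$ rel.\ $\fC$] controls. The same trick — inserting $Q_\eps(\cdot)P_\eps(\cdot)={\rm Id}$ at the current $\eps$ to rewrite the $\eps_0$-quantity before differencing — is used at every level $\ell$ in the paper, via the explicit formula $\rD\sigma_\eps(\fk_0)\fk_1=\ol Q_\eps(\gamma_\eps)(\fk_1,0)$ and its iterated tangents; this replaces your proposed re-derivation of $\rT^\ell\sigma_\eps$ as a solution map of a derived contraction $\widetilde B_\eps$. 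Your derived-contraction plan also faces the same $\eps$-comparison obstacle, compounded by the fact that $\widetilde B_\eps$ itself depends on $\sigma_\eps$ so both the base point and the operator vary with $\eps$. The paper explicitly acknowledges the derived-contraction route as ``classical'' but then abandons it for the explicit $Q_\eps$ formulas precisely to keep all operator comparisons at a fixed $\eps$. You would need to import that device for your argument to go through.
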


\begin{proof}[Proof of Theorem~\ref{thm:charts}]
Here is a table of contents for the steps of this proof: 
\begin{itemlist}
\item[Contraction property:] page \pageref{page contraction} 
\item[Solution Map:]  page \pageref{solution contraction}
\item[Uniform Bound and Uniform Continuity for Solution Maps:]  page \pageref{page uniform}
\item[Fibrewise Regularity of Solution Maps:]  page \pageref{page fibrewise regularity}
\item[Restriction to the Regularizing Case:]  page \pageref{page regularizing}
\begin{iitemlist}
\item[Adiabatic $\mathcal{C}^0$ Regularity:]  page \pageref{page adiabatic 0}
\item[Overview of Adiabatic $\mathcal{C}^\ell$ Regularity:]  page \pageref{page adiabatic overview}
\begin{iiitemlist}
\item[Uniform Bound of $\rT^\ell\sigma_\eps$:] page \pageref{page uniform bound} 
\item[Continuity of $\rT^\ell\sigma_\eps$ w.r.t.\ $\|\cdot\|_0$:] page \pageref{page continuity of derivatives}
\end{iiitemlist}
\item[Induction Base Case -- Adiabatic ${\mathcal{C}^1}$ Regularity:] page \pageref{page base case}
\begin{iiitemlist}
\item[Pointwise Continuity of ${\rT\sigma_\eps}$ in $\Delta$:] page \pageref{page pointwise 1}
\item[Uniform Continuity of ${\rT\sigma_\eps}$:] page \pageref{page uniform 1}
\end{iiitemlist}
\item[Induction Step -- Adiabatic ${\mathcal{C}^{\ell+1}}$ Regularity:]
page \pageref{page step}
\begin{iiitemlist}
\item[Higher Regularizing Property:] page \pageref{page higher reg}
\item[Controlling derivatives of $\widetilde \sigma_\eps$ by derivatives of $\sigma_\eps$:] page \pageref{page sigma}
\item[Controlling derivatives of $\widetilde Q_\eps$ by derivatives of $\cF_\eps$:] page \pageref{page Q}
\item[Pointwise Continuity of $\rD\rT^\ell\sigma_\eps$ in $\Delta$:] page \pageref{page pointwise l}
\item[Uniform Continuity of $\rD\rT^\ell\sigma_\eps$:] page \pageref{page uniform l}
\end{iiitemlist}
\end{iitemlist}
\end{itemlist}

\smallskip
\noindent
{\bf Contraction Property:} \label{page contraction}
To check the contraction property \eqref{eq:contraction} with $B:=\overline B_\eps$ on $K:=\fK$ and the subset $\cV_{\overline W}=\cV_{\overline W,\eps}$ of $\overline W:= \overline W_\eps$ equipped with the norm $\|\cdot\|:=\|\cdot\|^W_\eps$ for any $\eps\in\Delta_Q$, we choose a fixed $\theta\in(0,1)$ for all $\eps\in\Delta_Q$ in \eqref{eq:B-contraction} to obtain a preliminary $\delta_\sigma:=\delta_\theta>0$ with  
 \begin{align*}
\fk\in\fK, w,w'\in \cV_{\overline W,\eps},  \|w\|^W_\eps,  \|w'\|^W_\eps \leq \delta_\sigma  \quad \Longrightarrow \quad
\| \overline B_\eps (\fk, w' ) - \overline B_\eps(\fk, w) \|^W_\eps \leq  \theta \| w' - w \|^W_\eps . 
\end{align*}

\smallskip
\noindent
{\bf Solution Maps:} \label{solution contraction}
Next, we choose a possibly smaller $0<\delta_\sigma\leq\min\{\delta_\theta,\delta_Q\}$ -- with $\delta_Q>0$ from Lemma~\ref{lem:inverses} for later purposes -- so that the above continues to hold along with the inclusion $\{ w\in \overline W \,|\, \|w \|^{\overline W}\leq \delta_\sigma \} \subset \cV_{\overline W}$. 
Now we use \eqref{eq:B-small} with $\delta=(1-\theta) \delta_\sigma$ to find neighbourhoods $\cV_\fK:=\cV_{\fK,\delta}\subset\fK$ of $0$ and $\Delta_\sigma:=\Delta_\delta\subset\Delta_Q$ of $0$ that guarantee \eqref{eq:small} for all $\eps\in\Delta_\sigma$, 
\begin{align*}
\fk\in\cV_\fK  \quad \Longrightarrow \quad
\| \overline B_\eps (\fk, 0 )  \|^W_\eps 
< (1-\theta) \delta_\sigma. 
\end{align*}
This confirms the assumptions of Lemma~\ref{lem:solution classical} for each $\eps\in\Delta_\sigma$, which then provides unique solution maps $\sigma_\eps: \cV_\fK \to \cV_{\overline W,\eps}$ such that
$$
\bigl\{ (\fk,w)\in \cV_\fK \times \cV_{\overline W,\eps} \,\big|\, w- \overline B_\eps(\fk,w) =0 ,  \|w\|^W_\eps < \delta_\sigma \bigr\}
= \bigl\{  (\fk,  \sigma_\eps(\fk)  ) \,\big|\,  \fk\in\cV_\fK  \bigr\} . 
$$
Equivalently, these solutions $\sigma_\eps(\fk)=w_{\eps,\fk}=(\fc_{\eps,\fk},\gamma_{\eps,\fk})\in \fC\times\cV_{\overline W,\eps}$ satisfy besides the smallness condition $\|w_{\eps,\fk}\|^W_\eps =\|\fc_{\eps,\fk}\|^\fC + \|\gamma_{\eps,\fk}\|^\Gamma_\eps \leq \delta_\sigma$ the equations  
\begin{align}
 (\fc_{\eps,\fk},\gamma_{\eps,\fk}) = \overline B_\eps(\fk,\fc_{\eps,\fk},\gamma_{\eps,\fk})
& \quad\Leftrightarrow\quad
(\fc_{\eps,\fk},\gamma_{\eps,\fk}) =
\overline Q_\eps(0) \bigl( \fk   , \rD\overline\cF_\eps(0) \gamma -  \overline\cF_\eps(\gamma) \bigr)   \nonumber\\
& \quad\Leftrightarrow\quad
\overline P_\eps(0)(\fc_{\eps,\fk},\gamma_{\eps,\fk}) =
 \bigl( \fk   , \rD\overline\cF_\eps(0) \gamma -  \overline\cF_\eps(\gamma) \bigr)   \nonumber\\
& \quad\Leftrightarrow\quad
\bigl( \overline\pi_\fK(\gamma_{\eps,\fk}), \rD\overline\cF_\eps(0) \gamma_{\eps,\fk} - \fc_{\eps,\fk} \bigr)
= \bigl( \fk   , \rD\overline\cF_\eps(0) \gamma_{\eps,\fk} -  \overline\cF_\eps(\gamma_{\eps,\fk}) \bigr)
\nonumber\\
& \quad\Leftrightarrow\quad
 \overline\pi_\fK(\gamma_{\eps,\fk})=\fk
\;\text{and}\;
\overline\cF_\eps(\gamma_{\eps,\fk})  =  \fc_{\eps,\fk}   \label{eq:defining}\\
& \quad\Leftrightarrow\quad
\overline g_{\eps,\fk}(\fc_{\eps,\fk},\gamma_{\eps,\fk}) = 0 .  \nonumber
\end{align}
Here we used \eqref{eq:B}, \eqref{eq:P}, and the last version of the equation results from \eqref{eq:stabilized equation} for the map $\overline g$ from Lemma~\ref{lem:inverses}. This confirms the first part of the Lemma. 

\smallskip
\noindent
{\bf Uniform Bound and Uniform Continuity for Solution Maps:} \label{page uniform}
The smallness condition for the solutions $\|\sigma_\eps(\fk)=(\fc_{\eps,\fk},\gamma_{\eps,\fk}) \|^W_\eps  \leq \delta_\sigma$ guarantees the claimed uniform bound on the solution maps with $c^0_\sigma\equiv 0$ and $b^0_\sigma\equiv 1$. 
Since $\delta_\sigma \leq \delta_\theta$ for an a priori fixed $\theta\in(0,1)$ this also guarantees the continuity estimate from \eqref{eq:sigma-est}, which further specifies with the help of \eqref{eq:Q-estimate} in Lemma~\ref{lem:inverses} to
confirm [Uniform Continuity] of the solution maps: For any $\eps\in\Delta_\sigma$ and $\fk,\fl\in\fK$ we have
\begin{align}
\| \sigma_\eps(\fl) - \sigma_\eps(\fk) \|^W_\eps 
& \leq  \tfrac 1{1-\theta}  \| \overline B_\eps(\fl,\sigma(\fk)) -  \overline B_\eps(\fk,\sigma(\fk)) \|^W_\eps \nonumber\\
& = \tfrac 1{1-\theta}  \bigl\| \overline Q_\eps(0) \bigl( \fl   , \rD\overline\cF_\eps(0) \gamma_{\eps,\fk} -  \overline\cF_\eps(\gamma_{\eps,\fk}) \bigr) - \overline Q_\eps(0) \bigl( \fk   , \rD\overline\cF_\eps(0)\gamma_{\eps,\fk} -  \overline\cF_\eps(\gamma_{\eps,\fk}) \bigr)  \bigr\|^W_\eps \nonumber \\
& = \tfrac 1{1-\theta}  \bigl\| \overline Q_\eps(0) \bigl( \fl  -  \fk   , 0 \bigr)  \bigr\|^W_\eps 
\;\leq \;  \tfrac 1{1-\theta} \, C_Q \| \fl  -  \fk  \|^\fK  \;=:\; c^0_\sigma(\|\fl-\fk\|^{\fK}) . 
\label{eq:sigma continuity}
\end{align}
Note that this uniform continuity of the solution maps did not require special regularity properties of the adiabatic Fredholm family. It is, however, also one of the two properties encoded in adiabatic $\cC^0$ regularity of the solution maps. Thus here is the explanation why the notion of adiabatic $\cC^0$ regularity of the Fredholm family in Definition~\ref{def:adiabatic C-l} requires just one property (pointwise continuity), whereas $\cC^\ell$ regularity for $\ell\geq 1$ requires two properties of the Fredholm family (pointwise and uniform continuity) -- which will ensure the two properties (pointwise and uniform continuity) encoded in adiabatic $\cC^\ell$ regularity of the solution maps. 

\smallskip
\noindent
{\bf Fibrewise Regularity of Solution Maps:} \label{page fibrewise regularity}
It remains to establish the regularity of the solution maps -- starting with the fibrewise regularity: 
If the adiabatic Fredholm family is fibrewise $\cC^\ell$-regular, that is $\cF_\eps : (\cV_\Gamma,{\|\cdot\|^\Gamma_\eps}) \to (\Omega,\|\cdot\|^\Omega_\eps)$ is uniformly $\cC^\ell$ for each $\eps\in\Delta$, then the maps $\overline B_\eps: (\fk,\gamma)\mapsto \overline Q_\eps(0) \bigl( \fk   , \rD\overline\cF_\eps(0) \gamma -  \overline\cF_\eps(\gamma) \bigr)$ are uniformly $\cC^\ell$ as well for each fixed $\eps\in\Delta_Q$. Indeed, $\overline Q_\eps(0)$ and $\rD\overline\cF_\eps(0)$ are linear operators and $\overline\cF_\eps$ inherits its regularity from $\cF_\eps$ by Lemma~\ref{lem:extension}.  
In particular, the notion of an adiabatic Fredholm family in Definition~\ref{def:fredholm} automatically includes fibrewise $\cC^1$ regularity, thus each contraction $\overline B_\eps$ is continuous and $\cC^1$ for fixed $\eps\in\Delta_Q$. When applying Lemma~\ref{lem:solution classical} to $B=\overline B_\eps: \cV_\fK \times \cV_{\overline W,\eps} \to \overline W_\eps$ this means, first, that $B$ is continuous when $\cV_K=\cV_\fK$ is equipped with the subspace topology of $\cV_\fK\subset\fK$. Then the Lemma asserts that each solution map $\sigma_\eps: \cV_\fK \to \cV_{\overline W,\eps}$ is continuous.  
Second, the $\cC^1$ regularity of each $\overline B_\eps$ means that in Lemma~\ref{lem:solution classical} the contraction $B$ is $\cC^1$, and hence each solution map $\sigma_\eps$ is $\cC^1$. 
Moreover, if the adiabatic Fredholm family is fibrewise $\cC^\ell$-regular for some $\ell>1$ in the sense of Definition~\ref{def:fibrewise C-l}, then the maps $\overline\cF_\eps$ are $\cC^\ell$ by Lemma~\ref{lem:fredholm}, and hence each contraction $B=\overline B_\eps: \cV_\fK \times \cV_{\overline W,\eps} \to \overline W_\eps$ is $\cC^\ell$. Then Lemma~\ref{lem:solution classical} guarantees that each solution map $\sigma_\eps: \cV_\fK \to \cV_{\overline W,\eps} \subset \overline W_\eps$ is $\cC^\ell$. 

\smallskip
\noindent{\bf Restriction to the Regularizing Case:} \label{page regularizing}
Next, if the adiabatic Fredholm family is regularizing, then $\overline\cF_\eps(\gamma_{\eps,\fk})  =  \fc_{\eps,\fk}\in\fC\subset\Omega$ implies $\gamma_{\eps,\fk}\in\cV_\Gamma\subset\overline\Gamma_\eps$, so that each solution map takes values $\sigma_\eps: \cV_\fK \to \cV_W, \fk\mapsto w_{\eps,\fk}=(\fc_{\eps,\fk},\gamma_{\eps,\fk})$ in the $\eps$-independent domain $\cV_W=\fC\times\cV_\Gamma$. This then allows us to rewrite the defining equation for the solution maps $\sigma_\eps(\fk)=\overline B_\eps(\fk,\sigma_\eps(\fk))=B_\eps(\fk,\sigma_\eps(\fk))$ in terms of the contractions $B_\eps : \fK \times \cV_W \to W$ with $\eps$-independent domain and target spaces from Lemma~\ref{lem:contraction-regularizing}. 

\smallskip
\noindent{\bf Adiabatic $\mathbf{\mathcal{C}^0}$ Regularity:} \label{page adiabatic 0}
Now assume in addition to the regularizing property that the adiabatic Fredholm family is adiabatic $\cC^0$-regular as in Definition~\ref{def:adiabatic C-l}, that is, given any $\eps_0\in\Delta$ and a solution $\gamma_0\in\cV_\Gamma$ of $\cF_{\eps_0}(\gamma_0)\in \fC$, we have 
$\bigl\|\cF_\eps (\gamma_0)  -  \cF_{\eps_0} (\gamma_0) \bigr\|^\Omega_\eps \rightarrow 0$ as $\eps\to \eps_0$.
We already established uniform continuity in \eqref{eq:sigma continuity} above -- which notably did not require any extra assumptions on the adiabatic Fredholm family. 
So it remains to prove pointwise continuity in $\eps\in\Delta_\sigma$ of the solution maps for fixed $\fk_0\in\cV_\fK$. This can be estimated from the defining equation $\sigma_\eps(\fk)=B_\eps(\fk,\sigma_\eps(\fk))$ and the contraction property \eqref{eq:B-contraction}
\begin{align}
\bigl\| \sigma_\eps(\fk_0) - \sigma_{\eps_0}(\fk_0) \bigr\|^W_\eps
&= \bigl\| B_\eps(\fk_0,\sigma_\eps(\fk_0)) - \sigma_{\eps_0}(\fk_0) \bigr\|^W_\eps
\nonumber\\ 
&\leq \bigl\| B_\eps(\fk_0,\sigma_\eps(\fk_0)) -  B_\eps(\fk_0,\sigma_{\eps_0}(\fk_0)) \bigr\|^W_\eps
+ \bigl\|B_\eps(\fk_0,\sigma_{\eps_0}(\fk_0)) - \sigma_{\eps_0}(\fk_0) \bigr\|^W_\eps
\label{needs regularizing}\\ 
&\leq \theta \bigl\| \sigma_\eps(\fk_0) - \sigma_{\eps_0}(\fk_0) \bigr\|^W_\eps
+ \bigl\| B_\eps(\fk_0,\sigma_{\eps_0}(\fk_0)) -  \sigma_{\eps_0}(\fk_0) \bigr\|^W_\eps . 
\nonumber
\end{align}
Here we used the regularizing property to ensure $\sigma_{\eps_0}(\fk_0)\in\cV_W=\fC\times\cV_\Gamma$, so
that we can apply the $\|\cdot\|^W_\eps$ norm to it. 
This is also crucial to make sense of the expression $B_\eps(\fk_0,\sigma_{\eps_0}(\fk_0))\in W$. Without the regularizing property, we would need to make sense of ``$\ol B_\eps(\fk_0,\sigma_{\eps_0}(\fk_0))$'' where $\ol B_\eps$ is defined on $\fK\times\cV_{\ol W,\eps}$ but $\sigma_{\eps_0}(\fk_0)\in \cV_{\ol W,\eps_0}$. This type of triangle inequality computation will be used repeatedly in the following, and each time crucially relies on the regularizing property although we won't keep pointing it out. 

Less crucially, the regularizing property also simplifies the defining equation \eqref{eq:defining} for the solution map, where we specify to $\eps=\eps_0$ and denote $(\fc_0,\gamma_0):=w_0:=\sigma_{\eps_0}(\fk_0)$,
\begin{align}
w_0
= B_0 (\fk,w_0) 
\qquad \Leftrightarrow\qquad
& 
\pi_\fK(\gamma_0) = \fk 
\quad\text{and}\quad 
\cF_{\eps_0}(\gamma_0)  =  \fc_0   
\label{eq:defining reg}
\end{align}
Now since $0<\theta<1$ in the above estimate, we can absorb the first summand into the left hand side to establish [Pointwise Continuity in $\Delta$] of the solution map,
\begin{align*}
\bigl\| \sigma_\eps(\fk_0) - \sigma_{\eps_0}(\fk_0) \bigr\|^W_\eps
&\leq \tfrac1{1-\theta} \bigl\| B_\eps(\fk_0,(\fc_0,\gamma_0)) -  \sigma_{\eps_0}(\fk_0) \bigr\|^W_\eps
\\ 
&\qquad\qquad
\text{\small using \eqref{eq:B} and the fact that $ Q_\eps(0)   P_\eps(0)={\rm Id}_{ \fC \times \Gamma}$ from Lemma~\ref{lem:inverses}} \\
&= \tfrac1{1-\theta} \bigl\|  Q_\eps(0) \bigl( \fk_0   , \rD\cF_\eps(0) \gamma_0 -  \cF_\eps(\gamma_0) \bigr) -   Q_\eps(0)   P_\eps(0) (\fc_0,\gamma_0)  \bigr\|^W_\eps
\\ 
&\leq \tfrac1{1-\theta} \bigl\|  Q_\eps(0) \bigr\|^{\cL(\fK\times\overline\Gamma_\eps,\overline W_\eps)}
\bigl\| \bigl( \fk_0   , \rD\cF_\eps(0) \gamma_0 -  \cF_\eps(\gamma_0) \bigr) 
- \bigl(  \pi_\fK(\gamma_0)  , \rD\cF_\eps(0) \gamma_0 -  \fc_0 \bigr)
\bigr\|^{\fK\times\overline\Gamma_\eps}
\\
&\qquad\qquad
\text{\small using \eqref{eq:Q-estimate} in Lemma~\ref{lem:inverses}} \\
&\leq \tfrac 1{1-\theta} \, C_Q 
\bigl(  \| \fk_0 -  \pi_\fK(\gamma_0) \|^{\fK} + 
\bigl\|  \cF_\eps(\gamma_0) - \fc_0 \bigr\|^\Gamma_\eps \bigr)
\\
&\qquad\qquad
\text{\small using \eqref{eq:defining reg} } \\
&\leq \tfrac 1{1-\theta} \, C_Q 
\bigl(  \| 0 \|^{\fK} + 
\bigl\|  \cF_\eps(\gamma_0) 
-  \cF_{\eps_0}(\gamma_0) \bigr\|^\Gamma_\eps \bigr)
\quad\underset{\eps\to\eps_0}{\longrightarrow} \quad 0 .
\end{align*}
Here the final convergence holds by adiabatic $\cC^0$ regularity of the adiabatic Fredholm family in Definition~\ref{def:adiabatic C-l} at the solution $\gamma_0$ of the $\eps_0$-equation modulo cokernel $\cF_{\eps_0}(\gamma_0)  =  \fc_0\in\fC$. 
Now continuity of the family of solution maps $\Delta_\sigma\times\cV_\fK \to \bigl( \cV_W , \|\cdot\|^W_0 \bigr), (\eps,\ul\fk)\mapsto\sigma_\eps(\ul\fk)$ can be deduced by combining [Lower Bound on Norms] with [Pointwise Continuity in $\Delta$]  and the previously established [Uniform Continuity] in \eqref{eq:sigma continuity}: 
For any $\eps_0\in\Delta_\sigma$ and $\fk_0\in\cV_\fK$ we obtain for $\Delta_\sigma\times\cV_\fK \ni (\eps,\fk)\to(\eps_0,\fk_0)$
\begin{align}
\bigl\| \sigma_\eps (\fk)  -   \sigma_{\eps_0} (\fk_0) \bigr\|^{W}_0 
&\leq 
\bigl\| \sigma_\eps (\fk)  - \sigma_{\eps_0} (\fk_0) \bigr\|^{W}_\eps 
\nonumber \\ 
& \leq 
\bigl\|  \sigma_\eps (\fk)  -   \sigma_\eps (\fk_0) \bigr\|^{W}_\eps 
+
\bigl\|  \sigma_\eps (\fk_0)  -   \sigma_{\eps_0} (\fk_0) \bigr\|^{W}_\eps 
\label{eq:continuity}\\
& \leq 
c^0_\sigma(\|\fk-\fk_0\|^{\fK})  +
\bigl\|  \sigma_\eps (\fk_0)  -   \sigma_{\eps_0} (\fk_0) \bigr\|^{W}_\eps 
\quad  \underset{(\eps,\fk)\to(\eps_0,\fk_0)}{\longrightarrow} \quad 0 .
\nonumber
\end{align}

\vfill
\pagebreak

\noindent
{\bf Overview of Adiabatic $\mathbf{\mathcal{C}^\ell}$ Regularity:} \label{page adiabatic overview}
Finally, we assume that the adiabatic Fredholm family is regularizing and adiabatic $\cC^\ell$-regular for some $\ell\geq 1$ as in Definition~\ref{def:adiabatic C-l}. Then we established above that each solution map $\sigma_\eps: \cV_\fK \to \cV_W$ takes values in the $\eps$-independent subspace $\cV_W\subset W$ and is $\cC^\ell$ with respect to the $\eps$-dependent norm $\|\cdot\|^W_\eps$ that gives rise to the ambient Banach space $\cV_W \subset \overline W_\eps$. 
The assumption of adiabatic $\cC^\ell$ regularity in Definition~\ref{def:adiabatic C-l} guarantees the higher order regularizing property 
$$
\ul\gamma\in \rT^\ell\cV_{\overline\Gamma,\eps}, \; \rT^\ell\overline\cF_\eps(\ul\gamma) \in \rT^\ell\Omega \quad \Longrightarrow \quad \ul\gamma\in \rT^\ell\cV_\Gamma 
$$
and it guarantees two types of regularity for varying $\eps\in\Delta$:
\begin{itemlist}
\item[{\bf [Pointwise Continuity of $\mathbf\rT^\ell\cF_\eps$ in $\Delta$ at solutions modulo $\mathbf{\mathfrak C}$]}] 
Given any $\eps_0\in\Delta$ and a solution $\ul\gamma_0\in\rT^\ell\cV_\Gamma$ of the linearized equation modulo cokernel $\rT^\ell\cF_{\eps_0}(\ul\gamma_0)\in \rT^\ell\fC$, we have 
\begin{equation} \label{eq:T-ell-F pointwise}
\bigl\|  \rT^\ell\cF_\eps (\ul\gamma_0)  -   \rT^\ell\cF_{\eps_0} (\ul\gamma_0) \bigr\|^{\rT^\ell\Omega}_\eps \underset{\eps\to \eps_0}{\longrightarrow} 0 .
\end{equation}
\item[{\bf[Uniform Continuity of $\rD\rT^{\ell-1}\cF_\eps$]}]
There is a monotone continuous function $c^\ell_{\rT\cF}:=c^{\ell,\delta_Q}_{\rT\cF} : [0,\infty) \to [0,\infty)$ with $c^\ell_{\rT\cF}(0)= 0$ so that for all $\eps\in\Delta$ and $\ul\gamma^\fl,\ul\gamma^\fk\in\rT^{\ell-1}\cV_\Gamma$ with $\|\gamma^\fl_0\|^\Gamma_\eps, \|\gamma^\fk_0\|^\Gamma_\eps \leq \delta_Q$
we have (via Remark~\ref{rmk:uniform DF} for $\delta=\delta_Q$)
\begin{align} \label{eq:T-ell-F uniform}
&\bigl\| \rD\rT^{\ell-1}\cF_\eps (\ul\gamma^\fl)  -   \rD\rT^{\ell-1}\cF_\eps (\ul\gamma^\fk) \bigr\|^{\cL(\rT^{\ell-1}\ol\Gamma_\eps,\rT^{\ell-1}\ol\Omega_\eps )}  \\
&
\qquad\qquad\qquad\qquad\qquad\qquad
\leq 
c^\ell_{\rT\cF}(\|\ul\gamma^\fl - \ul\gamma^\fk \|^{\rT^{\ell-1}\Gamma}_\eps) \,  \max\bigl\{ 1, \| \ul\gamma^\fl \|^{\rT_\bullet^{\ell-1} \Gamma}_\eps,  \| \ul\gamma^\fk \|^{\rT_\bullet^{\ell-1} \Gamma}_\eps \bigr\}^\ell    . 
\nonumber
\end{align}
\end{itemlist}
Now our goal is to show that the family of solution maps $\bigl( \sigma_\eps: \cV_\fK \to \cV_W \bigr)_{\eps\in\Delta_\sigma}$ is adiabatic $\cC^\ell$ in the sense that it satisfies the analogous types of regularity for varying $\eps\in\Delta$: 
\begin{itemlist}
\item[\bf{[Pointwise Continuity of $\mathbf\rT^\ell\sigma_\eps$ in $\mathbf\Delta$]}]
Given any $\eps_0\in\Delta_\sigma$ and $\ul \fk_0 \in \rT^\ell\cV_\fK$ we have 
\begin{equation}   \label{eq:T-ell-sigma pointwise}
\bigl\| \rT^\ell\sigma_\eps(\ul \fk_0) - \rT^\ell\sigma_{\eps_0}(\ul \fk_0) \bigr\|^{\rT^\ell W}_\eps \underset{\eps\to \eps_0}{\longrightarrow} 0 . 
\end{equation}
\item[\bf{[Uniform Continuity of $\mathbf\rT^\ell\sigma_\eps$]}]
There are monotone continuous functions $c^\ell_\sigma : [0,\infty) \to [0,\infty)$ 
and $b^\ell_\sigma : [0,\infty) \to [1,\infty)$ with $c^\ell_\sigma(0)= 0$ 
so that for all $\eps\in\Delta_\sigma$ and $\ul\fk,\ul\fl\in\rT^\ell\cV_\fK$ we have
\begin{equation}   \label{eq:T-ell-sigma uniform} 
\bigl\|  \rT^\ell\sigma_\eps (\ul\fl)  -   \rT^\ell\sigma_\eps (\ul\fk) \bigr\|^{\rT^\ell W}_\eps 
\leq c^\ell_\sigma(\|\ul\fl- \ul\fk\|^{\rT^\ell\fK}) \, b^\ell_\sigma(\max\{ \|\fl\|^{\rT_\bullet^\ell\fK} ,  \|\fk\|^{\rT_\bullet^\ell\fK} \}) . 
\end{equation}
\end{itemlist}
Before proving these two continuity properties we will show that they imply the remaining claims. 

\smallskip
\noindent
{\bf[Uniform Bound]} \label{page uniform bound} 
follows from combining the [Uniform Continuity of $\rT^\ell\sigma_\eps$] with the fact that 
$\rT^\ell\sigma_\eps (\fk^0, 0, \ldots , 0) = (\sigma_\eps(\fk^0), 0 , \ldots , 0 )$, and thus for any $\eps\in\Delta_\sigma$ and $\ul\fk=(\fk^0,\fk^1,\ldots,\fk^{N_\ell})\in\rT^\ell\cV_\fK$
\begin{align*}
\bigl\|  \rT^\ell\sigma_\eps (\ul\fk)  \bigr\|^{\rT^\ell W}_\eps 
&\leq \bigl\|  \rT^\ell\sigma_\eps (\fk^0, \fk^1 \ldots \fk^{N_\ell}) -  \rT^\ell\sigma_\eps (\fk^0, 0 \ldots  0)  \bigr\|^{\rT^\ell W}_\eps 
+ \bigl\|  \rT^\ell\sigma_\eps (\fk^0, 0 \ldots  0)  \bigr\|^{\rT^\ell W}_\eps  \\
&\leq
c^\ell_\sigma(\| (0, \fk^1 \ldots \fk^{N_\ell}) \|^{\rT^\ell\fK}) \, b^\ell_\sigma(\max\{ \|\fk\|^{\rT_\bullet^\ell\fK} ,  \|(\fk^0,0 \ldots 0)\|^{\rT_\bullet^\ell\fK} \}) 
 + \bigl\| (\sigma_\eps(\fk^0), 0  \ldots  0 ) \bigr\|^{\rT^\ell W}_\eps \\
&=
c^\ell_\sigma(\| \ul\fk \|^{\rT_\bullet^\ell\fK}) \, b^\ell_\sigma(\|\fk\|^{\rT_\bullet^\ell\fK}) 
 + \bigl\| \sigma_\eps(\fk^0)  \bigr\|^W_\eps \\
&\leq
c^\ell_\sigma(\| \ul\fk \|^{\rT_\bullet^\ell\fK}) \, b^\ell_\sigma(\|\fk\|^{\rT_\bullet^\ell\fK})  + \delta_\sigma . 
\end{align*}

\smallskip
\noindent
{\bf [Continuity of $\mathbf\rT^\ell\sigma_\eps$ w.r.t.\ $\mathbf{\|\cdot\|_0}$]} \label{page continuity of derivatives}
follows by combining 
[Pointwise Continuity] and [Uniform Continuity] with  [Lower Bound on Norms]: 
The map
$\Delta_\sigma\times\rT^\ell\cV_\fK \to \bigl( \rT^\ell\cV_W , \|\cdot\|^{\rT^\ell W}_0\bigr), 
(\eps,\ul\fk)\mapsto \rT^\ell\sigma_\eps(\ul\fk)$ is continuous since for any $\eps_0\in\Delta_\sigma$ and $\ul\fk_0\in\rT^\ell\cV_\fK$ we have for $\Delta_\sigma\times\rT^\ell\cV_\fK \ni (\eps,\ul\fk)\to(\eps_0,\ul\fk_0)$
\begin{align}
\bigl\| \rT^\ell\sigma_\eps (\ul\fk)  -   \rT^\ell\sigma_{\eps_0} (\ul\fk_0) \bigr\|^{\rT^\ell W}_0 
&\leq 
\bigl\| \rT^\ell\sigma_\eps (\ul\fk)  - \rT^\ell\sigma_{\eps_0} (\ul\fk_0) \bigr\|^{\rT^\ell W}_\eps 
\nonumber\\ 
& \leq 
\bigl\|  \rT^\ell\sigma_\eps (\ul\fk)  -   \rT^\ell\sigma_\eps (\ul\fk_0) \bigr\|^{\rT^\ell W}_\eps 
+
\bigl\|  \rT^\ell\sigma_\eps (\ul\fk_0)  -   \rT^\ell\sigma_{\eps_0} (\ul\fk_0) \bigr\|^{\rT^\ell W}_\eps 
\nonumber \\
& \leq 
c^\ell_\sigma(\|\ul\fk- \ul\fk_0\|^{\rT^\ell\fK} ) b^\ell_\sigma(\max\{ \|\fk\|^{\rT_\bullet^\ell\fK} ,  \|\fk_0\|^{\rT_\bullet^\ell\fK} \})  +
\bigl\|  \rT^\ell\sigma_\eps (\ul\fk_0)  -   \rT^\ell\sigma_{\eps_0} (\ul\fk_0) \bigr\|^{\rT^\ell W}_\eps 
\label{eq:continuous derivatives}\\
&\hspace{-6mm} \underset{(\eps,\ul\fk)\to(\eps_0,\ul\fk_0)}{\longrightarrow} \quad 0 .
\phantom{M^N}
\nonumber
\end{align}
Here we used the fact that $c^\ell_\sigma$ is continuous with $c^\ell_\sigma(0)=0$ and that
$\|\fk\|^{\rT_\bullet^\ell\fK}\to  \|\fk_0\|^{\rT_\bullet^\ell\fK}$ as $\ul\fk \to\ul\fk_0$, so that 
$b^\ell_\sigma(\max\{ \|\fk\|^{\rT_\bullet^\ell\fK} ,  \|\fk_0\|^{\rT_\bullet^\ell\fK} \})$ stays bounded by continuity of $b^\ell_\sigma$

So it remains to prove the two properties of adiabatic $\cC^\ell$ regularity for the solution maps -- which we will do by induction in $\ell\in\bN$. The induction step -- below after \eqref{eq:D sigma} -- can be interpreted to work with $\ell=0$ as the base case (which is established above), but to improve accessibility of the argument, we first go through the computations for the case $\ell=1$.  

\begin{center}
{\underline{\bf Induction Base Case -- Adiabatic $\mathbf{\mathcal{C}^1}$ Regularity}} \label{page base case}
\end{center}

To begin this proof we use the already established differentiability for fixed $\eps\in\Delta_Q$ to compute the derivative of the solution maps at $\fk_0\in\cV_\fK$ from the defining equation \eqref{eq:defining} in terms of the family of maps $\ol g_{\eps,\fk}: ( \fc, \gamma) \mapsto (\ol \pi_\fK(\gamma)-\fk, \ol \cF_\eps(\gamma) - \fc )$ from Lemma~\ref{lem:inverses}. Here we denote $\sigma_\eps(\fk_0)=:(\fc_\eps,\gamma_\eps)$ to compute for any $\fk_1\in\fK$ 
\begin{align}
& \ol g_{\eps,\fk_0+\lambda\fk_0}(\sigma_\eps(\fk_0+\lambda\fk_1)) = 0   \nonumber \\
& \quad\Rightarrow\quad
\tfrac{\rd}{\rd\lambda} \ol g_{\eps,\fk_0+\lambda\fk_0}(\sigma_\eps(\fk_0+\lambda\fk_1))\big|_{\lambda =0} 
\;=\; 
\rD \ol g_{\eps,\fk_0} (\sigma_\eps(\fk_0)) \rD \sigma_\eps (\fk_0) \fk_1 + \tfrac\partial{\partial\fk} \ol g_{\eps,\fk_0}(\sigma_\eps(\fk_0))  \fk_1
\;=\; 0 \nonumber \\
&\quad\Leftrightarrow\quad
\ol P_\eps(\gamma_\eps) \rD \sigma_\eps (\fk_0) \fk_1 + ( - \fk_1 ,  0) = (0,0) 
\nonumber \\
&\quad\Leftrightarrow\quad
\ol P_\eps(\gamma_\eps) \rD \sigma_\eps (\fk_0) \fk_1 = ( \fk_1 ,  0) \; , 
\qquad\text{where} \quad (\fc_\eps,\gamma_\eps)=\sigma_\eps(\fk_0)      \label{eq:D defining} \\
&\quad\Leftrightarrow\quad
 \rD \sigma_\eps (\fk_0) \fk_1 = \ol Q_\eps(\gamma_\eps) ( \fk_1 ,  0)  .   \nonumber
\end{align}
Thus $\rD \sigma_\eps (\fk_0): \rT_{\fk_0}\fK = \fK \to \rT_{\sigma_\eps (\fk_0)} W=W$ is the composition of the inclusion $\fK \hookrightarrow \fK\times \Gamma$ with the inverse $\ol Q_\eps(\gamma_\eps)=\rD\ol  g_{\eps,\fk_0} (\sigma_\eps(\fk_0))^{-1}:  \fK \times \Omega \to \fC \times \Gamma = W$ of $\rD\ol  g_{\eps,\fk_0} (\fc_\eps,\gamma_\eps)$. Here the inverse $\ol Q_\eps(\gamma_\eps)$ exists -- and varies continuously with $\gamma_\eps$ -- since we have taken care to construct the solution map so that
$\|\gamma_\eps\|^\Gamma_\eps \leq\| \sigma_\eps(\fk_0) = (\fc_\eps,\gamma_\eps)\|^W_\eps < \delta_\sigma\leq \delta_Q$ guarantees applicability of Lemma~\ref{lem:inverses}. 

\smallskip
\noindent
{\bf[Regularizing Property of $\mathbf{\rD\sigma_\eps}$]} \label{page regularizing 1}
Recall that $\cC^1$ adiabatic regularity includes two regularizing properties in Definition~\ref{def:regularizing}. 
The first was used to establish the solution maps as maps between the $\eps$-independent dense subspaces $\sigma_\eps: \cV_\fK \to \cV_W$. Now given $(\fk_0,\fk_1)\in\rT\cV_\fK$ and denoting 
$\sigma_\eps(\fk_0)=:(\fc_\eps,\gamma_\eps)$ and $(\fc,\zeta):=\rD \sigma_\eps (\fk_0) \fk_1 \in \ol W_\eps = \fC\times\ol\Gamma_\eps$, the above defining equation together with \eqref{eq:P} yields
\begin{align*} 
\ol P_\eps(\gamma_\eps) (\fc,\zeta) + ( - \fk_1 ,  0) = (0,0) 
&\quad\Leftrightarrow\quad
\ol\pi_\fK(\zeta) - \fk_1 = 0 , \quad\rD\ol\cF_\eps( \gamma_\eps ) \zeta + \fc = 0  \\
&\quad\Rightarrow\quad
\rD\ol\cF_\eps( \gamma_\eps ) \zeta = - \fc \in \fC 
\quad\Rightarrow\quad \zeta \in \Gamma , 
\end{align*}
where we used the second (linearized) regularizing property \eqref{eq:reg 1} in Definition~\ref{def:regularizing}. 
With Remark~\ref{rmk:regularizing} this establishes the tangent solution maps as maps between the $\eps$-independent dense subspaces $\rT \sigma_\eps: \rT\cV_\fK \to \rT\cV_W$. 

\smallskip
\noindent
{\bf[Pointwise Continuity of $\mathbf{\rT\sigma_\eps}$ in $\mathbf\Delta$]} \label{page pointwise 1}
will be proved  by fixing $\eps_0\in\Delta_\sigma$, $\fk_0\in\cV_\fK$, $\fk_1\in\fK$, and writing 
$\rT\sigma_\eps(\fk_0,\fk_1)=(\sigma_\eps(\fk_0),\rD \sigma_\eps (\fk_0)\fk_1)$
where
$\sigma_\eps(\fk_0)=:(\fc_\eps,\gamma_\eps)$ solves \eqref{eq:defining} for all $\eps\in\Delta_\sigma$. 
To establish pointwise continuity 
$\bigl\| \rT\sigma_\eps(\fk_0,\fk_1) - \rT\sigma_{\eps_0}(\fk_0,\fk_1) \bigr\|^{\rT W}_\eps \to 0$ as $\eps\to \eps_0$ as in \eqref{eq:T-ell-sigma pointwise} note that we already showed
$$
\| \sigma_\eps(\fk_0) - \sigma_{\eps_0}(\fk_0) \|^{W}_\eps 
= \| \gamma_\eps -\gamma_{\eps_0}\|^{\Gamma}_\eps 
+
\| \fc_\eps -\fc_{\eps_0}\|^{\fC}  
\underset{\eps\to \eps_0}{\longrightarrow} 0 ,
$$
so it remains to prove $\bigl\| \rD\sigma_\eps(\fk_0)\fk_1 - \rD\sigma_{\eps_0}(\fk_0)\fk_1 \bigr\|^{W}_\eps \to 0$ as $\eps\to \eps_0$. Note here that applying the $\|\cdot\|^W_\eps$ norm to this difference only makes sense because we already established that the differentials take values in the $\eps$-independent space $W$. 
Thus $\rD \sigma_{\eps_0} (\fk_0)\fk_1 =: (\fc_1,\gamma_1)\in \fC \times \Gamma$ 
is the solution of 
\begin{equation}\label{eq:D sigma equation}
\pi_\fK(\gamma_1) = \fk_1 \quad\text{and}\quad  \rD\cF_{\eps_0}(\gamma_{\eps_0}) \gamma_1 = \fc_1  . 
\end{equation}
Here we used the explicit form of $\rD g_{\eps,\fk_0} (\fc_\eps,\gamma_\eps): (\fk,\gamma)\mapsto(\pi_\fK, \rD\cF_\eps(\gamma_\eps) \gamma - \fc)$ from Lemma~\ref{lem:inverses}. 
Thus $(\gamma_{\eps_0}, \gamma_1)\in\rT\cV_\Gamma$ is a solution of the linearized equation modulo cokernel 
$$
\rT\cF_{\eps_0}(\gamma_{\eps_0}, \gamma_1)
\;=\; \bigl( \cF_{\eps_0}(\gamma_{\eps_0}) , \rD\cF_{\eps_0}(\gamma_{\eps_0}) \gamma_1 \bigr)
\;=\; \bigl( \fc_{\eps_0} , \fc_1 \bigr)
\;\in\; \rT\fC
$$ 
which will allow us to apply \eqref{eq:T-ell-F pointwise} below.
Now for $\eps\to\eps_0\in\Delta_\sigma$ we can use the fact that $Q_\eps(\gamma_\eps)\rD g_{\eps,\fk_0} (\fc_\eps,\gamma_\eps) ={\rm Id}_{ \fC \times \Gamma}$ from Lemma~\ref{lem:inverses} to estimate
\begin{align*}
\bigl\| \rD\sigma_\eps(\fk_0)\fk_1 - \rD\sigma_{\eps_0}(\fk_0)\fk_1 \bigr\|^W_\eps 
&= 
\bigl\| Q_\eps(\gamma_\eps) ( \fk_1 ,  0)  -  Q_\eps(\gamma_\eps)\rD g_{\eps,\fk_0} (\fc_\eps,\gamma_\eps)  (\fc_1,\gamma_1) \bigr\|^W_\eps 
\\
&\leq 
\bigl\| Q_\eps(\gamma_\eps) \bigr\|^{\cL( \fK \times \ol\Omega_\eps , \ol W_\eps)} 
\bigl( 
\bigl\|  \fk_1  - \pi_\fK(\gamma_1) \bigr\|^\fK
+ 
\bigl\|  - \rD\cF_{\eps} (\gamma_\eps) \gamma_1 + \fc_1  \bigr\|^\Omega_\eps 
\bigr) 
\\
&\qquad\qquad
\text{\small using \eqref{eq:Q-estimate} and \eqref{eq:D sigma equation} } \\
&\leq 
C_Q
\bigl\|  \rD \cF_{\eps} (\gamma_\eps) \gamma_1 -   \rD\cF_{\eps_0}(\gamma_{\eps_0}) \gamma_1 \bigr\|^\Omega_\eps   \\
&\leq 
C_Q \bigl( 
\bigl\|  \rD \cF_{\eps} (\gamma_\eps) \gamma_1 -   \rD\cF_\eps(\gamma_{\eps_0}) \gamma_1 \bigr\|^\Omega_\eps 
+ \bigl\|  \rD\cF_\eps(\gamma_{\eps_0}) \gamma_1 -   \rD\cF_{\eps_0}(\gamma_{\eps_0}) \gamma_1 \bigr\|^\Omega_\eps \bigr) \\
&\qquad\qquad
\text{\small using \eqref{eq:T-ell-F uniform} and \eqref{eq:T-ell-F pointwise} for $\rT\cF_\eps(\gamma_0,\gamma_1)=(\cF_\eps(\gamma_0),\rD\cF_\eps(\gamma_0)\gamma_1)$} \\
&\leq
C_Q \bigl( 
c^1_\cF(\|\gamma_\eps- \gamma_{\eps_0}\|^\Gamma_\eps)
+ \bigl\|  \rT\cF_\eps(\gamma_{\eps_0}, \gamma_1) -   \rT\cF_{\eps_0}(\gamma_{\eps_0}, \gamma_1) \bigr\|^\Omega_\eps \bigr) 
\quad\underset{\eps\to\eps_0}{\longrightarrow} \quad 0 .
\end{align*}

\smallskip
\noindent
{\bf[Uniform Continuity of $\mathbf{\rT\sigma_\eps}$]} \label{page uniform 1}
follows similarly from the previously established
uniform continuity of $\sigma_\eps$ together with uniform continuity of $\rD\sigma_\eps$. 
To establish the latter, we consider $\eps\in\Delta_\sigma$ and $\ul\fk=(\fk_0,\fk_1), \ul\fl=(\fl_0,\fl_1) \in \rT\cV_\fK$, 
and write 
$\sigma_\eps(\fk_0)=:(\fc_\eps,\gamma_\eps)$ resp.\ $\sigma_\eps(\fl_0)=: (\fc'_\eps,\gamma'_\eps)$ 
to estimate
\begin{align*}
 \bigl\| \rD\sigma_\eps (\fl_0) \,  \fl_1 - \rD\sigma_\eps(\fk_0) \,  \fk_1 \bigr\|^W_\eps  
&= \bigl\| Q_\eps(\gamma'_\eps) ( \fl_1 ,  0)  -  Q_\eps(\gamma_\eps) ( \fk_1 ,  0) \bigr\|^W_\eps 
\\
&\leq \bigl\| Q_\eps(\gamma'_{\eps}) (\fl_1,  0)  -  Q_\eps(\gamma'_{\eps}) ( \fk_1 ,  0) \bigr\|^W_\eps
+  \bigl\| Q_\eps(\gamma'_{\eps}) ( \fk_1,  0)  -  Q_\eps(\gamma_\eps) ( \fk_1 ,  0) \bigr\|^W_\eps 
\\
&\leq  \bigl\| Q_\eps(\gamma'_{\eps}) \bigr\|^{\cL( \fK \times \ol\Omega_\eps ,\ol W_\eps)} \| \fl_1 - \fk_1 \|^W_\eps
+  \bigl\| Q_\eps(\gamma'_{\eps})  -  Q_\eps(\gamma_\eps)\bigr\|^{\cL( \fK \times \ol\Omega_\eps ,\ol W_\eps)} \| \fk_1 \|^W_\eps \\
&\qquad\qquad
\text{\small using \eqref{eq:Q-estimate} and \eqref{eq:Q cont} in Lemma~\ref{lem:inverses} } \\
&\leq 
C_Q  \| \fl_1 - \fk_1  \|^\fK 
+ (C_Q )^2 \, c^1_\cF(\| \gamma'_{\eps} - \gamma_\eps \|^\Gamma_\eps)   \|  \fk_1\|^\fK .
\end{align*}
Combining this with the already established uniform continuity estimate \eqref{eq:sigma continuity} for $\sigma_\eps$ yields 
\begin{align*}
 \bigl\| \rT\sigma_\eps (\ul \fl) - \rT\sigma_\eps(\ul\fk) \bigr\|^{\rT W}_\eps  
&=
 \bigl\| \sigma_\eps (\fl_0) - \sigma_\eps(\fk_0)  \bigr\|^W_\eps  
+
 \bigl\| \rD\sigma_\eps (\fl_0) \,  \fl_1 - \rD\sigma_\eps(\fk_0) \,  \fk_1 \bigr\|^W_\eps   \\
&\leq 
\tfrac{C_Q}{1-\theta} \| \fl_0  -  \fk_0  \|^\fK  + C_Q  \| \fl_1 - \fk_1  \|^\fK
+ (C_Q )^2 \, c^1_\cF \bigl(  \tfrac{C_Q}{1-\theta} \,  \| \fl_0  -  \fk_0  \|^\fK \bigr) \| \fk_1\|^\fK \\
&\leq 
\tfrac{C_Q}{1-\theta}  \| (\fl_0,\fl_1) - (\fk_0,\fk_1) \|^{\rT\fK}
+ (C_Q )^2 \, c^1_\cF \bigl(  \tfrac{C_Q}{1-\theta} \,  \| \fl_0  -  \fk_0  \|^\fK \bigr)  \max\{1,  \|  \fl_1\|^\fK, \|  \fk_1\|^\fK \} \\
&\leq 
\tfrac{C_Q}{1-\theta}  \| \ul\fl - \ul\fk \|^{\rT\fK}
+ (C_Q )^2 \, c^1_\cF \bigl(  \tfrac{C_Q}{1-\theta} \,  \| \ul\fl  -  \ul\fk  \|^{\rT\fK} \bigr)  \max\{1,  \| \ul\fl\|^{\rT_\bullet\fK},  \| \ul\fk\|^{\rT_\bullet\fK} \} \\
&\leq 
c^1_\sigma( \| \ul\fl - \ul\fk \|^{\rT\fK} ) \, b^1_\sigma(\max\{ \|\fl\|^{\rT_\bullet\fK} ,  \|\fk\|^{\rT_\bullet\fK} \})
\end{align*}
with 
$$
c^1_\sigma(x) := \tfrac{C_Q}{1-\theta} x  + (C_Q )^2 \, c^1_\cF (  \tfrac{C_Q}{1-\theta}  x) 
\qquad\text{and}\qquad
b^1_\sigma(x) := \max\{1,x\} .
$$
This finishes the proof that adiabatic $\cC^1$ regularity of the adiabatic Fredholm family implies adiabatic $\cC^1$ regularity of the solution maps. 

\vfill
\pagebreak

\begin{center}
{\underline{\bf  Induction Step -- Adiabatic $\mathbf{\mathcal{C}^{\ell+1}}$ Regularity}}
\label{page step}
\end{center}

Now suppose we proved that adiabatic $\cC^\ell$ regularity of an adiabatic Fredholm family as in Definition~\ref{def:adiabatic C-l} implies adiabatic $\cC^\ell$ regularity of the solution maps and uniform bounds \eqref{eq:uniform bound} for some $\ell\geq 1$\footnote{To make sense of this proof with induction base case $\ell=0$, ignore intermediate arguments involving $\rT^{0-1}$ of any map $f$, and interpret $\rD\rT^{0-1}f(v)= f$.} 
and consider an adiabatic $\cC^{\ell+1}$-regular adiabatic Fredholm family $\bigl((\cF_\eps:\cV_\Gamma\to\Omega)_{\eps\in\Delta},\ldots\bigr)$ and its solution maps $\bigl( \sigma_\eps: \cV_\fK \to\cV_W \bigr)_{\eps\in\Delta_\sigma}$ as constructed above. 
Then the $(\ell+1)$-st tangent of the solution maps
\begin{align*}
\rT^{\ell+1}\sigma_\eps = \rT \, \rT^\ell\sigma_\eps : \quad \rT^{\ell+1} \cV_\fK = \rT^\ell \cV_\fK \times \rT^\ell \cV_\fK
&\quad \longrightarrow \quad 
\rT^{\ell+1} W =  \rT^\ell W \times \rT^\ell W \\
\bigl( \ul\fk_0 , \ul \fk_1 \bigr) 
&\quad\longmapsto\quad
\bigl(  \rT^\ell\sigma_\eps (\ul\fk_0 )  ,  \rD \rT^\ell\sigma_\eps (\ul\fk_0) \ul \fk_1 \bigr) 
\end{align*}
are pointwise continuous in $\Delta$ and uniformly continuous in the first component $\rT^\ell\sigma_\eps$ by the induction hypothesis. 
So to prove that $\sigma_\eps$ is adiabatic $\cC^{\ell+1}$-regular it remains to consider the second component $\rD \rT^\ell\sigma_\eps$ and prove its pointwise and uniform continuity. This is classically achieved by casting the linearizations of the fixed point problems with solutions $\sigma_\eps$ as higher order fixed point problems whose solutions are the derivatives under consideration. That leads to the same estimates as the following approach in which we generalize the explicit expression for the differential 
$\rD \sigma_\eps (\fk_0) \fk_1 = Q_\eps(\gamma_\eps) ( \fk_1 ,  0) $ when $\sigma_\eps (\fk_0)=(\fc_\eps,\gamma_\eps)$. 
To generalize this expression for the first differential of the solution maps we rewrite the derivative of the defining equation \eqref{eq:D defining} for the solution maps more concisely as the identity
\begin{equation} \label{eq:P sigma}
\widetilde P_\eps \circ  \rT \sigma_\eps =  \widetilde \sigma_\eps 
\end{equation}
for the maps with abbreviation $V:=\fK\times\Omega$
\begin{align*} 
 \widetilde \sigma_\eps &:& \rT\cV_\fK = \cV_\fK \times \fK &\to \cV_W \times  V , &
 \quad (\fk_0,\fk_1) &\mapsto \bigl(  \sigma_\eps(\fk_0), \fk_1 , 0 \bigr) , \\
 \rT \sigma_\eps &:& \rT\cV_\fK = \cV_\fK \times \fK &\to \rT \cV_W =  \cV_W \times W, &
 \quad (\fk_0,\fk_1) &\mapsto \bigl(  \sigma_\eps(\fk_0),  \rD\sigma_\eps(\fk_0) \fk_1 \bigr) , \\
 \widetilde P_\eps  &:& \rT\cV_W = \cV_W \times W  &\to   \cV_W \times   V, &
\quad
 \bigl(  w_0 = (\fc_0,\gamma_0) ,  w_1 \bigr) &\mapsto \bigl( w_0 , P_\eps(\gamma_0) \, w_1 \bigr) .
\end{align*}
Indeed, applying the left hand side of \eqref{eq:P sigma} to any $(\fk_0,\fk_1)\in\rT\cV_\fk$ yields as claimed
\begin{align*}
\widetilde P_\eps \bigl( \rT \sigma_\eps (\fk_0,\fk_1) \bigr)
& =  \widetilde P_\eps \bigl( \sigma_\eps (\fk_0) ,  \rD \sigma_\eps (\fk_0) \, \fk_1 \bigr)  
\;=\;   \bigl( \sigma_\eps (\fk_0) = (\fc_\eps,\gamma_\eps) ,  P_\eps(\gamma_\eps)  \, \rD \sigma_\eps (\fk_0) \, \fk_1 \bigr)  \\
& \overset{ \eqref{eq:D defining}}{=}  \bigl( \sigma_\eps (\fk_0)  ,  ( \fk_1 ,  0)  \bigr)  \;=\;  \widetilde \sigma_\eps (\fk_0,\fk_1) .
\end{align*}
Since $P_\eps(\gamma_0) ( \fc, \gamma) = (\pi_\fK(\gamma), \rD\cF_\eps(\gamma_0) \gamma - \fc )$ is given by \eqref{eq:P}, the identity \eqref{eq:P sigma} also encodes the fact that the tangent map of the solution map solves the tangent version of the stablized equation $\cF_\eps(\gamma)\in\fC$, 
$$
(\fc,\gamma,\fc',\xi)= \rT \sigma_\eps (\fk_0,\fk_1)
\qquad
\Longrightarrow
\qquad
\rT\cF_\eps(\gamma,\xi) = (\fc, \fc') \in \rT\fC , 
$$
where $\rT\cV_W=\rT(\fC\times\cV_\Gamma)= \fC \times \cV_\Gamma \times \fC\times \Gamma$. 
Identifying $\rT^\ell\rT\cV_W\simeq \rT^\ell\fC \times \rT^\ell\cV_\Gamma \times \rT^\ell\fC\times \rT^\ell\Gamma$
and taking tangent maps of the above implication yields the higher tangent stabilized equation
\begin{equation} \label{eq:T stabilized equation}
\bigl(\ul\fc  , \ul\gamma , \ul\fc' , \ul\xi \bigr) 
\;=\; \rT^{\ell+1} \sigma_{\eps_0}(\ul \fk_0, \ul \fk_1 )
\qquad
\Longrightarrow
\qquad
\rT^{\ell+1} \cF_\eps(\ul\gamma,\ul\xi) = (\ul\fc, \ul\fc') \in \rT^{\ell+1}\fC . 
\end{equation}
{\bf[Higher Regularizing Property]:} \label{page higher reg}
More precisely, the higher tangent solution maps $\rT^{\ell+1} \sigma_{\eps_0}:\rT^{\ell+1}\cV_\fK \to \rT^{\ell+1}\cV_{\ol W,\eps}$ inititially exist as maps to the completion by Lemma~\ref{lem:solution classical}, so that we obtain the above conclusion in terms of the extended Fredholm map 
$\rT^{\ell+1}  \overline\cF_\eps(\ul\gamma,\ul\xi) = (\ul\fc, \ul\fc') \in \rT^{\ell+1}\fC$. 
However, the higher regularizing property \eqref{def:regularizing T} in Definition~\ref{def:adiabatic C-l}
then implies $(\ul\gamma,\ul\xi)\in \rT^{\ell+1}\cV_\Gamma$  and hence 
$\rT^{\ell+1} \overline\cF_\eps(\ul\gamma,\ul\xi) =\rT^{\ell+1} \cF_\eps(\ul\gamma,\ul\xi) $. 
This also shows that the higher tangent solution maps 
$\rT^{\ell+1} \sigma_{\eps_0}:\rT^{\ell+1}\cV_\fK \to \rT^{\ell+1}\cV_W$
take values in the $\eps$-independent dense subspace. 

Furthermore, the map $\widetilde P_\eps$ is invertible by Lemma~\ref{lem:inverses}, with the inverse explicitly given by
\begin{align}\label{eq:tilde Q} 
\widetilde Q_\eps  &:&   \cV_W \times V &\to  \cV_W \times  W =   \rT\cV_W , &
\quad ( w_0 = (\fc_0,\gamma_0) , \fk , \omega ) &\mapsto \bigl( w_0 ,  Q_\eps(\gamma_0) (\fk,\omega) \bigr) .
\end{align}
With that notation we can rewrite \eqref{eq:P sigma} equivalently as
\begin{equation} \label{eq:D sigma}
\rT \sigma_\eps = \widetilde Q_\eps \circ \widetilde \sigma_\eps  . 
\end{equation}
Note here that the maps $\widetilde \sigma_\eps$ have the same regularity as $\sigma_\eps$, and the maps
$\widetilde P_\eps$ have the same regularity as $\rD\cF_\eps$ since 
$P_\eps(\gamma_0) (w=(\fc,\gamma)) 
=\bigl( \pi_\fK(\gamma), \rD\cF_\eps(\gamma_0) \gamma - \fc \bigr)$. 
Towards analyzing higher order derivatives, we apply the tangent map construction and chain rule \eqref{eq:chain} repeatedly to  \eqref{eq:P sigma} to obtain 
\begin{equation}\label{eq:Psigma}
\rT^{\ell-1}\widetilde P_\eps \circ  \rT^\ell \sigma_\eps =   \rT^{\ell-1}\widetilde \sigma_\eps . 
\end{equation}
Analogously, \eqref{eq:D sigma} yields
$\rT^\ell\sigma_\eps = \rT^{\ell-1} \widetilde Q_\eps \circ  \rT^{\ell-1} \widetilde \sigma_\eps $,  
and taking the differential of this identity yields 
\begin{equation}\label{eq:DTsigma}
\rD \rT^\ell\sigma_\eps(\ul \fk_0) = \rD \bigl(  \rT^{\ell-1}  \widetilde Q_\eps \circ  \rT^{\ell-1}  \widetilde \sigma_\eps \bigr) (\ul \fk_0)   
=  \rD  \rT^{\ell-1}  \widetilde Q_\eps ( \rT^{\ell-1} \widetilde \sigma_\eps (\ul \fk_0) )   \,\circ\,
\rD  \rT^{\ell-1}  \widetilde \sigma_\eps (\ul \fk_0) . 
\end{equation}
This is an inductively explicit formula for $\rD \rT^\ell\sigma_\eps$ since the $\ell$-th order derivatives of $\widetilde \sigma_\eps$ on the right hand side are determined by $\ell$-th order derivatives of $\sigma_\eps$. And this inductive formula will serve to prove adiabatic regularity of $\rD \rT^\ell\sigma_\eps$ since the $\ell$-th order derivative of $\sigma_\eps$ is adiabatic regular by induction hypothesis, and the derivatives of the inverse operators in $\rD  \rT^{\ell-1}  \widetilde Q_\eps$ can be estimated in terms of $(\ell+1)$-st order derivatives of the adiabatic $\cC^{\ell+1}$-regular adiabatic Fredholm family $\cF_\eps$. 
We will establish these estimates in two preliminary steps before proving the two adiabatic regularity properties (pointwise and uniform continuity) of $\rD \rT^\ell\sigma_\eps$. 

\smallskip
\noindent
{\bf Controlling derivatives of $\widetilde \sigma_\eps$ by derivatives of $\sigma_\eps$:} \label{page sigma}
The maps $\widetilde \sigma_\eps$ naturally split 
\begin{align*}
\widetilde \sigma_\eps \; \simeq \;  \sigma_\eps \times I_\fK \; : \quad 
\rT\cV_\fK = \cV_\fK \times \fK   & \quad \longrightarrow \quad  \cV_W \times V  &\quad =\quad & \cV_W \times  \fK \times \Omega \\
(\fk_0,\fk_1)  &\quad \longmapsto\quad  \bigl(  \sigma_\eps(\fk_0), I_\fK(\fk_1) \bigr)   &\quad =\quad & \bigl(  \sigma_\eps(\fk_0), \fk_1 , 0 \bigr)   
\end{align*}
into the original solution maps $\sigma_\eps$ and the $\eps$-independent inclusion $I_\fK: \fK \to V=\fK\times\Omega, \fk \mapsto (\fk,0)$. 
We will identify tangent maps analogously, by splitting higher tangent spaces according to 
$$
\rT \rT \cV_\fK
\;=\bigcup_{(\fv_0,\fk_0)\in\rT\cV_\fK} \rT_{(\fv_0,\fk_0)} \rT \cV_\fK 
\;= \bigcup_{(\fv_0,\fk_0)\in \cV\times \fK} \rT_{(\fv_0,\fk_0)} \bigl(\cV_\fK \times\fK\bigr) 
\;\simeq \bigcup_{\fv_0\in\cV_\fK} \rT_{\fv_0} \cV_\fK \times  \bigcup_{\fk_0\in\fK} \rT_{\fk_0} \fK 
$$
rather than the usual 
$$
\rT \rT \cV_\fK
\;=\bigcup_{(\fv_0,\fk_0)\in\rT\cV_\fK} \rT_{(\fv_0,\fk_0)} \rT \cV_\fK 
\;= \bigcup_{(\fv_0,\fk_0)\in \cV\times \fK} \rT_{(\fv_0,\fk_0)} \bigl(\cV_\fK \times\fK\bigr) 
\;= \bigcup_{(\fv_0,\fk_0)\in \cV\times \fK}  \rT_{\fv_0} \cV_\fK \times \rT_{\fk_0}\fK  . 
$$
Thus for all $k\geq 1$ we identify $\ul \fk = (\fk^0,\fk^1,\ldots,\fk^{N_k})\in\rT^k\cV_\fK$ for $N_k=2^k-1$ with 
\begin{equation}\label{eq:k ev od}
\ul\fk\; \simeq\;  
(\ul \fk^{\scriptscriptstyle ev},\ul \fk^{\scriptscriptstyle od}) \, := \; \bigl( (\fk^0,\fk^2,\ldots,\fk^{N_k-1}) , (\fk^1,\fk^3,\ldots,\fk^{N_k})  \bigr) \;\in\; \rT^{k-1}\cV_\fK \times \rT^{k-1}\fK
\end{equation}
when writing
\begin{align*}
\rT^{\ell}\widetilde \sigma_\eps  \;\simeq\;  \rT^{\ell}\sigma_\eps \times   \rT^\ell I_\fK \; : \quad
\rT^\ell\cV_\fK \times \rT^\ell\fK   & \quad \longrightarrow \quad   \rT^\ell\cV_W \times \rT^\ell V  \\
(\ul\fk^{\scriptscriptstyle ev},\ul\fk^{\scriptscriptstyle od})  \;&\quad \longmapsto\quad \bigl(  \rT^{\ell}\sigma_\eps(\ul\fk^{\scriptscriptstyle ev}),   \rT^\ell I_\fK ( \ul\fk^{\scriptscriptstyle od} ) \bigr)     .   
\end{align*}
Here the inclusion map $I_\fK$ is linear, hence $\rT^\ell I_\fK$ is given by $I_\fK$ in each component, which is naturally bounded $\|I_\fK(\fk)\|^V_\eps=\|(\fk,0)\|^{\fK\times\Omega}_\eps = \|\fk\|^\fK$. 
Now we can transfer the uniform bounds and continuity estimates from $\rT^\ell\sigma_\eps$ to $\rT^\ell\widetilde \sigma_\eps$ as follows: 
First, the inductive assumption \eqref{eq:uniform bound} implies the uniform bound for any $\eps\in\Delta_\sigma$ and $(\ul\fk_0,\ul\fk_1)=\ul\fk\simeq(\ul \fk^{\scriptscriptstyle ev},\ul \fk^{\scriptscriptstyle od}) \in \rT^\ell\cV_\fK \times \rT^\ell\fK\simeq \rT^{\ell+1}\cV_\fK$
\begin{align}
\bigl\| \rT^\ell  \widetilde \sigma_\eps (\ul\fk_0,\ul\fk_1) \bigr\|^{\rT^{\ell}(W \times V)}_\eps 
&= \bigl\|  \rT^{\ell-1}  \widetilde \sigma_\eps (\ul\fk_0) \bigr\|^{\rT^{\ell-1}(W \times V)}_\eps   
+ \bigl\| \rD  \rT^{\ell-1}  \widetilde \sigma_\eps (\ul\fk_0) \ul\fk_1  \bigr\|^{\rT^{\ell-1}(W \times V)}_\eps    
\nonumber\\
&=  \bigl\| \rT^\ell  \widetilde \sigma_\eps (\ul \fk) \bigr\|^{\rT^{\ell}(W \times V)}_\eps 
\;=\;
 \bigl\| \rT^\ell  \widetilde \sigma_\eps (\ul \fk^{\scriptscriptstyle ev},\ul \fk^{\scriptscriptstyle od}) \bigr\|^{\rT^{\ell}(W \times V)}_\eps 
\nonumber\\
&= 
 \bigl\| \rT^\ell  \sigma_\eps (\ul \fk^{\scriptscriptstyle ev})   \bigr\|^{\rT^\ell W}_\eps 
+
\bigl\| \rT^\ell I_\fK (\ul \fk^{\scriptscriptstyle od} ) \bigr\|^{\rT^\ell V }_\eps
\label{eq:DTsigma bound} \\
&\leq
\delta_\sigma  
+ c^\ell_\sigma(\| \ul\fk^{\scriptscriptstyle ev} \|^{\rT_\bullet^\ell\fK}) \, b^\ell_\sigma(\|\fk^{\scriptscriptstyle ev}\|^{\rT_\bullet^\ell\fK}) 
+ \| \ul \fk^{\scriptscriptstyle od} \|^{\rT^\ell \fK }   
\nonumber  \\
&\leq
\delta_\sigma  
+ \bigl( c^\ell_\sigma(\| \ul\fk \|^{\rT_\bullet^{\ell+1}\fK} ) 
+ \| \ul \fk \|^{\rT_\bullet^{\ell+1} \fK }   \bigr)
\, b^\ell_\sigma(\|\fk\|^{\rT_\bullet^{\ell+1}\fK}) 
\nonumber  \\
&\leq
\delta_\sigma  
+ \tilde c^\ell_{\sigma}(\| \ul\fk \|^{\rT_\bullet^{\ell+1}\fK} ) \, b^\ell_\sigma(\|\fk\|^{\rT_\bullet^{\ell+1}\fK})  
\nonumber 
\end{align}
with $\tilde c^\ell_{\sigma}(x):=c^\ell_\sigma(x) + x$ a new monotone, continuous function with $\tilde c^\ell_{\sigma}(0)=0$. 

Second, for fixed $(\ul\fk_0,\ul\fk_1)=\ul\fk\simeq(\ul \fk^{\scriptscriptstyle ev},\ul \fk^{\scriptscriptstyle od}) 
\in \rT^\ell\cV_\fK \times \rT^\ell\fK$ we write
$\rD  \rT^{\ell-1}  \widetilde \sigma_\eps (\ul \fk^{\scriptscriptstyle ev},\ul \fk^{\scriptscriptstyle od}):= 
 \rD  \rT^{\ell-1}  \widetilde \sigma_\eps (\ul \fk_0) \ul \fk_1$. 
Then for $\Delta_\sigma \ni \eps\to\eps_0$ the pointwise continuity of $\rT^\ell\sigma_\eps$ given by the inductive assumption in \eqref{eq:T-ell-sigma pointwise} implies the pointwise continuity 
\begin{align}
\bigl\|    \rT^{\ell}  \widetilde \sigma_\eps (\ul \fk) - 
 \rT^{\ell}  \widetilde \sigma_{\eps_0} (\ul \fk)
\bigr\|^{\rT^{\ell}(W \times V)}_\eps   
&=
 \bigl\| \bigl( \rT^{\ell}  \sigma_\eps (\ul \fk^{\scriptscriptstyle ev}) , \rT^\ell I_\fK (\ul \fk^{\scriptscriptstyle od} )  \bigr)  - 
 \bigl( \rT^{\ell}  \sigma_{\eps_0} (\ul \fk^{\scriptscriptstyle ev})   , \rT^\ell I_\fK (\ul \fk^{\scriptscriptstyle od} )\bigr)
\bigr\|^{\rT^{\ell}(W\times V)}_\eps 
\nonumber\\
&=
 \bigl\|  \rT^{\ell}  \sigma_\eps (\ul \fk^{\scriptscriptstyle ev})   - 
\rT^{\ell}  \sigma_{\eps_0} (\ul \fk^{\scriptscriptstyle ev})  
\bigr\|^{\rT^{\ell}W}_\eps 
\;\underset{\eps\to\eps_0}{\longrightarrow}\; 0.  
\label{eq:DTsigma pointwise}
\end{align}
Third, for any $\eps\in\Delta_\sigma$ and $\ul\fk\simeq(\ul \fk^{\scriptscriptstyle ev},\ul \fk^{\scriptscriptstyle od}), \ul\fl\simeq(\ul \fl^{\scriptscriptstyle ev},\ul \fl^{\scriptscriptstyle od}) \in \rT^\ell\cV_\fK \times \rT^\ell\fK$ the inductive assumption in \eqref{eq:T-ell-sigma uniform} implies the uniform continuity  
\begin{align}
 \bigl\|  \rT^{\ell}  \widetilde \sigma_\eps (\ul \fl )- 
 \rT^{\ell}  \widetilde \sigma_\eps (\ul \fk)
\bigr\|^{\rT^{\ell}(W \times V)}_\eps   
& =
 \bigl\| \bigl( \rT^{\ell}  \sigma_\eps (\ul \fl^{\scriptscriptstyle ev}) , \rT^\ell I_\fK ( \ul \fl^{\scriptscriptstyle od}) \bigr)  - 
 \bigl( \rD  \rT^{\ell}  \sigma_\eps (\ul \fk^{\scriptscriptstyle ev})   ,  \rT^\ell I_\fK ( \ul \fk^{\scriptscriptstyle od}) \bigr)
\bigr\|^{\rT^{\ell}(W\times V)}_\eps 
\nonumber\\
&=
 \bigl\|  \rT^{\ell}  \sigma_\eps (\ul \fl^{\scriptscriptstyle ev})   - 
 \rT^{\ell}  \sigma_\eps (\ul \fk^{\scriptscriptstyle ev})   \bigr\|^{\rT^{\ell} W }_\eps 
+
 \bigl\| \rT^\ell I_\fK (\ul \fl^{\scriptscriptstyle od} ) -  \rT^\ell I_\fK ( \ul \fk^{\scriptscriptstyle od} ) \bigr\|^{\rT^{\ell} V}_\eps  
 \nonumber\\
&\leq 
c^\ell_\sigma(  \| \ul \fl^{\scriptscriptstyle ev} - \ul \fk^{\scriptscriptstyle ev} \|^{\rT^\ell \fK }   )  \, b^\ell_\sigma(\max\{ \|\fl^{\scriptscriptstyle ev} \|^{\rT_\bullet^\ell\fK} ,  \|\fk^{\scriptscriptstyle ev} \|^{\rT_\bullet^\ell\fK} \}) 
+
\| \ul \fl^{\scriptscriptstyle od} - \ul \fk^{\scriptscriptstyle od} \|^{\rT^\ell \fK } 
\label{eq:DTsigma continuity} \\
&\leq 
\bigl( c^\ell_\sigma(  \| \ul \fl - \ul \fk \|^{\rT^{\ell+1} \fK }   )  
+ \| \ul \fl - \ul \fk \|^{\rT_\bullet^{\ell+1} \fK }  \bigr)\, b^\ell_\sigma(\max\{ \|\fl \|^{\rT_\bullet^{\ell+1}\fK} ,  \|\fk \|^{\rT_\bullet^{\ell+1}\fK} \}) 
\nonumber\\
&\leq 
\tilde c^\ell_{\sigma}(  \| \ul \fl - \ul \fk \|^{\rT^{\ell+1} \fK }   )  \, b^\ell_\sigma(\max\{ \|\fl \|^{\rT_\bullet^{\ell+1}\fK} ,  \|\fk \|^{\rT_\bullet^{\ell+1}\fK} \}) . 
\nonumber
\end{align}
These three implications of the inductive assumption will be crucial for the adiabatic $\cC^{\ell+1}$ regularity. Before proving it, we will establish one more set of estimates in preparation. 

\smallskip
\noindent
{\bf Controlling derivatives of $\widetilde Q_\eps$ by derivatives of $\cF_\eps$:} \label{page Q}
The final preparation step is to establish uniform continuity and bounds for the derivatives of the map $\widetilde Q_\eps$ given by \eqref{eq:tilde Q}. For that purpose we will go back to its expression in components\footnote{
Alternatively, the chain rule applied to the identity $\widetilde Q_\eps\circ \widetilde P_\eps={\rm Id}_{W\times W}$ allows one to express $\rD\rT^{\ell-1}\widetilde Q_\eps$ as an inverse of $\rD\rT^{\ell-1}\widetilde P_\eps$. However, while the latter is uniformly continuous and invertible, we couldn't find an explicit formula for this inverse that was suitable for transferring the uniform continuity estimates to $\rD\rT^{\ell-1}\widetilde Q_\eps$.}
\begin{align*}
\widetilde Q_\eps  &:&   \cV_W \times V &\to  \cV_W \times  W  , &
\quad ( w = (\fc ,\gamma ) , v ) &\mapsto \bigl( w ,  Q_\eps(\gamma) v \bigr)
\end{align*}
and use the product rule in the identity $Q_\eps(\gamma) \, P_\eps(\gamma)={\rm Id}_W$ from Lemma~\ref{lem:inverses}. 
More precisely, our goal is to prove uniform bounds and continuity of $\rD \rT^{\ell-1} \widetilde Q_\eps$.  
So we begin by analyzing the higher tangent maps $\rT^{\ell-1}\widetilde Q_\eps$ defined in Definition~\ref{def:tangent map notation}, 
\begin{align*}
\rT^{\ell-1}\widetilde Q_\eps  &:&  \rT^{\ell-1}( \cV_W \times V) \simeq \rT^{\ell-1}\fC \times \rT^{\ell-1}\cV_\Gamma \times  \rT^{\ell-1} V &\;\to\;  \rT^{\ell-1}\fC \times \rT^{\ell-1}\cV_\Gamma \times \rT^{\ell-1} W  \simeq \rT^{\ell-1}( \cV_W \times W), \\
&&
\quad \bigl( \ul\fc , \ul\gamma , \ul v ) &\;\mapsto\; \bigl( \ul\fc , \ul\gamma ,  \ul w:= {\Pi}_{\rT^{\ell-1} W} \rT^{\ell-1}\widetilde Q_\eps (\ul\fc , \ul\gamma , \ul v) 
\bigr) , 
\end{align*}
where the $N_{\ell-1}=2^{\ell-1}-1$ components of $\ul w=(w_0,\ldots,w_{N_\ell})$ are functions of $ \ul\gamma=(\gamma_0,\ldots,\gamma_{N_{\ell-1}})$ and $\ul v=(v_0,\ldots,v_{N_{\ell-1}})$ starting with 
\begin{align*}
w_0 &= Q_\eps(\gamma_0) v_0  \\
w_1 &=  \tfrac\rd{\rd t}|_{t=0} \bigl(  Q_\eps(\gamma_0+t \gamma_1) (v_0 +t v_1)\bigr) 
\;=\; Q_\eps(\gamma_0) v_1 + [\rD Q_\eps(\gamma_0) \gamma_1 ] v_0 .
\end{align*}
Next, $(w_2,w_3)$ arise from the differential $\rD(w_0,w_1)$ at base point $(\gamma_0,v_0)$ applied to $(\gamma_2,\gamma_3, v_2,v_3)$, 
\begin{align*}
w_2 &=  \tfrac\rd{\rd t}|_{t=0} w_0\bigl( (\gamma_0,v_0) +t (\gamma_2, v_2)\bigr)  
\;=\; Q_\eps(\gamma_0) v_2 + [\rD Q_\eps(\gamma_0) \gamma_2 ] v_0 \\
w_3 &= \tfrac\rd{\rd t}|_{t=0} w_1\bigl( (\gamma_0,\gamma_1,v_0,v_1) +t (\gamma_2,\gamma_3, v_2,v_3)\bigr)   \\
&=
Q_\eps(\gamma_0) v_3 + [\rD Q_\eps(\gamma_0) \gamma_2 ] v_1 
+   [\rD Q_\eps(\gamma_0) \gamma_1 ] v_2   +   [\rD Q_\eps(\gamma_0) \gamma_3 ] v_0  + [\rD^2 Q_\eps(\gamma_0) (\gamma_1,\gamma_2) ] v_0 . 
\end{align*}
Then $(w_4,\ldots,w_7)$ arise from the differential $\rD (w_0,\ldots,w_3)$ at base point $(\gamma_0,\ldots,\gamma_3,v_0,\ldots,v_3)$ applied to $(\gamma_4,\ldots,\gamma_7, v_2,\ldots,v_7)$, hence 
\begin{align*}
w_4 &=  \tfrac\rd{\rd t}|_{t=0} w_0\bigl( (\gamma_0,v_0) +t (\gamma_4, v_4) \bigr) 
\;=\; Q_\eps(\gamma_0) v_4 + [\rD Q_\eps(\gamma_0) \gamma_4 ] v_0 \\
\ldots \\
w_7 &=  \tfrac\rd{\rd t}|_{t=0} w_3\bigl( (\gamma_0,v_0,\ldots,\gamma_3,v_3) + t  (\gamma_4,v_4,\ldots,\gamma_7,v_7) \bigr)  \\
&= Q_\eps(\gamma_0) v_7  + [\rD Q_\eps(\gamma_0) \gamma_4] v_3 
+ [\rD Q_\eps(\gamma_0) \gamma_2 ] v_5  + [\rD Q_\eps(\gamma_0) \gamma_6 ] v_1 + [\rD^2 Q_\eps(\gamma_0) (\gamma_2,\gamma_4) ] v_1  \\
&\quad
+   [\rD Q_\eps(\gamma_0) \gamma_1 ] v_6   +   [\rD Q_\eps(\gamma_0) \gamma_5 ] v_2   +   [\rD^2 Q_\eps(\gamma_0) (\gamma_1,\gamma_4) ] v_2   \\
&\quad
+   [\rD Q_\eps(\gamma_0) \gamma_3 ] v_4 +   [\rD Q_\eps(\gamma_0) \gamma_7 ] v_0 +   [\rD^2 Q_\eps(\gamma_0) (\gamma_3,\gamma_4) ] v_0 \\
&\quad
+ [\rD^2 Q_\eps(\gamma_0) (\gamma_1,\gamma_2) ] v_4
+ [\rD^2 Q_\eps(\gamma_0) (\gamma_5,\gamma_2) ] v_0
+ [\rD^2 Q_\eps(\gamma_0) (\gamma_1,\gamma_6) ] v_0
+ [\rD^3 Q_\eps(\gamma_0) (\gamma_1,\gamma_2,\gamma_4) ] v_0 
\end{align*}
Continuing inductively, each component of ${\Pi}_{\rT^{\ell-1} W} \rT^{\ell-1}\widetilde Q_\eps (\ul\fc , \ul\gamma , \ul v)$ is a sum of expressions $[\rD^kQ_\eps(\gamma_0)(\gamma_*,\ldots,\gamma_*)] v_*$ with $k\leq \ell-1$, whose argument $\gamma_*,\ldots,\gamma_*$ is a permutation of a subset of $(\gamma_1,\ldots,\gamma_{N_{\ell-1}})$. 
Furthermore, differentials of these expressions at a base point $(\ul\gamma = (\gamma_0,\ldots) , \ul v)$ are sums of operators 
$O^k_{i,*} (\ul\gamma, \ul v) : \rT^{\ell-1} \Gamma \times  \rT^{\ell-1} V \to \rT^{\ell-1} W$
of the forms
\begin{align} 
O^k_{1,*}(\ul\gamma, \ul v) \,:\; 
\bigl( \ul\xi=(\xi_0,\ldots), \ul y=(y_0,\ldots) \bigr) &\mapsto 
[\rD^{k+1}Q_\eps(\gamma_0)(\gamma_*,\ldots,\gamma_*,\xi_0)] v_*   
\nonumber\\
O^k_{2,*}(\ul\gamma, \ul v) \,:\; 
\bigl( \ul\xi=(\xi_0,\ldots), \ul y=(y_0,\ldots) \bigr) &\mapsto 
[\rD^kQ_\eps(\gamma_0)(\gamma_*,\ldots, \xi_*, \ldots, \gamma_*)] v_*  
\label{eq:parts of TQ} \\
O^k_{3,*}(\ul\gamma, \ul v) \,:\; 
\bigl( \ul\xi=(\xi_0,\ldots), \ul y=(y_0,\ldots) \bigr) &\mapsto 
[\rD^kQ_\eps(\gamma_0)(\gamma_*,\ldots,\gamma_*)] y_* . 
\nonumber
\end{align}
Consequently, the differential of $\rT^{\ell-1}\widetilde Q_\eps$ at a base point $(\ul\fc , \ul\gamma, \ul v)\in \rT^{\ell-1}\fC \times \rT^{\ell-1}\cV_\Gamma \times  \rT^{\ell-1} V$ is 
\begin{align*}
\rD\rT^{\ell-1}\widetilde Q_\eps (\ul\fc , \ul\gamma, \ul v)  &:&  \rT^{\ell-1}\fC \times \rT^{\ell-1}\cV_\Gamma \times  \rT^{\ell-1} V &\;\to\;  \rT^{\ell-1}\fC \times \rT^{\ell-1}\cV_\Gamma \times \rT^{\ell-1} W , \\
&&
\quad ( \ul\fc' , \ul\xi , \ul y) &\;\mapsto\; \bigl( \ul\fc' , \ul\xi,  \rD\ul w (\ul\gamma,\ul v)  (\ul\xi,\ul y) , 
\bigr)
\end{align*}
where each component $\rD w_i (\ul\gamma,\ul v)$ is a sum of operators of the three types above -- all involving higher differentials\footnote{
See Remark~\ref{rmk:multilinear} for the precise definition and notation used for higher differentials.}
 of the inverses $Q_\eps:\cV_{\Gamma,Q} \to\cL(V,W)$  on $\cV_{\Gamma,Q}:=\{\gamma\in\cV_\Gamma \,|\,   \|\gamma_0\|^\Gamma_\eps<\delta_Q \}$ from Lemma~\ref{lem:inverses}. 
To compute these, we use the identity $Q_\eps(\gamma) \, P_\eps(\gamma)={\rm Id}_W$ and the product rule:
\begin{align*}
0 = \rD \bigl[ Q_\eps \, P_\eps \bigr] (\gamma_0) \gamma_1 
&=
\rD Q_\eps (\gamma_0)(\gamma_1) \, P_\eps(\gamma_0) 
+ Q_\eps (\gamma_0) \, \rD P_\eps(\gamma_0)(\gamma_1) \\
\Rightarrow\quad 
- \rD Q_\eps (\gamma_0)(\gamma_1) 
&= 
Q_\eps (\gamma_0) \, \rD P_\eps(\gamma_0)(\gamma_1) \, Q_\eps(\gamma_0)  
\qquad\forall (\gamma_0,\gamma_1)\in \cV_{\Gamma,Q}\times\Gamma  . 
\end{align*}
Taking another differential of this identity yields for $(\gamma_0,\gamma_1,\gamma_2)\in \cV_{\Gamma,Q}\times\Gamma\times\Gamma$ 
\begin{align*}
- \rD^2 Q_\eps (\gamma_0)(\gamma_1,\gamma_2) 
&= 
\rD Q_\eps (\gamma_0)(\gamma_2) \, \rD P_\eps(\gamma_0)(\gamma_1) \, Q_\eps(\gamma_0)  
+ Q_\eps (\gamma_0) \, \rD^2 P_\eps(\gamma_0)(\gamma_1,\gamma_2) \, Q_\eps(\gamma_0) \\
&\qquad
+ Q_\eps (\gamma_0) \, \rD P_\eps(\gamma_0)(\gamma_1) \, \rD Q_\eps(\gamma_0) (\gamma_2) \\
&= 
- Q_\eps (\gamma_0) \, \rD P_\eps(\gamma_0)(\gamma_2) \, Q_\eps(\gamma_0) \, \rD P_\eps(\gamma_0)(\gamma_1) \, Q_\eps(\gamma_0)   \\
&\qquad
+ Q_\eps (\gamma_0) \, \rD^2 P_\eps(\gamma_0)(\gamma_1,\gamma_2) \, Q_\eps(\gamma_0) \\
&\qquad
- Q_\eps (\gamma_0) \, \rD P_\eps(\gamma_0)(\gamma_1) \, Q_\eps (\gamma_0) \, \rD P_\eps(\gamma_0)(\gamma_2) \, Q_\eps(\gamma_0) . 
\end{align*}
Continuing these computations by induction identifies $\rD^k Q_\eps (\gamma_0)(\gamma_1,\ldots,\gamma_k)\in\cL(V,W)$ with a sum of products of operators
\begin{equation} \label{eq:DQ parts}
\pm Q_\eps (\gamma_0) \, \rD^{k_1} P_\eps(\gamma_0) (\gamma_* {\scriptstyle\ldots}) \, Q_\eps (\gamma_0) \, \ldots  \, Q_\eps (\gamma_0) \, \rD^{k_n} P_\eps(\gamma_0) (\gamma_* {\scriptstyle\ldots} ) \, Q_\eps(\gamma_0)
\end{equation}
with $k_1+\ldots+k_n = k$ and the arguments $\gamma_* {\scriptstyle\ldots} \, ,\ldots , \gamma_* {\scriptstyle\ldots}$ given as a permutation of $\gamma_1,\ldots,\gamma_k$. 
Now recall from Lemma~\ref{lem:inverses} that $P_\eps(\gamma_0) \in \cL(W,V)$ is given by $w=(\fc,\xi)\mapsto \bigl( \pi_\fK(\gamma) , \rD\cF_\eps(\gamma_0)\xi- \fc  \bigr)$, so that the differential
$\rD P_\eps (\gamma_0) \gamma_1 \in \cL(W,V)$ is given by 
\begin{align*}
(\fc,\xi) \mapsto  \tfrac{\rd}{\rd t}|_{t=0} P_\eps(\gamma_0+t\gamma_1) (\fc,\xi)
\;=\;
 \tfrac{\rd}{\rd t}|_{t=0}  \bigl( \pi_\fK(\xi) , \rD\cF_\eps(\gamma_0+t\gamma_1)\xi- \fc  \bigr)
 \;=\;
 \bigl( 0 , \rD^2\cF_\eps(\gamma_0) (\xi,\gamma_1)  \bigr) . 
\end{align*}
By induction, all differentials $\rD^k P_\eps (\gamma_0) (\gamma_1,\ldots,\gamma_k) \in \cL(W,V)$ for $k\geq 1$ are given by 
\begin{align*}
W = \fC \times \Gamma \;\ni \; 
(\fc,\xi) \mapsto  \bigl( 0 , \rD^{k+1}\cF_\eps(\gamma_0) (\xi,\gamma_1,\ldots,\gamma_k)  \bigr)  \;\in\; \fK\times\Omega = V,
\end{align*}
so that the [Uniform Bound on $\rD^k\cF_\eps(0)$ for $1\leq k\leq \ell+1\,$] in Definition~\ref{def:adiabatic C-l} 
together with \eqref{rmk:uniform DF} in Remark~\ref{rmk:uniform DF}
implies uniform bounds 
%
%
for all $\eps\in\Delta$, $\gamma_0\in\cV_\Gamma$ with $\|\gamma_0\|^\Gamma_\eps, \leq \delta_Q$, and $\ul\gamma=(\gamma_1,\ldots,\gamma_k) \in\Gamma^k$ with $1\leq k\leq \ell$, 
\begin{align} 
\bigl\| \rD^k P_\eps(\gamma_0) (\gamma_1,\ldots,\gamma_k) \bigr\|^{\cL(\overline W_\eps, \overline V_\eps)} 
&=\sup_{\|w\| \leq 1} \bigl\| [ \rD^k P_\eps(\gamma_0)  (\gamma_1,\ldots,\gamma_k) ] w \bigr\|^V_\eps   
\nonumber \\
&=\sup_{\|(\fc,\xi)\| \leq 1} \bigl\| \bigl( 0 , \rD^{k+1}\cF_\eps(\gamma_0) (\xi,\gamma_1,\ldots,\gamma_k) \bigr) \bigr\|^{\fK\times\overline\Omega_\eps}   
\nonumber \\
&= \sup_{\|\xi\| \leq 1} \bigl\| \rD^{k+1}\cF_\eps(\gamma_0) (\xi,\gamma_1,\ldots,\gamma_k)  \bigr\|^\Omega_\eps \label{eq:DP bound}\\
&\leq \sup_{\|\xi\| \leq 1} \bigl\| \rD^{k+1}\cF_\eps(\gamma_0) \bigr\|^{\cL^k(\overline \Gamma_\eps, \overline \Omega_\eps)} \|\gamma_1\|^\Gamma_\eps \ldots  \|\gamma_k\|^\Gamma_\eps 
\nonumber\\
&\leq \sup_{\|\xi\| \leq 1} \ti C^{k+1}_\cF \|\xi\|^\Gamma_\eps \|\gamma_1\|^\Gamma_\eps \ldots  \|\gamma_k\|^\Gamma_\eps 
\nonumber\\
&\leq
\ti C^{k+1}_\cF  \|\gamma_1\|^\Gamma_\eps \ldots  \|\gamma_k\|^\Gamma_\eps
\;\leq\; \ti C^{k+1}_\cF \bigl( \| \ul\gamma \|^{\Gamma^k}_{\eps,\infty} \bigr)^k  
\nonumber
\end{align}
with 
\begin{equation} \label{eq:Gamma infty}
\ti C^{k}_\cF:=C^k_\cF + c^k_\cF(\delta_Q) 
\qquad\text{and}\qquad
\| \ul\gamma \|^{\Gamma^k}_{\eps,\infty} :=  \max_{1\leq i \leq k} \| \gamma_i \|^{\Gamma}_\eps .
\end{equation} 
Similarly, the 
[Uniform Bound and Uniform Continuity of $\rD^k\cF_\eps$ for $1\leq k\leq \ell+1\,$] in Definition~\ref{def:adiabatic C-l} and Remark~\ref{rmk:uniform DF} imply uniform continuity for all $1\leq k\leq \ell$ and $\eps\in\Delta$, 
$\gamma^\fl_0,\gamma^\fk_0\in\cV_\Gamma$
with $\|\gamma^\fl_0\|^\Gamma_\eps, \|\gamma^\fk_0\|^\Gamma_\eps \leq \delta_Q$, and $\ul\gamma^\fl=(\gamma^\fl_1,\ldots,\gamma^\fl_k), \ul\gamma^\fk=(\gamma^\fk_1,\ldots,\gamma^\fk_k )\in\Gamma^k$ 
\begin{align}
& \bigl\|  \rD^k P_\eps(\gamma^\fl_0) (\gamma^\fl_1,\ldots,\gamma^\fl_k)  -  \rD^k P_\eps(\gamma^\fk_0) (\gamma^\fk_1,\ldots,\gamma^\fk_k)  \bigr\|^{\cL(\overline W_\eps, \overline V_\eps)}  
\nonumber\\
&=
\bigl\|\rD^{k+1}\cF_\eps(\gamma^\fl_0) (\,\cdot\,,\gamma^\fl_1,\ldots,\gamma^\fl_k)  -\rD^{k+1}\cF_\eps(\gamma^\fk_0) (\,\cdot\,,\gamma^\fk_1,\ldots,\gamma^\fk_k)  \bigr\|^{\cL(\overline\Gamma_\eps,\overline\Omega_\eps)}  
\nonumber\\
&=
\sup_{\|\xi\|\leq 1}
\bigl\|\rD^{k+1}\cF_\eps(\gamma^\fl_0) (\xi,\gamma^\fl_1,\ldots,\gamma^\fl_k)  -\rD^{k+1}\cF_\eps(\gamma^\fk_0) (\xi,\gamma^\fk_1,\ldots,\gamma^\fk_k)  \bigr\|^\Omega_\eps  
\nonumber
\end{align}
\begin{align}
&\leq 
\sup_{\|\xi\|\leq 1} \bigl( 
\bigl\|\rD^{k+1}\cF_\eps(\gamma^\fl_0) (\xi,\gamma^\fl_1,\ldots,\gamma^\fl_k)  -\rD^{k+1}\cF_\eps(\gamma^\fk_0) (\xi,\gamma^\fl_1,\ldots,\gamma^\fl_k)  \bigr\|^\Omega_\eps
\nonumber\\
&\qquad\qquad + 
\bigl\|\rD^{k+1}\cF_\eps(\gamma^\fk_0) (\xi,\gamma^\fl_1,\ldots,\gamma^\fl_k)  -\rD^{k+1}\cF_\eps(\gamma^\fk_0) (\xi,\gamma^\fk_1,\gamma^\fl_2,\ldots,\gamma^\fl_k)  \bigr\|^\Omega_\eps
+ \ldots
\nonumber\\
&\qquad\qquad + 
\bigl\|\rD^{k+1}\cF_\eps(\gamma^\fk_0) (\xi,\gamma^\fk_1,\ldots,\gamma^\fk_{k-1},\gamma^\fl_k)  -\rD^{k+1}\cF_\eps(\gamma^\fk_0) (\xi,\gamma^\fk_1,\ldots,\gamma^\fk_k)  \bigr\|^\Omega_\eps
\bigr) 
\nonumber\\
&= 
\sup_{\|\xi\|\leq 1} \bigl( 
\bigl\| \bigl( \rD^{k+1}\cF_\eps(\gamma^\fl_0) -\rD^{k+1}\cF_\eps(\gamma^\fk_0) \bigr) (\xi,\gamma^\fl_1,\ldots,\gamma^\fl_k)  \bigr\|^\Omega_\eps
\nonumber\\
&\qquad\qquad + 
\bigl\|\rD^{k+1}\cF_\eps(\gamma^\fk_0) (\xi,\gamma^\fl_1 - \gamma^\fk_1,\gamma^\fl_2,\ldots,\gamma^\fl_k)  \bigr\|^\Omega_\eps +\ldots 
\nonumber\\
&\qquad\qquad + 
\bigl\|\rD^{k+1}\cF_\eps(\gamma^\fk_0) (\xi,\gamma^\fk_1,\ldots,\gamma^\fk_{k-1},\gamma^\fl_k - \gamma^\fk_k)  \bigr\|^\Omega_\eps
\bigr) 
\nonumber \\
&\leq 
\sup_{\|\xi\|\leq 1} \bigl( 
\bigl\|\rD^{k+1}\cF_\eps(\gamma^\fl_0) -\rD^{k+1}\cF_\eps(\gamma^\fk_0) \bigr\|^{\cL^{k+1}(\overline\Gamma_\eps^{k+1}, \overline\Omega_\eps)}   \|  \xi \|^\Gamma_\eps   \| \gamma^\fl_1 \|^\Gamma_\eps \ldots  \| \gamma^\fl_k \|^\Gamma_\eps
\label{eq:DP continuity}\\
&\qquad\qquad + 
\bigl\|\rD^{k+1}\cF_\eps(\gamma^\fk_0) \bigr\|^{\cL^{k+1}(\overline\Gamma_\eps^{k+1}, \overline\Omega_\eps)} 
\|  \xi \|^\Gamma_\eps   \| \gamma^\fl_1 - \gamma^\fk_1 \|^\Gamma_\eps   \| \gamma^\fl_2 \|^\Gamma_\eps \ldots  \| \gamma^\fl_k \|^\Gamma_\eps 
+\ldots
\nonumber\\
&\qquad\qquad + 
\bigl\|\rD^{k+1}\cF_\eps(\gamma^\fk_0) \bigr\|^{\cL^{k+1}(\overline\Gamma_\eps^{k+1}, \overline\Omega_\eps)} 
\|  \xi \|^\Gamma_\eps   \| \gamma^\fk_1 \|^\Gamma_\eps \ldots  \| \gamma^\fk_{k-1} \|^\Gamma_\eps \| \gamma^\fl_k - \gamma^\fk_k \|^\Gamma_\eps
\bigr) 
\nonumber \\
&\leq 
\sup_{\|\xi\|\leq 1} \bigl( 
c^{k+1}_\cF( \|\gamma^\fl_0 - \gamma^\fk_0 \|^\Gamma_\eps )   \|  \xi \|^\Gamma_\eps   \| \gamma^\fl_1 \|^\Gamma_\eps  \ldots  \| \gamma^\fl_k \|^\Gamma_\eps
\nonumber\\
&\qquad\qquad + 
C^{k+1,\delta_Q}_\cF  \|  \xi \|^\Gamma_\eps   \| \gamma^\fl_1 - \gamma^\fk_1 \|^\Gamma_\eps  \| \gamma^\fl_2 \|^\Gamma_\eps \ldots  \| \gamma^\fl_k \|^\Gamma_\eps 
+\ldots 
\nonumber\\
&\qquad\qquad  + 
C^{k+1,\delta_Q}_\cF  \|  \xi \|^\Gamma_\eps  \| \gamma^\fk_1 \|^\Gamma_\eps \ldots  \| \gamma^\fk_{k-1} \|^\Gamma_\eps   \| \gamma^\fl_k - \gamma^\fk_k \|^\Gamma_\eps 
\bigr) 
\nonumber\\
&= 
c^{k+1}_\cF( \|\gamma^\fl_0 - \gamma^\fk_0 \|^\Gamma_\eps )   \| \gamma^\fl_1 \|^\Gamma_\eps  \ldots  \| \gamma^\fl_k \|^\Gamma_\eps
+ 
C^{k+1,\delta_Q}_\cF  \| \gamma^\fl_1 - \gamma^\fk_1 \|^\Gamma_\eps  \| \gamma^\fl_2 \|^\Gamma_\eps \ldots  \| \gamma^\fl_k \|^\Gamma_\eps +\ldots  
\nonumber\\
&\qquad\qquad\qquad\qquad\qquad\qquad\qquad\qquad\qquad
 + 
C^{k+1,\delta_Q}_\cF  \| \gamma^\fk_1 \|^\Gamma_\eps \ldots  \| \gamma^\fk_{k-1} \|^\Gamma_\eps   \| \gamma^\fl_k - \gamma^\fk_k \|^\Gamma_\eps 
\nonumber\\
&\leq
c^{k+1}_\cF( \|\gamma^\fl_0 - \gamma^\fk_0 \|^\Gamma_\eps )  \bigl( \| \ul\gamma^\fl \|^{\Gamma^k}_{\eps,\infty} \bigr)^k
+ 
k C^{k+1,\delta_Q}_\cF  \max_{1\leq i \leq k} \| \gamma^\fl_i - \gamma^\fk_i \|^{\Gamma}_\eps 
\bigl(\max_{1\leq i \leq k} \max\{ \| \gamma^\fl_i \|^{\Gamma}_\eps, \| \gamma^\fk_i \|^{\Gamma}_\eps\}  \bigr)^{k-1} 
\nonumber\\
&\leq
\bigl( c^{k+1}_\cF( \|\gamma^\fl_0 - \gamma^\fk_0 \|^\Gamma_\eps ) 
+ 
k C^{k+1,\delta_Q}_\cF \| \ul\gamma^\fl - \ul\gamma^\fk \|^{\Gamma^k}_{\eps,\infty} \bigr) \cdot
M_\eps(\ul\gamma^\fl ,\ul\gamma^\fk )^k  
\nonumber
\end{align}
with 
\begin{equation}\label{eq:M}
M_\eps(\ul\gamma^\fl,\ul\gamma^\fk):= \max\{ 1, \| \ul\gamma^\fl \|^{\Gamma^k}_{\eps,\infty} ,  \| \ul\gamma^\fk \|^{\Gamma^k}_{\eps,\infty}\} . 
\end{equation}
We now combine these two estimates with Lemma~\ref{lem:inverses} to bound the higher differentials of the inverse map $\rD^k Q_\eps$ for $1\leq k \leq \ell$ to obtain for all $\eps\in\Delta$, $\gamma_0\in\cV_\Gamma$ with $\|\gamma_0\|^\Gamma_\eps\leq\delta_Q$, and $\ul\gamma=(\gamma_1,\ldots,\gamma_k) \in\Gamma^k$
\begin{align} 
& \bigl\| \rD^k Q_\eps (\gamma_0)(\gamma_1,\ldots,\gamma_k) \bigr\|^{\cL(\ol V_\eps, \ol W_\eps)}
\nonumber\\
&\qquad \text{using the decomposition \eqref{eq:DQ parts} with $\gamma_* {\scriptstyle\ldots} \, , \ldots, \gamma_* {\scriptstyle\ldots}$ a permutation of $\gamma_1,\ldots,\gamma_k$}
\nonumber \\
&\leq 
\textstyle\sum_{*}
\bigl\|  Q_\eps (\gamma_0) \, \rD^{k_*} P_\eps(\gamma_0) (\gamma_* {\scriptstyle\ldots}) \, Q_\eps (\gamma_0) \, \ldots \, \rD^{k_*} P_\eps(\gamma_0) (\gamma_* {\scriptstyle\ldots}) \, Q_\eps(\gamma_0) 
 \bigr\|^{\cL(\ol V_\eps, \ol W_\eps)}
 \nonumber\\
&\qquad \text{where $k_*\geq 1$ add to $k_1+\ldots+k_{n_*} = k$ }
\nonumber \\
&\leq 
\textstyle\sum_{*}
\bigl\|  Q_\eps (\gamma_0)  \bigr\|^{\scriptscriptstyle\cL}
\bigl\|  \rD^{k_1} P_\eps(\gamma_0) (\gamma_* {\scriptstyle\ldots}) \bigr\|^{\scriptscriptstyle\cL}
\bigl\|  Q_\eps (\gamma_0)  \bigr\|^{\scriptscriptstyle\cL} {\scriptstyle\ldots}  
\bigl\|   \rD^{k_{n_*}} P_\eps(\gamma_0) (\gamma_* {\scriptstyle\ldots})  \bigr\|^{\scriptscriptstyle\cL(\ol W_\eps, \ol V_\eps)}
\bigl\|  Q_\eps(\gamma_0)  \bigr\|^{\scriptscriptstyle\cL(\ol V_\eps, \ol W_\eps)}
\nonumber\\
&\qquad \text{using \eqref{eq:Q-estimate} with $C_Q\geq 1$}
\nonumber \\
&\leq 
\textstyle\sum_{*} (C_Q)^{n_*+1}
\bigl\|  \rD^{k_1} P_\eps(\gamma_0) (\gamma_* {\scriptstyle\ldots})  \bigr\|^{\cL(\ol V_\eps, \ol W_\eps)}  \ldots \bigl\| \rD^{k_{n_*}} P_\eps(\gamma_0) (\gamma_* {\scriptstyle\ldots}) \bigr\|^{\cL(\ol V_\eps, \ol W_\eps)}
\label{eq:DQ bound}
\\
&\qquad \text{using \eqref{eq:DP bound}} \nonumber \\
&\leq 
\textstyle\sum_{*} (C_Q)^{n_*+1} \, 
\ti C^{k_1+1}_\cF  \|\gamma_*\|^\Gamma_\eps \ldots  \|\gamma_*\|^\Gamma_\eps  \ldots \, \ti C^{k_{n_*}+1}_\cF  \|\gamma_*\|^\Gamma_\eps \ldots  \|\gamma_*\|^\Gamma_\eps
 \nonumber\\
&\leq 
C^k_{\rD Q} \|\gamma_1\|^\Gamma_\eps \ldots  \|\gamma_k\|^\Gamma_\eps 
\;\leq\; 
C^k_{\rD Q}  \bigl( \| \ul\gamma \|^{\Gamma^k}_{\eps,\infty} \bigr)^k
\text{with}\quad C^k_{\rD Q} := (C_Q)^{k+1} \bigl( \textstyle\sum_{*}  
\ti C^{k_1+1}_\cF  \, \ldots \, \ti C^{k_{n_*}+1}_\cF \bigr). 
\nonumber
\end{align}
Similarly, we obtain uniform continuity of $\rD^k Q_\eps$ for $1\leq k \leq \ell$ in the sense that for all 
$\eps\in\Delta$, $\gamma^\fl_0,\gamma^\fk_0\in\cV_\Gamma$ with $\|\gamma^\fl_0\|^\Gamma_\eps,\|\gamma^\fk_0\|^\Gamma_\eps\leq\delta_Q$, and $\ul\gamma^\fl=(\gamma^\fl_1,\ldots,\gamma^\fl_k), \ul\gamma^\fk=(\gamma^\fk_1,\ldots,\gamma^\fk_k )\in\Gamma^k$
\begin{align*} 
& \bigl\| \rD^k Q_\eps (\gamma^\fl_0)(\gamma^\fl_1,\ldots,\gamma^\fl_k) 
	- \rD^k Q_\eps (\gamma^\fk_0)(\gamma^\fk_1,\ldots,\gamma^\fk_k)  \bigr\|^{\scriptscriptstyle\cL(\ol V_\eps, \ol W_\eps)}
\nonumber\\
&\qquad \text{using the decomposition \eqref{eq:DQ parts} with $\gamma^{\fl/\fk}_* {\scriptstyle\ldots}\, ,\ldots ,\gamma^{\fl/\fk}_*{\scriptstyle\ldots}$ a permutation of $\gamma^{\fl/\fk}_1,\ldots,\gamma^{\fl/\fk}_k$}
\nonumber \\
&\leq 
\textstyle\sum_{*}
\bigl\|  Q_\eps (\gamma^\fl_0) \, \rD^{k_*} P_\eps(\gamma^\fl_0) (\gamma^\fl_* {\scriptstyle\ldots}) \, Q_\eps (\gamma^\fl_0) \, \ldots  \, \rD^{k_*} P_\eps(\gamma^\fl_0) (\gamma^\fl_* {\scriptstyle\ldots} ) \, Q_\eps(\gamma^\fl_0) 
\nonumber\\
&\qquad\quad
- 
Q_\eps (\gamma^\fk_0) \, \rD^{k_*} P_\eps(\gamma^\fk_0) (\gamma^\fk_* {\scriptstyle\ldots}) \, Q_\eps (\gamma^\fk_0) \, \ldots  \, \rD^{k_*} P_\eps(\gamma^\fk_0) (\gamma^\fk_* {\scriptstyle\ldots} ) \, Q_\eps(\gamma^\fk_0) 
 \bigr\|^{\scriptscriptstyle\cL(\ol V_\eps, \ol W_\eps)}
 \nonumber\\
&\qquad \text{where $k_*\geq 1$ add to $k_1+\ldots+k_{n_*} = k$ }
\nonumber \\
&\leq 
\textstyle\sum_{*} \Bigl( 
\bigl\|  \bigl( Q_\eps (\gamma^\fl_0) - Q_\eps (\gamma^\fk_0) \bigr)\, \rD^{k_1} P_\eps(\gamma^\fl_0) (\gamma^\fl_* {\scriptstyle\ldots}) \, Q_\eps (\gamma^\fl_0) \, \ldots \, \rD^{k_{n_*}} P_\eps(\gamma^\fl_0) (\gamma^\fl_* {\scriptstyle\ldots} ) \, Q_\eps(\gamma^\fl_0)  \bigr\|^{\scriptscriptstyle\cL(\ol V_\eps, \ol W_\eps)} 
\nonumber\\
&\qquad
+
 \bigl\| 
Q_\eps (\gamma^\fk_0) \, \bigl( \rD^{k_1} P_\eps(\gamma^\fl_0) (\gamma^\fl_* {\scriptstyle\ldots}) - \rD^{k_1} P_\eps(\gamma^\fk_0) (\gamma^\fk_* {\scriptstyle\ldots}) \bigr) \, Q_\eps (\gamma^\fl_0) \, \ldots   \, \rD^{k_{n_*}} P_\eps(\gamma^\fl_0) (\gamma^\fl_* {\scriptstyle\ldots} ) \, Q_\eps(\gamma^\fl_0) 
 \bigr\|^{\scriptscriptstyle\cL}  
\nonumber \\
&\qquad + \ldots
\nonumber \\
&\qquad
+
 \bigl\| 
Q_\eps (\gamma^\fk_0) \, \rD^{k_1} P_\eps(\gamma^\fk_0) (\gamma^\fk_* {\scriptstyle\ldots}) \, Q_\eps (\gamma^\fk_0) \, \ldots \, \bigl( \rD^{k_{n_*}} P_\eps(\gamma^\fl_0) (\gamma^\fl_* {\scriptstyle\ldots} ) -  \rD^{k_{n_*}} P_\eps(\gamma^\fk_0) (\gamma^\fk_* {\scriptstyle\ldots} ) \bigr) \, Q_\eps(\gamma^\fl_0) 
 \bigr\|^{\scriptscriptstyle\cL}  
\nonumber \\
&\qquad
+
 \bigl\| 
Q_\eps (\gamma^\fk_0) \, \rD^{k_1} P_\eps(\gamma^\fk_0) (\gamma^\fk_* {\scriptstyle\ldots}) \, Q_\eps (\gamma^\fk_0) \, \ldots  \, \rD^{k_{n_*}} P_\eps(\gamma^\fk_0) (\gamma^\fk_* {\scriptstyle\ldots} ) \, \bigl( Q_\eps(\gamma^\fl_0)  - Q_\eps(\gamma^\fk_0)  \bigr)
 \bigr\|^{\scriptscriptstyle\cL(\ol V_\eps, \ol W_\eps)}  
\Bigl)
\nonumber \\
&\qquad \text{using \eqref{eq:Q-estimate} with $C_Q\geq 1$ and \eqref{eq:DP bound} with 
$\| \ul\gamma \|^{\Gamma^k}_{\eps,\infty} =  \max_{1\leq i \leq k} \| \gamma_i \|^{\Gamma}_\eps$}
\\
&\leq 
\textstyle\sum_{*} \Bigl( 
\bigl\|  Q_\eps (\gamma^\fl_0) - Q_\eps (\gamma^\fk_0)   \bigr\|^{\scriptscriptstyle\cL(\ol V_\eps, \ol W_\eps)} 
\ti C^{k_1+1}_\cF  \bigl(\|\ul\gamma^\fl\|^{\Gamma^k}_{\eps,\infty}\bigr)^{k_1} \,  
C_Q \; \ldots\;  \ti C^{k_{n_*}+1}_\cF  \bigl(\|\ul\gamma^\fl\|^{\Gamma^k}_{\eps,\infty}\bigr)^{k_{n_*}}  \, C_Q
\nonumber\\
&\qquad
+
C_Q  \bigl\|  \rD^{k_1} P_\eps(\gamma^\fl_0) (\gamma^\fl_* {\scriptstyle\ldots}) - \rD^{k_1} P_\eps(\gamma^\fk_0) (\gamma^\fk_* {\scriptstyle\ldots})  \bigr\|^{\scriptscriptstyle\cL(\ol W_\eps, \ol V_\eps)} 
C_Q 
\; \ldots\;  \ti C^{k_{n_*}+1}_\cF  \bigl(\|\ul\gamma^\fl\|^{\Gamma^k}_{\eps,\infty}\bigr)^{k_{n_*}}  \,  C_Q
\nonumber \\
&\qquad + \ldots
\nonumber \\
&\qquad
+
C_Q \, \ti C^{k_1+1}_\cF  \bigl(\|\ul\gamma^\fk\|^{\Gamma^k}_{\eps,\infty}\bigr)^{k_1}   \, C_Q 
 \; \ldots\; 
\bigl\| \rD^{k_{n_*}} P_\eps(\gamma^\fl_0) (\gamma^\fl_* {\scriptstyle\ldots} ) -  \rD^{k_{n_*}} P_\eps(\gamma^\fk_0) (\gamma^\fk_* {\scriptstyle\ldots} ) \bigr\|^{\scriptscriptstyle\cL(\ol W_\eps, \ol V_\eps)}    C_Q
\nonumber \\
&\qquad
+
C_Q \, \ti C^{k_1+1}_\cF   \bigl(\|\ul\gamma^\fk\|^{\Gamma^k}_{\eps,\infty}\bigr)^{k_1}   \, C_Q 
 \; \ldots\; 
 \ti  C^{k_{n_*}+1}_\cF  \bigl(\|\ul\gamma^\fk\|^{\Gamma^k}_{\eps,\infty}\bigr)^{k_{n_*}} 
 \bigl\| Q_\eps(\gamma^\fl_0)  - Q_\eps(\gamma^\fk_0)   \bigr\|^{\scriptscriptstyle\cL(\ol V_\eps, \ol W_\eps)}  
\Bigl)
\nonumber \\
&\qquad \text{using \eqref{eq:Q cont} and \eqref{eq:DP continuity} with
$M_\eps(\ul\gamma^\fl,\ul\gamma^\fk)= \max\{ 1, \| \ul\gamma^\fl \|^{\Gamma^k}_{\eps,\infty} ,  \| \ul\gamma^\fk \|^{\Gamma^k}_{\eps,\infty}\}$ }
\nonumber \\
&\leq 
\textstyle\sum_{*}  \Bigl( (n_*+1) (C_Q)^{n_*} \ti C^{k_1+1}_\cF \; \ldots\;  \ti C^{k_{n_*}+1}_\cF
 \, (C_Q)^2 \, c^1_\cF(\|\gamma^\fl_0- \gamma^\fk_0\|^\Gamma_\eps) \,
M_\eps(\ul\gamma^\fl , \ul\gamma^\fk)^{k_1+\ldots+k_{n_*}} 
\nonumber\\
&\qquad
+ (C_Q)^{n_*+1}
\bigl( c^{k_1+1}_\cF( \|\gamma^\fl_0 - \gamma^\fk_0 \|^\Gamma_\eps )  + 
k_1\, \ti C^{k_1+1}_\cF  \| \ul\gamma^\fl - \ul\gamma^\fk \|^{\Gamma^k}_{\eps,\infty} \bigr)
M_\eps(\ul\gamma^\fl , \ul\gamma^\fk)^{k_1} 
\nonumber \\
&\qquad\qquad\qquad\qquad\qquad\qquad\qquad\qquad\qquad\qquad\cdot
\ti C^{k_2+1}_\cF M_\eps(\ul\gamma^\fl , \ul\gamma^\fk)^{k_2}  \; \ldots\;  \ti C^{k_{n_*}+1}_\cF M_\eps(\ul\gamma^\fl , \ul\gamma^\fk)^{k_{n_*}} 
\nonumber \\
&\qquad + \ldots
\nonumber \\
&\qquad
+ (C_Q)^{n_*+1} \ti C^{k_1+1}_\cF M_\eps(\ul\gamma^\fl , \ul\gamma^\fk)^{k_1}  \; \ldots\;  \ti C^{k_{n_*-1}+1}_\cF M_\eps(\ul\gamma^\fl , \ul\gamma^\fk)^{k_{n_*-1}} 
\nonumber \\
&\qquad\qquad\qquad\quad\cdot
\bigl( c^{k_{n_*}+1}_\cF( \|\gamma^\fl_0 - \gamma^\fk_0 \|^\Gamma_\eps ) 
+k_{n_*}\, \ti C^{k_{n_*}+1}_\cF  \|\ul\gamma^\fl - \ul\gamma^\fk \|^{\Gamma^k}_\eps \bigr) M_\eps(\ul\gamma^\fl , \ul\gamma^\fk)^{k_{n_*}} 
\Bigr)
\nonumber\\
&\qquad \text{using $C_Q\geq 1$ and $\ti C^{k_*+1}_\cF\geq C^{k_*+1}_\cF \geq 1$} \nonumber\\
&\leq 
\textstyle\sum_{*}  (C_Q)^{n_*+1}  \ti C^{k_1+1}_\cF {\scriptstyle\ldots}\; \ti C^{k_{n_*}+1}_\cF   M_\eps(\ul\gamma^\fl , \ul\gamma^\fk)^k \Bigl( (n_*+1)  \, c^1_\cF(\|\gamma^\fl_0- \gamma^\fk_0\|^\Gamma_\eps) 
+ k \| \ul\gamma^\fl - \ul\gamma^\fk \|^{\Gamma^k}_\eps
\nonumber\\
&\qquad\qquad\qquad\qquad 
 +  c^{k_1+1}_\cF( \|\gamma^\fl_0 - \gamma^\fk_0 \|^\Gamma_\eps ) 
 + \ldots
+ c^{k_{n_*}+1}_\cF( \|\gamma^\fl_0 - \gamma^\fk_0 \|^\Gamma_\eps )  
 \Bigr) \nonumber
\\
&\qquad \text{using the constant $C^k_{\rD Q}= (C_Q)^{k+1} \bigl( \textstyle\sum_{*}  
\ti C^{k_1+1}_\cF  \, \ldots \, \ti C^{k_{n_*}+1}_\cF \bigr)$ from \eqref{eq:DQ bound}} \nonumber\\
&\leq 
C^k_{\rD Q} \, M_\eps(\ul\gamma^\fl , \ul\gamma^\fk)^k
 \Bigl( (k+1)  \, c^1_\cF(\|\gamma^\fl_0- \gamma^\fk_0\|^\Gamma_\eps)  +  k \| \ul\gamma^\fl - \ul\gamma^\fk \|^{\Gamma^k}_\eps 
\nonumber\\
&\qquad\qquad\qquad\qquad
  + k \max\{c^2_\cF( \|\gamma^\fl_0 - \gamma^\fk_0 \|^\Gamma_\eps ), \ldots, c^{k+1}_\cF( \|\gamma^\fl_0 - \gamma^\fk_0 \|^\Gamma_\eps ) \}  \Bigr) \nonumber 
\end{align*}
and hence 
\begin{align} \label{eq:DQ continuity}
& \bigl\| \rD^k Q_\eps (\gamma^\fl_0)(\gamma^\fl_1,\ldots,\gamma^\fl_k) 
	- \rD^k Q_\eps (\gamma^\fk_0)(\gamma^\fk_1,\ldots,\gamma^\fk_k)  \bigr\|^{\scriptscriptstyle\cL(\ol V_\eps, \ol W_\eps)} \\
&\qquad\qquad\qquad\qquad\qquad\qquad\qquad\qquad
\leq 
C^k_{\rD Q} \, \bigl( c^k_{\rD Q}\bigl( \|\gamma^\fl_0 - \gamma^\fk_0 \|^\Gamma_\eps  \bigr) +   \| \ul\gamma^\fl - \ul\gamma^\fk \|^{\Gamma^k}_\eps \bigr)   \, M_\eps(\ul\gamma^\fl , \ul\gamma^\fk)^k  
\nonumber
\end{align}
with the function $c^k_{\rD Q}:[0,\infty)\to[0,\infty)$, 
$c^k_{\rD Q}(x) :=  (k+1) \, c^1_\cF(x)  + k \max\{c^2_\cF(x), \ldots, c^{k+1}_\cF(x) \}$ which inherits monotonicity, continuity, and the value $c^k_{\rD Q}(0)=0$ from its constituents. 

The notation $\| \ul\gamma \|^{\Gamma^k}_{\eps,\infty}$ in \eqref{eq:Gamma infty} and $M_\eps(\ul\gamma^\fl,\ul\gamma^\fk)$ for tuples in $\Gamma^k$ in \eqref{eq:M} will in the following be applied to tuples $(\gamma_*,\ldots,\gamma_*)$ arising as subsets of vector entries of higher tangent vectors, when it is bounded by the fiber norm in \eqref{eq:fiber norm}. That is, for $\ul\gamma,\ul\gamma^\fl,\ul\gamma^\fk\in\rT^{\ell-1}\cV_\Gamma$ we have
$$
\| (\gamma_*,\ldots,\gamma_*) \|^{\Gamma^k}_{\eps,\infty} 
\;=\;   \max_{*} \| \gamma_* \|^{\Gamma}_\eps 
\;\leq\; \max_{1\leq i \leq N_{\ell-1}} \| \gamma_i \|^{\Gamma}_\eps 
\;\leq\; \| \ul\gamma \|^{\rT_\bullet^{\ell-1}\Gamma}_\eps
$$
and thus
\begin{align*}
M_\eps( (\gamma^\fl_*\ldots \gamma^\fl_*) , (\gamma^\fk_*\ldots\gamma^\fk_*) )
&=
\max\{ 1, \| (\gamma^\fl_*\ldots \gamma^\fl_*)\|^{\Gamma^k}_{\eps,\infty} ,  \| (\gamma^\fk_*\ldots\gamma^\fk_*) \|^{\Gamma^k}_{\eps,\infty}\} \\
&\leq 
\max\{ 1,  \| \ul\gamma^\fl \|^{\rT_\bullet^{\ell-1}\Gamma}_\eps  ,  \| \ul\gamma^\fk \|^{\rT_\bullet^{\ell-1}\Gamma}_\eps \}
\;=\; M_\eps(\ul\gamma^\fl,\ul\gamma^\fk) , 
\end{align*}
where we extended the notation $M_\eps(\ldots)$ to higher tangent vectors by 
$$
M_\eps(\ul\gamma^\fl,\ul\gamma^\fk) := \max\{ 1,  \| \ul\gamma^\fl \|^{\rT_\bullet^{\ell-1}\Gamma}_\eps  ,  \| \ul\gamma^\fk \|^{\rT_\bullet^{\ell-1}\Gamma}_\eps \bigr\} . 
$$
Finally, recall that the differential $\rD \rT^{\ell-1}\widetilde Q_\eps$ at a base point $(\ul\fc , \ul\gamma, \ul v)\in \rT^{\ell-1}\fC \times \rT^{\ell-1}\cV_\Gamma \times  \rT^{\ell-1} V$ is a Cartesian product of the identity on $\rT^{\ell-1}\fC \times \rT^{\ell-1}\cV_\Gamma$ and maps 
$ \rD\ul w (\ul\gamma,\ul v) : \rT^{\ell-1}\cV_\Gamma \times  \rT^{\ell-1} V \to \rT^{\ell-1} W$ whose components are all sums of operators of the three types in \eqref{eq:parts of TQ}.  
We will now deduce uniform bounds and continuity estimates for these operators from \eqref{eq:DQ bound} and \eqref{eq:DQ continuity}.

\smallskip
\noindent
{\bf Uniform Bounds on $O^k_{3,*}$:}
For any $1\leq k \leq \ell-1$, $\eps\in\Delta$, $\ul\gamma=(\gamma_0,\ldots)\in \rT^{\ell-1}\cV_\Gamma$ with $\|\gamma_0\|^\Gamma_\eps\leq\delta_Q$, and $\ul v \in \rT^{\ell-1} V$ we obtain from \eqref{eq:DQ bound}  
\begin{align*}
 \bigl\| O^k_{3,*}(\ul\gamma , \ul v) \bigr\|^{\cL(\rT^{\ell-1}\overline\Gamma_\eps\times\rT^{\ell-1}\overline V_\eps, \overline W_\eps)} 
&= 
\sup_{\|(\ul\xi,\ul y)\|\leq 1} 
\bigl\|
\rD^kQ_\eps(\gamma_0)(\gamma_*\ldots \gamma_*)] y_* \bigr\|^W_\eps
\\
&\leq
\sup_{\|y_*\|\leq 1}  C^k_{\rD Q} \,  \bigl( \| (\gamma_*\ldots \gamma_* ) \|^{\Gamma^k}_{\eps,\infty} \bigr)^k
 \|y_* \|^V_\eps 
\;\leq\;
 C^k_{\rD Q} \, \bigl( \| \ul\gamma \|^{\rT_\bullet^{\ell-1}\Gamma}_\eps \bigr)^k  .
\end{align*}

\smallskip
\noindent
{\bf Uniform Continuity of $O^k_{3,*}$:} For any $1\leq k \leq \ell-1$, $\eps\in\Delta$, $\ul\gamma^\fl=(\gamma^\fl_0,\ldots), \ul\gamma^\fk=(\gamma^\fk_0,\ldots )\in \rT^{\ell-1}\cV_\Gamma$ with $\|\gamma^\fl_0\|^\Gamma_\eps,\|\gamma^\fk_0\|^\Gamma_\eps\leq\delta_Q$, and
$\ul v^\fl, \ul v^\fk\in \rT^{\ell-1} V$ we obtain from \eqref{eq:DQ continuity} 
\begin{align*}
& \bigl\| O^k_{3,*}(\ul\gamma^\fl, \ul v^\fl)  - O^k_{3,*} (\ul\gamma^\fk , \ul v^\fk)  \bigr\|^{\cL(\rT^{\ell-1}\overline\Gamma_\eps\times\rT^{\ell-1}\overline V_\eps, \overline W_\eps)}  \\
&= 
\sup_{\|(\ul\xi,\ul y)\|\leq 1} 
\bigl\|
\rD^kQ_\eps(\gamma^\fl_0)(\gamma^\fl_*\ldots \gamma^\fl_*)] y_*
- \rD^kQ_\eps(\gamma^\fk_0)(\gamma^\fk_*\ldots \gamma^\fk_*)] y_* \bigr\|^W_\eps
\\
&\leq
\sup_{\|y_*\|\leq 1}
\bigl\|
\bigl( \rD^kQ_\eps(\gamma^\fl_0)(\gamma^\fl_*\ldots \gamma^\fl_*)] 
- \rD^kQ_\eps(\gamma^\fk_0)(\gamma^\fk_*\ldots \gamma^\fk_*)] \bigr) y_* \bigr\|^W_\eps
\\
&\leq
\sup_{\|y_*\|\leq 1}  C^k_{\rD Q} \bigl( c^k_{\rD Q}\bigl( \|\gamma^\fl_0 - \gamma^\fk_0 \|^\Gamma_\eps  \bigr) +   \| (\gamma^\fl_*\ldots \gamma^\fl_*) - (\gamma^\fk_*\ldots \gamma^\fk_*) \|^{\Gamma^k}_\eps \bigr) \, M_\eps((\gamma^\fl_*\ldots \gamma^\fl_* ), (\gamma^\fk_*\ldots \gamma^\fk_*))^k \|y_* \|^V_\eps \Bigr)  \\
&\leq
 C^k_{\rD Q} \bigl( c^k_{\rD Q}\bigl( \|\gamma^\fl_0 - \gamma^\fk_0 \|^\Gamma_\eps  \bigr) +   \| \ul\gamma^\fl - \ul\gamma^\fk \|^{\rT_\bullet^{\ell-1}\Gamma}_\eps \bigr) \, M_\eps(\ul\gamma^\fl ,\ul\gamma^\fk )^k  .
\end{align*}

\smallskip
\noindent
{\bf Uniform Bounds on $O^k_{2,*}$:} For any $1\leq k \leq \ell-1$, $\eps\in\Delta$, $\ul\gamma=(\gamma_0,\ldots)\in \rT^{\ell-1}\cV_\Gamma$ with $\|\gamma_0\|^\Gamma_\eps\leq\delta_Q$, and $\ul v \in \rT^{\ell-1} V$ we obtain from \eqref{eq:DQ bound}  
\begin{align*}
\bigl\| O^k_{2,*} (\ul\gamma , \ul v)  \bigr\|^{\cL(\rT^{\ell-1}\overline\Gamma_\eps\times\rT^{\ell-1}\overline V_\eps, \overline W_\eps)} 
&= 
\sup_{\|(\ul\xi,\ul y)\|\leq 1} \bigl\|
 \rD^kQ_\eps(\gamma_0)(\gamma_*\ldots \xi_* \ldots \gamma_*)] v_*
\bigr\|^W_\eps
\\
&\leq
\sup_{\|\xi_*\|\leq 1}  C^k_{\rD Q} \, \|\gamma_*\|^\Gamma_\eps \ldots  \|\xi_*\|^\Gamma_\eps \ldots 
 \|\gamma_*\|^\Gamma_\eps  \|v_* \|^V_\eps \\
 &\leq
 C^k_{\rD Q} \, 
 \bigl( \| (\gamma_*\ldots \gamma_*) \|^{\Gamma^{k-1}}_{\eps,\infty} \bigr)^{k-1}  \|v_* \|^V_\eps \\
&\leq
 C^k_{\rD Q} \, \bigl( \| \ul\gamma \|^{\rT_\bullet^{\ell-1}\Gamma}_\eps \bigr)^{k-1}  \| \ul v \|^{\rT_\bullet^{\ell-1} V}_\eps  .
\end{align*}

\smallskip
\noindent
{\bf Uniform Continuity of $O^k_{2,*}$:} For any $1\leq k \leq \ell-1$, $\eps\in\Delta$, $\ul\gamma^\fl=(\gamma^\fl_0,\ldots), \ul\gamma^\fk=(\gamma^\fk_0,\ldots)\in \rT^{\ell-1}\cV_\Gamma$  with $\|\gamma^\fl_0\|^\Gamma_\eps,\|\gamma^\fk_0\|^\Gamma_\eps\leq\delta_Q$, and
$\ul v^\fl, \ul v^\fk\in \rT^{\ell-1} V$ we obtain from \eqref{eq:DQ bound} and \eqref{eq:DQ continuity}
\begin{align*}
& \bigl\| O^k_{2,*}(\ul\gamma^\fl, \ul v^\fl)  - O^k_{2,*} (\ul\gamma^\fk , \ul v^\fk)  \bigr\|^{\cL(\rT^{\ell-1}\overline\Gamma_\eps\times\rT^{\ell-1}\overline V_\eps, \overline W_\eps)}  \\
&= 
\sup_{\|(\ul\xi,\ul y)\|\leq 1} \bigl\|
\rD^kQ_\eps(\gamma^\fl_0)(\gamma^\fl_*\ldots \xi_* \ldots \gamma^\fl_*)] v^\fl_*
- \rD^kQ_\eps(\gamma^\fk_0)(\gamma^\fk_*\ldots \xi_* \ldots \gamma^\fk_*)] v^\fk_*
\bigr\|^W_\eps
\\
&\leq
\sup_{\|\xi_*\|\leq 1} \Bigl( 
\bigl\| \rD^kQ_\eps(\gamma^\fl_0)(\gamma^\fl_*\ldots \xi_* \ldots \gamma^\fl_*)] (v^\fl_* -  v^\fk_*) \bigr\|^W_\eps
\\
&\qquad\qquad\qquad
+ \bigl\| \rD^kQ_\eps(\gamma^\fl_0)(\gamma^\fl_*\ldots \xi_* \ldots \gamma^\fl_*)] v^\fk_*
- \rD^kQ_\eps(\gamma^\fk_0)(\gamma^\fk_*\ldots \xi_* \ldots \gamma^\fk_*)] v^\fk_* \bigr\|^W_\eps
\Bigr)
\\
&\leq
\sup_{\|\xi_*\|\leq 1}  C^k_{\rD Q} \, \Bigl( 
\|\gamma^\fl_*\|^\Gamma_\eps \ldots  \|\xi_*\|^\Gamma_\eps \ldots 
 \|\gamma^\fl_*\|^\Gamma_\eps   \, \|v^\fl_* - v^\fk_* \|^V_\eps \\
&\qquad\qquad\qquad
+\bigl( c^k_{\rD Q}\bigl( \|\gamma^\fl_0 - \gamma^\fk_0 \|^\Gamma_\eps  \bigr) +   \| (\gamma^\fl_*\ldots \xi_* \ldots \gamma^\fl_*) - (\gamma^\fk_*\ldots \xi_* \ldots \gamma^\fk_*) \|^{\Gamma^k}_\eps \bigr) \\
&\qquad\qquad\qquad\qquad\qquad\qquad\qquad\qquad\qquad
\cdot M_\eps((\gamma^\fl_*\ldots \xi_* \ldots \gamma^\fl_* ), (\gamma^\fk_*\ldots \xi_* \ldots \gamma^\fk_*))^k \|v^\fk_* \|^V_\eps \Bigr) \\
&\leq
C^k_{\rD Q} \, \Bigl( 
\|\gamma^\fl_*\|^\Gamma_\eps \ldots  \|\gamma^\fl_*\|^\Gamma_\eps   \, \|v^\fl_* - v^\fk_* \|^V_\eps \\
&\qquad\qquad
+\bigl( c^k_{\rD Q}\bigl( \|\gamma^\fl_0 - \gamma^\fk_0 \|^\Gamma_\eps  \bigr) +  \| \ul\gamma^\fl - \ul\gamma^\fk \|^{\rT^{\ell-1}_\bullet\Gamma}_\eps \bigr) \, M_\eps((\gamma^\fl_* \ldots \gamma^\fl_* ), (\gamma^\fk \ldots \gamma^\fk_*))^k \|v^\fk_* \|^V_\eps \Bigr) \\
&\leq
 C^k_{\rD Q} \, \bigl( c^k_{\rD Q}\bigl( \|\gamma^\fl_0 - \gamma^\fk_0 \|^\Gamma_\eps  \bigr) +   \| \ul\gamma^\fl - \ul\gamma^\fk \|^{\rT_\bullet^{\ell-1}\Gamma}_\eps +  \| \ul v^\fl - \ul v^\fk \|^{\rT_\bullet^{\ell-1}V}_\eps \bigr)   
\, M_\eps(\ul\gamma^\fl , \ul\gamma^\fk)^k M_\eps(\ul v^\fl , \ul v^\fk)   , 
\end{align*}

\smallskip
\noindent
{\bf Uniform Bounds on $O^k_{1,*}$:} 
For any $1\leq k \leq \ell-1$, $\eps\in\Delta$, $\ul\gamma=(\gamma_0,\ldots)\in \rT^{\ell-1}\cV_\Gamma$ with $\|\gamma_0\|^\Gamma_\eps\leq\delta_Q$, and $\ul v \in \rT^{\ell-1} V$ we obtain from \eqref{eq:DQ bound}  
\begin{align*}
\bigl\| O^k_{1,*}(\ul\gamma, \ul v)   \bigr\|^{\cL(\rT^{\ell-1}\overline\Gamma_\eps\times\rT^{\ell-1}\overline V_\eps, \overline W_\eps)} 
&= 
\sup_{\|(\ul\xi,\ul y)\|\leq 1} \bigl\|
[\rD^{k+1}Q_\eps(\gamma_0)(\gamma_*\ldots\gamma_*,\xi_0)] v_*  \bigr\|^W_\eps
\\
&\leq
\sup_{\|\xi_0\|\leq 1}  C^{k+1}_{\rD Q} \, \|\gamma_*\|^\Gamma_\eps \ldots  \|\gamma_*\|^\Gamma_\eps
\|\xi_0\|^\Gamma_\eps  \| v_* \|^V_\eps \\
&\leq
C^{k+1}_{\rD Q} \,  \bigl( \| (\gamma_*\ldots \gamma_*) \|^{\Gamma^{k}}_{\eps,\infty} \bigr)^k   \| v_* \|^V_\eps \\
&\leq
 C^{k+1}_{\rD Q} \,  \bigl( \| \ul\gamma \|^{\rT_\bullet^{\ell-1}\Gamma}_\eps \bigr)^k   \| \ul v \|^{\rT_\bullet^{\ell-1}V}_\eps. 
\end{align*}

\smallskip
\noindent
{\bf Uniform Continuity of $O^k_{1,*}$:} For any $1\leq k \leq \ell-1$, $\eps\in\Delta$, $\ul\gamma^\fl=(\gamma^\fl_0,\ldots), \ul\gamma^\fk=(\gamma^\fk_0,\ldots)\in \rT^{\ell-1}\cV_\Gamma$  with $\|\gamma^\fl_0\|^\Gamma_\eps,\|\gamma^\fk_0\|^\Gamma_\eps\leq\delta_Q$, and
$\ul v^\fl, \ul v^\fk\in \rT^{\ell-1} V$ we obtain from \eqref{eq:DQ bound} and \eqref{eq:DQ continuity}
\begin{align*}
& \bigl\| O^k_{1,*}(\ul\gamma^\fl, \ul v^\fl)  - O^k_{1,*} (\ul\gamma^\fk , \ul v^\fk)  \bigr\|^{\cL(\rT^{\ell-1}\overline\Gamma_\eps\times\rT^{\ell-1}\overline V_\eps, \overline W_\eps)} \\
&= 
\sup_{\|(\ul\xi,\ul y)\|\leq 1} \bigl\|
[\rD^{k+1}Q_\eps(\gamma^\fl_0)(\gamma^\fl_*\ldots\gamma^\fl_*,\xi_0)] v^\fl_* 
-
[\rD^{k+1}Q_\eps(\gamma^\fk_0)(\gamma^\fk_*\ldots\gamma^\fk_*,\xi_0)] v^\fk_* 
\bigr\|^W_\eps
\\
&\leq 
\sup_{\|(\ul\xi,\ul y)\|\leq 1} \Bigl( \bigl\|
[\rD^{k+1}Q_\eps(\gamma^\fl_0)(\gamma^\fl_*\ldots\gamma^\fl_*,\xi_0)] (v^\fl_* 
- v^\fk_* ) \bigr\|^W_\eps
\\
&\qquad\qquad\qquad
+ \bigl\| [\rD^{k+1}Q_\eps(\gamma^\fl_0)(\gamma^\fl_*\ldots\gamma^\fl_*,\xi_0)] v^\fk_* 
- [\rD^{k+1}Q_\eps(\gamma^\fk_0)(\gamma^\fk_*\ldots\gamma^\fk_*,\xi_0)] v^\fk_* 
\bigr\|^W_\eps
\Bigr)
\\
&\leq
\sup_{\|\xi_0\|\leq 1}  C^{k+1}_{\rD Q} \, \Bigl( 
\|\gamma^\fl_*\|^\Gamma_\eps \ldots  \|\gamma^\fl_*\|^\Gamma_\eps \|\xi_0\|^\Gamma_\eps  \|v^\fl_* - v^\fk_* \|^V_\eps \\
&\qquad
+\bigl( c^{k+1}_{\rD Q}\bigl( \|\gamma^\fl_0 - \gamma^\fk_0 \|^\Gamma_\eps  \bigr) +   \| (\gamma^\fl_*\ldots \gamma^\fl_*,\xi_0) - (\gamma^\fk_*\ldots\gamma^\fk_*,\xi_0) \|^{\Gamma^{k+1}}_\eps \bigr) \, 
M_\eps((\gamma^\fl_*\ldots\gamma^\fl_*,\xi_0) , (\gamma^\fk_*\ldots\gamma^\fk_*,\xi_0))^{k+1} \|v^\fk_* \|^V_\eps \Bigr) \\
&\leq
 C^{k+1}_{\rD Q} \, \bigl( c^{k+1}_{\rD Q}\bigl( \|\gamma^\fl_0 - \gamma^\fk_0 \|^\Gamma_\eps  \bigr) +   \| \ul\gamma^\fl - \ul\gamma^\fk \|^{\rT_\bullet^{\ell-1}\Gamma}_\eps +  \| \ul v^\fl - \ul v^\fk \|^{\rT_\bullet^{\ell-1}V}_\eps \bigr)   
\, M_\eps(\ul\gamma^\fl , \ul\gamma^\fk)^{k+1}  M_\eps(\ul v^\fl , \ul v^\fk) . 
\end{align*}
Summarizing, the three types of operators $O^k_{*,*}$ for $1\leq k \leq \ell-1$ satisfy the following:  

\smallskip
\noindent
{\bf Uniform Bounds on $O^k_{*,*}$:} 
For any $\eps\in\Delta$, $(\ul\gamma,\ul v)=(\gamma_0,\ldots)\in \rT^{\ell-1}\cV_\Gamma\times \rT^{\ell-1} V \simeq \rT^{\ell-1}(\cV_\Gamma\times V)$ with $\|\gamma_0\|^\Gamma_\eps\leq\delta_Q$ we have 
\begin{align*}
\bigl\| O^k_{*,*}(\ul\gamma, \ul v)   \bigr\|^{\cL(\rT^{\ell-1}\overline\Gamma_\eps\times\rT^{\ell-1}\overline V_\eps, \overline W_\eps)} 
&\leq
 C^\ell_{\rD Q} \,  \max \bigl\{1, \| (\ul\gamma,\ul v) \|^{\rT_\bullet^{\ell-1}(\Gamma\times V)}_\eps \bigr\}^{k+1} 
\end{align*}
with $C^{\leq\ell}_{\rD Q}:=\max_{1\leq k \leq \ell}  C^k_{\rD Q}$. 

\smallskip
\noindent
{\bf Uniform Continuity of $O^k_{*,*}$:} For any $\eps\in\Delta$, $(\ul\gamma^\fl,\ul v^\fl)=(\gamma^\fl_0,\ldots), (\ul\gamma^\fk,\ul v^\fk)=(\gamma^\fk_0,\ldots)\in \rT^{\ell-1}\cV_\Gamma\times \rT^{\ell-1} V \simeq \rT^{\ell-1}(\cV_\Gamma\times V)$  with $\|\gamma^\fl_0\|^\Gamma_\eps,\|\gamma^\fk_0\|^\Gamma_\eps\leq\delta_Q$ we have
\begin{align*}
& \bigl\| O^k_{*,*}(\ul\gamma^\fl, \ul v^\fl)  - O^k_{1,*} (\ul\gamma^\fk , \ul v^\fk)  \bigr\|^{\cL(\rT^{\ell-1}\overline\Gamma_\eps\times\rT^{\ell-1}\overline V_\eps, \overline W_\eps)} \\
&\qquad\qquad \leq
 C^{\leq\ell}_{\rD Q} \, \bigl( c^{\leq\ell}_{\rD Q}\bigl( \|\gamma^\fl_0 - \gamma^\fk_0 \|^\Gamma_\eps  \bigr) +   \| (\ul\gamma^\fl,\ul v^\fl) - (\ul\gamma^\fk,\ul v^\fk) \|^{\rT_\bullet^{\ell-1}(\Gamma\times V)}_\eps  \bigr)   
\, M_\eps( (\ul\gamma^\fl,\ul v^\fl) , (\ul\gamma^\fk,\ul v^\fk))^{k+2} 
\end{align*}
with the new function $c^{\leq\ell}_{\rD Q}:[0,\infty)\to [0,\infty)$ given by
$c^{\leq\ell}_{\rD Q}(x):=\max_{1\leq k \leq \ell}  c^k_{\rD Q}(x)$, which inherits monotonicity, continuity, and the value $c^{\leq\ell}_{\rD Q}(0)=0$ from its constituents. . 
 
\smallskip

Since these operators sum to the nontrivial components of $\rD \rT^{\ell-1}\widetilde Q_\eps$ in \eqref{eq:parts of TQ}, we now obtain:

\smallskip
\noindent
{\bf Uniform Bounds on $\rD \rT^{\ell-1}\widetilde Q_\eps$:} 
For any $\eps\in\Delta$ and
$\ul z=(\ul\fc , \ul\gamma, \ul v)\in \rT^{\ell-1}\fC \times \rT^{\ell-1}\cV_\Gamma \times  \rT^{\ell-1} V\simeq \rT^{\ell-1} (\cV_W\times V)$ with $\|\gamma_0\|^\Gamma_\eps\leq\delta_Q$ we have
 \begin{align} 
& \bigl\|  \rD  \rT^{\ell-1}  \widetilde Q_\eps ( \ul z )  \bigr\|^{\cL(\rT^{\ell-1}(\overline W_\eps \times \overline V_\eps) , \rT^{\ell-1}(\overline W_\eps \times \overline W_\eps) )}  
\nonumber\\
&\qquad\qquad\qquad\qquad=
\sup_{\|( \ul\fc' , \ul\xi , \ul y)\|\leq 1} \bigl\| \bigl( \ul\fc' , \ul\xi,  \rD\ul w (\ul\gamma,\ul v)  (\ul\xi,\ul y) \bigr) \bigr\|^{\rT^{\ell-1}(W\times W)}_\eps 
\nonumber\\
&\qquad\qquad\qquad\qquad=
\sup_{\|( \ul\fc' , \ul\xi , \ul y)\|\leq 1} \Bigl( 
\bigl\| \bigl( \ul\fc' , \ul\xi \bigr) \bigr\|^{\rT^{\ell-1} W}_\eps 
+ \textstyle\sum_{i=0}^{ N_{\ell-1}} 
 \bigl\|  \rD w_i (\ul\gamma, \ul v)  ( \ul\xi , \ul y) \bigr\|^W_\eps   \Bigr)
\label{eq:TQ bound} \\
&\qquad\qquad\qquad\qquad\leq 
1 + 
\textstyle\sum_{i=0}^{ N_{\ell-1}} 
\textstyle\sum_{*,*}
 \bigl\| O^{k_*}_{*,*}(\ul\gamma, \ul v)  \bigr\|^{\cL(\rT^{\ell-1}\overline\Gamma_\eps\times\rT^{\ell-1}\overline V_\eps, \overline W_\eps)} 
 \nonumber \\
&\qquad\qquad\qquad\qquad\leq
1 + \textstyle\sum_{i=0}^{ N_{\ell-1}} 
\textstyle\sum_{*,*}    C^{\leq\ell}_{\rD Q} \, 
  \max \bigl\{1, \| (\ul\gamma,\ul v) \|^{\rT_\bullet^{\ell-1}(\Gamma\times V)}_\eps \bigr\}^{k_*+1}  
 \nonumber\\
 &\qquad\qquad\qquad\qquad\leq
 1 + \bigl(\textstyle\sum_{i=0}^{ N_{\ell-1}} 
\textstyle\sum_{*,*}    C^{\leq\ell}_{\rD Q} \bigr) \ \max \bigl\{1, \| (\ul\gamma,\ul v) \|^{\rT_\bullet^{\ell-1}(\Gamma\times V)}_\eps \bigr\}^{k_*+1}  
\nonumber \\
&\qquad\qquad\qquad\qquad\leq
 C^\ell_{\rT Q} \,   \max \bigl\{1, \| \ul z \|^{\rT_\bullet^{\ell-1}(W \times V) }_\eps \bigr\}^\ell 
\nonumber
\end{align}
with a new constant $C^\ell_{\rT Q}:= 1+ \textstyle\sum_{i=0}^{ N_{\ell-1}} \textstyle\sum_{*,*}    C^{\leq\ell}_{\rD Q}$.

\smallskip
\noindent
{\bf Uniform Continuity of $\rD \rT^{\ell-1}\widetilde Q_\eps$:} For any $\eps\in\Delta$ and
$\ul z^\fl=(\ul\fc^\fl , \ul\gamma^\fl, \ul v^\fl), \ul z^\fk=(\ul\fc^\fk , \ul\gamma^\fk, \ul v^\fk)\in \rT^{\ell-1}\fC \times \rT^{\ell-1}\cV_\Gamma \times  \rT^{\ell-1} V\simeq \rT^{\ell-1} (\cV_W\times V)$  with $\|\gamma^\fl_0\|^\Gamma_\eps,\|\gamma^\fk_0\|^\Gamma_\eps\leq\delta_Q$ we have

\begin{align} 
& \bigl\|  \rD  \rT^{\ell-1}  \widetilde Q_\eps (\ul z^\fl ) - 
 \rD  \rT^{\ell-1}  \widetilde Q_\eps (\ul z^\fk) \bigr\|^{\cL(\rT^{\ell-1}(\overline W_\eps \times \overline V_\eps) , \rT^{\ell-1}(\overline W_\eps \times \overline W_\eps))}  
 \nonumber\\
&=
 \bigl\|  \bigl( 0  ,  \rD \ul w (\ul\gamma^\fl, \ul v^\fl)  -  \rD \ul w (\ul\gamma^\fk , \ul v^\fk)  \bigr)  \bigr\|^{\cL(\rT^{\ell-1}(\overline W_\eps \times \overline V_\eps) , \rT^{\ell-1}(\overline W_\eps \times \overline W_\eps))} 
 \nonumber \\
 &=
\textstyle\sum_{i=0}^{ N_{\ell-1}} 
 \bigl\|  \rD w_i (\ul\gamma^\fl, \ul v^\fl)  -  \rD w_i (\ul\gamma^\fk , \ul v^\fk)  \bigr\|^{\cL(\rT^{\ell-1}\overline\Gamma_\eps\times\rT^{\ell-1}\overline V_\eps , \overline W_\eps)} 
 \label{eq:TQ continuity} \\
&\leq
\textstyle\sum_{i=0}^{ N_{\ell-1}} 
\textstyle\sum_{*,*}
 \bigl\| O^{k_*}_{*,*}(\ul\gamma^\fl, \ul v^\fl)  - O^{k_*}_{*,*} (\ul\gamma^\fk , \ul v^\fk)  \bigr\|^{\cL(\rT^{\ell-1}\overline\Gamma_\eps\times\rT^{\ell-1}\overline V_\eps, \overline W_\eps)} 
 \nonumber \\
&\leq 
\textstyle\sum_{i=0}^{ N_{\ell-1}} 
\textstyle\sum_{*,*}
C^{\leq\ell}_{\rD Q} \, \bigl( c^{\leq\ell}_{\rD Q}\bigl( \|\gamma^\fl_0 - \gamma^\fk_0 \|^\Gamma_\eps  \bigr) +   \| (\ul\gamma^\fl,\ul v^\fl) - (\ul\gamma^\fk,\ul v^\fk) \|^{\rT_\bullet^{\ell-1}(\Gamma\times V)} _\eps  \bigr)  
\nonumber \\
&\qquad\qquad\qquad\qquad\qquad\qquad\qquad\qquad\qquad\qquad\qquad\qquad
\cdot M_\eps((\ul\gamma^\fl,\ul v^\fl) ,(\ul\gamma^\fk,\ul v^\fk))^{k_*+2}  
\nonumber \\
&\leq 
 C^\ell_{\rT Q} \bigl( c^{\leq\ell}_{\rD Q}\bigl(  \|\gamma^\fl_0 - \gamma^\fk_0 \|^\Gamma_\eps \bigr) 
+  \| \ul z^\fl - \ul z^\fk \|^{\rT_\bullet^{\ell-1} (W \times V) }_\eps \bigr) \, M( \ul z^\fl , \ul z^\fk )^{\ell+1} . 
\nonumber \\
&=
 C^\ell_{\rT Q} \bigl( c^{\leq\ell}_{\rD Q}\bigl(  \|\gamma^\fl_0 - \gamma^\fk_0 \|^\Gamma_\eps \bigr) 
+  \| \ul z^\fl - \ul z^\fk \|^{\rT_\bullet^{\ell-1} (W \times V) }_\eps \bigr) \,
 \max\{ 1,  \| \ul z^\fl \|^{\rT_\bullet^{\ell-1}(W \times V)}_\eps  ,  \| \ul z^\fk\|^{\rT_\bullet^{\ell-1}(W \times V)}_\eps \bigr\}^{\ell+1} . 
\nonumber
\end{align}
Note here that $\gamma^\fl_0,\gamma^\fk_0\in\cV_\Gamma$ are the second entries of the base points of 
$\ul z^\fl,\ul z^\fk$, that is 
$$
\ul z^\fl=(\fc^\fl_0 , \gamma^\fl_0, v^\fl_0, \ldots )
\quad \text{and}\quad 
\ul z^\fk=(\fc^\fk_0 , \gamma^\fk_0, v^\fk_0, \ldots ).
$$

Finally, we have all preparations in place to prove adiabatic $\cC^{\ell+1}$ regularity in two claims.

\smallskip
\noindent
{\bf Claim: [Pointwise Continuity of $\mathbf\rD\rT^\ell\sigma_\eps$ in $\mathbf\Delta$]}
\label{page pointwise l}
{\it 
For any $\eps_0\in\Delta_\sigma$ and $(\ul \fk_0,\ul \fk_1) \in \rT^{\ell+1}\cV_\fK$ 
$$
\bigl\| \rD\rT^\ell\sigma_\eps(\ul \fk_0)\ul\fk_1 - \rD\rT^\ell\sigma_{\eps_0}(\ul \fk_0)\ul\fk_1 \bigr\|^{\rT^\ell W}_\eps \underset{\eps\to \eps_0}{\longrightarrow} 0 . 
$$}
To prove this claim first recall from Lemma~\ref{lem:inverses} that for each $\eps\in\Delta_\sigma$ -- in the above notation -- we have the identity $\widetilde Q_\eps \circ \widetilde P_\eps = {\rm Id}_{\rT\cV_W}$. This yields
$\rT^{\ell-1}\widetilde Q_\eps \circ  \rT^{\ell-1}\widetilde P_\eps =   \rT^{\ell-1}{\rm Id}_{\rT\cV_W}$ where $\rT^{\ell-1}{\rm Id}_{\rT\cV_W}$ is the identity map on $\rT^{\ell-1}\rT\cV_W = \rT^\ell \cV_W$, whose differential is again the identity map on $\rT^\ell  \cV_W$. The resulting identity 
$$
{\rm Id}_{\rT^\ell \cV_W}
=
\rD\rT^{\ell-1}\bigl( \widetilde Q_\eps \circ \widetilde P_\eps \bigr) ( \ul w_\eps) 
=
\rD\rT^{\ell-1} \widetilde Q_\eps \bigl(\rT^{\ell-1} \widetilde P_\eps ( \ul w_\eps)  \bigr)  \circ \rD\rT^{\ell-1} \widetilde P_\eps( \ul w_\eps) 
$$ 
holds for each base point $\ul w_\eps\in\rT^\ell \cV_W$ on the tangent space $\rT_{ \ul w_\eps}\rT^\ell \cV_W$, which is canonically identified with $\rT^\ell W$. 
Hence for fixed $\eps_0\in\Delta_\sigma$ and  $(\ul\fk_0,\ul\fk_1)\in\rT^{\ell+1}\cV_\fK$ we can apply the identity with varying base points $\ul w_\eps:=\rT^\ell \sigma_\eps (\ul \fk_0)$ to the fixed vector 
$\rD \rT^\ell \sigma_{\eps_0} (\ul \fk_0) \, \ul \fk_1\in \rT_{ \rT^\ell \sigma_{\eps_0}(\fk_0)}\rT^\ell \cV_W=\rT^\ell W$ 
to obtain
\begin{align}
\rD \rT^\ell \sigma_{\eps_0} (\ul \fk_0) \, \ul \fk_1
&=
\rD\rT^{\ell-1}\widetilde Q_\eps \bigl( \rT^{\ell-1}\widetilde P_\eps (\rT^\ell \sigma_\eps (\ul \fk_0)) \bigr) \,  \rD \rT^{\ell-1}\widetilde P_\eps (\rT^\ell \sigma_\eps (\ul \fk_0)) \, \rD \rT^\ell \sigma_{\eps_0} (\ul \fk_0) \, \ul \fk_1 \nonumber \\
&\overset{\eqref{eq:Psigma}}{=}
\rD\rT^{\ell-1}\widetilde Q_\eps \bigl( \rT^{\ell-1}\widetilde \sigma_\eps  (\ul \fk_0) \bigr) \,  \rD \rT^{\ell-1}\widetilde P_\eps (\rT^\ell \sigma_\eps (\ul \fk_0)) \, \rD \rT^\ell \sigma_{\eps_0} (\ul \fk_0) \, \ul \fk_1  . 
\label{eq:DTsigma0}
\end{align}
After these preparations we can now estimate for $\eps_0\in\Delta_\sigma$, $(\ul\fk_0,\ul\fk_1)\in\rT^{\ell+1}\cV_\fK$, and $\Delta_\sigma\ni\eps\to\eps_0$
\begin{align*}
& \bigl\| \rD \rT^\ell\sigma_\eps (\ul \fk_0) \,  \ul \fk_1- \rD\rT^\ell\sigma_{\eps_0}(\ul \fk_0) \,  \ul \fk_1 \bigr\|^{\rT^\ell W}_\eps  \\
&\qquad\qquad  \text{\small using \eqref{eq:DTsigma} and \eqref{eq:DTsigma0}} \\
&= 
\bigl\| \rD  \rT^{\ell-1}  \widetilde Q_\eps( \rT^{\ell-1} \widetilde \sigma_\eps (\ul \fk_0) )   \bigl( 
\rD  \rT^{\ell-1}  \widetilde \sigma_\eps (\ul \fk_0)   \,  \ul \fk_1   
- \rD \rT^{\ell-1}\widetilde P_\eps (\rT^\ell \sigma_\eps (\ul \fk_0)) \, \rD \rT^\ell \sigma_{\eps_0} (\ul \fk_0) \, \ul \fk_1
\bigr)  \bigr\|^{\rT^\ell W}_\eps 
\\
&\leq 
\bigl\|  \rD  \rT^{\ell-1}  \widetilde Q_\eps( \rT^{\ell-1} \widetilde \sigma_\eps (\ul \fk_0) )   \bigr\|^{\cL(\rT^{\ell-1}(\overline W_\eps \times \overline V_\eps) , \rT^\ell\overline W_\eps)} 
\Bigl( \bigl\|    \rD  \rT^{\ell-1}  \widetilde \sigma_\eps (\ul \fk_0)   \,  \ul \fk_1 
-  \rD \rT^{\ell-1}\widetilde P_\eps (\rT^\ell \sigma_\eps (\ul \fk_0))  \bigr) \rD \rT^\ell \sigma_{\eps_0} (\ul \fk_0) \, \ul \fk_1 \bigr\|^{\rT^{\ell-1}(W \times V)}_\eps
\Bigr) \\
&\qquad\qquad  \text{\small using  \eqref{eq:TQ bound}, the triangle inequality, and \eqref{eq:Psigma} at $\eps=\eps_0$ } \\
&\leq 
C^\ell_{\rT Q} \, \max\{1, \| \rT^{\ell-1} \widetilde \sigma_\eps (\ul \fk_0) \|^{\rT^{\ell-1} (W \times V) }_\eps \}^\ell
\Bigl( \bigl\|    \rD  \rT^{\ell-1}  \widetilde \sigma_\eps (\ul \fk_0)   \,  \ul \fk_1 - 
 \rD  \rT^{\ell-1}  \widetilde \sigma_{\eps_0} (\ul \fk_0)   \,  \ul \fk_1
\bigr\|^{\rT^{\ell-1}(W \times V)}_\eps \\
&\qquad\qquad
+ \bigl\| \bigl( \rD \rT^{\ell-1}\widetilde P_{\eps_0} (\rT^\ell \sigma_{\eps_0}(\ul \fk_0)) 
-  \rD \rT^{\ell-1}\widetilde P_\eps (\rT^\ell \sigma_\eps (\ul \fk_0))  \bigr) \rD \rT^\ell \sigma_{\eps_0} (\ul \fk_0) \, \ul \fk_1 \bigr\|^{\rT^{\ell-1}(W \times V)}_\eps
\Bigr) \\
&\qquad\qquad  \text{\small using \eqref{eq:DTsigma bound} and  \eqref{eq:DTsigma pointwise} for $(\ul\fk_0,\ul\fk_1)\simeq(\ul \fk^{\scriptscriptstyle ev},\ul \fk^{\scriptscriptstyle od}) \in \rT^\ell\cV_\fK \times \rT^\ell\fK$ 
and \eqref{eq:DTP pointwise} with notation \eqref{eq:base point}, \eqref{eq:vector}} \\
&\leq
C^\ell_{\rT Q} \,   \max\{1 ,\delta_\sigma  
+ c^{\ell-1}_\sigma(\| \ul\fk_0 \|^{\rT_\bullet^{\ell}\fK} ) \, b^{\ell-1}_\sigma(\|\fk_0\|^{\rT_\bullet^{\ell}\fK}) 
+ \| \ul \fk_0 \|^{\rT_\bullet^{\ell} \fK } \}^{\ell}
\Bigl( \bigl\|    \rD  \rT^{\ell-1}  \sigma_\eps (\fk^{\scriptscriptstyle ev})   - 
 \rD  \rT^{\ell-1}  \sigma_{\eps_0} (\fk^{\scriptscriptstyle ev})  \bigr\|^{\rT^{\ell-1}W}_\eps  \\
&\qquad\qquad
+  c^{\ell+1}_{\rT\cF}(\|\ul\gamma_\eps - \ul\gamma_{\eps_0} \|^{\rT^\ell\Gamma}_\eps ) \, \| \ul\xi_0 \|^{\rT^\ell\Gamma}_\eps
+ \bigl\|  \rT^{\ell+1}\cF_\eps (\ul\gamma_{\eps_0}, \ul\xi_0) - \rT^{\ell+1} \cF_{\eps_0} (\ul\gamma_{\eps_0},\ul\xi_0) \bigr\|^{\rT^{\ell+1}\Omega}_\eps 
\Bigr) \\
&\qquad\qquad  \text{\small using  the convergence in \eqref{eq:DTsigma pointwise} and \eqref{eq:DTP pointwise} } \\
& \quad\underset{\eps\to\eps_0}{\longrightarrow} \quad 0 .
\end{align*}
Here the last step requires an estimate for the maps
$\widetilde P_\eps  :  \fC\times\cV_\Gamma \times \fC\times\Gamma  \to   \fC\times\cV_\Gamma  \times   \fK \times \Omega$
given by 
$$
\widetilde P_\eps (  \fc, \gamma ,  \fc' ,\xi) 
= \bigl( \fc ,\gamma , P_\eps(\gamma) (\fc',\xi)  \bigr)
= \bigl( \fc ,\gamma ,  \pi_\fK(\xi) , \rD\cF_\eps(\gamma) \xi - \fc'    \bigr) . 
$$ 
These are independent from $\eps$ in the first three factors, and in the last factor amount to the maps
$\wh P_\eps: \cV_\Gamma\times\Gamma \times \fC \times \fC  \to \Omega, ( \gamma , \xi , \fc, \fc')  \mapsto \rD\cF_\eps(\gamma) \xi - \fc'$, which are the difference between
$\rD\cF_\eps:\rT\cV_\Gamma\to\Omega$ and the projection $\Pi:\rT\fC \to \Omega, (\fc,\fc') \mapsto \fc'$.
Their higher tangent maps 
$\rT^{\ell-1} \wh P_\eps : \rT^{\ell-1} ( \rT\cV_\Gamma\times\rT\fC ) \to \rT^{\ell-1}\Omega$ are the difference between $\rT^{\ell-1}\rD\cF_\eps:\rT^\ell\cV_\Gamma\to\rT^{\ell-1}\Omega$ and the projection $\rT^{\ell-1}\Pi : \rT^{\ell-1}\rT\fC\simeq \rT^{\ell-1}\fC \times \rT^{\ell-1}\fC \to \rT^{\ell-1}\Omega, (\ul\fc,\ul\fc') \mapsto \ul\fc'$.
Taking the differential of this identification then identifies $\rD\rT^{\ell-1} \wh P_\eps$   
with the sum of $\rD\rT^{\ell-1}\rD\cF_\eps:\rT^{\ell+1}\cV_\Gamma\to\rT^{\ell-1}\Omega$ and the $\eps$-independent map $\rT^{\ell-1}\Pi$.  
Now the term to be estimated is a difference between the maps $\rD\rT^{\ell-1}\widetilde P$ at $\eps_0$ and $\eps$ with base points 
\begin{equation} \label{eq:base point}
\ul w_{\eps_0}:=(\ul\gamma_{\eps_0},\ul\fc_{\eps_0}):= \rT^\ell \sigma_{\eps_0}(\ul \fk_0)
\quad\text{and}\quad
\ul w_\eps:=(\ul\gamma_\eps,\ul\fc_\eps):= \rT^\ell \sigma_\eps(\ul \fk_0)
\end{equation}
applied to the fixed vector 
\begin{equation} \label{eq:vector}
(\ul\xi_0, \ul\fc_0)\,:=\; \rD \rT^\ell \sigma_{\eps_0} (\ul \fk_0) \, \ul \fk_1 \;\in\; \rT_{\ul w_{\eps_0}}\rT^\ell\cV_W \;\simeq\; \rT_{\ul w_\eps}\rT^\ell\cV_W \;\simeq\; \rT^\ell\cV_\Gamma \times \rT^{\ell-1}\fC\times \rT^{\ell-1}\fC . 
\end{equation}
Using the above identifications we can bound this difference by 
\begin{align} 
& \bigl\| \bigl( \rD \rT^{\ell-1}\widetilde P_{\eps_0} (\ul w_{\eps_0}) 
-  \rD \rT^{\ell-1}\widetilde P_\eps (\ul w_\eps)  \bigr) (\ul\xi_0, \ul\fc_0) \bigr\|^{\rT^{\ell-1}(W \times V)}_\eps \nonumber\\
&=
\bigl\| \bigl(  \rD \rT^{\ell-1}\rD \cF_{\eps_0} (\ul\gamma_{\eps_0}) \, \ul\xi_0  -  \rT^{\ell-1}\Pi \, \ul\fc_0 \bigr)
-  \bigl(  \rD \rT^{\ell-1}\rD \cF_\eps (\ul\gamma_\eps) \, \ul\xi_0 -  \rT^{\ell-1}\Pi \, \ul\fc_0 \bigr) \bigr\|^{\rT^{\ell-1}\Omega}_\eps \nonumber\\
&=
\bigl\|  \rD \rT^{\ell-1}\rD \cF_{\eps_0} (\ul\gamma_{\eps_0}) \, \ul\xi_0  - \rD \rT^{\ell-1}\rD \cF_\eps (\ul\gamma_\eps) \, \ul\xi_0 \bigr\|^{\rT^{\ell-1}\Omega}_\eps \nonumber\\
&\leq
\bigl\|  \rD\rT^\ell \cF_\eps (\ul\gamma_\eps) \, \ul\xi_0  - \rD\rT^\ell \cF_{\eps_0} (\ul\gamma_{\eps_0}) \, \ul\xi_0  \bigr\|^{\rT^\ell\Omega}_\eps \nonumber\\
&\leq
\bigl\| \bigl( \rD\rT^\ell \cF_\eps (\ul\gamma_\eps) - \rD\rT^\ell \cF_\eps (\ul\gamma_{\eps_0}) \bigr) \, \ul\xi_0  \bigr\|^{\rT^\ell\Omega}_\eps
+
\bigl\|   \rD\rT^\ell \cF_\eps (\ul\gamma_{\eps_0}) \, \ul\xi_0 - \rD\rT^\ell \cF_{\eps_0} (\ul\gamma_{\eps_0}) \, \ul\xi_0 \bigr\|^{\rT^\ell\Omega}_\eps  \nonumber\\
&\qquad\qquad  \text{\small using \eqref{eq:T-ell-F uniform} with $\ell$ replaced by $\ell+1$ } \nonumber\\
&\leq 
c^{\ell+1}_{\rT\cF}(\|\ul\gamma_\eps - \ul\gamma_{\eps_0} \|^{\rT^\ell\Gamma}_\eps ) \, \| \ul\xi_0 \|^{\rT^\ell\Gamma}_\eps
+ \bigl\|  \rT^{\ell+1}\cF_\eps (\ul\gamma_{\eps_0}, \ul\xi_0) - \rT^{\ell+1} \cF_{\eps_0} (\ul\gamma_{\eps_0},\ul\xi_0) \bigr\|^{\rT^{\ell+1}\Omega}_\eps 
 \label{eq:DTP pointwise} \\
& \underset{\eps\to\eps_0}{\longrightarrow} 0.  \nonumber
\end{align}
To deduce the claimed convergence note that $\ul\xi_0$ is a fixed vector, $c^{\ell+1}_{\rT\cF}$ is continuous with $c^{\ell+1}_{\rT\cF}(0)=0$, and by the induction hypothesis 
$$
\bigl\| \ul\gamma_\eps - \ul\gamma_{\eps_0} \bigr\|^{\rT^\ell \Gamma}_\eps
\;\leq\;
\bigl\| (\ul\gamma_\eps,\ul\fc_\eps) - (\ul\gamma_{\eps_0},\ul\fc_{\eps_0}) \bigr\|^{\rT^\ell W}_\eps
\;=\;
\bigl\| \rT^\ell\sigma_\eps(\ul \fk_0) - \rT^\ell\sigma_{\eps_0}(\ul \fk_0) \bigr\|^{\rT^\ell W}_\eps
\;\underset{\eps\to\eps_0}{\longrightarrow}\; 0 .
$$ 
For the second term in \eqref{eq:DTP pointwise}, convergence is guaranteed by 
[Continuity of $\rT^{\ell+1}\cF_\eps$ in $\Delta$ rel.\ $\mathbf{\mathfrak C}$]
since $(\ul\gamma_{\eps_0},\ul\xi_0)\in\rT^{\ell+1}\cV_\Gamma$ is a part of the vector
$\bigl( \ul\gamma_{\eps_0} ,\ul\fc_{\eps_0}  , \ul\xi_0, \ul\fc_0  \bigr) = \rT^{\ell+1} \sigma_{\eps_0}(\ul \fk_0, \ul \fk_1 )$ which solves the stabilized equation 
$\rT^{\ell+1} \cF_\eps(\ul\gamma_{\eps_0},\ul\xi_0) = (\ul\fc_{\eps_0}, \ul\fc_0) \in \rT^{\ell+1}\fC$
by \eqref{eq:T stabilized equation}. 
This establishes pointwise continuity of the $(\ell+1)$-st tangent of the solution maps 
$\rT^{\ell+1}\sigma_\eps = (\rT^\ell\sigma_\eps ,  \rD \rT^\ell\sigma_\eps)$. To establish adiabatic $\cC^{\ell+1}$ regularity of the solution maps as in Definition~\ref{def:adiabatic C-l}, it remains to prove the following.

\smallskip
\noindent
{\bf Claim: [Uniform Continuity of $\mathbf\rD\rT^\ell\sigma_\eps$]}
\label{page uniform l}
{\it 
There are monotone continuous functions $\hat c^{\ell+1}_\sigma : [0,\infty) \to [0,\infty)$ 
and $\hat b^{\ell+1}_\sigma : [0,\infty) \to [1,\infty)$ 
with $\hat c^{\ell+1}_\sigma(0)= 0$ so that for all $\eps\in\Delta_\sigma$ 
and $\ul\fk=(\ul\fk_0,\ul\fk_1), \ul\fl=(\ul\fl_0,\ul\fl_1)\in\rT^{\ell+1}\cV_\fK = \rT^\ell\cV_\fK \times \rT^\ell \fK$ 
we have
$$
\bigl\|  \rD\rT^\ell\sigma_\eps (\ul\fl_0)\ul\fl_1  -   \rD\rT^\ell\sigma_\eps (\ul\fk_0)\ul\fk_1 \bigr\|^{\rT^\ell W}_\eps 
\leq \hat c^{\ell+1}_\sigma(\| \ul\fl - \ul\fk \|^{\rT^{\ell+1}\fK}) \, \hat b^{\ell+1}_\sigma(\max\{ \|\fl\|^{\rT_\bullet^{\ell+1}\fK} ,  \|\fk\|^{\rT_\bullet^{\ell+1}\fK} \})  . 
$$}
Once established, this claim combines with the induction hypothesis  \eqref{eq:T-ell-sigma uniform} to prove [Uniform Continuity of $\rT^{\ell+1}\sigma_\eps$]: For all $\eps\in\Delta_\sigma$ 
and $\ul\fk=(\ul\fk_0,\ul\fk_1), \ul\fl=(\ul\fl_0,\ul\fl_1)\in\rT^{\ell+1}\cV_\fK$ we have
\begin{align*}
& \bigl\|  \rT^{\ell+1}\sigma_\eps (\ul\fl_0,\ul\fl_1)  -   \rT^{\ell+1}\sigma_\eps (\ul\fk_0,\ul\fk_1) \bigr\|^{\rT^{\ell+1} W}_\eps  \\
&= 
\bigl\|  \rT^\ell\sigma_\eps (\ul\fl_0)  -   \rT^\ell\sigma_\eps (\ul\fk_0) \bigr\|^{\rT^\ell W}_\eps 
+ \bigl\|  \rD\rT^\ell\sigma_\eps (\ul\fl_0)\ul\fl_1  -   \rD\rT^\ell\sigma_\eps (\ul\fk_0)\ul\fk_1 \bigr\|^{\rT^\ell W}_\eps 
\\
&\leq
 c^\ell_\sigma(\|\ul\fl_0- \ul\fk_0\|^{\rT^\ell\fK}) \, b^\ell_\sigma(\max\{ \|\fl_0\|^{\rT_\bullet^\ell\fK} ,  \|\fk_0\|^{\rT_\bullet^\ell\fK} \}) 
+ \hat c^{\ell+1}_\sigma(\| \ul\fl - \ul\fk \|^{\rT^{\ell+1}\fK}) \, \hat b^{\ell+1}_\sigma(\max\{ \|\fl\|^{\rT_\bullet^{\ell+1}\fK} ,  \|\fk\|^{\rT_\bullet^{\ell+1}\fK} \})  
\\
&\leq
\bigl(  c^\ell_\sigma(\|\ul\fl- \ul\fk\|^{\rT^{\ell+1}\fK}) 
+ \hat c^{\ell+1}_\sigma(\| \ul\fl - \ul\fk \|^{\rT^{\ell+1}\fK}) \bigr) \, \max\{ 
b^\ell_\sigma(\max\{ \|\fl\|^{\rT_\bullet^{\ell+1}\fK} ,  \|\fk\|^{\rT_\bullet^{\ell+1}\fK} \}) , 
\hat b^{\ell+1}_\sigma(\max\{ \|\fl\|^{\rT_\bullet^{\ell+1}\fK} ,  \|\fk\|^{\rT_\bullet^{\ell+1}\fK} \})  
\\
&\leq  c^{\ell+1}_\sigma(\| \ul\fl - \ul\fk \|^{\rT^{\ell+1}\fK}) \, b^{\ell+1}_\sigma(\max\{ \|\fl\|^{\rT_\bullet^{\ell+1}\fK} ,  \|\fk\|^{\rT_\bullet^{\ell+1}\fK} \}) 
\end{align*}
where $c^{\ell+1}_\sigma(x):=c^\ell_\sigma(x) + \hat c^{\ell+1}_\sigma(x)$
and $b^{\ell+1}_\sigma(x):=\max\{b^\ell_\sigma(x), \hat b^{\ell+1}_\sigma(x)\}$  
defines the required functions $c^{\ell+1}_\sigma : [0,\infty) \to [0,\infty)$ 
and $b^{\ell+1}_\sigma : [0,\infty) \to [1,\infty)$, 
which inherit monotonicity, continuity, and the value $c^{\ell+1}_\sigma(0)= 0$ from their constituents. 

To prove the claim we consider $\eps\in\Delta_\sigma$ and $\ul\fk=(\ul\fk_0,\ul\fk_1), \ul\fl=(\ul\fl_0,\ul\fl_1)\in\rT^\ell\cV_\fK \times\rT^\ell\fK$ and denote
$$
\ul z_\eps^\fk\,:=\; \rT^{\ell-1} \ti\sigma_\eps(\ul \fk_0) \;=\; (\fc_\eps^\fk, \gamma_\eps^\fk, \ldots ) 
\qquad\text{and}\qquad
\ul z_\eps^\fl\,:=\; \rT^{\ell-1} \ti\sigma_\eps(\ul \fl_0) ;=\; (\fc_\eps^\fl, \gamma_\eps^\fl, \ldots ) , 
$$
where $(\fc_\eps^\fk, \gamma_\eps^\fk)=\sigma_\eps(\fk_0^0)$ and $(\fc_\eps^\fl, \gamma_\eps^\fl)=\sigma_\eps(\fl_0^0)$ are the solution maps applied to the base points of $\ul\fk_0=(\fk_0^0,\ldots)$ and $\ul\fl_0=(\fl_0^0,\ldots)$. 
Then we can use \eqref{eq:DTsigma bound} to estimate the scaling factor as a function of 
 $x= \max\{ \|\fl \|^{\rT_\bullet^{\ell+1}\fK} ,  \|\fk \|^{\rT_\bullet^{\ell+1}\fK} \}$ by
\begin{align}
M( \ul z_\eps^\fl , \ul z_\eps^\fk )
&= \max\{ 1,  \|   \ul z_\eps^\fl \|^{\rT_\bullet^{\ell-1}(W \times V)}_\eps  ,  \| \ul z_\eps^\fk \|^{\rT_\bullet^{\ell-1}(W \times V)}_\eps \bigr\} 
\nonumber \\
&= \max\{ 1,  \|  \rT^{\ell-1} \ti\sigma_\eps(\ul \fl_0) \|^{\rT_\bullet^{\ell-1}(W \times V)}_\eps  ,  \|  \rT^{\ell-1} \ti\sigma_\eps(\ul \fk_0) \|^{\rT_\bullet^{\ell-1}(W \times V)}_\eps \bigr\} 
\nonumber \\
&\leq 
 \max\{ 1, 
 \delta_\sigma  
+ \tilde c^{\ell-1}_{\sigma}(\| \ul\fl_0 \|^{\rT_\bullet^{\ell-1}\fK} ) \, b^{\ell-1}_\sigma(\|\fl_0\|^{\rT_\bullet^{\ell-1}\fK})  ,  
 \delta_\sigma  
+ \tilde c^{\ell-1}_{\sigma}(\| \ul\fk_0 \|^{\rT_\bullet^{\ell-1}\fK} ) \, b^{\ell-1}_\sigma(\|\fk_0\|^{\rT_\bullet^{\ell-1}\fK}) \bigr\}  
\label{eq:M est} \\
&\leq 
 \max\{ 1,  \delta_\sigma  
+ \tilde c^{\ell-1}_{\sigma}(x ) \, b^{\ell-1}_\sigma(x) \bigr\}
\nonumber  . 
\end{align}
With that we can finally estimate the continuity of $\rD\rT^\ell\sigma_\eps$ by 
\begin{align*}
& \bigl\|  \rD\rT^\ell\sigma_\eps (\ul\fl_0)\ul\fl_1  -  \rD \rT^\ell\sigma_\eps (\ul\fk_0)\ul\fk_1 \bigr\|^{\rT^\ell W}_\eps \\
&=
\bigl\| 
 \rD  \rT^{\ell-1}  \widetilde Q_\eps ( \ul z_\eps^\fl )   \,
\rD  \rT^{\ell-1}  \widetilde \sigma_\eps (\ul \fl_0) \, \ul\fl_1
- 
 \rD  \rT^{\ell-1}  \widetilde Q_\eps (\ul z_\eps^\fk )   \,
\rD  \rT^{\ell-1}  \widetilde \sigma_\eps (\ul \fk_0) \, \ul\fk_1
\bigr\|^{\rT^\ell W}_\eps 
 \\ 
&\leq
\bigl\| 
 \rD  \rT^{\ell-1}  \widetilde Q_\eps ( \ul z_\eps^\fl )   \,
\rD  \rT^{\ell-1}  \widetilde \sigma_\eps (\ul \fl_0) \, \ul\fl_1
- 
\rD  \rT^{\ell-1}  \widetilde Q_\eps (\ul z_\eps^\fl )   \,
\rD  \rT^{\ell-1}  \widetilde \sigma_\eps (\ul \fk_0) \, \ul\fk_1
\bigr\|^{\rT^\ell W}_\eps 
 \\ 
&\quad +
\bigl\| 
 \rD  \rT^{\ell-1}  \widetilde Q_\eps (\ul z_\eps^\fl )   \,
\rD  \rT^{\ell-1}  \widetilde \sigma_\eps (\ul \fk_0) \, \ul\fk_1
- 
 \rD  \rT^{\ell-1}  \widetilde Q_\eps (\ul z_\eps^\fk )   \,
\rD  \rT^{\ell-1}  \widetilde \sigma_\eps (\ul \fk_0) \, \ul\fk_1
\bigr\|^{\rT^\ell W}_\eps 
 \\ 
&\leq
\bigl\|  \rD  \rT^{\ell-1}  \widetilde Q_\eps ( \ul z_\eps^\fl )  \bigr\|^{\cL(\rT^{\ell-1}(\overline W_\eps \times \overline V_\eps) , \rT^{\ell-1}(\overline W_\eps\times \overline W_\eps))} 
\bigl\|\rD  \rT^{\ell-1}  \widetilde \sigma_\eps (\ul \fl_0) \ul\fl_1 -
\rD  \rT^{\ell-1}  \widetilde \sigma_\eps (\ul \fk_0) \ul\fk_1 \|^{\rT^{\ell-1}(W \times V)}_\eps
\\ 
&\quad +
\bigl\|  \rD  \rT^{\ell-1}  \widetilde Q_\eps ( \ul z_\eps^\fl ) - 
 \rD  \rT^{\ell-1}  \widetilde Q_\eps (\ul z_\eps^\fk ) \bigr\|^{\cL(\rT^{\ell-1}(\overline W_\eps \times \overline V_\eps) , \rT^{\ell-1}(\overline W_\eps\times \overline W_\eps))} 
\bigl\| \rD  \rT^{\ell-1}  \widetilde \sigma_\eps (\ul \fk_0) \, \ul\fk_1 \|^{\rT^{\ell-1}(W \times V)}_\eps
 \\ 
&\qquad\qquad
\text{\small using \eqref{eq:TQ bound} and \eqref{eq:TQ continuity} } \\
&\leq
C^\ell_{\rT Q} \, \max\{1, \| \ul z_\eps^\fl \|^{\rT_\bullet^{\ell-1} (W \times V) }_\eps \}^\ell 
\cdot
\bigl\| \rT^{\ell} \widetilde \sigma_\eps (\ul \fl) - \rT^{\ell}  \widetilde \sigma_\eps (\ul \fk) \|^{\rT^{\ell}(W \times V)}_\eps
\\ 
&\quad +
 C^\ell_{\rT Q} \bigl( c^{\leq\ell}_{\rD Q}\bigl(  \|\gamma^\fl_\eps - \gamma^\fk_\eps \|^\Gamma_\eps \bigr) 
+ \| \ul z_\eps^\fl - \ul z_\eps^\fk \|^{\rT_\bullet^{\ell-1} (W \times V) }_\eps \bigr) 
\,  M( \ul z_\eps^\fl , \ul z_\eps^\fk )^{\ell+1} 
\cdot \bigl\| \rT^{\ell}  \widetilde \sigma_\eps (\ul \fk) \|^{\rT^{\ell}(W \times V)}_\eps
\\
&\qquad\qquad
\text{\small using \eqref{eq:DTsigma continuity} and  \eqref{eq:DTsigma bound}   } \\
&\leq
C^\ell_{\rT Q} \, \max\{1, \| \ul z_\eps^\fl \|^{\rT^{\ell-1} (W \times V) }_\eps \}^\ell
\cdot \tilde c^\ell_{\sigma}(  \| \ul \fl - \ul \fk \|^{\rT^{\ell+1} \fK }   )  \, b^\ell_\sigma(\max\{ \|\fl \|^{\rT_\bullet^{\ell+1}\fK} ,  \|\fk \|^{\rT_\bullet^{\ell+1}\fK} \})
\\
&\qquad 
+   C^\ell_{\rT Q}  \,  M( \ul z_\eps^\fl , \ul z_\eps^\fk )^\ell 
 \bigl( c^{\leq\ell}_{\rD Q}\bigl(  \|\gamma^\fl_\eps - \gamma^\fk_\eps \|^\Gamma_\eps \bigr) 
+ \| \ul z_\eps^\fl - \ul z_\eps^\fk \|^{\rT_\bullet^{\ell-1} (W \times V) }_\eps \bigr)  
\cdot \bigl(\delta_\sigma  
+ \tilde c^\ell_{\sigma}(\| \ul\fk \|^{\rT_\bullet^{\ell+1}\fK} ) \, b^\ell_\sigma(\|\fk\|^{\rT_\bullet^{\ell+1}\fK})  \bigr)
\\
&\qquad\qquad
\text{\small using \eqref{eq:M est} with $x:= \max\{ \|\fl \|^{\rT_\bullet^{\ell+1}\fK} ,  \|\fk \|^{\rT_\bullet^{\ell+1}\fK} \}$}
\\
&\leq
C^\ell_{\rT Q} \,   \max\{ 1,  \delta_\sigma + \tilde c^{\ell-1}_{\sigma}(x ) \, b^{\ell-1}_\sigma(x) \bigr\}^\ell \Bigl( 
b^\ell_\sigma(x) \, 
 \tilde c^\ell_{\sigma}(  \| \ul \fl - \ul \fk \|^{\rT^{\ell+1} \fK }   ) 
\\
&\qquad\qquad \qquad \qquad \qquad \qquad \qquad \qquad
+ \bigl(\delta_\sigma  + \tilde c^\ell_{\sigma}(x) \, b^\ell_\sigma(x)  \bigr) \, 
 \bigl( c^{\leq\ell}_{\rD Q}\bigl(  \|\gamma^\fl_\eps - \gamma^\fk_\eps \|^\Gamma_\eps \bigr) 
+ \| \ul z_\eps^\fl - \ul z_\eps^\fk \|^{\rT_\bullet^{\ell-1} (W \times V) }_\eps \bigr)  \Bigr)
\\
&\qquad\qquad
\text{using \eqref{eq:DTsigma continuity} for  
$\ul z_\eps^\fl= \rT^{\ell-1} \ti\sigma_\eps(\ul \fl_0), \ul z_\eps^\fk= \rT^{\ell-1} \ti\sigma_\eps(\ul \fk_0)$ and
\eqref{eq:sigma continuity} for $\gamma^\fl_0=\sigma_\eps(\fk_0^0) , \gamma^\fk_0=\sigma_\eps(\fl_0^0)$
 } 
\\
&\leq
C^\ell_{\rT Q}  \max\bigl\{ 1,  \delta_\sigma + \tilde c^{\ell-1}_{\sigma}(x ) b^{\ell-1}_\sigma(x) \bigr\}^\ell 
\, \max\bigl\{ b^\ell_\sigma(x) ,  \delta_\sigma  + \tilde c^\ell_{\sigma}(x ) \, b^\ell_\sigma(x)  \bigr\}
 \\
&\qquad 
\cdot  \Bigl(   \tilde c^\ell_{\sigma}(  \| \ul \fl - \ul \fk \|^{\rT^{\ell+1} \fK }   )  
+  c^{\leq\ell}_{\rD Q}\bigl( c^0_\sigma( \|\fl^0_0-\fk^0_0 \|^\fK ) \bigr) 
+ \tilde c^{\ell-1}_{\sigma}\bigl(  \| \ul \fl_0 - \ul \fk_0 \|^{\rT^\ell \fK }  \bigr) \, b^{\ell-1}_\sigma(\max\{ \|\fl_0 \|^{\rT_\bullet^{\ell}\fK} , \|\fk_0 \|^{\rT_\bullet^{\ell}\fK} \} )  \Bigr)
\\
&\leq 
C^\ell_{\rT Q}  \max\bigl\{ 1,  \delta_\sigma + \tilde c^{\ell-1}_{\sigma}(x ) b^{\ell-1}_\sigma(x) \bigr\}^\ell 
\, \max\bigl\{ b^\ell_\sigma(x) ,  b^{\ell-1}_\sigma(x )  ( \delta_\sigma  + \tilde c^\ell_{\sigma}(x ) \, b^\ell_\sigma(x)  )\bigr\}
 \\
&\qquad 
\cdot  \Bigl(   \tilde c^\ell_{\sigma}(  \| \ul \fl - \ul \fk \|^{\rT^{\ell+1} \fK }   )  
+  c^{\leq\ell}_{\rD Q}\bigl( c^0_\sigma( \| \ul \fl - \ul \fk \|^{\rT^{\ell+1} \fK }  ) \bigr) 
+ \tilde c^{\ell-1}_{\sigma}\bigl( \| \ul \fl - \ul \fk \|^{\rT^{\ell+1} \fK }  \bigr) \Bigr)
\\
&\leq 
\hat b^{\ell+1}_\sigma\bigl( \max\{ \|\fl \|^{\rT_\bullet^{\ell+1}\fK} ,  \|\fk \|^{\rT_\bullet^{\ell+1}\fK} \} \bigr)  \, \hat c^{\ell+1}_\sigma(\| \ul\fl - \ul\fk \|^{\rT^{\ell+1}\fK})  , 
\end{align*}
where the functions $\hat b^{\ell+1}_\sigma : [0,\infty) \to [1,\infty)$ and $\hat c^{\ell+1}_\sigma : [0,\infty) \to [0,\infty)$ are given by 
\begin{align*}
\hat b^{\ell+1}_\sigma(x) &:=   C^\ell_{\rT Q}  \max\bigl\{ 1,  \delta_\sigma + \tilde c^{\ell-1}_{\sigma}(x ) b^{\ell-1}_\sigma(x) \bigr\}^\ell 
\, \max\bigl\{ b^\ell_\sigma(x) ,  b^{\ell-1}_\sigma(x )  ( \delta_\sigma  + \tilde c^\ell_{\sigma}(x ) \, b^\ell_\sigma(x)  )\bigr\} \\
\text{and} \qquad 
\hat c^{\ell+1}_\sigma(x) &:=    \tilde c^\ell_{\sigma}( x )  +  c^{\leq\ell}_{\rD Q}\bigl( c^0_\sigma( x  ) \bigr) + \tilde c^{\ell-1}_{\sigma}(x) , 
\end{align*}
and inherit monotonicity, continuity, and the value $\hat c^{\ell+1}_\sigma(0)= 0$ from their constituents. 
In particular, $\hat b^{\ell+1}_\sigma(x)\geq 1$ is guaranteed by $C^\ell_{\rT Q}\geq 1$ in \eqref{eq:TQ bound}. 
This finishes the last claim in the inductive step and thus proves for any $\ell\geq 1$ that adiabatic $\cC^\ell$ regularity of the adiabatic Fredholm family implies adiabatic $\cC^\ell$ regularity of the solution map. 
\end{proof}

\subsection{Finite Dimensional Reduction} \label{reduction}

This section finishes the proof of Theorem~\ref{thm:charts} in Corollary~\ref{cor:family finite dimensional reduction} by inserting the solution maps from \S\ref{solution} back into the remaining finite dimensional equations identified in \S\ref{contraction}. More precisely -- in the regularizing case -- the two components of the solution map $(\eps,\fk)\mapsto\sigma_\eps(\fk)\in\fC\times\Gamma$ play different roles: 
The $\fC$ component gives rise to the finite dimensional equation $(\eps,\fk)\mapsto {\rm Pr}_\fC(\sigma_\eps(\fk))$, whereas the $\Gamma$ component yields the map $(\eps,\fk)\mapsto \bigl(\eps,{\rm Pr}_\Gamma(\sigma_\eps(\fk))\bigr)$ to the solution spaces $\textstyle\bigcup_{\eps\in\Delta_\sigma} \{\eps\}\times \cF_\eps^{-1}(0)$. 

Starting again in the classical Fredholm setting of Theorem~\ref{thm:charts classical}, we work here directly with the form of the map $\cG_Q$ that was constructed in Lemma~\ref{lem:contraction classical}. Then this Lemma finishes the proof of Theorem~\ref{thm:charts classical}.


\begin{lemma} \label{lem:finite dimensional reduction classical}
Consider a Banach space $(\overline W,\|\cdot\|)$, an open subset $\cV_{\overline W}\subset\overline W$ 
containing $0=0_{\overline W}\in\cV_{\overline W}$, 
finite dimensional normed vector spaces $\fC,K$, 
an open subset $\cV_K\subset K$, 
and a continuous map 
\begin{align*}
\cG \,:\quad \cV_K \times \cV_{ \overline W} &\rightarrow \fC \times \overline W, \\
(k,w) &\mapsto \bigl( A(k,w) , w - B(k,w) \bigr) . 
\end{align*} 
Suppose it is close to the identity map on $\cV_{ \overline W}$ up to the finite dimensional factors $K,\fC$ and a contraction in the following sense: 
$A: \cV_K \times\cV_{ \overline W} \to \fC$ maps to the finite dimensional space $\fC$, 
and $B : \cV_K \times \cV_{ \overline W}  \to \overline W$ is a contraction near $0\in\overline W$ parametrized by $k\in\cV_K$ that satisfies \eqref{eq:contraction}, \eqref{eq:small}. 

Then there is a finite dimensional reduction describing the zero set $\cG^{-1}(0)$, locally, as the zero set of a map between finite dimensional spaces. That is, we have continuous maps 
$$
f: \cV_K\to\fC
\qquad\text{and}\qquad 
\phi: f^{-1}(0) \to \cG^{-1}(0)
$$ 
such that $\phi$ is a homeomorphism to an open subset of $\cG^{-1}(0)$. 
More precisely, Lemma~\ref{lem:solution classical} constructs a solution map $\sigma: \cV_K \to \cV_{\overline W}$, which induces the maps $f(k) := A \bigl(k , \sigma(k) \bigr)$ and $\phi(k) := (k , \sigma (k))$.
And with $\delta>0$ from \eqref{eq:small}, we have 
$$
\cG^{-1}(0)  \cap
\bigl\{ (k,w)\in\cV_K\times \cV_{\overline W} \,\big|\, \| w\|^W < \delta \bigr\}
\;=\;
\bigl\{   \phi(k)  \,\big|\, k \in f^{-1}(0) \bigr\} . 
$$
If, moreover, the map $\cG$ is $\cC^\ell$, then the finite dimensional reduction $f: \cV_K\to\fC$ is $\cC^\ell$ as well.
\end{lemma}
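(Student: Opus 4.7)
The plan is to reduce the entire statement to the contraction solution Lemma~\ref{lem:solution classical}, since the only genuine analytic content (existence, continuity, and differentiability of the fixed-point map) is already packaged there. First I would apply that lemma to $B : \cV_K \times \cV_{\overline W} \to \overline W$, which by hypothesis is a $k$-parametrized contraction satisfying \eqref{eq:contraction} and \eqref{eq:small} (with $\cV_K$ equipped with the subspace topology of the normed vector space $K$, making $B$ continuous as the $\overline W$-component of the continuous map $(k,w)\mapsto w - \cG(k,w)$). This produces the solution map $\sigma : \cV_K \to \cV_{\overline W}$ with $\|\sigma(k)\| < \delta$, and I then define $f(k) := A(k,\sigma(k))$ and $\phi(k) := (k,\sigma(k))$.

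Second, I would unpack the zero-set equation. For $(k,w)\in\cV_K\times\cV_{\overline W}$ with $\|w\|<\delta$, the equation $\cG(k,w)=0$ is equivalent to the system $A(k,w)=0$ together with $w - B(k,w)=0$. By the uniqueness statement in Lemma~\ref{lem:solution classical}, the second equation plus the bound $\|w\|<\delta$ is equivalent to $w=\sigma(k)$. Substituting this into the first equation gives $0 = A(k,\sigma(k)) = f(k)$, so
\[
\cG^{-1}(0) \cap \bigl\{ (k,w) \in \cV_K \times \cV_{\overline W} \,\big|\, \|w\|<\delta \bigr\}
\;=\; \bigl\{ \phi(k) \,\big|\, k\in f^{-1}(0) \bigr\} ,
\]
which is open in $\cG^{-1}(0)$ as the intersection of $\cG^{-1}(0)$ with an open set.

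Third, for the homeomorphism property, continuity of $\sigma$ (guaranteed by Lemma~\ref{lem:solution classical} once $B$ is continuous) implies continuity of $\phi$ and of $f$; injectivity of $\phi$ is immediate from the first component, and the inverse $\phi^{-1}$ is the restriction of the projection $(k,w)\mapsto k$, which is continuous. For the regularity claim, if $\cG$ is $\cC^\ell$ then both components $A=\Pi_\fC\circ\cG$ and $B(k,w)=w-\Pi_{\overline W}\circ\cG(k,w)$ are $\cC^\ell$ (the projections being bounded linear maps), so Lemma~\ref{lem:solution classical} gives $\sigma\in\cC^\ell$, and the chain rule applied to $f = A\circ(\mathrm{id}_{\cV_K},\sigma)$ yields $f\in\cC^\ell$. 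I do not expect any real obstacle: all the hard work has been absorbed into Lemma~\ref{lem:solution classical}, and the only care needed here is in correctly identifying which norm-smallness condition makes the fixed-point equation equivalent to $w=\sigma(k)$—precisely the strict inequality $\|w\|<\delta$ allowed by \eqref{eq:small}.
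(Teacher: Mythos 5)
Your proposal is correct and follows essentially the same route as the paper's own proof: both apply Lemma~\ref{lem:solution classical} to obtain the solution map $\sigma$, unpack $\cG(k,w)=0$ into the pair $A(k,w)=0$ and $w=B(k,w)$ (with the strict bound $\|w\|<\delta$ triggering the uniqueness of the fixed point), define $f=A\circ(\mathrm{id},\sigma)$ and $\phi=(\mathrm{id},\sigma)$, and deduce the homeomorphism from the projection to $K$ and the $\cC^\ell$ regularity from the chain rule. No gaps.
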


\begin{proof}
By Lemma~\ref{lem:solution classical}, the solutions of $w - B(k,w) = 0$ with $\|w\|^W<\delta$ are parametrized by a uniquely determined map $\sigma:\cV_K\to\overline\cV_W$. Thus we can rewrite the zero set of the original map: For $(k,w)\in\cV_K\times\cV_{\overline W}$ we have
\begin{align*}
\cG(k,w) = (0,0) , \quad \|w\|^W<\delta
 &\quad \Leftrightarrow \quad 
 \bigl( A(k,w) , w - B(k,w) \bigr) = (0,0) , \quad \|w\|^W<\delta  \\
 & \quad \Leftrightarrow \quad 
  A(k,w)=0, \quad  w= B(k,w)  , \quad \|w\|^W<\delta\\
& \quad \Leftrightarrow \quad 
  A(k,w)=0, \quad w=\sigma(k) \\
 &\quad \Leftrightarrow \quad 
  A(k,\sigma(k))=0 ,   \quad  (k,w)=(k,\sigma(k)). 
\end{align*}
The resulting map $f(k):= A(k,\sigma(k))$ has finite dimensional domain $\cV_K\subset K$ and target $\fC$. 
The map $\phi: f^{-1}(0) \to \cV_K\times\cV_{\overline W}$ given by $\phi(k):=(k,\sigma(k))$ takes values in $\cG^{-1}(0)$ by construction, is locally surjective as specified above. To check that $\phi$ is a homeomorphism to its image, note that ${\rm Pr}_K(\phi(k))=k$. This shows that $\phi$ is injective with continuous inverse ${\rm Pr}_K$. 

If, moreover, $\cG$ is $\cC^\ell$-regular, then both maps $A: \cV_K \times \cV_{ \overline W} \to \fC$ and
$B: \cV_K \times \cV_{ \overline W} \to \overline W$ are $\cC^\ell$-regular. The latter, combined with Lemma~\ref{lem:solution classical}, implies $\cC^\ell$ regularity of $\sigma$. Then $\cC^\ell$ regularity of 
 $f: \cV_K\to\fC$ $k\mapsto A \bigl(k , \sigma(k) \bigr)$ follows from the chain rule.
\end{proof}

This finishes the proof of Theorem~\ref{thm:charts classical}, so it remains to finish the proof of Theorem~\ref{thm:charts}.

\begin{corollary} \label{cor:family finite dimensional reduction}
Every adiabatic Fredholm family $\bigl( (\cF_\eps:\cV_\Gamma\to \Omega )_{\eps\in\Delta} , \ldots  \bigr)$ as in Definition~\ref{def:fredholm} has a finite dimensional reduction describing the union of completed zero sets, locally, as the zero set of a map between finite dimensional spaces. That is, the family induces maps 
$$
f:\Delta_\sigma\times\cV_\fK\to\fC
\qquad\text{and}\qquad 
\phi: f^{-1}(0) \to \textstyle\bigcup_{\eps\in\Delta_\sigma} \{\eps\}\times \overline\cF_\eps^{-1}(0)
$$ 
defined on neighbourhoods $\Delta_\sigma\subset\Delta$ of $0$ and $\cV_\fK\subset\fK$ of $0$ such that $\phi$ is injective and locally surjective in the sense that 
for some $\delta_\sigma>0$ we have 
$$
\bigl( \textstyle\bigcup_{\eps\in\Delta_\sigma} \{\eps\}\times \overline\cF_\eps^{-1}(0)  \bigr) \cap
\bigl\{ (\eps,\gamma) \,\big|\, \eps\in\Delta_\sigma, \gamma\in\cV_{\overline \Gamma,\eps}, \|\gamma\|^\Gamma_\eps < \delta_\sigma \bigr\}
\;=\;
\bigr\{ \phi(\eps,\fk)  \,\big|\,  (\eps,\fk)\in f^{-1}(0) \bigr\} . 
$$
More precisely, $\Delta_\sigma$, $\cV_\fK$, and $\delta_\sigma$ are given by Theorem~\ref{thm:family solution} in the process of constructing
solution maps $\bigl( \sigma_\eps: \cV_\fK \to\cV_{\overline W,\eps} = \fC\times \cV_{\overline\Gamma,\eps} \bigr)_{\eps\in\Delta_\sigma}$, whose components define 
$f(\eps,\fk) := f_\eps(\fk):= {\rm Pr}_\fC \bigl( \sigma_\eps (\fk) \bigr) $ and
$\phi(\eps,\fk) :=  (\eps, \phi_\eps(\fk)) := (\eps,  {\rm Pr}_{\ol\Gamma_\eps} \bigl( \sigma_\eps(\fk)\bigr) \bigr)$. 
This construction also yields an explicit inverse $\phi^{-1}: \im\phi \to f^{-1}(0)$  given by 
$(\eps,\gamma) \mapsto (\eps, \ol\pi_\fK(\gamma))$. 
Finally, it has the following regularity properties: 

\begin{itemlist}
\item
If the adiabatic Fredholm family is fibrewise $\cC^\ell$-regular for some for some $\ell\geq1$ (which is automatic for $\ell=0$ and $\ell=1$) as in Definition~\ref{def:fibrewise C-l}, then the finite dimensional reduction is fibrewise $\cC^\ell$-regular in the sense that the maps
$f_\eps :  \cV_\fK \to \fC$ are $\cC^\ell$-regular and $\phi_\eps: f_\eps^{-1}(0)\to \ol \cF_\eps^{-1}(0)$ are homeomorphisms to their images for each $\eps\in\Delta_\sigma$.


\item
If the adiabatic Fredholm family is regularizing as in Definition~\ref{def:regularizing}, then its zero sets $\cF_\eps^{-1}(0)=\ol\cF_\eps^{-1}(0)$ agree with the completions and each map $\phi_\eps: f_\eps^{-1}(0)\to\cF_\eps^{-1}(0)\subset \Gamma$ takes values in the $\eps$-independent space $\Gamma$. 

\item
If the adiabatic Fredholm family is regularizing and adiabatic $\cC^0$-regular as in Definition~\ref{def:adiabatic C-l}, then the finite dimensional reduction is adiabatic $\cC^0$ in the sense that 
$f: \Delta_\sigma\times \cV_\fK \to \fC$ is continuous
and $\phi: f^{-1}(0) \to \textstyle\bigcup_{\eps\in\Delta_\sigma} \{\eps\}\times \cF_\eps^{-1}(0) \subset \Delta_\sigma\times \Gamma$ is a homeomorphism onto its image with respect to the relative topology induced by $\Delta_\sigma$ and $(\Gamma,\|\cdot\|^\Gamma_0)$. 

\item
If, furthermore, the adiabatic Fredholm family is adiabatic $\cC^\ell$-regular as in Definition~\ref{def:adiabatic C-l} for some $\ell\geq 1$, then the finite dimensional reduction is adiabatic $\cC^\ell$ in the sense that 
\begin{itemize}
\item 
the $\ell$-th tangent maps from differentiation in $\fK$ (see Definition~\ref{def:tangent map notation}) form a continuous map 
$$
\Delta_\sigma\times\rT^\ell\cV_\fK \to \rT^\ell\fC , \quad 
(\eps,\ul\fk)\mapsto \rT^\ell f_\eps(\ul\fk) ,
$$ 
\item
 the $\fK$-differentials of order $0\leq k \leq \ell$ (see Remark~\ref{rmk:multilinear}) form continuous maps 
$$
\Delta_\sigma\times\cV_\fK \to \cL^k(\fK^k,\fC) , \quad 
(\eps,\fk_0)\mapsto \rD^k f_\eps(\fk_0) . 
$$ 
\end{itemize}
\end{itemlist}
\end{corollary}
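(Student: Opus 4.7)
The plan is to deduce this corollary directly from Theorem~\ref{thm:family solution}, which supplies a family of solution maps $\bigl(\sigma_\eps:\cV_\fK\to\cV_{\overline W,\eps}\bigr)_{\eps\in\Delta_\sigma}$ with the full complement of regularity we need. Since $\cV_{\overline W,\eps}=\fC\times\cV_{\overline\Gamma,\eps}$, the two components of $\sigma_\eps$ define the candidate maps
$$
f_\eps(\fk):={\rm Pr}_\fC\bigl(\sigma_\eps(\fk)\bigr),\qquad \phi_\eps(\fk):={\rm Pr}_{\overline\Gamma_\eps}\bigl(\sigma_\eps(\fk)\bigr),
$$
and the defining relations \eqref{eq:defining} give $\overline\pi_\fK\circ\phi_\eps={\rm id}_{\cV_\fK}$, so the asserted inverse $\phi^{-1}(\eps,\gamma)=(\eps,\overline\pi_\fK(\gamma))$ is well defined on $\im\phi$ and $\phi$ is injective. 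The asserted set identity for the completed zero sets is then obtained by restricting the graph description of $\sigma_\eps$ in Theorem~\ref{thm:family solution} to $\fc=0$, which is exactly the condition $f_\eps(\fk)=0$.

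Fibrewise regularity transfers from $\sigma_\eps$ to $f_\eps$ and $\phi_\eps$ through the bounded linear projections ${\rm Pr}_\fC$ and ${\rm Pr}_{\overline\Gamma_\eps}$. In particular, fibrewise $\cC^\ell$-regularity of $f_\eps$ is immediate from the corresponding property of $\sigma_\eps$, while $\phi_\eps$ is a homeomorphism onto its image because $\overline\pi_\fK$ is bounded on $\overline\Gamma_\eps$ by construction in Lemma~\ref{lem:fredholm}. When the family is regularizing, Theorem~\ref{thm:family solution} places $\sigma_\eps$ in the $\eps$-independent subset $\cV_W=\fC\times\cV_\Gamma$, so $\phi_\eps$ lands in $\Gamma$, and Lemma~\ref{lem:equivalent-fredholm} identifies $\overline\cF_\eps^{-1}(0)=\cF_\eps^{-1}(0)\subset\cV_\Gamma$.

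For the adiabatic $\cC^0$ statement, [Continuity w.r.t.\ $\|\cdot\|_0$] in Theorem~\ref{thm:family solution} asserts that $(\eps,\fk)\mapsto\sigma_\eps(\fk)$ is continuous as a map to $(\cV_W,\|\cdot\|^W_0)$, which immediately gives joint continuity of $f$ via ${\rm Pr}_\fC$ and of $\phi$ via ${\rm Pr}_\Gamma$ into $(\Gamma,\|\cdot\|^\Gamma_0)$. Continuity of $\phi^{-1}$ in the same relative topology uses that $\overline\pi_\fK$ extends to a bounded map $\overline\Gamma_0\to\fK$, by combining [Lower Bound on Norms] with the boundedness of $\pi_\fK$ built into Definition~\ref{def:fredholm}. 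For the adiabatic $\cC^\ell$ case, the fact that $\rT^\ell$ commutes with linear maps (Remark~\ref{rmk:tangent map}) yields $\rT^\ell f_\eps={\rm Pr}_{\rT^\ell\fC}\circ\rT^\ell\sigma_\eps$, so that [Continuity w.r.t.\ $\|\cdot\|_0$] of $\rT^\ell\sigma_\eps$ from Theorem~\ref{thm:family solution} gives continuity of $(\eps,\ul\fk)\mapsto\rT^\ell f_\eps(\ul\fk)$ into the finite dimensional space $\rT^\ell\fC$.

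The main technical obstacle will be upgrading this continuity of $\rT^k f_\eps$ to continuity of the partial differentials $\rD^k f_\eps:\Delta_\sigma\times\cV_\fK\to\cL^k(\fK^k,\fC)$ in the operator norm on multilinear maps. The formula in Remark~\ref{rmk:tangent map} recovers $\rD^k f_\eps(\fk_0)(\fk_1,\ldots,\fk_k)$ as a specific last-entry component of $\rT^k f_\eps(\fk_0,\fk_1,\fk_2,0,\fk_4,0,\ldots,\fk_{2^{k-1}},0,\ldots,0)$, so the continuity already established gives continuity of the multilinear form pointwise in $(\fk_1,\ldots,\fk_k)$. The finite-dimensionality of $\fK$ then promotes this to continuity of $(\eps,\fk_0)\mapsto\rD^k f_\eps(\fk_0)$ in the operator norm on $\cL^k(\fK^k,\fC)$ by an elementary compactness argument on the unit sphere in $\fK^k$, completing the reduction.
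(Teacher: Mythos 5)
Your proposal is correct and follows essentially the same route as the paper's proof: you read $f_\eps$ and $\phi_\eps$ off the two components of the solution map from Theorem~\ref{thm:family solution}, get injectivity of $\phi$ and the explicit inverse from $\overline\pi_\fK\circ\phi_\eps={\rm id}_{\cV_\fK}$, obtain the set identity by restricting the graph to $\fc=0$, and transfer each regularity property through the bounded projections onto the $\fC$ and $\Gamma$ factors. The only point where you expand beyond the paper's terse wording is the final upgrade from continuity of $\rT^k f_\eps$ to operator-norm continuity of $\rD^k f_\eps$, where your "compactness on the unit sphere of $\fK^k$" is precisely the content of the paper's remark that the map is linear in the last $k$ arguments and $\fK$ is finite dimensional; the two phrasings are equivalent.
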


\begin{proof}
By Lemma~\ref{lem:contraction}, restricted to the subset $\Delta_\sigma\subset\Delta_Q$ from Theorem~\ref{thm:family solution}, the completed zero sets $\bigcup_{\eps\in\Delta_\sigma} \{\eps\}\times \overline\cF_\eps^{-1}(0) \simeq \overline\cG_Q^{-1}(0_{\Delta_\sigma})$ are naturally identified with the preimage of the "zero section" $0_{\Delta_\sigma} := \bigcup_{\eps\in\Delta_\sigma} \{\eps\}\times \{(0,0,0)\}$ under 
\begin{align*}
\overline\cG_Q : 
\textstyle \bigcup_{\eps\in\Delta_Q} \{\eps\} \times \fK \times \bigl( \cV_{\overline W,\eps} = \fC \times \cV_{\overline\Gamma,\eps} \bigr) &\rightarrow \textstyle \bigcup_{\eps\in\Delta_Q} \{\eps\} \times \fC \times \bigl( \overline W_\eps = \fC \times \overline\Gamma_\eps \bigr), \\
\bigl(\eps, \fk,  w=(\fc, \gamma) \bigr) &\mapsto  \bigl( \overline A_\eps(\fk,w), w - \overline B_\eps(\fk,w) \bigr) .
\end{align*}
Here $\overline A_\eps : \fK \times\cV_{\overline W,\eps} \to \fC$ maps to the finite dimensional space $\fC$ by projection $\overline A_\eps(\fk,\fc,\gamma)=\fc$. 
On the infinite dimensional factor, Theorem~\ref{thm:family solution} establishes $\overline B_\eps(\fk,\cdot)$ as contractions whose fixed points define solution maps $\bigl( \sigma_\eps: \cV_\fK \to\cV_{ \overline W,\eps} \bigr)_{\eps\in\Delta_\sigma}$ defined on neighbourhoods $\Delta_\sigma\subset\Delta$ of $0$ and $\cV_\fK\subset\fK$ of $0$ 
such that for some $\delta_\sigma>0$ we have 
$$
\bigl\{ (\fk,\fc,\gamma)\in \cV_\fK \times \fC\times \cV_{\overline\Gamma,\eps} \,\big|\,
\|\fc\|^\fC + \|\gamma\|^\Gamma_\eps < \delta_\sigma , 
 \overline\pi_\fK(\gamma)=\fk, 
\overline\cF_\eps(\gamma)  =  \fc \bigr\} 
 \;=\; \bigl\{  (\fk,  \sigma_\eps(\fk)  ) \,\big|\,  \fk\in\cV_\fK  \bigr\} .
$$
We denote the components of the solution maps
$\sigma_\eps : \cV_\fK \to\cV_{ \overline W,\eps}= \fC \times \cV_{ \overline\Gamma,\eps}$
by
$$
f(\eps,\cdot):= {\rm Pr}_\fC \circ \sigma_\eps : \cV_\fK \to \fC
\quad\text{and}\quad 
g(\eps,\cdot):= {\rm Pr}_{\ol\Gamma_\eps} \circ \sigma_\eps : \cV_\fK \to\cV_{ \overline\Gamma,\eps}
$$ 
and 
note that $g(\eps,\cdot):\cV_\fK \to \cV_{ \overline W,\eps}$ by construction takes values in $\overline\cF_\eps^{-1}(\fC)\cap\{ \gamma\in  \cV_{\overline\Gamma,\eps} \,\big|\,\|\gamma\|^\Gamma_\eps < \delta_\sigma \bigr\}$ and
solves $\overline\pi_\fK\circ g(\eps,\cdot) = {\rm Id}_\fK$. 
Then intersecting the above identity of sets with $\bigl\{  (\fk,\fc,\gamma)\in \cV_\fK \times \fC\times \cV_{\overline\Gamma,\eps}\,\big|\, \fk=\ol \pi_\fK(\gamma), \fc=0 \bigr\}$
yields
$$
\bigl\{ (\ol\pi_\fK(\gamma),0,\gamma) \,\big|\, \gamma\in \cV_{\overline\Gamma,\eps} , 
\|\gamma\|^\Gamma_\eps < \delta_\sigma , \overline\cF_\eps(\gamma)  =  0 \bigr\} 
 \;=\; \bigl\{  (\fk, 0 , g(\eps,\fk)  ) \,\big|\,  \fk\in\cV_\fK ,  f(\eps,\fk)=0 \bigr\}  . 
$$
Next, projecting the sets by 
$\cV_\fK \times \{0\} \times \cV_{\overline\Gamma,\eps}\to \cV_{\overline\Gamma,\eps}$ yields
$$
\bigl\{\gamma \in \cV_{\overline\Gamma,\eps} \,\big|\,
\|\gamma\|^\Gamma_\eps < \delta_\sigma , \overline\cF_\eps(\gamma)  =  0  \bigr\} 
 \;=\; \bigl\{  g(\eps,\fk) \,\big|\,   \fk\in\cV_\fK ,  f(\eps,\fk)=0 \bigr\}  ,
$$
where the $\fk\in\cV_\fK$ on the right hand side is uniquely determined by $\gamma= g(\eps,\fk)$. 
We keep track of this fact by observing that $g(\eps,\cdot): \{ \fk\in\cV_\fK \,|\, f(\eps,\fk)=0 \} \to \cV_{\overline\Gamma,\eps}$ is injective since $\gamma=g(\eps,\fk) \;\Rightarrow\; \ol\pi_\fK(\gamma)= \ol\pi_\fK(g(\eps,\fk))=\fk$. 
Now taking the union over $\eps\in\Delta_\sigma$ yields the claimed identity
\begin{align*}
\bigl( \textstyle\bigcup_{\eps\in\Delta_\sigma} \{\eps\}\times \overline\cF_\eps^{-1}(0)  \bigr) \cap
\bigl\{ (\eps,\gamma) \,\big|\, \eps\in\Delta_\sigma, \gamma\in\cV_{\overline \Gamma,\eps}, \|\gamma\|^\Gamma_\eps < \delta_\sigma \bigr\}  
& 
\\
=\; \bigl\{ (\eps,\gamma) \,\big|\,
\eps\in\Delta_\sigma, \gamma\in\cV_{\overline \Gamma,\eps}, \|\gamma\|^\Gamma_\eps < \delta_\sigma, 
 \overline\cF_\eps(\gamma)  =  0  \bigr\}
&=\;
\bigr\{ \phi(\eps,\fk)  \,\big|\,  (\eps,\fk)\in f^{-1}(0) \bigr\} 
\end{align*}
in terms of the maps
\begin{align*}
f \,:\quad \Delta_\sigma\times\cV_\fK &\;\to\; \fC  , & (\eps,\fk) &\;\mapsto\; {\rm Pr}_\fC \bigl( \sigma_\eps (\fk) \bigr) , \\
\phi \,:\;\;\quad f^{-1}(0) \; &\;\to\; \textstyle\bigcup_{\eps\in\Delta_\sigma} \{\eps\}\times \overline\cF_\eps^{-1}(0)  , & (\eps,\fk) &\;\mapsto\; (\eps,  {\rm Pr}_{\ol\Gamma_\eps} \bigl( \sigma_\eps(\fk)\bigr) \bigr) . 
\end{align*}
Here $\phi$ takes values in the union of completed zero sets since each
$g(\eps,\cdot)={\rm Pr}_{\ol\Gamma_\eps} \circ \sigma_\eps:\cV_\fK \to \cV_{ \overline W,\eps}$ takes values in $\overline\cF_\eps^{-1}$.
It is locally surjective in the sense established above, and it is injective with explicit inverse
$\phi^{-1}: \im\phi \to f^{-1}(0)$  given by 
$(\eps,\gamma) \mapsto (\eps, \ol\pi_\fK(\gamma))$.

This finite dimensional reduction can also be seen as the result of applying Lemma~\ref{lem:finite dimensional reduction classical} for each fixed $\eps\in\Delta_\sigma$ to the map 
$\overline\cG_Q (\eps,\cdot,\cdot): \cV_\fK \times \cV_{\overline W,\eps} \to \fC \times  \overline W_\eps$, 
$(\fk,  w) \mapsto  \bigl( \overline A_\eps(\fk,w), w - \overline B_\eps(\fk,w) \bigr)$. 
The resulting finite dimensional reduction is given by $\fk \mapsto A_\eps \bigl(\fk , \sigma_\eps(\fk) \bigr)$ and $\fk \mapsto (\fk , \sigma_\eps (\fk)$. 
Since $A_\eps(\fk,w=(\fc,\gamma))={\rm Pr}_\fC(w)=\fc$, this gives rise to $f_\eps:\fk \mapsto A_\eps \bigl(\fk , \sigma_\eps(\fk) \bigr)= {\rm Pr}_\fC(\sigma_\eps(\fk))$. To identify the zero sets, we need to include the identification 
$\overline\cG_Q(\eps,\cdot)^{-1}(0)=\overline\cG(\eps,\cdot)^{-1}(0)\simeq \overline\cF_\eps^{-1}(0)$ from 
Lemmas~\ref{lem:equivalent-fredholm} and \ref{lem:contraction}, given by the projection ${\rm Pr}: (\overline\pi_\fK(\gamma),0, \gamma) \mapsto \gamma$. This results, as claimed, in $\phi_\eps(\fk)={\rm Pr}( \fk , \sigma_\eps (\fk))= {\rm Pr}_{\ol\Gamma_\eps} \bigl( \sigma_\eps(\fk)\bigr)$. The benefit of this identification is that we can now deduce fibrewise regularity from Lemma~\ref{lem:finite dimensional reduction classical}. 

If the adiabatic Fredholm family is fibrewise $\cC^\ell$-regular for some $\ell>1$, then each contraction $\overline B_\eps$ is $\cC^\ell$ by Lemma~\ref{lem:contraction} -- as is the linear map $\overline A_\eps$. 
Thus $f_\eps$ is $\cC^\ell$ by Lemma~\ref{lem:finite dimensional reduction classical}.

If the adiabatic Fredholm family is regularizing, then its zero sets $\cF_\eps^{-1}(0)=\ol\cF_\eps^{-1}(0)$ agree with the completions by Lemma~\ref{lem:equivalent-fredholm}.
This in particular guarantees that each map $\phi_\eps: f_\eps^{-1}(0)\to\cF_\eps^{-1}(0)\subset \Gamma$ takes values in the $\eps$-independent space $\Gamma$. 

If the adiabatic Fredholm family is regularizing and adiabatic $\cC^\ell$-regular, then Theorem~\ref{thm:family solution} shows that $(\eps,\ul\fk)\mapsto \rT^\ell\sigma_\eps(\ul\fk)$ is a continuous map
$\Delta_\sigma\times\rT^\ell\cV_\fK \to \bigl( \rT^\ell\cV_W , \|\cdot\|^{\rT^\ell W}_0\bigr)$. 
Its target space can be identified with the product $\rT^\ell\cV_W=\rT^\ell(\fC\times\cV_\Gamma)\simeq \rT^\ell\fC\times\rT^\ell\cV_\Gamma$. In this identification the first factor is the finite dimensional reduction map $\rT^\ell f_\eps={\rm Pr}_{\rT^\ell\fC}\circ\rT^\ell\sigma_\eps$ and the norm is 
$\|\ul w=(\ul\fc,\ul\gamma)\|^{\rT^\ell W}_0 = \|\ul\fc\|^{\rT^\ell \fC} + \|\ul\gamma\|^{\rT^\ell\Gamma}_0$.  
Thus continuity of 
$\Delta_\sigma\times\rT^\ell\cV_\fK \to \rT^\ell\fC$, $(\eps,\ul\fk)\mapsto \rT^\ell f_\eps(\ul\fk)$ follows directly from the adiabatic $\cC^\ell$ regularity of the solution maps.  

To deduce continuity of the higher differentials, recall from Remark~\ref{rmk:tangent map} that for $0\leq k\leq \ell$ the $k$-th tangent map $\rT^k f_\eps$ is contained in some components of the $\ell$-th tangent map $\rT^\ell f_\eps$. Moreover, we can recover the $k$-th differential $\rD^k f_\eps (\fk_0) (\fk'_1,\ldots, \fk'_{2^{\ell-1}})$ as the last entry of $\rT^k f (\fk_0,\fk'_1,\fk'_2,0,\fk'_4,0,\ldots,\fk'_{2^{\ell-1}},0,\ldots,0)$.
Thus -- renumbering components $\fk_i:=\fk'_{2^{i-1}}$ -- the above continuity of $\rT^\ell f_\eps$ also implies continuity of 
$\Delta_\sigma\times\cV_\fK \times \fK^k \to \fC$, $(\eps,\fk_0,\fk_1,\ldots,\fk_k)\mapsto \rD^\ell f_\eps(\fk_0)$ for all $0\leq k\leq \ell$. 
Since this map is linear in the last $k$ factors, and $\fK$ is finite dimensional, this implies the continuity of  
$\Delta_\sigma\times\cV_\fK \to \cL^k(\fK^k,\fC)$ ,
$(\eps,\fk_0)\mapsto \rD^k f_\eps(\fk_0)$
for all $0\leq k\leq \ell$.

Finally, the adiabatic $\cC^0$ regularity in particular implies that 
$\phi: f^{-1}(0) \to \Delta_\sigma\times \Gamma, (\eps,\fk) \mapsto \bigl(\eps,{\rm Pr}_{\Gamma}\bigl(\sigma_\eps(\fk)\bigr)$ is continuous with respect to the relative topology induced by $\Delta_\sigma$ and $(\Gamma,\|\cdot\|^\Gamma_0)$. It is a homeomorphism onto its image due to continuity of the explicit inverse map 
$\phi^{-1}: \im\phi \to f^{-1}(0)$, $(\eps,\gamma) \mapsto (\eps, \ol\pi_\fK(\gamma))$.
\end{proof}

\bibliographystyle{alpha}
\small
\bibliography{biblio}

\def\cprime{$'$}
\begin{thebibliography}{CGMiRS02}

\bibitem[AB24]{abouzaid-bottman}
Mohammed Abouzaid and Nathaniel Bottman.
\newblock Functoriality in categorical symplectic geometry.
\newblock {\em Bull. Amer. Math. Soc. (N.S.)}, 61(4):525--608, 2024.

\bibitem[Bot19]{bottman_realizations}
Nathaniel Bottman.
\newblock Moduli spaces of witch curves topologically realize the
  2-associahedra.
\newblock {\em J. Symplectic Geom.}, 17(6):1649--1682, 2019.

\bibitem[BW18]{bottman_wehrheim}
Nathaniel Bottman and Katrin Wehrheim.
\newblock Gromov compactness for squiggly strip shrinking in pseudoholomorphic
  quilts.
\newblock {\em Selecta Mathematica}, 24(4):3381--3443, 2018.

\bibitem[CGMiRS02]{cieliebak-gaio-siera-salamon}
Kai Cieliebak, A.~Rita Gaio, Ignasi Mundet~i Riera, and Dietmar~A. Salamon.
\newblock The symplectic vortex equations and invariants of {H}amiltonian group
  actions.
\newblock {\em J. Symplectic Geom.}, 1(3):543--645, 2002.

\bibitem[Col12]{coleman}
Rodney Coleman.
\newblock {\em Calculus on normed vector spaces}.
\newblock Universitext. Springer, New York, 2012.

\bibitem[DS94]{dostoglou-salamon}
Stamatis Dostoglou and Dietmar~A. Salamon.
\newblock Self-dual instantons and holomorphic curves.
\newblock {\em Ann. of Math. (2)}, 139(3):581--640, 1994.

\bibitem[FFGW16]{fabert-fish-golovko-wehrheim}
Oliver Fabert, Joel~W. Fish, Roman Golovko, and Katrin Wehrheim.
\newblock Polyfolds: a first and second look.
\newblock {\em EMS Surv. Math. Sci.}, 3(2):131--208, 2016.

\bibitem[GS05]{gaio-salamon}
Ana Rita~Pires Gaio and Dietmar~A. Salamon.
\newblock Gromov-{W}itten invariants of symplectic quotients and adiabatic
  limits.
\newblock {\em J. Symplectic Geom.}, 3(1):55--159, 2005.

\bibitem[HWZ17]{hofer-wysocki-zehnder_GW}
H.~Hofer, K.~Wysocki, and E.~Zehnder.
\newblock Applications of polyfold theory {I}: {T}he polyfolds of
  {G}romov-{W}itten theory.
\newblock {\em Mem. Amer. Math. Soc.}, 248(1179):v+218, 2017.

\bibitem[HWZ21]{hofer-wysocki-zehnder_polyfold}
Helmut Hofer, Krzysztof Wysocki, and Eduard Zehnder.
\newblock {\em Polyfold and {F}redholm theory}, volume~72 of {\em Ergebnisse
  der Mathematik und ihrer Grenzgebiete. 3. Folge. A Series of Modern Surveys
  in Mathematics [Results in Mathematics and Related Areas. 3rd Series. A
  Series of Modern Surveys in Mathematics]}.
\newblock Springer, Cham, [2021] \copyright 2021.

\bibitem[KLT20]{kutluhan-lee-taubes}
Cagatay Kutluhan, Yi-Jen Lee, and Clifford~Henry Taubes.
\newblock {${\rm HF}={\rm HM}$}, {I}: {H}eegaard {F}loer homology and
  {S}eiberg-{W}itten {F}loer homology.
\newblock {\em Geom. Topol.}, 24(6):2829--2854, 2020.

\bibitem[Sal95]{salamon_ICM}
Dietmar Salamon.
\newblock Lagrangian intersections, {$3$}-manifolds with boundary, and the
  {A}tiyah-{F}loer conjecture.
\newblock In {\em Proceedings of the {I}nternational {C}ongress of
  {M}athematicians, {V}ol.\ 1, 2 ({Z}\"urich, 1994)}, pages 526--536.
  Birkh\"auser, Basel, 1995.

\bibitem[Sie96]{Siebert}
Bernd Siebert.
\newblock Gromov-witten invariants for general symplectic manifolds.
\newblock {\em arXiv preprint arXiv:dg-ga/9608005}, 1996.

\bibitem[Weh12]{wehrheim_Fredholm}
Katrin Wehrheim.
\newblock Fredholm notions in scale calculus and hamiltonian floer theory.
\newblock {\em arXiv preprint arXiv:1209.4040}, 2012.

\bibitem[WW10]{wehrheim2010quilted}
Katrin Wehrheim and Chris~T Woodward.
\newblock Quilted {F}loer cohomology.
\newblock {\em Geometry \& Topology}, 14(2):833--902, 2010.

\bibitem[WW12]{wehrheim_woodward_geometric_composition}
Katrin Wehrheim and Chris~T. Woodward.
\newblock Floer cohomology and geometric composition of {L}agrangian
  correspondences.
\newblock {\em Adv. Math.}, 230(1):177--228, 2012.

\end{thebibliography}

\end{document}